
\documentclass[12pt,reqno]{amsart}%
\usepackage{amsfonts}
\usepackage{amsmath}
\usepackage{amssymb}
\usepackage{graphicx}%
\setcounter{MaxMatrixCols}{30}
\providecommand{\U}[1]{\protect\rule{.1in}{.1in}}
\newtheorem{theorem}{Theorem}
\theoremstyle{plain}

\newtheorem{corollary}{Corollary}

\newtheorem{definition}{Definition}

\newtheorem{lemma}{Lemma}

\newtheorem{proposition}{Proposition}
\newtheorem{remark}{Remark}

\numberwithin{theorem}{section} \numberwithin{equation}{section}
\numberwithin{lemma}{section}
\numberwithin{remark}{section}
\numberwithin{example}{section}
\numberwithin{proposition}{section}
\numberwithin{definition}{section}
\numberwithin{corollary}{section}
\textwidth=17cm \textheight=24cm
\hoffset=-1.5cm \voffset=-1.6cm
\begin{document}
\title[Noncommutative localizations]{Noncommutative localizations of a contractive quantum plane}
\author{Anar Dosi}
\address{College of Mathematical Sciences, Harbin Engineering University, Nangang
District, Harbin, 150001, China}
\email{dosiev@yahoo.com dosiev@hrbeu.edu.cn}
\date{December 1, 2025}
\subjclass[2020]{ Primary 46L52, 47A60; Secondary 81S10, 47A13}
\keywords{The (absolute) localizations, fibered product of Fr\'{e}chet algebras, the
quantizations, noncommutative analytic space, topological homology, Taylor
spectrum, Putinar spectrum}

\begin{abstract}
In the present paper we investigate the localizations in the sense of J. L.
Taylor of the Arens-Michael-Fr\'{e}chet algebras associated with
noncommutative analytic spaces of a contractive $q$-plane representing its
formal geometry. It turns out that all noncommutative Fr\'{e}chet algebras
obtained by the Fr\'{e}chet algebra structure sheaves over open subsets from
the topology bases are indeed localizations. That topological homology
property of the structure sheaves results in the key properties of Taylor and
Putinar spectra of the left Banach $q$-modules over the algebras of global sections.

\end{abstract}
\maketitle

\section{Introduction\label{secInt}}

The localizations or (weak) homological epimorphisms introduced by J. L.
Taylor \cite{Tay2} play one of the central roles in the multivariable
functional calculus problem for the operator tuples, and noncommutative
complex analytic geometry. They have also significant impact in computations
of the homological dimensions of topological algebras, and many other problems
of topological homology developed by J. L. Taylor \cite{TayHom} and A. Ya.
Helemskii \cite{HelHom}. The key idea of a localization is to draw the pure
algebra homology construction over a base algebra $B$ to topological homology
of a polynormed algebra $A$ along the given canonical algebra homomorphism
$B\rightarrow A$. Since every pure algebra can be equipped with the finest
polynormed topology, the localization can be treated as a polynormed algebra
homomorphism $B\rightarrow A$, which identifies the category of polynormed
$A$-modules with a full subcategory of the category of polynormed $B$-modules
so that topological homology over $A$ is reduced to topological homology over
$B$ along the homomorphism $B\rightarrow A$. In the purely algebraic case, we
used to say that $B\rightarrow A$ is a homological epimorphism \cite{GL}, or
$A$ is stably flat over $B$ \cite{NR}, which is equivalent to the condition
$\operatorname{Tor}_{n}^{B}\left(  A,A\right)  =0$ for all $n>0$. The stably
flatness plays an important role in some problems of representation theory of
finite-dimensional algebras \cite{GBW}, and algebraic $K$-theory \cite{NR}.
The localization property within the content of noncommutative algebras is
referred as noncommutative localization similar to the algebraic case (see for
example \cite{NR}).

In noncommutative complex analytic geometry, the base algebra $B$ stands for a
finitely generated algebra of noncommutative variables, whereas its
Fr\'{e}chet algebra localizations represent the algebras of local sections of
germs of noncommutative holomorphic functions in elements of $B$ defined on a
noncommutative space $\mathfrak{X}$ of $B$. A noncommutative analytic space
(or a geometric model) associated with $B$ is related to its certain
multinormed envelope $\widehat{B}$ that represents the
Arens-Michael-Fr\'{e}chet algebra of some (noncommutative) entire functions in
elements of $B$. For example, so are its Arens-Michael envelope \cite[5.2.21]%
{Hel} (all entire functions in elements of $B$), the PI-envelope \cite{Aris}
of $B$, and there are others obtained as the universal objects in some classes
of Banach algebra homomorphisms of $B$. Upon specifying the multinormed
envelope $\widehat{B}$ so that $B\rightarrow\widehat{B}$ is a localization,
the underlying topological space $\mathfrak{X}$ is defined as the spectrum of
$\widehat{B}$ to be the set of all its irreducible Banach space
representations \cite[6.2]{Hel} equipped with a certain (used to be
non-Hausdorff) topology. It contains the space $\operatorname{Spec}\left(
\widehat{B}\right)  $ of all continuous characters (or one-dimensional
irreducible representations) of $\widehat{B}$. By \textit{a noncommutative
analytic space of} $\widehat{B}$ we mean a ringed space $\left(
\mathfrak{X,F}\right)  $ equipped with a Fr\'{e}chet algebra (pre)sheaf
$\mathfrak{F}$ on the spectrum $\mathfrak{X}$ such that $\mathfrak{F}\left(
\mathfrak{X}\right)  =\widehat{B}$ and $U\supseteq\operatorname{Spec}\left(
\mathfrak{F}\left(  U\right)  \right)  $ for every open subset $U\subseteq
\mathfrak{X}$. We also say that $\left(  \mathfrak{X,F}\right)  $ is \textit{a
}(\textit{noncommutative})\textit{ complex analytic geometry that stands for}
$\widehat{B}$. If $\mathfrak{X=}\operatorname{Spec}\left(  \widehat{B}\right)
$, then $\left(  \mathfrak{X,F}\right)  $ can be referred as \textit{a
standard geometry }of $\widehat{B}$, so is the commutative case of $B$ (see
below). The \textit{noncommutative localization property of }$\left(
\mathfrak{X,F}\right)  $ is to select of a countable topology base $\left\{
U_{i}\right\}  $ for $\mathfrak{X}$ so that all restriction homomorphisms
$\widehat{B}\rightarrow\mathfrak{F}\left(  U_{i}\right)  $ (or $B\rightarrow
\mathfrak{F}\left(  U_{i}\right)  $) are localizations. The related
noncommutative $\mathfrak{F}$-functional calculus problem for the Banach space
representations of $\widehat{B}$ is in turn associated with noncommutative
$\widehat{B}$-spectral theory and the related topics.

In the case of the (commutative) polynomial base algebra $B=\mathbb{C}\left[
x_{1},\ldots,x_{n}\right]  $, its Arens-Michael envelope $\widehat{B}$
(including other envelopes) is reduced to the algebra $\mathcal{O}\left(
\mathbb{C}^{n}\right)  $ of all entire functions in $n$ complex variables, and
the commutative analytic space $\left(  \mathbb{C}^{n},\mathcal{O}\right)  $
with the Fr\'{e}chet algebra sheaf $\mathcal{O}$ of germs of holomorphic
functions on $\mathbb{C}^{n}$ stands for the standard complex analytic
geometry of $\widehat{B}$ (or $B$). In this case, the canonical inclusions
$B\rightarrow\mathcal{O}\left(  U\right)  $ over domains $U\subseteq
\mathbb{C}^{n}$ of holomorphy are all localizations, that is, $\left(
\mathbb{C}^{n},\mathcal{O}\right)  $ has the localization property. Moreover,
the $\mathcal{O}$-functional calculus problem is solved by Taylor's
holomorphic functional calculus and the joint spectral theory of several
commuting operators \cite{EP}, \cite{Tay}, \cite{Tay2}.

In the noncommutative setting, the proposed framework of noncommutative
analytic spaces including the related functional calculus and spectral theory
were implemented in the case of the universal enveloping (base) algebra
$B=\mathcal{U}\left(  \mathfrak{g}\right)  $ of a finite dimensional nilpotent
Lie algebra $\mathfrak{g}$ \cite{DosiIzv09}, \cite{DosiAC}, \cite{DosJOT10}.
The localizations of $\mathcal{U}\left(  \mathfrak{g}\right)  $ for a finite
dimensional Lie algebra $\mathfrak{g}$ were gradually investigated in the last
20 years mainly by O. Yu. Aristov, A. Yu. Pirkovskii and the author in a
series of papers. The first target was the canonical map of $\mathcal{U}%
\left(  \mathfrak{g}\right)  $ into its Arens-Michael envelope $\mathcal{O}%
\left(  \mathfrak{g}\right)  $ that represents the algebra of all
noncommutative entire functions in variables generating $\mathfrak{g}$.
Certainly $\mathcal{U}\left(  \mathfrak{g}\right)  \rightarrow\mathcal{O}%
\left(  \mathfrak{g}\right)  $ is a localization when $\mathfrak{g}$ is an
abelian Lie algebra, and that is not the case if $\mathfrak{g}$ is a
semisimple Lie algebra \cite{Tay2}. In \cite{Pir16}, A. Yu. Pirkovskii proved
that the solvability of $\mathfrak{g}$ is a necessary condition for
$\mathcal{U}\left(  \mathfrak{g}\right)  \rightarrow\mathcal{O}\left(
\mathfrak{g}\right)  $ to be a localization. The case of a nilpotent Lie
algebra $\mathfrak{g}$ was investigated in \cite{Dosi2003}, \cite{DosiMS10}
and \cite{DosiK} mainly motivated by implementation of Taylor`s general
framework in the noncommutative case \cite{Dos}, \cite{DosCS}, \cite{DosJOT10}%
. The detailed analysis of the localization problem $\mathcal{U}\left(
\mathfrak{g}\right)  \rightarrow\mathcal{O}\left(  \mathfrak{g}\right)  $ was
considered in \cite{PirSF}, and completed in \cite{Aris20} by O. Yu. Aristov,
where it was shown that $\mathcal{U}\left(  \mathfrak{g}\right)
\rightarrow\mathcal{O}\left(  \mathfrak{g}\right)  $ is a localization for
every nilpotent Lie algebra $\mathfrak{g}$. The case of a non-nilpotent but
solvable Lie algebra $\mathfrak{g}$ was recently solved in \cite{Aris25}. Thus
$\mathcal{O}\left(  \mathfrak{g}\right)  $ is stably flat over $\mathcal{U}%
\left(  \mathfrak{g}\right)  $ if and only if $\mathfrak{g}$ is a solvable Lie
algebra. The localization property of the noncommutative analytic space
associated with the PI-envelope of $\mathcal{U}\left(  \mathfrak{g}\right)  $
in the case of a nilpotent Lie algebra $\mathfrak{g}$ was solved in
\cite{DosiAC}. The related structure sheaf is the Fr\'{e}chet algebra sheaf of
formally-radical functions in elements of $\mathfrak{g}$ \cite{DosJOT10}.

In the present paper we investigate the localization problem for the
noncommutative analytic spaces of the formal geometry of a contractive
$q$-plane from \cite{Dosi25}. The quantum plane (or just $q$-plane) is the
free associative algebra
\[
\mathfrak{A}_{q}=\mathbb{C}\left\langle x,y\right\rangle /\left(
xy-q^{-1}yx\right)  ,\quad q\in\mathbb{C}\backslash\left\{  0,1\right\}
\]
generated by $x$ and $y$ modulo $xy=q^{-1}yx$ (see \cite{M}, \cite{Goor}). If
$\left\vert q\right\vert \neq1$, then we come up with a contractive quantum
plane. The Arens-Michael envelope of the base algebra $B=\mathfrak{A}_{q}$
denoted by $\mathcal{O}_{q}\left(  \mathbb{C}^{2}\right)  $ represents the
algebra of all entire functions in noncommutative variables $x$, $y$. It turns
out that $\mathcal{O}_{q}\left(  \mathbb{C}^{2}\right)  $ is commutative
modulo its Jacobson radical $\operatorname{Rad}\mathcal{O}_{q}\left(
\mathbb{C}^{2}\right)  $ whenever $\left\vert q\right\vert \neq1$ (see
\cite{Dosi24}), which means that the spectrum $\mathfrak{X}$ of $\mathcal{O}%
_{q}\left(  \mathbb{C}^{2}\right)  $ is reduced to $\operatorname{Spec}\left(
\mathcal{O}_{q}\left(  \mathbb{C}^{2}\right)  \right)  $. In this case, it
consists of two complex lines
\[
\mathfrak{X}=\mathbb{C}_{xy}:=\mathbb{C}_{x}\cup\mathbb{C}_{y}\quad
\text{with\quad}\mathbb{C}_{x}=\mathbb{C\times}\left\{  0\right\}
\subseteq\mathbb{C}^{2},\quad\mathbb{C}_{y}=\left\{  0\right\}  \times
\mathbb{C\subseteq C}^{2},
\]
and their unique intersection point $\left(  0,0\right)  $. One can equip
$\mathbb{C}_{x}$ with the $q$-topology (see below Subsection \ref{subsecQT})
and the other line $\mathbb{C}_{y}$ with the disk topology (see \cite{Dosi24})
or vice-versa. Then $\mathbb{C}_{xy}$ is equipped with the final topology so
that both inclusions of these lines into $\mathbb{C}_{xy}$ are continuous. The
(noncommutative) analytic space $\left(  \mathbb{C}_{xy},\mathcal{O}%
_{q}\right)  $ that stands for $\mathcal{O}_{q}\left(  \mathbb{C}^{2}\right)
$ is given by the topological space $\mathbb{C}_{xy}$ and the structure
Fr\'{e}chet algebra presheaf $\mathcal{O}_{q}$ \cite{Dosi24}. The related
$\mathcal{O}_{q}$-functional calculus problem is solved in \cite{Dosi24}.

The PI envelope $\mathcal{F}_{q}\left(  \mathbb{C}_{xy}\right)  $ of
$\mathfrak{A}_{q}$ admits the standard geometry $\left(  \mathbb{C}%
_{xy},\mathcal{F}_{q}\right)  $ referred as the formal geometry of
$\mathfrak{A}_{q}$, which has the same spectrum $\mathbb{C}_{xy}$ but now
equipped with the $q$-topology over both lines, and the Fr\'{e}chet algebra
sheaf $\mathcal{F}_{q}$ \cite{Dosi25}. The structure sheaf $\mathcal{F}_{q}$
is obtained as the fibered product
\[
\mathcal{F}_{q}=\mathcal{O}\left[  \left[  y\right]  \right]
\underset{\mathbb{C}\left[  \left[  x,y\right]  \right]  }{\times}\left[
\left[  x\right]  \right]  \mathcal{O}%
\]
of the formal power series sheaves $\mathcal{O}\left[  \left[  y\right]
\right]  $ and $\left[  \left[  x\right]  \right]  \mathcal{O}$ over the
constant sheaf $\mathbb{C}\left[  \left[  x,y\right]  \right]  $, where
$\mathcal{O}$ is the sheaf of stalks of holomorphic functions on the complex
$q$-plane. The multiplications in all these sheaves are naturally extended by
one of the original algebra $\mathfrak{A}_{q}$ (or $\mathcal{O}_{q}\left(
\mathbb{C}_{xy}\right)  $). The connecting morphisms $\mathcal{O}\left[
\left[  y\right]  \right]  \rightarrow\mathbb{C}\left[  \left[  x,y\right]
\right]  $ and $\left[  \left[  x\right]  \right]  \mathcal{O}\rightarrow
\mathbb{C}\left[  \left[  x,y\right]  \right]  $ defining the fibered product
are evaluations maps of stalks at the intersection point of two irreducible
subsets $\mathbb{C}_{x}$ and $\mathbb{C}_{y}$ in $\mathbb{C}_{xy}$. Thus, we
come up with the following analytic spaces $\left(  \left\{  \left(
0,0\right)  \right\}  ,\mathbb{C}\left[  \left[  x,y\right]  \right]  \right)
$, $\left(  \mathbb{C}_{x},\mathcal{O}\left[  \left[  y\right]  \right]
\right)  $ and $\left(  \mathbb{C}_{y},\left[  \left[  x\right]  \right]
\mathcal{O}\right)  $ representing certain multinormed envelopes of
$\mathfrak{A}_{q}$, whose spectra are the intersection point $\left(
0,0\right)  $, the $x$-line $\mathbb{C}_{x}$, and the $y$-line $\mathbb{C}%
_{y}$, respectively (see Definitions \ref{def1} and \ref{def2}). Moreover, all
these analytic spaces are linked with the natural morphisms (see Subsection
\ref{subsecNCCP}), and $\left(  \mathbb{C}_{xy},\mathcal{F}_{q}\right)  $ is
the fibered product of the analytic spaces $\left(  \mathbb{C}_{x}%
,\mathcal{O}\left[  \left[  y\right]  \right]  \right)  $ and $\left(
\mathbb{C}_{y},\left[  \left[  x\right]  \right]  \mathcal{O}\right)  $ over
$\left(  \left\{  \left(  0,0\right)  \right\}  ,\mathbb{C}\left[  \left[
x,y\right]  \right]  \right)  $ (see Definition \ref{def3}).

The main result of the present work states that all these noncommutative
analytic spaces of the formal geometry of $\mathfrak{A}_{q}$ possess the
localization property. Namely, if $U$ is a $q$-open subset of the complex
$q$-plane, then the canonical embeddings of $\mathfrak{A}_{q}$ into the
Arens-Michael-Fr\'{e}chet algebras $\mathbb{C}\left[  \left[  x,y\right]
\right]  $, $\mathcal{O}\left(  U\right)  \left[  \left[  y\right]  \right]  $
and $\left[  \left[  x\right]  \right]  \mathcal{O}\left(  U\right)  $ are
localizations. Moreover, $\mathfrak{A}_{q}\rightarrow\mathcal{F}_{q}\left(
U_{i}\right)  $ are localizations over a topology base $\left\{
U_{i}\right\}  $ for $\mathbb{C}_{xy}$, which consists of Runge $q$-open
subsets (see Definition \ref{def6}). In particular, the PI-envelope of
$\mathfrak{A}_{q}$ is a localization. Thus we have the full description of the
homological properties of these algebras. Our approach to the problem is
crucially based on the Decomposition Theorem from \cite{Dosi25}, and the
triangular representations of the differentials by involving some partial
differential and difference operators. The related technical machinery is
developed in Section \ref{sectionLRq}, and some long calculations on the
diagonal cohomology groups are reflected in the Appendix Secton
\ref{sectionDCG}. A proper discussion of localizations we start in Section
\ref{sectionNL}. The main result (Theorem \ref{thCENTER}) of the paper is
proved in Section \ref{sectionFGq} right after a brief discussion of the
formal geometry of $\mathfrak{A}_{q}$.

Finally, in Section \ref{secJS} we analyze the joint spectra obtained from
topological homology constructions. The transversality theorem from
\cite[Theorem 6.5]{Dosi25} combined with our result on the localizations yield
that $\mathcal{F}_{q}\left(  U\right)  \perp X$ (see Subsection \ref{subSth})
holds iff $U\cap\sigma\left(  T,S\right)  =\varnothing$, whenever $U$ is a
Runge $q$-open subset and $X$ is a left Banach $q$-module. We also prove that
the $q$-closure of the Taylor spectrum $\sigma\left(  T,S\right)  $ of a left
Banach $q$-module $X$ given by an operator couple $\left(  T,S\right)  $ on
$X$ with $TS=q^{-1}ST$ is reduced to the Putinar spectrum $\sigma\left(
\mathcal{F}_{q},X\right)  $ of the module $X$ with respect to the sheaf
$\mathcal{F}_{q}$ (see Theorem \ref{corPT}), that is,
\[
\sigma\left(  \mathcal{F}_{q},X\right)  =\sigma\left(  T,S\right)
^{-q}\text{.}%
\]
It turns out that there is a big gap between these spectra caused mainly by
the weakness of the $q$-topology. That is somewhat new that we have not seen
even in the case of Banach modules over a nilpotent (in particular abelian)
Lie algebra \cite[Proposition 5.3]{DosJOT10}. These spectra used to coincide
in the case of a Banach module, but that is not the case in the $q$-theory.
This phenomenon explains the lack of the projection property for the Taylor
spectrum of the Banach $q$-modules \cite{Dosi242}.

\textbf{Acknowledgement. }The author thanks O. Yu. Aristov for the useful
discussions, remarks and his essential help in many details. In particular,
the key idea on the topological injectivity property in the proof of the main
result belongs to him.

\section{Preliminaries\label{sPre}}

All considered vector spaces are assumed to be complex. The algebra of all
continuous linear operators on a polynormed (or locally convex) space $X$ is
denoted by $\mathcal{L}\left(  X\right)  $. If the topology of a complete
polynormed algebra $A$ is defined by means of a family of multiplicative
seminorms, then $A$ is called an \textit{Arens-Michael algebra} \cite[1.2.4]%
{Hel}.

\subsection{Triangular (co)chains\label{subsecTCs}}

Let $X_{n}$, $Y_{n}$, $Z_{n}$, $n\in\mathbb{N}$ be Fr\'{e}chet spaces and let
$X=\prod_{n}X_{n}$, $Y=\prod_{n}Y_{n}$, and $Z=\prod_{n}Z_{n}$ be their
topological direct products. A continuous linear mapping $S:X\rightarrow Y$ is
said to be \textit{a triangular operator} if it is given by a lower (or upper)
triangular operator matrix $S=\left[  S_{mn}\right]  $, $S_{mn}=0$, $m<n$,
where $S_{mn}:X_{n}\rightarrow Y_{m}$ is a continuous linear operator. By
\textit{a triangular cochain} we mean a couple of triangular operators
$X\overset{S}{\longrightarrow}Y\overset{T}{\longrightarrow}Z$ such that
$TS=0$. In this case, there are the diagonal cochains $X_{m}\overset{S_{mm}%
}{\longrightarrow}Y_{m}\overset{T_{mm}}{\longrightarrow}Z_{m}$, $m\in
\mathbb{N}$. In general, if a (co)chain complex of countable direct products
of Fr\'{e}chet spaces has a lower (or upper) triangular differentials, then it
is called \textit{a triangular complex}. The diagonal entries of those
differentials generate the related \textit{diagonal complexes}.

The following assertion was taken from \cite[Proposition 4.2]{DMedJM09} and
\cite{Aris}.

\begin{lemma}
\label{lemTri}(Aristov-Dosi) Let $X\overset{S}{\longrightarrow}%
Y\overset{T}{\longrightarrow}Z$ be a triangular cochain of Fr\'{e}chet spaces.
If all diagonal cochains $X_{m}\overset{S_{mm}}{\longrightarrow}%
Y_{m}\overset{T_{mm}}{\longrightarrow}Z_{m}$, $m\in\mathbb{N}$ are exact then
so is the original cochain. Moreover, if all diagonal operators $X_{m}%
\overset{S_{mm}}{\longrightarrow}Y_{m}$, $m\in\mathbb{N}$ are topologically
injective maps, then so is $X\overset{S}{\longrightarrow}Y$.
\end{lemma}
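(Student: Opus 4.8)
The plan is to reduce everything to the diagonal via the triangular structure, then handle each diagonal separately by classical Fréchet-space homological algebra. First I would set up the triangular cochain: write $X=\prod_n X_n$, $Y=\prod_n Y_n$, $Z=\prod_n Z_n$ with the lower-triangular operators $S=[S_{mn}]$ and $T=[T_{mn}]$, and note that $TS=0$ together with lower-triangularity forces, on comparing the $(m,m)$-entries, the identity $T_{mm}S_{mm}=0$, so each diagonal cochain $X_m\to Y_m\to Z_m$ is genuinely a cochain. The hypothesis gives exactness at $Y_m$ for every $m$.

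Next I would prove exactness of the big cochain at $Y$ by an inductive ``descending the columns'' argument. Given $y=(y_m)\in Y$ with $Ty=0$, I would produce $x=(x_m)\in X$ with $Sx=y$ recursively: the first diagonal equation $S_{11}x_1=y_1$ is solvable because the diagonal cochain is exact (one checks $T_{11}y_1=0$, which follows from the first component of $Ty=0$); having chosen $x_1,\dots,x_{m-1}$, the $m$-th component of $Sx=y$ reads $S_{mm}x_m = y_m - \sum_{n<m}S_{mn}x_n$, and one verifies the right-hand side lies in the image of $S_{mm}$ by applying $T_{mm}$ and using $T_{mm}S_{mn}=-\sum_{n<k<m}T_{mk}S_{kn}$-type relations coming from $TS=0$, reduced inductively to $T_{mm}(\text{RHS})=0$; exactness of the $m$-th diagonal cochain then yields $x_m$. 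The resulting $x=(x_m)$ automatically lies in the product $X=\prod_n X_n$ and $Sx=y$, giving exactness. (Here one should remark that no closed-range or open-mapping subtlety enters for \emph{exactness} alone; it is pure diagram chasing compatible with the product topology.)

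For the topological injectivity statement I would argue as follows. Suppose in addition each $S_{mm}:X_m\to Y_m$ is topologically injective, i.e.\ a homeomorphism onto its (closed) image. First, $S$ is injective: if $Sx=0$ then the diagonal relations $S_{mm}x_m=-\sum_{n<m}S_{mn}x_n$ combined with injectivity of each $S_{mm}$ give $x_m=0$ by induction on $m$. For the topological part I would show that $S$ is \emph{relatively open}, i.e.\ the inverse map $\mathrm{im}\,S\to X$ is continuous; equivalently, a net (or, since these are Fréchet, a sequence) $S x^{(k)}\to 0$ in $Y$ forces $x^{(k)}\to 0$ in $X$. Working coordinatewise: from the first coordinate $S_{11}x^{(k)}_1\to 0$ and topological injectivity of $S_{11}$ we get $x^{(k)}_1\to 0$; inductively, the $m$-th coordinate gives $S_{mm}x^{(k)}_m = (Sx^{(k)})_m - \sum_{n<m}S_{mn}x^{(k)}_n \to 0$ because the first term tends to $0$ and each $S_{mn}x^{(k)}_n\to 0$ by the induction hypothesis and continuity of $S_{mn}$; topological injectivity of $S_{mm}$ then yields $x^{(k)}_m\to 0$. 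Since the product topology on $X$ is exactly convergence in every coordinate, $x^{(k)}\to 0$, so $S$ is topologically injective.

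The main obstacle, and the point deserving care, is the passage from the coordinatewise estimates to a genuine \emph{topological} (open-onto-image) conclusion rather than merely a sequential one, and keeping track of the bookkeeping in the inductive solvability step where one must repeatedly invoke the consequences of $TS=0$ to verify that each partial right-hand side sits in $\ker T_{mm}=\mathrm{im}\,S_{mm}$. For the first issue, the saving grace is that $X$, $Y$, $Z$ are Fréchet (hence metrizable), so topological injectivity is equivalent to sequential relative openness, and the product of countably many Fréchet spaces is again Fréchet with the topology of coordinatewise convergence; thus the coordinatewise argument is in fact sufficient. I would therefore present the proof in two clean blocks — exactness by descending induction, then injectivity by the same induction carried out for null sequences — and flag explicitly the two algebraic identities extracted from $TS=0$ that drive the induction, citing \cite{DMedJM09} and \cite{Aris} for the precise form if a reader wants the full computation.
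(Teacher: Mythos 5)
Your proof is correct. The paper does not actually give a proof of Lemma~\ref{lemTri} --- it cites \cite[Proposition~4.2]{DMedJM09} and \cite{Aris} --- but the coordinate-wise induction you run (reading $T_{mm}S_{mm}=0$ off the $(m,m)$-entry of $TS=0$; solving $S_{mm}x_m = y_m - \sum_{n<m}S_{mn}x_n$ at each stage after checking that the right-hand side lies in $\ker T_{mm}$ via the identities extracted from $Ty=0$ and $TS=0$; and, for topological injectivity, arguing by null sequences and using that a countable product of Fr\'echet spaces is Fr\'echet with the topology of coordinate-wise convergence) is exactly the expected elementary argument behind the cited result, and I see no gaps in it.
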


Let us notify that the reverse statement to one from Lemma \ref{lemTri} is not
true, the exactness of a triangular cochain does not necessarily imply the
exactness of its diagonals. That is the case of the bimodule resolution for
the Fr\'{e}chet algebra $\mathcal{F}_{q}\left(  U\right)  $ considered below
in Subsection \ref{subsecIO}.

\subsection{Topological homology and localizations\label{subSth}}

The main category of the underlying polynormed spaces for the considered
framework of topological homology is the category of Fr\'{e}chet spaces. Let
$A$ be a Fr\'{e}chet algebra, and let $X$ be a left Fr\'{e}chet $A$-module.
The category of all left Fr\'{e}chet $A$-modules is denoted by $A$%
-$\operatorname*{mod}$ whereas $\operatorname{mod}$-$A$ (resp., $A$%
-$\operatorname{mod}$-$A$) denotes the category of all right (resp., bi)
Fr\'{e}chet $A$-modules. Recall that $X$ is said to be a free left $A$-module
if $X=A\widehat{\otimes}E$ is the projective tensor product of $A$ and some
Fr\'{e}chet space $E$ equipped with the natural left $A$-module structure. A
retract in the category $A$-$\operatorname*{mod}$ of a free $A$-module is
called a projective left $A$-module. \textit{A projective resolution }of a
left $A$-module $X$ is a complex $\left(  \mathcal{P},d\right)  $ of left
projective $A$-modules with a morphism $\pi:\mathcal{P}_{0}\rightarrow X$ such
that the augmented complex $0\leftarrow X\overset{\pi}{\longleftarrow
}\mathcal{P}_{0}\overset{d_{0}}{\longleftarrow}\mathcal{P}_{1}\overset{d_{1}%
}{\longleftarrow}\cdots$ is admissible (splits as a complex of the Fr\'{e}chet spaces).

Let $B$ be a polynormed algebra and let $F:A$-$\operatorname*{mod}\rightarrow
B$-$\operatorname*{mod}$ be an additive (co)functor. By $F_{n}$ (respectively,
$F^{n}$) we denote the $n$-th (projective) derived (co)functor of $F$. By its
very definition (see \cite[3.3.1]{Hel}), $F_{n}\left(  X\right)  $
(respectively, $F^{n}\left(  X\right)  $) is just the $n$-th (co)homology of
the (co)chain complex $\left(  F\left(  \mathcal{P}\right)  ,F\left(
d\right)  \right)  $ for a projective resolution $\left(  \mathcal{P}%
,d\right)  $ of the $A$-module $X$. Taking into account that all projective
resolutions of a module are homotopy equivalent \cite[3.2.3]{Hel}, we conclude
that $F_{n}\left(  X\right)  $ (respectively, $F^{n}\left(  X\right)  $) does
not depend upon the particular choice of a projective resolution $\left(
\mathcal{P},d\right)  $ of $X$. If $F$ is the (co)functor
$X\underset{A}{\widehat{\otimes}}\circ$ (respectively, $\operatorname{Hom}%
_{A}\left(  \circ\mathfrak{,}Y\right)  $) then we write $\operatorname{Tor}%
_{n}^{A}\left(  X,\circ\right)  $ (respectively, $\operatorname{Ext}_{A}%
^{n}\left(  \circ,Y\right)  $) instead of the $n$-th derived (co)functor, as
usual. If $\operatorname{Tor}_{n}^{A}\left(  X,Y\right)  =\left\{  0\right\}
$, $n\geq0$, then we write $X\perp Y$, and say that these $A$-modules are
\textit{in the} \textit{transversality relation}. Notice that is a particular
case of the transversality relation considered in \cite[Definition 3.1.14]%
{EP}, where it is just assumed that $\operatorname{Tor}_{n}^{A}\left(
X,Y\right)  =\left\{  0\right\}  $, $n\geq1$ and $\operatorname{Tor}_{0}%
^{A}\left(  X,Y\right)  $ is Hausdorff (or it is identified with
$X\widehat{\otimes}_{A}Y$).

Now assume that $\iota:B\rightarrow A$ is a continuous algebra homomorphism
from a polynormed algebra $B$ into a Fr\'{e}chet algebra $A$. The source
algebra $B$ can be a pure algebra equipped with the finest locally convex
topology. The homomorphism $\iota:B\rightarrow A$ is said to be \textit{a
localization} if it induces the natural isomorphisms $H_{n}\left(  B,M\right)
=H_{n}\left(  A,M\right)  $, $n\geq0$ of all homology groups for every
Fr\'{e}chet $A$-bimodule $M$. In particular, the multiplication mapping
$A\overline{\otimes}_{B}A\rightarrow A$, $a_{1}\otimes a_{2}\mapsto a_{1}%
a_{2}$ (on the completed inductive tensor product) is a topological
isomorphism, and $H_{n}\left(  B,A\widehat{\otimes}A\right)
=\operatorname{Tor}_{n}^{B}\left(  A,A\right)  =\left\{  0\right\}  $ for all
$n>0$, where the algebra $A$ is considered to be a $B$-bimodule via the
homomorphism $\iota$. If $0\leftarrow B\longleftarrow\mathcal{P}$ is an
admissible projective bimodule resolution of $B$, then the application of the
functor $A\overline{\otimes}_{B}\circ\overline{\otimes}_{B}A$ to $\mathcal{P}$
augmented by the multiplication morphism results in the following chain
complex
\begin{equation}
0\leftarrow A\longleftarrow A\overline{\otimes}_{B}\mathcal{P}\overline
{\otimes}_{B}A. \label{TAc}%
\end{equation}
If (\ref{TAc}) is admissible for some (actually for every) $\mathcal{P}$, then
certainly $\iota:B\rightarrow A$ is a localization called \textit{an absolute
localization }(or $A$ is \textit{stably flat over} $B$). In this case,
$H_{n}\left(  B,M\right)  =H_{n}\left(  A,M\right)  $, $n\geq0$ hold for every
$\widehat{\otimes}$-bimodule $M$ (not necessarily a Fr\'{e}chet one). If $A$
and $A\overline{\otimes}_{B}\mathcal{P}\overline{\otimes}_{B}A$ consists of
nuclear Fr\'{e}chet spaces, then $\iota:B\rightarrow A$ is a localization (see
\cite[Proposition 1.6]{Tay2}) whenever (\ref{TAc}) is exact.

\subsection{Arens-Michael envelope\label{subsecAME}}

The \textit{Arens-Michael envelope} \cite[5.2.21]{Hel} of a polynormed algebra
$A$ is given by a couple $\left(  \widetilde{A},\omega\right)  $ of an
Arens-Michael algebra $\widetilde{A}$ and a continuous algebra homomorphism
$\omega:A\rightarrow\widetilde{A}$ with the following universal
projective\ property. Every continuous algebra homomorphism $\pi:A\rightarrow
B$ into an Arens-Michael algebra $B$ admits a unique continuous algebra
homomorphism $\widetilde{\pi}:\widetilde{A}\rightarrow B$ such that
$\widetilde{\pi}\cdot\omega=\pi$. Since an Arens-Michael algebra is an inverse
limit of Banach algebras \cite[5.2.10]{Hel}, the latter universal projective
property can be given in terms of the homomorphisms $A\rightarrow B$ into
Banach algebra $B$. The set of all continuous characters of $A$ is denoted by
$\operatorname{Spec}\left(  A\right)  $. If $\lambda\in\operatorname{Spec}%
\left(  A\right)  $ then the algebra homomorphism $\lambda:A\rightarrow
\mathbb{C}$ defines (Banach) $A$-module structure on $\mathbb{C}$ via pull
back along $\lambda$. This module called \textit{a trivial module} is denoted
by $\mathbb{C}\left(  \lambda\right)  $, which is an $\widetilde{A}$-module
automatically. Thus $\operatorname{Spec}\left(  A\right)  =\operatorname{Spec}%
\left(  \widetilde{A}\right)  $.

\subsection{The $q$-topology of the complex plane\label{subsecQT}}

Let us fix $q\in\mathbb{C}\backslash\left\{  0\right\}  $ with $\left\vert
q\right\vert <1$. A subset $S\subseteq\mathbb{C}$ is called a $q$%
\textit{-spiraling set }if it contains the origin and $qS\subseteq S$. An open
subset $U\subseteq\mathbb{C}$ is said to be \textit{a }$q$\textit{-open set}
if it is an open subset of $\mathbb{C}$ in the standard topology, which is
also a $q$-spiraling set. The whole plane $\mathbb{C}$ is $q$-open, and the
empty set is assumed to be $q$-open set. The family of all $q$-open subsets
defines a new topology $\mathfrak{q}$ in $\mathbb{C}$, which is weaker than
the original standard topology of the complex plane. Every open disk $B\left(
0,r\right)  $ centered at the origin is a $q$-open set.

Notice that $\left\{  0\right\}  $ is a generic point of the topological space
$\left(  \mathbb{C},\mathfrak{q}\right)  $ being dense in the whole plane. One
can easily prove that if $x\in\mathbb{C}\backslash\left\{  0\right\}  $ then
its closure in $\left(  \mathbb{C},\mathfrak{q}\right)  $ is given by
\[
\left\{  x\right\}  ^{-}=\left\{  q^{-k}x:k\in\mathbb{Z}_{+}\right\}  ,
\]
whereas $\left\{  x\right\}  _{q}=\left\{  q^{k}x:k\in\mathbb{Z}_{+}\right\}
$ defines its $q$-hull. Thus the topological space $\left(  \mathbb{C}%
,\mathfrak{q}\right)  $ satisfies the axiom $T_{0}$, and it turns out to be an
irreducible topological space \cite[II, 4.1]{BurComA}. One can easily see that
$\left(  \mathbb{C},\mathfrak{q}\right)  $ is not quasicompact, in particular,
it is not Noetherian.

If $K\subseteq\mathbb{C}$ is a compact subset in the standard topology of
$\mathbb{C}$ then it turns out to be a quasicompact subset of $\left(
\mathbb{C},\mathfrak{q}\right)  $, but not necessarily a $\mathfrak{q}$-closed
subset. In particular, all disks (open or closed) centered at the origin are
quasicompact (nonclosed) subsets of $\left(  \mathbb{C},\mathfrak{q}\right)
$. They are all dense subsets of $\left(  \mathbb{C},\mathfrak{q}\right)  $.
Every closure $\left\{  x\right\}  ^{-}$ of a point $x\in\mathbb{C}$ is not quasicompact.

Let $\mathcal{O}$ be the standard Fr\'{e}chet sheaf of stalks of the
holomorphic functions on $\mathbb{C}$ and let $\operatorname{id}%
:\mathbb{C\rightarrow}\left(  \mathbb{C},\mathfrak{q}\right)  $ be the
identity mapping, which is continuous. The direct image $\operatorname{id}%
_{\ast}\mathcal{O}$ of the sheaf $\mathcal{O}$ along the identity mapping
turns out to be a Fr\'{e}chet algebra sheaf on the topological space $\left(
\mathbb{C},\mathfrak{q}\right)  $. For every $q$-open set $U$ and its
quasicompact subset $K\subseteq U$ we define the related seminorm $\left\Vert
f\right\Vert _{K}=\sup\left\vert f\left(  K\right)  \right\vert $,
$f\in\mathcal{O}\left(  U\right)  $ on the algebra $\mathcal{O}\left(
U\right)  $. The family $\left\{  \left\Vert \cdot\right\Vert _{K}\right\}  $
of seminorms over all $q$-compact subsets $K\subseteq U$ (that is, $K=K_{q}$)
defines the same original Fr\'{e}chet topology of $\mathcal{O}\left(
U\right)  $. The stalk of $\operatorname{id}_{\ast}\mathcal{O}$ at zero is the
same original stalk $\mathcal{O}_{0}$, whereas $\left(  \operatorname{id}%
_{\ast}\mathcal{O}\right)  _{\lambda}=\mathcal{O}\left(  \left\{
\lambda\right\}  _{q}\right)  =\mathcal{O}_{0}+\sum_{n\in\mathbb{Z}_{+}%
}\mathcal{O}_{q^{n}\lambda}$ at every $\lambda\in\mathbb{C}\backslash\left\{
0\right\}  $. The algebra $\left(  \operatorname{id}_{\ast}\mathcal{O}\right)
_{\lambda}$ is not local for $\lambda\in\mathbb{C}\backslash\left\{
0\right\}  $. The ideal of those stalks $\left\langle U,f\right\rangle
\in\left(  \operatorname{id}_{\ast}\mathcal{O}\right)  _{\lambda}$ with
$f\left(  \left\{  \lambda\right\}  _{q}\right)  =\left\{  0\right\}  $ is not
maximal (see \cite{Dosi24}). Everywhere below, we use the same notation
$\mathcal{O}$ instead of $\operatorname{id}_{\ast}\mathcal{O}$.

\subsection{The Fr\'{e}chet space sheaves $\mathcal{O}\left(  d\right)  $ on
$\left(  \mathbb{C},\mathfrak{q}\right)  $\label{subsecFSO}}

If $U\subseteq\left(  \mathbb{C},\mathfrak{q}\right)  $ is a $q$-open subset,
then it contains the origin and we put%
\[
\mathcal{O}\left(  d\right)  \left(  U\right)  =\left\{  f\left(  z\right)
\in\mathcal{O}\left(  U\right)  :z^{-d}f\left(  z\right)  \in\mathcal{O}%
\left(  U\right)  \right\}
\]
to be a closed ideal of $\mathcal{O}\left(  U\right)  $, where $d\in
\mathbb{Z}_{+}$. Notice that $\mathcal{O}\left(  0\right)  \left(  U\right)
=\mathcal{O}\left(  U\right)  $, and $\mathcal{O}\left(  d\right)  \left(
U\right)  $ consists of those $f\left(  z\right)  \in\mathcal{O}\left(
U\right)  $ such that $f\left(  0\right)  =f^{\prime}\left(  0\right)
=\cdots=f^{\left(  d-1\right)  }\left(  0\right)  =0$. The following assertion
was taken from \cite[Lemma 2.2]{Dosi25}. The projection $P_{d}$ from its proof
will be used below.

\begin{lemma}
\label{lemMD}The ideal $\mathcal{O}\left(  d\right)  \left(  U\right)  $ is
the principal ideal of $\mathcal{O}\left(  U\right)  $ generated by $z^{d}$,
that is, $\mathcal{O}\left(  d\right)  \left(  U\right)  =z^{d}\mathcal{O}%
\left(  U\right)  $. The linear mapping $z^{-d}:\mathcal{O}\left(  d\right)
\left(  U\right)  \rightarrow\mathcal{O}\left(  U\right)  $, $f\left(
z\right)  \mapsto z^{-d}f\left(  z\right)  $ implements a topological
isomorphism of the Fr\'{e}chet spaces preserving the multiplication operator
by $z$. Moreover,
\[
\mathcal{O}\left(  U\right)  =\mathcal{O}\left(  d\right)  \left(  U\right)
\oplus\mathbb{C}1\oplus\mathbb{C}z\oplus\cdots\oplus\mathbb{C}z^{d-1}%
\]
is a topological direct sum of the closed subspaces $\mathcal{O}\left(
d\right)  \left(  U\right)  $ and $\mathbb{C}1\oplus\mathbb{C}z\oplus
\cdots\oplus\mathbb{C}z^{d-1}$.
\end{lemma}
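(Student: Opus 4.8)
The plan is to reduce every assertion to the Taylor expansion at the origin, which is available because a $q$-open $U$ always contains $0$.

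\emph{Step 1: identifying $\mathcal{O}\left(d\right)\left(U\right)$.} I would first record that the condition $z^{-d}f\left(z\right)\in\mathcal{O}\left(U\right)$ is equivalent to $f\left(0\right)=f^{\prime}\left(0\right)=\cdots=f^{\left(d-1\right)}\left(0\right)=0$: picking a disk $B\left(0,r\right)\subseteq U$ and writing $f\left(z\right)=\sum_{k\geq0}a_{k}z^{k}$ there, the function $z^{-d}f$ (which is automatically holomorphic on $U\setminus\left\{0\right\}$) extends holomorphically across $0$ precisely when $a_{0}=\cdots=a_{d-1}=0$. Since each functional $f\mapsto f^{\left(k\right)}\left(0\right)$ is continuous on $\mathcal{O}\left(U\right)$ --- by the Cauchy estimates against any quasicompact disk $K\subseteq U$ centred at $0$ --- the set $\mathcal{O}\left(d\right)\left(U\right)$ is a closed subspace; it is an ideal because $z^{-d}\left(fh\right)=\left(z^{-d}f\right)h$.

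\emph{Step 2: principal ideal and the isomorphism $z^{-d}$.} The inclusion $z^{d}\mathcal{O}\left(U\right)\subseteq\mathcal{O}\left(d\right)\left(U\right)$ is immediate from $z^{-d}\left(z^{d}g\right)=g\in\mathcal{O}\left(U\right)$, and the reverse inclusion is the very definition of $\mathcal{O}\left(d\right)\left(U\right)$, so $\mathcal{O}\left(d\right)\left(U\right)=z^{d}\mathcal{O}\left(U\right)$. Consequently $z^{-d}\colon\mathcal{O}\left(d\right)\left(U\right)\to\mathcal{O}\left(U\right)$ is a linear bijection with inverse given by multiplication by $z^{d}$, and both maps commute with multiplication by $z$ since monomial multiplication operators commute. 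For the topological part I would observe that multiplication by $z^{d}$ is a continuous map $\mathcal{O}\left(U\right)\to\mathcal{O}\left(d\right)\left(U\right)$ --- from $\left\Vert z^{d}g\right\Vert_{K}\leq\left(\sup_{K}\left\vert z\right\vert^{d}\right)\left\Vert g\right\Vert_{K}$ on quasicompact, hence bounded, $K\subseteq U$ --- and a bijection onto the Fr\'{e}chet space $\mathcal{O}\left(d\right)\left(U\right)$; the open mapping theorem for Fr\'{e}chet spaces then upgrades it to a topological isomorphism, so its inverse $z^{-d}$ is continuous.

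\emph{Step 3: the topological direct sum.} For $f\in\mathcal{O}\left(U\right)$ put $p_{f}\left(z\right)=\sum_{k=0}^{d-1}\tfrac{1}{k!}f^{\left(k\right)}\left(0\right)z^{k}$; then $f-p_{f}$ vanishes to order $\geq d$ at $0$, so $f-p_{f}\in\mathcal{O}\left(d\right)\left(U\right)$ and $p_{f}\in\mathbb{C}1\oplus\cdots\oplus\mathbb{C}z^{d-1}$, which gives the algebraic decomposition. The sum is direct because $g+p=0$ with $g\in\mathcal{O}\left(d\right)\left(U\right)$ and $\deg p<d$ forces, upon differentiating up to order $d-1$ at $0$, first $p=0$ and then $g=0$. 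Finally $f\mapsto p_{f}$ is continuous, being a finite combination of the continuous functionals $f\mapsto f^{\left(k\right)}\left(0\right)$ with fixed monomial coefficients, so $f\mapsto f-p_{f}$ is a continuous projection onto $\mathcal{O}\left(d\right)\left(U\right)$ with continuous complementary projection $f\mapsto p_{f}$; since moreover $\mathbb{C}1\oplus\cdots\oplus\mathbb{C}z^{d-1}$ is finite-dimensional and hence closed, this exhibits the decomposition as a topological direct sum.

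I do not expect a genuine obstacle here; the only place that calls for care is the continuity of $z^{-d}$, for which I prefer to invoke the open mapping theorem rather than a bare-hands Cauchy-estimate argument, together with the routine verification that the derivative-evaluation functionals at $0$ are continuous for the Fr\'{e}chet topology of $\mathcal{O}\left(U\right)$ generated by the seminorms $\left\Vert\cdot\right\Vert_{K}$ over quasicompact $K\subseteq U$.
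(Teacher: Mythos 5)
Your proof is correct and follows essentially the same route as the paper's: identify $\mathcal{O}(d)(U)$ via vanishing of the first $d$ Taylor coefficients at $0$, produce the continuous finite-rank projection $f\mapsto\sum_{k<d}\frac{f^{(k)}(0)}{k!}z^{k}$ to get the topological direct sum, and invoke the Open Mapping Theorem to upgrade the continuous bijection $z^{d}\colon\mathcal{O}(U)\to\mathcal{O}(d)(U)$ to a topological isomorphism. The only cosmetic difference is that you obtain $\mathcal{O}(d)(U)=z^{d}\mathcal{O}(U)$ directly from the defining condition $z^{-d}f\in\mathcal{O}(U)$, whereas the paper derives it inductively from the intertwining relation $P_{d}(zf)=zP_{d-1}(f)$; both are fine, and in fact the direct reading of the definition is arguably cleaner.
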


\begin{proof}
If $f\in\mathcal{O}\left(  d\right)  \left(  U\right)  $, $d\geq1$, then
$f\left(  z\right)  =z\left(  z^{-1}f\left(  z\right)  \right)  $ with
$z^{-1}f\left(  z\right)  \in\mathcal{O}\left(  d-1\right)  \left(  U\right)
$. Thus $\mathcal{O}\left(  d\right)  \left(  U\right)  \subseteq
z\mathcal{O}\left(  d-1\right)  \left(  U\right)  $. One can easily verify
that
\[
P_{d}\in\mathcal{L}\left(  \mathcal{O}\left(  U\right)  \right)  ,\quad
P_{d}\left(  f\right)  \left(  z\right)  =f\left(  z\right)  -\sum_{i=0}%
^{d-1}\frac{f^{\left(  i\right)  }\left(  0\right)  }{i!}z^{i}%
\]
is a continuous projection onto $\mathcal{O}\left(  d\right)  \left(
U\right)  $ such that $\ker\left(  P_{d}\right)  =\oplus_{i=0}^{d-1}%
\mathbb{C}z^{i}$ is a finite dimensional polynomial subspace. Furthermore,
\begin{align*}
P_{d}\left(  zf\left(  z\right)  \right)   &  =zf\left(  z\right)  -\sum
_{i=0}^{d-1}\frac{\left(  zf\right)  ^{\left(  i\right)  }\left(  0\right)
}{i!}z^{i}=zf\left(  z\right)  -\sum_{i=1}^{d-1}\frac{f^{\left(  i-1\right)
}\left(  0\right)  }{\left(  i-1\right)  !}z^{i}=z\left(  f\left(  z\right)
-\sum_{i=0}^{d-2}\frac{f^{\left(  i\right)  }\left(  0\right)  }{i!}%
z^{i}\right) \\
&  =zP_{d-1}\left(  f\left(  z\right)  \right)
\end{align*}
for all $f\in\mathcal{O}\left(  U\right)  $. It follows that $z\mathcal{O}%
\left(  d-1\right)  \left(  U\right)  \subseteq\mathcal{O}\left(  d\right)
\left(  U\right)  \subseteq z\mathcal{O}\left(  d-1\right)  \left(  U\right)
$, that is, $\mathcal{O}\left(  d\right)  \left(  U\right)  =z\mathcal{O}%
\left(  d-1\right)  \left(  U\right)  =z^{d}\mathcal{O}\left(  U\right)  $ is
the principal ideal generated by the monomial $z^{d}$. Moreover,
$\mathcal{O}\left(  U\right)  =\left(  z^{d}\mathcal{O}\left(  U\right)
\right)  \oplus\left(  \mathbb{C}z^{d-1}\oplus\cdots\oplus\mathbb{C}1\right)
$ is a topological direct sum of the closed subspaces.

Finally, the multiplication operator $z^{d}:\mathcal{O}\left(  U\right)
\rightarrow\mathcal{O}\left(  d\right)  \left(  U\right)  $ is a continuous
bijection of the Fr\'{e}chet spaces. By the Open Mapping Theorem, its inverse
mapping $z^{-d}:\mathcal{O}\left(  d\right)  \left(  U\right)  \rightarrow
\mathcal{O}\left(  U\right)  $, $f\left(  z\right)  \mapsto z^{-d}f\left(
z\right)  $ is continuous, and $z\left(  z^{-d}\left(  f\left(  z\right)
\right)  \right)  =z^{-d}\left(  zf\left(  z\right)  \right)  $,
$f\in\mathcal{O}\left(  d\right)  \left(  U\right)  $.
\end{proof}

Thus $U\mapsto\mathcal{O}\left(  d\right)  \left(  U\right)  $ defines a
Fr\'{e}chet space sheaf on $\mathbb{C}$ denoted by $\mathcal{O}\left(
d\right)  $. It is an isomorphic copy of the original sheaf $\mathcal{O}$
obtained by means of the $d$-shift, that is, $\mathcal{O}\left(  d\right)
=z^{d}\mathcal{O}$ is a (free) coherent $\mathcal{O}$-module \cite[4]{EP}. By
Lemma \ref{lemMD}, we deduce that
\[
\mathcal{O}=\mathcal{O}\left(  d\right)  \oplus\mathbb{C}1\oplus
\mathbb{C}z\oplus\cdots\oplus\mathbb{C}z^{d-1}%
\]
is a direct sum of the Fr\'{e}chet sheaves on $\left(  \mathbb{C}%
,\mathfrak{q}\right)  $, where $\mathbb{C}z^{i}$, $0\leq i<d$ are the constant sheaves.

Along with the projection $P_{d}$ from the proof of Lemma \ref{lemMD}, let us
also introduce the following (translation) operator
\begin{equation}
\Delta_{q}\in\mathcal{L}\left(  \mathcal{O}\left(  U\right)  \right)
,\quad\Delta_{q}\left(  f\right)  \left(  z\right)  =f\left(  qz\right)  .
\label{Delq}%
\end{equation}
Once can easily verify that $\Delta_{q}P_{d}=P_{d}\Delta_{q}$. Finally, for a
holomorphic function $\theta\left(  z,w\right)  $ about the origin in two
complex variables $z$ and $w$, we define
\[
\theta_{z}\left(  z,w\right)  =\frac{\theta\left(  z,w\right)  -\theta\left(
0,w\right)  }{z},\quad\theta_{w}\left(  z,w\right)  =\frac{\theta\left(
z,w\right)  -\theta\left(  z,0\right)  }{w}%
\]
to be the difference operators.

\subsection{Arens-Michael envelope of $\mathfrak{A}_{q}$ and the character
space $\mathbb{C}_{xy}$\label{subsecAMEC}}

As above let $\mathfrak{A}_{q}$ be the quantum plane (or just $q$-plane)
generated by $x$ and $y$ modulo the relation $xy=q^{-1}yx$, where
$q\in\mathbb{C}\backslash\left\{  0,1\right\}  $, $\left\vert q\right\vert
\leq1$. Thus $\mathfrak{A}_{q}$ is the vector space of all ordered polynomials
$f=\sum_{i,k\in\mathbb{Z}_{+}}a_{ik}x^{i}y^{k}$, $a_{ik}\in\mathbb{C}$, which
can be equipped with the following submultiplicative seminorms $\left\Vert
f\right\Vert _{\rho}=\sum_{i,k\in\mathbb{Z}_{+}}\left\vert a_{ik}\right\vert
\rho^{i+k}<\infty$, $\rho>0$. The Banach algebra completion of the seminormed
space $\left(  \mathfrak{A}_{q},\left\Vert \cdot\right\Vert _{\rho}\right)  $
is denoted by $\mathcal{A}\left(  \rho\right)  $. The Arens-Michael envelope
$\mathcal{O}_{q}\left(  \mathbb{C}^{2}\right)  $ of $\mathfrak{A}_{q}$ turns
out to be an inverse limit of the Banach algebras $\mathcal{A}\left(
\rho\right)  $, $\rho>0$ (see \cite{Pir}, \cite{Dosi24}). The closed two-sided
ideal in $\mathcal{O}_{q}\left(  \mathbb{C}^{2}\right)  $ generated by $xy$ is
denoted by $\mathcal{I}_{xy}$. The following assertion was proved in
\cite{Dosi24}.

\begin{proposition}
\label{tO}If $\left\vert q\right\vert \neq1$ then the algebra $\mathcal{O}%
_{q}\left(  \mathbb{C}^{2}\right)  $ is commutative modulo its Jacobson
radical $\operatorname{Rad}\mathcal{O}_{q}\left(  \mathbb{C}^{2}\right)  $.
Moreover,
\[
\operatorname{Rad}\mathcal{O}_{q}\left(  \mathbb{C}^{2}\right)  =\cap\left\{
\ker\left(  \lambda\right)  :\lambda\in\operatorname{Spec}\left(
\mathcal{O}_{q}\left(  \mathbb{C}^{2}\right)  \right)  \right\}
=\mathcal{I}_{xy},
\]%
\[
\operatorname{Spec}\left(  \mathcal{O}_{q}\left(  \mathbb{C}^{2}\right)
\right)  =\operatorname{Spec}\left(  \mathcal{O}_{q}\left(  \mathbb{C}%
^{2}\right)  /\mathcal{I}_{xy}\right)  =\mathbb{C}_{xy},
\]
where $\mathbb{C}_{x}=\mathbb{C\times}\left\{  0\right\}  \subseteq
\mathbb{C}^{2}$, $\mathbb{C}_{y}=\left\{  0\right\}  \times\mathbb{C\subseteq
C}^{2}$ and $\mathbb{C}_{xy}=\mathbb{C}_{x}\cup\mathbb{C}_{y}$. If $\left\vert
q\right\vert =1$ then $\mathcal{I}_{xy}$ contains all quasinilpotents of
$\mathcal{O}_{q}\left(  \mathbb{C}^{2}\right)  $, and none of the ordered
monomials $x^{i}y^{k}$, $i,k\in\mathbb{N}$ is quasinilpotent.
\end{proposition}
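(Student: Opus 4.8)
The plan is to prove Proposition \ref{tO} by first localizing the problem away from the origin, where the structure of $\mathcal{O}_q(\mathbb{C}^2)$ simplifies, and then analyzing the ideal $\mathcal{I}_{xy}$ directly. First I would observe that $\mathcal{O}_q(\mathbb{C}^2) = \varprojlim_\rho \mathcal{A}(\rho)$, so it suffices to understand the Banach algebras $\mathcal{A}(\rho)$ and their characters, using that $\operatorname{Spec}(\mathcal{O}_q(\mathbb{C}^2)) = \bigcup_\rho \operatorname{Spec}(\mathcal{A}(\rho))$ (with appropriate compatibility). In $\mathcal{A}(\rho)$, an element $f = \sum a_{ik} x^i y^k$ with $|q| < 1$ — the key inequality $\|xy - q^{-1}yx\|$ being zero is the defining relation — and the contractivity forces, roughly, that the monomials $x^i y^k$ with both $i,k \geq 1$ become ``small'' relative to the pure powers. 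Concretely, a character $\lambda$ is determined by the scalars $\lambda(x) = \alpha$, $\lambda(y) = \beta$, subject to $\alpha\beta = q^{-1}\alpha\beta$, i.e. $(1 - q^{-1})\alpha\beta = 0$; since $q \neq 1$ this forces $\alpha\beta = 0$, hence $(\alpha,\beta) \in \mathbb{C}_x \cup \mathbb{C}_y = \mathbb{C}_{xy}$. Conversely every such point gives a bounded character on each $\mathcal{A}(\rho)$, so $\operatorname{Spec}(\mathcal{O}_q(\mathbb{C}^2)) = \mathbb{C}_{xy}$, and the same computation shows characters factor through $\mathcal{O}_q(\mathbb{C}^2)/\mathcal{I}_{xy}$ since $xy \in \ker\lambda$ for all $\lambda$.

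Next I would establish $\operatorname{Rad}\mathcal{O}_q(\mathbb{C}^2) = \mathcal{I}_{xy}$ and the commutativity modulo radical, for $|q| \neq 1$. The inclusion $\mathcal{I}_{xy} \subseteq \bigcap_\lambda \ker\lambda = \operatorname{Rad}$ is immediate from the previous paragraph (using that for an Arens-Michael algebra the Jacobson radical is the intersection of kernels of characters when the algebra is commutative modulo radical, or more carefully working with the Banach quotients $\mathcal{A}(\rho)$). For the reverse inclusion, I would show $\mathcal{O}_q(\mathbb{C}^2)/\mathcal{I}_{xy}$ is commutative with trivial radical: in the quotient, $xy = 0$, so every element is represented by $\phi(x) + \psi(y)$ with $\phi, \psi$ entire and $\phi(0) = \psi(0)$ matched appropriately — this identifies $\mathcal{O}_q(\mathbb{C}^2)/\mathcal{I}_{xy}$ with (a subalgebra of) $\mathcal{O}(\mathbb{C}_x) \times_{\mathbb{C}} \mathcal{O}(\mathbb{C}_y)$, which is a commutative Fréchet algebra whose spectrum is $\mathbb{C}_{xy}$ and which is semisimple (holomorphic functions separate points and have no quasinilpotents). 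Hence $\operatorname{Rad}(\mathcal{O}_q(\mathbb{C}^2)/\mathcal{I}_{xy}) = 0$, giving $\operatorname{Rad}\mathcal{O}_q(\mathbb{C}^2) \subseteq \mathcal{I}_{xy}$, and since the quotient is commutative, $\mathcal{O}_q(\mathbb{C}^2)$ is commutative modulo its radical.

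For the case $|q| = 1$, I would argue separately: the relation $(1 - q^{-1})\alpha\beta = 0$ still forces characters to vanish on $xy$, so $\mathcal{I}_{xy} \subseteq \bigcap_\lambda \ker\lambda$; but now the intersection of character kernels need not be the full radical. To show $\mathcal{I}_{xy}$ contains all quasinilpotents, I would note that modulo $\mathcal{I}_{xy}$ the algebra is the commutative semisimple algebra described above, so any quasinilpotent element must lie in $\mathcal{I}_{xy}$. For the claim that no ordered monomial $x^i y^k$ with $i,k \in \mathbb{N}$ is quasinilpotent, I would exhibit, for each such monomial, a bounded Banach-algebra representation of $\mathcal{O}_q(\mathbb{C}^2)$ (for instance on $\ell^2(\mathbb{Z})$ or a weighted sequence space, using the standard unitary-type representation of the quantum torus when $|q| = 1$) in which $x$ and $y$ act as operators with $x^i y^k$ having nonzero spectral radius — the rotation/shift representation $x \mapsto$ (weighted) shift, $y \mapsto$ (modulated) shift does the job. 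The main obstacle I expect is the $|q| = 1$ analysis of quasinilpotents: one must produce enough bounded representations to detect that $x^i y^k$ is not quasinilpotent, and care is needed because $\mathcal{O}_q(\mathbb{C}^2)$ for $|q| = 1$ has a genuinely noncommutative radical, so purely character-theoretic arguments are insufficient and one needs an explicit operator-theoretic model. The $|q| \neq 1$ part, by contrast, should go through cleanly once the identification of the quotient by $\mathcal{I}_{xy}$ with the fibered product of entire-function algebras is made precise, which itself rests on a careful estimate showing that in $\mathcal{A}(\rho)$ the closed ideal generated by $xy$ is exactly the closure of the span of monomials $x^i y^k$ with $i,k \geq 1$.
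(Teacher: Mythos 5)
Your outline is sound on the easy side: the character computation (the relation $(1-q^{-1})\alpha\beta=0$ confines $\operatorname{Spec}$ to $\mathbb{C}_{xy}$), the identification of the quotient $\mathcal{O}_q(\mathbb{C}^2)/\mathcal{I}_{xy}$ with the commutative semisimple algebra $\mathcal{O}(\mathbb{C}_x)\times_{\mathbb{C}}\mathcal{O}(\mathbb{C}_y)$, and the resulting inclusion $\operatorname{Rad}\mathcal{O}_q(\mathbb{C}^2)\subseteq\mathcal{I}_{xy}=\bigcap_\lambda\ker\lambda$ are all correct in outline. The gap is the reverse inclusion $\mathcal{I}_{xy}\subseteq\operatorname{Rad}\mathcal{O}_q(\mathbb{C}^2)$, which you call ``immediate'' but justify only by appealing to the principle that the Jacobson radical equals the intersection of character kernels when the algebra is commutative modulo its radical --- and that is circular, since commutativity modulo the radical is precisely what the proposition asserts. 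This inclusion is where the hypothesis $|q|\neq 1$ actually does its work, and it requires a genuine quasinilpotence estimate in the Banach completions $\mathcal{A}(\rho)$. Using $yx=qxy$ one gets $(x^iy^k)^n=q^{ikn(n-1)/2}x^{in}y^{kn}$, hence $\Vert(x^iy^k)^n\Vert_\rho^{1/n}=|q|^{ik(n-1)/2}\rho^{i+k}\to 0$ when $|q|<1$ (the case $|q|>1$ reducing to this by swapping the roles of $x$ and $y$), and one must promote this decay from individual monomials to the full closed ideal $\mathcal{I}_{xy}$ and its products with arbitrary algebra elements, so that $\mathcal{I}_{xy}$ is a topologically nil ideal in each $\mathcal{A}(\rho)$ and hence lies in the radical. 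You instead locate the ``careful estimate'' at the identification of $\mathcal{I}_{xy}$ with the closed span of $\{x^iy^k : i,k\geq 1\}$, but that is an elementary normal-ordering exercise; the real analytical content is the radical inclusion, which your plan omits. Without it you have only $\operatorname{Rad}\subseteq\mathcal{I}_{xy}$, not equality, and the commutativity-modulo-radical conclusion does not follow.

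A smaller remark on the $|q|=1$ case: the operator model you propose (a weighted-shift realization of the quantum torus) does work, but it is more than needed. The same monomial computation already gives $\Vert(x^iy^k)^n\Vert_\rho^{1/n}=\rho^{i+k}>0$ when $|q|=1$, so $x^iy^k$ has positive spectral radius in $\mathcal{A}(\rho)$ directly, with no representation to build. The assertion that $\mathcal{I}_{xy}$ contains every quasinilpotent does follow as you say once the quotient is identified with the semisimple fibered product.
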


Thus the character space $\mathbb{C}_{xy}$\ of the Fr\'{e}chet algebra
$\mathcal{O}_{q}\left(  \mathbb{C}^{2}\right)  $ is the union $\mathbb{C}%
_{x}\cup\mathbb{C}_{y}$ of two copies of the complex plane whose intersection
consists of the trivial character that corresponds to $\left(  0,0\right)  $.
One can swap the role of $x$ and $y$ having $\mathbb{C}_{yx}$ as the character
space of $\mathcal{O}_{q^{-1}}\left(  \mathbb{C}^{2}\right)  $. Based on
Proposition \ref{tO}, we use the notation $\mathcal{O}_{q}\left(
\mathbb{C}_{xy}\right)  $ instead of $\mathcal{O}_{q}\left(  \mathbb{C}%
^{2}\right)  $ whenever $\left\vert q\right\vert \neq1$. The character space
$\mathbb{C}_{xy}$ being the union $\mathbb{C}_{x}\cup\mathbb{C}_{y}$ is
equipped with the final topology so that both embeddings
\[
\left(  \mathbb{C}_{x},\mathfrak{q}\right)  \hookrightarrow\mathbb{C}%
_{xy}\hookleftarrow\left(  \mathbb{C}_{y},\mathfrak{q}\right)
\]
are continuous, which is called the $\mathfrak{q}$\textit{-topology of}
$\mathbb{C}_{xy}$. The topology base in $\mathbb{C}_{xy}$ consists of all open
subsets $U=U_{x}\cup U_{y}$ with $q$-open sets $U_{x}\subseteq\mathbb{C}_{x}$
and $U_{y}\subseteq\mathbb{C}_{y}$. In this case, $\mathbb{C}_{xy}%
=\mathbb{C}_{x}\cup\mathbb{C}_{y}$ is the union of two irreducible subsets,
whose intersection is the unique generic point.

\section{The $x$-diagonal and $y$-diagonal Fr\'{e}chet space
complexes\label{sectionLRq}}

In this section we introduce the $x$-diagonal and $y$-diagonal Fr\'{e}chet
space complexes mainly on the level of the formal power series, and
investigate their exactness. They play the key roles in the localization
problems of the structure sheaves of the noncommutative analytic spaces
$\mathbb{C}_{x}$, $\mathbb{C}_{y}$ and their intersection point $\left\{
\left(  0,0\right)  \right\}  $ within the formal geometry of $\mathfrak{A}%
_{q}$ considered in \cite{Dosi25}.

\subsection{The $y$-diagonal formal $q$-complex\label{subsecLDFC}}

Let $\mathbb{C}\left[  \left[  x_{1},x_{2}\right]  \right]  $ be the
commutative Fr\'{e}chet algebra of all formal power series in the independent
variables $x_{1}$, $x_{2}$, and let $\mathbb{C}\left[  \left[  y\right]
\right]  $ be another Fr\'{e}chet algebra of formal power series in $y$. The
right (or left) multiplication operator $R_{y}$ by $y$ on $\mathbb{C}\left[
\left[  y\right]  \right]  $ is acting as the shift operator. Since
$\mathbb{C}\left[  \left[  x_{1},x_{2}\right]  \right]  \left[  \left[
y\right]  \right]  =\mathbb{C}\left[  \left[  x_{1},x_{2}\right]  \right]
\widehat{\otimes}\mathbb{C}\left[  \left[  y\right]  \right]  $, it follows
that $1\otimes R_{y}$ is also acting as the following shift operator
\[
N_{y}\in\mathcal{L}\left(  \mathbb{C}\left[  \left[  x_{1},x_{2}\right]
\right]  \left[  \left[  y\right]  \right]  \right)  \text{,\quad}N_{y}\left(
f\right)  =\sum_{n\geq1}f_{n-1}\left(  x_{1},x_{2}\right)  y^{n},
\]
where $f=\sum_{n\geq0}f_{n}\left(  x_{1},x_{2}\right)  y^{n}$ represents an
element of $\mathbb{C}\left[  \left[  x_{1},x_{2}\right]  \right]  \left[
\left[  y\right]  \right]  $. The variables $x_{i}$ are acting as the
multiplication operators on $\mathbb{C}\left[  \left[  x_{1},x_{2}\right]
\right]  $, and they can be inflated to the space $\mathbb{C}\left[  \left[
x_{1},x_{2}\right]  \right]  \left[  \left[  y\right]  \right]  $ as
$x_{i}\otimes1$. For brevity we use the same notations $x_{i}$ instead of
$x_{i}\otimes1$, respectively. There is also a well defined continuous linear
operator (the diagonal projection)
\[
\pi_{y}:\mathbb{C}\left[  \left[  x_{1},x_{2}\right]  \right]  \left[  \left[
y\right]  \right]  \longrightarrow\mathbb{C}\left[  \left[  x\right]  \right]
,\quad\pi_{y}\left(  \sum_{n\geq0}h_{n}\left(  x_{1},x_{2}\right)
y^{n}\right)  =h_{0}\left(  x,x\right)  .
\]
As above we fix $q\in\mathbb{C}\backslash\left\{  0,1\right\}  $, and put
\[
D_{y,q}\in\mathcal{L}\left(  \mathbb{C}\left[  \left[  x_{1},x_{2}\right]
\right]  \left[  \left[  y\right]  \right]  \right)  ,\quad D_{y,q}\left(
f\right)  =\sum_{n\geq0}q^{n}f_{n}\left(  x_{1},x_{2}\right)  y^{n}%
\]
to be the $q$-diagonal operator, that is, $D_{y,q}=\prod\limits_{n}q^{n}$. Let
us define the following Fr\'{e}chet space complex $\mathfrak{D}_{q}\left(
y\right)  $:
\[
0\rightarrow\mathbb{C}\left[  \left[  x_{1},x_{2}\right]  \right]  \left[
\left[  y\right]  \right]  \overset{d_{y}^{0}}{\longrightarrow}%
\begin{array}
[c]{c}%
\mathbb{C}\left[  \left[  x_{1},x_{2}\right]  \right]  \left[  \left[
y\right]  \right] \\
\oplus\\
\mathbb{C}\left[  \left[  x_{1},x_{2}\right]  \right]  \left[  \left[
y\right]  \right]
\end{array}
\overset{d_{y}^{1}}{\longrightarrow}\mathbb{C}\left[  \left[  x_{1}%
,x_{2}\right]  \right]  \left[  \left[  y\right]  \right]  \overset{\pi
_{y}}{\longrightarrow}\mathbb{C}\left[  \left[  x\right]  \right]
\rightarrow0,
\]
whose differentials are acting by%
\begin{equation}
d_{y}^{0}=\left[
\begin{array}
[c]{c}%
N_{y}\\
x_{2}-qx_{1}D_{y,q}%
\end{array}
\right]  ,\quad d_{y}^{1}=\left[
\begin{array}
[c]{cc}%
x_{2}-x_{1}D_{y,q} & -N_{y}%
\end{array}
\right]  . \label{Fdif}%
\end{equation}
Notice that $\left(  x_{2}-x_{1}D_{y,q}\right)  f=\sum_{n\geq0}\left(
x_{2}-q^{n}x_{1}\right)  f_{n}\left(  x_{1},x_{2}\right)  y^{n}=\left(
\prod\limits_{n}\left(  x_{2}-q^{n}x_{1}\right)  \right)  f$ for all
$f\in\mathbb{C}\left[  \left[  x_{1},x_{2}\right]  \right]  \left[  \left[
y\right]  \right]  $, that is, $x_{2}-x_{1}D_{y,q}=\prod\limits_{n}\left(
x_{2}-q^{n}x_{1}\right)  $. In a similar way, we have $x_{2}-qx_{1}%
D_{y,q}=\prod\limits_{n}\left(  x_{2}-q^{n+1}x_{1}\right)  $. Then
\begin{align*}
d_{y}^{1}d_{y}^{0}f  &  =\left(  \prod\limits_{n}\left(  x_{2}-q^{n}%
x_{1}\right)  \right)  \sum_{n\geq1}f_{n-1}\left(  x_{1},x_{2}\right)
y^{n}-\left(  1\otimes R_{y}\right)  \sum_{n\geq0}\left(  x_{2}-q^{n+1}%
x_{1}\right)  f_{n}\left(  x_{1},x_{2}\right)  y^{n}\\
&  =\sum_{n\geq1}\left(  x_{2}-q^{n}x_{1}\right)  f_{n-1}\left(  x_{1}%
,x_{2}\right)  y^{n}-\sum_{n\geq0}\left(  x_{2}-q^{n+1}x_{1}\right)
f_{n}\left(  x_{1},x_{2}\right)  y^{n+1}=0
\end{align*}
for every $f\in\mathbb{C}\left[  \left[  x_{1},x_{2}\right]  \right]  \left[
\left[  y\right]  \right]  $. Moreover,
\begin{align*}
\pi_{y}d_{y}^{1}h  &  =\pi_{y}\left(  \sum_{n\geq0}\left(  x_{2}-q^{n}%
x_{1}\right)  f_{n}\left(  x_{1},x_{2}\right)  y^{n}-\sum_{n\geq1}%
g_{n-1}\left(  x_{1},x_{2}\right)  y^{n}\right) \\
&  =\left(  x_{2}-x_{1}\right)  f_{0}\left(  x_{1},x_{2}\right)
|_{x_{1}=x_{2}=x}=0
\end{align*}
for all $h=\left[
\begin{array}
[c]{c}%
f\\
g
\end{array}
\right]  \in\mathbb{C}\left[  \left[  x_{1},x_{2}\right]  \right]  \left[
\left[  y\right]  \right]  ^{\oplus2}$. It follows that $d_{y}^{1}d_{y}^{0}=0$
and $\pi_{y}d_{y}^{1}=0$, which means that $\mathfrak{D}_{q}\left(  y\right)
$ is a cochain complex of the Fr\'{e}chet spaces called \textit{the }%
$y$-\textit{diagonal formal }$q$\textit{-complex}.

\subsection{The $x$-diagonal formal $q$-complex\label{subsecRDC}}

In a similar way, we consider the commutative Fr\'{e}chet algebra
$\mathbb{C}\left[  \left[  y_{1},y_{2}\right]  \right]  $ of all formal power
series in the independent variables $y_{1}$,$y_{2}$, and the Fr\'{e}chet space
$\left[  \left[  x\right]  \right]  \mathbb{C}\left[  \left[  y_{1}%
,y_{2}\right]  \right]  $ (to be $\mathbb{C}\left[  \left[  x\right]  \right]
\widehat{\otimes}\mathbb{C}\left[  \left[  y_{1},y_{2}\right]  \right]  $)
with the related operators
\begin{align*}
N_{x},D_{x,q}  &  \in\mathcal{L}\left(  \left[  \left[  x\right]  \right]
\mathbb{C}\left[  \left[  y_{1},y_{2}\right]  \right]  \right)  ,\quad
N_{x}\left(  f\right)  =\sum_{n\geq1}x^{n}f_{n-1}\left(  y_{1},y_{2}\right)
,\text{ }D_{x,q}\left(  f\right)  =\sum_{n\geq0}x^{n}q^{n}f_{n}\left(
y_{1},y_{2}\right)  ,\\
y_{1}-qy_{2}D_{x,q}  &  =\prod\limits_{n}\left(  y_{1}-q^{n+1}y_{2}\right)
,\quad y_{2}D_{x,q}-y_{1}=\prod\limits_{n}\left(  q^{n}y_{2}-y_{1}\right)  .
\end{align*}
Let us define the following\textit{ }$x$-\textit{diagonal formal }%
$q$\textit{-complex} $\mathfrak{D}_{q}\left(  x\right)  $ of the Fr\'{e}chet
spaces
\[
0\rightarrow\left[  \left[  x\right]  \right]  \mathbb{C}\left[  \left[
y_{1},y_{2}\right]  \right]  \overset{d_{x}^{0}}{\longrightarrow}%
\begin{array}
[c]{c}%
\left[  \left[  x\right]  \right]  \mathbb{C}\left[  \left[  y_{1}%
,y_{2}\right]  \right] \\
\oplus\\
\left[  \left[  x\right]  \right]  \mathbb{C}\left[  \left[  y_{1}%
,y_{2}\right]  \right]
\end{array}
\overset{d_{x}^{1}}{\longrightarrow}\left[  \left[  x\right]  \right]
\mathbb{C}\left[  \left[  y_{1},y_{2}\right]  \right]  \overset{\pi
_{x}}{\longrightarrow}\mathbb{C}\left[  \left[  y\right]  \right]
\rightarrow0,
\]
whose differentials are acting by%
\[
d_{x}^{0}=\left[
\begin{array}
[c]{c}%
y_{1}-qy_{2}D_{x,q}\\
N_{x}%
\end{array}
\right]  ,\quad d_{x}^{1}=\left[
\begin{array}
[c]{cc}%
N_{x} & y_{2}D_{x,q}-y_{1}%
\end{array}
\right]  ,
\]
and the morphism $\pi_{x}$ has the same action $\pi_{x}\left(  \sum_{n}%
x^{n}h_{n}\left(  y_{1},y_{2}\right)  \right)  =h_{0}\left(  y,y\right)  $.
Notice that
\begin{align*}
\pi_{x}d_{x}^{1}\left[
\begin{array}
[c]{c}%
f\\
g
\end{array}
\right]   &  =\pi_{x}\left(  N_{x}f+\left(  y_{2}D_{x,q}-y_{1}\right)
g\right) \\
&  =\pi_{x}\left(  \sum_{n\geq1}x^{n}f_{n-1}\left(  y_{1},y_{2}\right)
+\sum_{n\geq0}x^{n}\left(  y_{2}q^{n}-y_{1}\right)  g_{n}\left(  y_{1}%
,y_{2}\right)  \right) \\
&  =\left(  y_{2}-y_{1}\right)  g_{0}\left(  y_{1},y_{2}\right)
|_{y_{1}=y_{2}=y}=0.
\end{align*}
In the same way, we have
\begin{align*}
d_{x}^{1}d_{x}^{0}h  &  =N_{x}\left(  y_{1}-qy_{2}D_{x,q}\right)  h+\left(
y_{2}D_{x,q}-y_{1}\right)  N_{x}h\\
&  =N_{x}\sum_{n\geq0}x^{n}\left(  y_{1}-q^{n+1}y_{2}\right)  h_{n}\left(
y_{1},y_{2}\right)  -\sum_{n\geq0}x^{n+1}\left(  y_{1}-q^{n+1}y_{2}\right)
h_{n}\left(  y_{1},y_{2}\right) \\
&  =\sum_{n\geq1}x^{n}\left(  y_{1}-q^{n}y_{2}\right)  h_{n-1}\left(
y_{1},y_{2}\right)  -\sum_{n\geq1}x^{n}\left(  y_{1}-q^{n}y_{2}\right)
h_{n-1}\left(  y_{1},y_{2}\right)  =0.
\end{align*}
Thus $\mathfrak{D}_{q}\left(  x\right)  $ is a cochain complex of the
Fr\'{e}chet spaces indeed.

\subsection{The exactness of the diagonal formal complexes}

Actually, both diagonal formal $q$-complexes are exact.

\begin{lemma}
\label{lemLDc1}The diagonal formal $q$-complexes $\mathfrak{D}_{q}\left(
y\right)  $ and $\mathfrak{D}_{q}\left(  x\right)  $ are exact.
\end{lemma}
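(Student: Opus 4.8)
The plan is to prove exactness of $\mathfrak{D}_q(y)$ one term at a time by decomposing every series along its $y$-degree and using that on the coefficient of $y^n$ the operator $D_{y,q}$ acts as the scalar $q^n$, so that $x_2-x_1D_{y,q}$ and $x_2-qx_1D_{y,q}$ become multiplication by the linear forms $x_2-q^nx_1$ and $x_2-q^{n+1}x_1$ in $\mathbb{C}[[x_1,x_2]]$, while $N_y$ is simply the shift sending the coefficient of $y^n$ to the coefficient of $y^{n+1}$. Writing $f=\sum_{n\geq0}f_n(x_1,x_2)y^n$, and similarly for the other entries, each relation of the complex turns into a family of equations in $\mathbb{C}[[x_1,x_2]]$, one per degree, coupled only through the shift, and I would solve these recursively. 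I would explicitly \emph{not} appeal to Lemma~\ref{lemTri}: the fixed-$y$-degree diagonal subcomplexes are not exact — already in degree $0$ the relevant cocycle space is all of $\mathbb{C}[[x_1,x_2]]$ whereas the relevant image is only the principal ideal $(x_2-qx_1)$ — and the whole mechanism is that $N_y$ exactly repairs this defect between consecutive degrees.

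In detail: $d_y^0$ is topologically injective because $N_y$ is already a topological embedding of $\mathbb{C}[[x_1,x_2]][[y]]$ onto the closed subspace $y\,\mathbb{C}[[x_1,x_2]][[y]]$, so $d_y^0h=0$ forces $N_yh=0$, i.e. $h=0$. For exactness at the middle term, a cocycle $(f,g)$ satisfies $(x_2-q^nx_1)f_n=g_{n-1}$ for all $n$ (with $g_{-1}=0$); the case $n=0$ gives $f_0=0$, and for $n\geq1$ the element $g_{n-1}$ is then divisible by the nonzerodivisor $x_2-q^nx_1$ in $\mathbb{C}[[x_1,x_2]]$, whence $f_n=g_{n-1}/(x_2-q^nx_1)$ is determined. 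Putting $h:=\sum_{m\geq0}f_{m+1}y^m$ one checks that $N_yh=f$ and $(x_2-q^{n+1}x_1)h_n=g_n$ for every $n$, the latter being precisely the degree-$(n+1)$ cocycle equation; hence $d_y^0h=(f,g)$. For exactness at the third term, $\pi_y$ is onto (lift $\phi(x)$ to the degree-zero series $\phi(x_1)$), and if $\pi_y(h)=0$, i.e. $h_0(x,x)=0$, then $h_0$ lies in the principal diagonal ideal $(x_2-x_1)$, so that $f:=h_0/(x_2-x_1)$ in degree $0$ together with $g:=-\sum_{m\geq0}h_{m+1}y^m$ solves $d_y^1(f,g)=h$. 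All the maps constructed — the shift, its inverse on the shifted subspace, and division by a fixed nonzero linear form on the closed subspace of divisible series — are continuous for the coefficientwise Fréchet topology, so in fact $\mathfrak{D}_q(y)$ is admissible, not merely exact.

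The complex $\mathfrak{D}_q(x)$ is handled by the identical argument after interchanging the two pairs of variables: one decomposes along the $x$-degree in $[[x]]\mathbb{C}[[y_1,y_2]]$, $D_{x,q}$ acts on the coefficient of $x^n$ as $q^n$, $N_x$ is the $x$-shift, and the only cosmetic change is that $N_x$ now sits in the other slot of $d_x^0$ and $d_x^1$, which merely permutes the bookkeeping (the degree-$n$ cocycle equation reads $f_{n-1}=(y_1-q^ny_2)g_n$, etc.). The only point needing care throughout is the algebraic input — that each $x_2-q^nx_1$, resp. $y_1-q^ny_2$, is a nonzerodivisor in the formal power series ring, that the diagonal ideal is generated by $x_2-x_1$, resp. $y_1-y_2$, and that the ensuing division operators are continuous — but these are elementary facts about $\mathbb{C}[[x_1,x_2]]$; there is no genuine analytic obstacle here, the substance being entirely the degree-recursion that stitches together the otherwise non-exact diagonal pieces.
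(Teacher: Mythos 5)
Your proof is correct and follows essentially the same route as the paper's: decompose by $y$-degree (resp.\ $x$-degree), reduce to $f_0=0$ and the recursion $(x_2-q^nx_1)f_n=g_{n-1}$ using that $x_2-q^nx_1$ is a nonzerodivisor in $\mathbb{C}[[x_1,x_2]]$, build the preimage as the shifted sequence $h_m=f_{m+1}$, and factor $h_0$ through the diagonal ideal $(x_2-x_1)$ for exactness at the third spot. Your extra observations — that Lemma~\ref{lemTri} cannot be applied directly to $\mathfrak{D}_q(y)$ since its fixed-degree diagonals are not exact, and that the explicit contracting maps are continuous so the complex is in fact admissible — are correct and a nice supplement, but they do not change the core argument.
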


\begin{proof}
We provide the proof in the case of the $y$-diagonal formal $q$-complex
$\mathfrak{D}_{q}\left(  y\right)  $. The same argument is applicable in the
case of $\mathfrak{D}_{q}\left(  x\right)  $ (see below Remark \ref{remLDc1}).
If $h\in\mathbb{C}\left[  \left[  x_{1},x_{2}\right]  \right]  \left[  \left[
y\right]  \right]  $ with $d_{y}^{0}h=0$, then $N_{y}h=\sum_{n\geq1}%
h_{n-1}\left(  x_{1},x_{2}\right)  y^{n}=0$, which means that $h_{n}\left(
x_{1},x_{2}\right)  =0$ for all $n$. Thus $\ker\left(  d_{y}^{0}\right)
=\left\{  0\right\}  $. Pick $\left[
\begin{array}
[c]{c}%
f\\
g
\end{array}
\right]  \in\ker\left(  d_{y}^{1}\right)  $, that is, $\prod\limits_{n}\left(
x_{2}-q^{n}x_{1}\right)  f=\left(  1\otimes R_{y}\right)  g$. Then $\left(
x_{2}-x_{1}\right)  f_{0}\left(  x_{1},x_{2}\right)  =0$ and $\left(
x_{2}-q^{n}x_{1}\right)  f_{n}\left(  x_{1},x_{2}\right)  =g_{n-1}\left(
x_{1},x_{2}\right)  $, $n\geq1$. Since the commutative algebra $\mathbb{C}%
\left[  \left[  x_{1},x_{2}\right]  \right]  $ has no zero divisors
\cite[IV.5.3, Theorem 1]{BurA2}, it follows that $f_{0}\left(  x_{1}%
,x_{2}\right)  =0$. Put $h=\sum_{n\geq0}f_{n+1}\left(  x_{1},x_{2}\right)
y^{n}\in\mathbb{C}\left[  \left[  x_{1},x_{2}\right]  \right]  \left[  \left[
y\right]  \right]  $. Then
\begin{align*}
d_{y}^{0}h  &  =\left[
\begin{array}
[c]{c}%
N_{y}h\\
\left(  x_{2}-qx_{1}D_{y,q}\right)  h
\end{array}
\right]  =\left[
\begin{array}
[c]{c}%
\sum_{n\geq1}f_{n}\left(  x_{1},x_{2}\right)  y^{n}\\
\sum_{n\geq0}\left(  x_{2}-q^{n+1}x_{1}\right)  f_{n+1}\left(  x_{1}%
,x_{2}\right)  y^{n}%
\end{array}
\right] \\
&  =\left[
\begin{array}
[c]{c}%
f\\
\sum_{n\geq0}g_{n}\left(  x_{1},x_{2}\right)  y^{n}%
\end{array}
\right]  =\left[
\begin{array}
[c]{c}%
f\\
g
\end{array}
\right]  ,
\end{align*}
that is, $\ker\left(  d_{y}^{1}\right)  =\operatorname{im}\left(  d_{y}%
^{0}\right)  $. Further, take $h\in\ker\left(  \pi_{y}\right)  $, that is,
$h_{0}\left(  x,x\right)  =0$. Based on Taylor's formula for the power series
\cite[IV.5.8, Proposition 9]{BurA2}, we deduce that
\[
h_{0}\left(  x_{1},x_{1}+t\right)  -h_{0}\left(  x_{1},x_{1}\right)
=\sum_{m\geq1}\frac{1}{m!}\frac{\partial^{m}h_{0}}{\partial x_{2}^{m}}\left(
x_{1},x_{1}\right)  t^{m}=t\left(  \sum_{m\geq1}\frac{1}{m!}\frac{\partial
^{m}h_{0}}{\partial x_{2}^{m}}\left(  x_{1},x_{1}\right)  t^{m-1}\right)  .
\]
It follows that
\[
h_{0}\left(  x_{1},x_{2}\right)  =h_{0}\left(  x_{1},x_{1}+\left(  x_{2}%
-x_{1}\right)  \right)  -h_{0}\left(  x_{1},x_{1}\right)  =\left(  x_{2}%
-x_{1}\right)  f_{0}\left(  x_{1},x_{2}\right)
\]
for some $f_{0}\left(  x_{1},x_{2}\right)  \in\mathbb{C}\left[  \left[
x_{1},x_{2}\right]  \right]  $. Put $g=-\sum_{n\geq0}h_{n+1}\left(
x_{1},x_{2}\right)  y_{1}^{n}$ and $f=f_{0}\left(  x_{1},x_{2}\right)  $.
Then
\begin{align*}
d_{y}^{1}\left[
\begin{array}
[c]{c}%
f\\
g
\end{array}
\right]   &  =\left(  x_{2}-x_{1}\right)  f_{0}\left(  x_{1},x_{2}\right)
-\left(  -\sum_{n\geq0}h_{n+1}\left(  x_{1},x_{2}\right)  y_{1}^{n+1}\right)
=h_{0}\left(  x_{1},x_{2}\right)  +\sum_{n\geq1}h_{n}\left(  x_{1}%
,x_{2}\right)  y_{1}^{n}\\
&  =h,
\end{align*}
that is, $\ker\left(  \pi_{y}\right)  =\operatorname{im}\left(  d_{y}%
^{1}\right)  $. Hence $\mathfrak{D}_{q}\left(  y\right)  $ is an exact complex.
\end{proof}

\begin{remark}
\label{remLDc1}In the case of the $x$-diagonal formal $q$-complex
$\mathfrak{D}_{q}\left(  x\right)  $, one needs to take $h=\sum_{n\geq0}%
x^{n}g_{n+1}\left(  y_{1},y_{2}\right)  \in\left[  \left[  x\right]  \right]
\mathbb{C}\left[  \left[  y_{1},y_{2}\right]  \right]  $ whenever $\left[
\begin{array}
[c]{c}%
f\\
g
\end{array}
\right]  \in\ker\left(  d_{x}^{1}\right)  $. If $h\in\ker\left(  \pi
_{x}\right)  $ then we put $f=\sum_{n\geq0}x^{n}h_{n+1}\left(  y_{1}%
,y_{2}\right)  $, and $g=g_{0}\left(  y_{1},y_{2}\right)  $ whenever
$h_{0}\left(  y_{1},y_{2}\right)  =\left(  y_{2}-y_{1}\right)  g_{0}\left(
y_{1},y_{2}\right)  $.
\end{remark}

\subsection{The $y$-diagonal holomorphic $q$-complex\label{subsecDHC}}

Now assume that $\left\vert q\right\vert <1$, $U\subseteq\mathbb{C}_{xy}$ a
$q$-open subset, and let $U_{x_{1}}$, $U_{x_{2}}$ (resp., $U_{y_{1}}$,
$U_{y_{2}}$) be two copies of the $q$-open subset $U_{x}\subseteq\mathbb{C}$
(resp., $U_{y}\subseteq\mathbb{C}$). We replace $\mathbb{C}\left[  \left[
x_{1},x_{2}\right]  \right]  $ by the Fr\'{e}chet algebra $\mathcal{O}\left(
U_{x_{1}}\times U_{x_{2}}\right)  $ of holomorphic functions on the polydomain
$U_{x_{1}}\times U_{x_{2}}\subseteq\mathbb{C}^{2}$. In the same way,
$\mathbb{C}\left[  \left[  y_{1},y_{2}\right]  \right]  $ is replaced by
$\mathcal{O}\left(  U_{y_{1}}\times U_{y_{2}}\right)  $. As above there are
well defined continuous linear operators $N_{y},D_{y,q}\in\mathcal{L}\left(
\mathcal{O}\left(  U_{x_{1}}\times U_{x_{2}}\right)  \left[  \left[  y\right]
\right]  \right)  $, and $N_{x},D_{x,q}\in\mathcal{L}\left(  \left[  \left[
x\right]  \right]  \mathcal{O}\left(  U_{y_{1}}\times U_{y_{2}}\right)
\right)  $, which in turn define the diagonal holomorphic $q$-complexes
$\mathcal{O}_{q}\left(  y\right)  $ and $\mathcal{O}_{q}\left(  x\right)  $
instead of $\mathfrak{D}_{q}\left(  y\right)  $ and $\mathfrak{D}_{q}\left(
x\right)  $, respectively. Namely,\textit{ the }$y$-\textit{diagonal
holomorphic }$q$\textit{-complex} $\mathcal{O}_{q}\left(  y\right)  $ is the
following Fr\'{e}chet space complex
\[
0\rightarrow\mathcal{O}\left(  U_{x_{1}}\times U_{x_{2}}\right)  \left[
\left[  y\right]  \right]  \overset{d_{y}^{0}}{\longrightarrow}%
\begin{array}
[c]{c}%
\mathcal{O}\left(  U_{x_{1}}\times U_{x_{2}}\right)  \left[  \left[  y\right]
\right] \\
\oplus\\
\mathcal{O}\left(  U_{x_{1}}\times U_{x_{2}}\right)  \left[  \left[  y\right]
\right]
\end{array}
\overset{d_{y}^{1}}{\longrightarrow}\mathcal{O}\left(  U_{x_{1}}\times
U_{x_{2}}\right)  \left[  \left[  y\right]  \right]  \overset{\pi
_{y}}{\longrightarrow}\mathcal{O}\left(  U_{x}\right)  \rightarrow0,
\]
where $d_{y}^{i}$ are given by the same matrices (\ref{Fdif}), and $\pi
_{y}\left(  \sum_{n\geq0}h_{n}\left(  x_{1},x_{2}\right)  y^{n}\right)
=h_{0}\left(  x,x\right)  $. In the same way, \textit{the }$x$%
\textit{-diagonal holomorphic }$q$\textit{-complex }$\mathcal{O}_{q}\left(
x\right)  $ is defined.

\begin{proposition}
\label{propLDc1}The diagonal holomorphic $q$-complexes $\mathcal{O}_{q}\left(
x\right)  $ and $\mathcal{O}_{q}\left(  y\right)  $ are exact.
\end{proposition}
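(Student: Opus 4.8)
The plan is to transcribe, essentially verbatim, the proof of Lemma \ref{lemLDc1}, replacing the formal power series algebra $\mathbb{C}\left[\left[x_{1},x_{2}\right]\right]$ by the algebra $\mathcal{O}(U_{x_{1}}\times U_{x_{2}})$ of holomorphic functions on the Stein polydomain $U_{x_{1}}\times U_{x_{2}}$, and $\mathbb{C}\left[\left[x\right]\right]$ by $\mathcal{O}(U_{x})$; the complex $\mathcal{O}_{q}(x)$ is then handled by the same argument with the roles of $x$ and $y$ interchanged, following the bookkeeping of Remark \ref{remLDc1}. The observation that legitimizes this transcription is that $y$ (resp.\ $x$) remains a genuine formal parameter, so $\mathcal{O}(U_{x_{1}}\times U_{x_{2}})\left[\left[y\right]\right]$ carries no growth condition in $y$, the operators $N_{y}$, $D_{y,q}$, $\pi_{y}$ are still continuous and act coefficientwise or as the shift exactly as in Subsection \ref{subsecLDFC}, and the factorizations $x_{2}-x_{1}D_{y,q}=\prod_{n}(x_{2}-q^{n}x_{1})$ and $x_{2}-qx_{1}D_{y,q}=\prod_{n}(x_{2}-q^{n+1}x_{1})$ persist unchanged. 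Only two steps of the proof of Lemma \ref{lemLDc1} used arithmetic features of $\mathbb{C}\left[\left[x_{1},x_{2}\right]\right]$, and the real content here is to supply their holomorphic substitutes over $\mathcal{O}(U_{x_{1}}\times U_{x_{2}})$.

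Exactness at the leftmost node is unchanged: $d_{y}^{0}h=0$ forces $N_{y}h=0$, hence all coefficients of $h$ vanish and $\ker(d_{y}^{0})=\{0\}$. At the node $\ker(d_{y}^{1})=\operatorname{im}(d_{y}^{0})$ one extracts from $d_{y}^{1}\left[\begin{array}{c}f\\g\end{array}\right]=0$ the relations $(x_{2}-x_{1})f_{0}=0$ and $(x_{2}-q^{n}x_{1})f_{n}=g_{n-1}$ for $n\geq1$; in place of the fact that $\mathbb{C}\left[\left[x_{1},x_{2}\right]\right]$ has no zero divisors, I would observe that $f_{0}$ vanishes on the dense open subset $(U_{x_{1}}\times U_{x_{2}})\setminus\{x_{1}=x_{2}\}$ and is therefore identically zero, a formulation preferable here because a $q$-open set need not be connected. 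Then $h=\sum_{n\geq0}f_{n+1}(x_{1},x_{2})y^{n}$ satisfies $d_{y}^{0}h=\left[\begin{array}{c}f\\g\end{array}\right]$ by the same calculation as in the formal case. At the node $\ker(\pi_{y})=\operatorname{im}(d_{y}^{1})$, the hypothesis $\pi_{y}h=0$ says that $h_{0}$ vanishes on the diagonal of $U_{x_{1}}\times U_{x_{2}}$, which is the reduced zero set of the irreducible element $x_{2}-x_{1}$; hence, by the Analytic Nullstellensatz \cite[II.E.18]{GRo} (equivalently, since $h_{0}/(x_{2}-x_{1})$ is holomorphic off the diagonal and locally bounded across it, so extends holomorphically by the Removable Singularity Theorem \cite[I.C.3]{GRo}), there is $f_{0}\in\mathcal{O}(U_{x_{1}}\times U_{x_{2}})$ with $h_{0}=(x_{2}-x_{1})f_{0}$. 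Setting $f=f_{0}$ (constant in $y$) and $g=-\sum_{n\geq0}h_{n+1}(x_{1},x_{2})y^{n}$ gives $d_{y}^{1}\left[\begin{array}{c}f\\g\end{array}\right]=h$, while $\pi_{y}$ is surjective since any $u\in\mathcal{O}(U_{x})$ lifts to $u(x_{1})$. The four analogous steps with $x\leftrightarrow y$ then settle $\mathcal{O}_{q}(x)$.

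Since at every node the image equals the kernel of the next arrow, all the images are automatically closed, so both complexes are exact in the category of Fr\'{e}chet spaces. The one step I expect to be genuinely non-formal is the holomorphic division at the third node over the possibly disconnected polydomain $U_{x_{1}}\times U_{x_{2}}$, concretely the local boundedness of $h_{0}/(x_{2}-x_{1})$ along the diagonal that is needed in order to apply \cite[I.C.3]{GRo} (equivalently \cite[II.E.18]{GRo}); everything else is a direct transcription of Lemma \ref{lemLDc1}.
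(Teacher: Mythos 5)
Your proposal is correct and takes essentially the same route as the paper's proof: both transcribe Lemma \ref{lemLDc1}, replacing the two algebraic facts about $\mathbb{C}\left[\left[x_{1},x_{2}\right]\right]$ (it is an integral domain; formal Taylor expansion gives divisibility along the diagonal) by their holomorphic counterparts via the Removable Singularity Theorem over $U_{x_{1}}\times U_{x_{2}}$. The one remark worth making is that the step you flag as ``genuinely non-formal'' -- local boundedness of $h_{0}/(x_{2}-x_{1})$ near the diagonal -- is not actually a gap: the paper sidesteps the issue by considering the difference quotient $\bigl(h_{0}(x_{1},x_{2})-h_{0}(x_{1},x_{1})\bigr)/(x_{2}-x_{1})$, which near any diagonal point $(a,a)$ is the tail of the power series of $h_{0}(x_{1},\cdot)$ centered at $x_{2}=x_{1}$ and hence automatically holomorphic across $\Xi$; the Removable Singularity Theorem then gives the global extension, and when $\pi_{y}h=0$ (so $h_{0}(x_{1},x_{1})=0$) this difference quotient is exactly your $f_{0}$.
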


\begin{proof}
Let us consider the case of the $y$-diagonal complex $\mathcal{O}_{q}\left(
y\right)  $. By mimicking the proof of Lemma \ref{lemLDc1}, we have to prove
first that the equality $\left(  x_{2}-x_{1}\right)  f_{0}\left(  x_{1}%
,x_{2}\right)  =0$ for some $f_{0}\in\mathcal{O}\left(  U_{x_{1}}\times
U_{x_{2}}\right)  $ implies that $f_{0}=0$. Notice that $x_{2}=x_{1}$ defines
a thin subset $\Xi\subseteq\mathbb{C}^{2}$ to be zero set of the holomorphic
function $x_{2}-x_{1}$. The function $f_{0}$ is vanishing out of $\Xi
\cap\left(  U_{x_{1}}\times U_{x_{2}}\right)  $. By Removable Singularity
Theorem \cite[I.C.3]{GRo}, we deduce that $f_{0}$ is vanishing over all
$U_{x_{1}}\times U_{x_{2}}$. Further, if $h_{0}\in\mathcal{O}\left(  U_{x_{1}%
}\times U_{x_{2}}\right)  $, then the function%
\[
f_{0}\left(  x_{1},x_{2}\right)  =\frac{h_{0}\left(  x_{1},x_{2}\right)
-h_{0}\left(  x_{1},x_{1}\right)  }{x_{2}-x_{1}}%
\]
defined on $\left(  \mathbb{C}^{2}\backslash\Xi\right)  \cap\left(  U_{x_{1}%
}\times U_{x_{2}}\right)  $ has a unique holomorphic extension to $U_{x_{1}%
}\times U_{x_{2}}$ again by virtue of Removable Singularity Theorem.

Finally, in the case of $\mathcal{O}_{q}\left(  x\right)  $ we use the same
argument by appealing to Remark \ref{remLDc1}. The rest is clear.
\end{proof}

\section{The localizations of the quantum plane\label{sectionNL}}

In this section we switch to the quantum plane $\mathfrak{A}_{q}$ and its
bimodule resolution, which is lifted to the bimodule resolutions of the
structure sheaf algebras of some noncommutative analytic spaces of
$\mathfrak{A}_{q}$.

\subsection{The bimodule resolution of the quantum plane\label{subsecBRQP}}

The quantum plane $\mathfrak{A}_{q}$ possesses (see \cite{Tah}, \cite{WM}) the
following free $\mathfrak{A}_{q}$-bimodule resolution
\begin{equation}
0\leftarrow\mathfrak{A}_{q}\overset{\pi}{\longleftarrow}\mathfrak{A}%
_{q}\otimes\mathfrak{A}_{q}\overset{\partial_{0}}{\longleftarrow}%
\mathfrak{A}_{q}\otimes\mathbb{C}^{2}\otimes\mathfrak{A}_{q}\overset{\partial
_{1}}{\longleftarrow}\mathfrak{A}_{q}\otimes\wedge^{2}\mathbb{C}^{2}%
\otimes\mathfrak{A}_{q}\leftarrow0, \label{Ures}%
\end{equation}
with the mapping $\pi\left(  a\otimes b\right)  =ab$ and the differentials
\begin{align*}
\partial_{0}\left(  a\otimes e_{1}\otimes b\right)   &  =a\otimes xb-ax\otimes
b,\quad\partial_{0}\left(  a\otimes e_{2}\otimes b\right)  =a\otimes
yb-ay\otimes b,\\
\partial_{1}\left(  a\otimes e_{1}\wedge e_{2}\otimes b\right)   &  =a\otimes
e_{2}\otimes xb-qax\otimes e_{2}\otimes b-qa\otimes e_{1}\otimes yb+ay\otimes
e_{1}\otimes b,
\end{align*}
where $a,b\in\mathfrak{A}_{q}$ and $\left(  e_{1},e_{2}\right)  $ is the
standard basis of $\mathbb{C}^{2}$. One of the key results of \cite{Pir}
asserts that the canonical embedding $\mathfrak{A}_{q}\rightarrow
\mathcal{O}_{q}\left(  \mathbb{C}_{xy}\right)  $ is an absolute localization
(see Subsection \ref{subSth}). Thus, by applying the functor $\mathcal{O}%
_{q}\left(  \mathbb{C}_{xy}\right)  \underset{\mathfrak{A}_{q}%
}{\widehat{\otimes}}\circ\underset{\mathfrak{A}_{q}}{\widehat{\otimes}%
}\mathcal{O}_{q}\left(  \mathbb{C}_{xy}\right)  $ to the resolution
(\ref{Ures}), we derive that the following complex%
\[
0\leftarrow\mathcal{O}_{q}\left(  \mathbb{C}_{xy}\right)  \overset{\pi
}{\longleftarrow}\mathcal{O}_{q}\left(  \mathbb{C}_{xy}\right)
^{\widehat{\otimes}2}\overset{d_{0}}{\longleftarrow}\mathcal{O}_{q}\left(
\mathbb{C}_{xy}\right)  \widehat{\otimes}\mathbb{C}^{2}\widehat{\otimes
}\mathcal{O}_{q}\left(  \mathbb{C}_{xy}\right)  \overset{d_{1}}{\longleftarrow
}\mathcal{O}_{q}\left(  \mathbb{C}_{xy}\right)  \widehat{\otimes}\wedge
^{2}\mathbb{C}^{2}\widehat{\otimes}\mathcal{O}_{q}\left(  \mathbb{C}%
_{xy}\right)  \leftarrow0
\]
similar to (\ref{Ures}) is admissible. It provides a free $\mathcal{O}%
_{q}\left(  \mathbb{C}_{xy}\right)  $-bimodule resolution of $\mathcal{O}%
_{q}\left(  \mathbb{C}_{xy}\right)  $ (see \cite[3.2]{HelHom}). We use a bit
modified cochain version of this resolution
\[
\mathcal{R}\left(  \mathcal{O}_{q}\left(  \mathbb{C}_{xy}\right)
^{\widehat{\otimes}2}\right)  :0\rightarrow\mathcal{O}_{q}\left(
\mathbb{C}_{xy}\right)  ^{\widehat{\otimes}2}\overset{d^{0}}{\longrightarrow
}\mathcal{O}_{q}\left(  \mathbb{C}_{xy}\right)  ^{\widehat{\otimes}2}%
\oplus\mathcal{O}_{q}\left(  \mathbb{C}_{xy}\right)  ^{\widehat{\otimes}%
2}\overset{d^{1}}{\longrightarrow}\mathcal{O}_{q}\left(  \mathbb{C}%
_{xy}\right)  ^{\widehat{\otimes}2}\rightarrow0.
\]
Since $\mathcal{R}\left(  \mathcal{O}_{q}\left(  \mathbb{C}_{xy}\right)
^{\widehat{\otimes}2}\right)  \rightarrow\mathcal{O}_{q}\left(  \mathbb{C}%
_{xy}\right)  \rightarrow0$ is an admissible complex of the free
$\mathcal{O}_{q}\left(  \mathbb{C}_{xy}\right)  $-bimodules, we deduce (see
\cite[3.1.18]{HelHom}) that it splits in the category of Fr\'{e}chet
$\mathcal{O}_{q}\left(  \mathbb{C}_{xy}\right)  $-bimodules too.

Now let $\mathcal{A}$ be a Fr\'{e}chet $\mathcal{O}_{q}\left(  \mathbb{C}%
_{xy}\right)  $-algebra with its continuous homomorphism $\mathcal{O}%
_{q}\left(  \mathbb{C}_{xy}\right)  \rightarrow\mathcal{A}$. By applying the
functor $\mathcal{A}\underset{\mathcal{O}_{q}\left(  \mathbb{C}_{xy}\right)
}{\widehat{\otimes}}\circ\underset{\mathcal{O}_{q}\left(  \mathbb{C}%
_{xy}\right)  }{\widehat{\otimes}}\mathcal{A}$ to the resolution
$\mathcal{R}\left(  \mathcal{O}_{q}\left(  \mathbb{C}_{xy}\right)
^{\widehat{\otimes}2}\right)  $, we obtain that
\[
\mathcal{A}\underset{\mathcal{O}_{q}\left(  \mathbb{C}_{xy}\right)
}{\widehat{\otimes}}\mathcal{R}\left(  \mathcal{O}_{q}\left(  \mathbb{C}%
_{xy}\right)  ^{\widehat{\otimes}2}\right)  \underset{\mathcal{O}_{q}\left(
\mathbb{C}_{xy}\right)  }{\widehat{\otimes}}\mathcal{A=R}\left(
\mathcal{A}^{\widehat{\otimes}2}\right)  \text{,}%
\]
which is a free $\mathcal{A}$-bimodule complex. It can be augmented by the
multiplication morphism $\pi:\mathcal{A}^{\widehat{\otimes}2}\rightarrow
\mathcal{A}$, that is, $\mathcal{R}\left(  \mathcal{A}^{\widehat{\otimes}%
2}\right)  \overset{\pi}{\longrightarrow}\mathcal{A}\rightarrow0$ is a complex
of the free $\mathcal{A}$-bimodules. Thus, we come up with the following
cochain complex
\begin{equation}
0\rightarrow\mathcal{A}^{\widehat{\otimes}2}\overset{d^{0}}{\longrightarrow}%
\begin{array}
[c]{c}%
\mathcal{A}^{\widehat{\otimes}2}\\
\oplus\\
\mathcal{A}^{\widehat{\otimes}2}%
\end{array}
\overset{d^{1}}{\longrightarrow}\mathcal{A}^{\widehat{\otimes}2}%
\overset{\pi}{\longrightarrow}\mathcal{A}\rightarrow0, \label{Rsp}%
\end{equation}
whose differentials can be given by the following operator matrices
\begin{equation}
d^{0}=\left[
\begin{array}
[c]{c}%
R_{y}\otimes1-q\otimes L_{y}\\
1\otimes L_{x}-R_{x}\otimes q
\end{array}
\right]  ,\text{ \ }d^{1}=\left[
\begin{array}
[c]{cc}%
1\otimes L_{x}-R_{x}\otimes1 & 1\otimes L_{y}-R_{y}\otimes1
\end{array}
\right]  , \label{d}%
\end{equation}
where $L$ and $R$ indicate to the left and right regular (anti)
representations of the algebra $\mathcal{A}$.

Based on the proper definition of localizations from Subsection \ref{subSth},
we deduce that $\mathcal{O}_{q}\left(  \mathbb{C}_{xy}\right)  \rightarrow
\mathcal{A}$ is an absolute localization if (\ref{Rsp}) is admissible (in
particular, exact). In this case, (\ref{Rsp}) provides a free $\mathcal{A}%
$-bimodule resolution of the algebra $\mathcal{A}$. But if $\mathcal{A}$ is a
nuclear Fr\'{e}chet space, and (\ref{Rsp}) is exact, then $\mathcal{O}%
_{q}\left(  \mathbb{C}_{xy}\right)  \rightarrow\mathcal{A}$ (or $\mathfrak{A}%
_{q}\rightarrow\mathcal{A}$) is a localization, that is, all homology functors
$H_{p}\left(  \mathcal{A},\circ\right)  $, $p\geq0$ are reduced to
$H_{p}\left(  \mathfrak{A}_{q},\circ\right)  $, $p\geq0$ (see Subsection
\ref{subSth}). Notice that $\mathcal{O}_{q}\left(  \mathbb{C}_{xy}\right)
\rightarrow\mathcal{A}$ is a localization if and only so is $\mathfrak{A}%
_{q}\rightarrow\mathcal{A}$ \cite[Proposition 1.8]{Tay2}. The potential
candidates for the localizing algebras $\mathcal{A}$ are the Fr\'{e}chet
$\mathcal{O}_{q}\left(  \mathbb{C}_{xy}\right)  $-algebras $\mathcal{O}\left(
U_{x}\right)  \left[  \left[  y\right]  \right]  $, $\left[  \left[  x\right]
\right]  \mathcal{O}\left(  U_{y}\right)  $, $\mathcal{F}_{q}\left(  U\right)
$ and $\mathbb{C}_{q}\left[  \left[  x,y\right]  \right]  $ considered below.
They are all nuclear Fr\'{e}chet spaces, and we prove that the related
complexes (\ref{Rsp}) are exact indeed.

\begin{proposition}
\label{propKD}Let $\mathcal{A}$ be a Fr\'{e}chet $\mathcal{O}_{q}\left(
\mathbb{C}_{xy}\right)  $-algebra with its continuous homomorphism
$\mathcal{O}_{q}\left(  \mathbb{C}_{xy}\right)  \rightarrow\mathcal{A}$. If
$\operatorname{im}\left(  d^{1}\right)  $ is closed then $H^{2}\left(
\mathcal{R}\left(  \mathcal{A}^{\widehat{\otimes}2}\right)  \right)
=\mathcal{A}\widehat{\otimes}_{\mathcal{O}_{q}\left(  \mathbb{C}_{xy}\right)
}\mathcal{A}$. If additionally $\mathcal{O}_{q}\left(  \mathbb{C}_{xy}\right)
\rightarrow\mathcal{A}$ has the dense range, then $\operatorname{im}\left(
d^{1}\right)  =\ker\left(  \pi\right)  $.
\end{proposition}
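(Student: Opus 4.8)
The plan is to identify $\operatorname{coker}(d^{1})$ with the defining quotient of the $\mathcal{O}_q(\mathbb{C}_{xy})$-balanced projective tensor product, and then to exploit the dense range hypothesis to make the augmentation $\pi$ a topological isomorphism after corestriction.

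\emph{Step 1: $\overline{\operatorname{im}(d^{1})}=\overline{N}$.} Let $\varphi\colon\mathcal{O}_q(\mathbb{C}_{xy})\to\mathcal{A}$ be the structure homomorphism and let $N\subseteq\mathcal{A}^{\widehat{\otimes}2}$ be the linear span of the elements $a\varphi(c)\otimes b-a\otimes\varphi(c)b$, $c\in\mathcal{O}_q(\mathbb{C}_{xy})$, $a,b\in\mathcal{A}$, so that $\mathcal{A}\widehat{\otimes}_{\mathcal{O}_q(\mathbb{C}_{xy})}\mathcal{A}=\mathcal{A}^{\widehat{\otimes}2}/\overline{N}$ by definition. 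The two columns of $d^{1}$ in (\ref{d}) are $\pm(1\otimes L_x-R_x\otimes 1)$ and $\pm(1\otimes L_y-R_y\otimes 1)$; on an elementary tensor $a\otimes b$ they give $\pm\bigl(a\varphi(x)\otimes b-a\otimes\varphi(x)b\bigr)$ and $\pm\bigl(a\varphi(y)\otimes b-a\otimes\varphi(y)b\bigr)$, which lie in $N$, hence $\operatorname{im}(d^{1})\subseteq\overline{N}$ by continuity of the operators and density of finite tensors. Conversely, for a monomial $c=x^{i}y^{k}$ one telescopes $a\varphi(c)\otimes b-a\otimes\varphi(c)b$ into a finite sum of terms $(1\otimes L_y-R_y\otimes 1)(\xi_j)$ and $(1\otimes L_x-R_x\otimes 1)(\eta_j)$ with $\xi_j,\eta_j$ explicit elementary tensors, so it lies in $\operatorname{im}(d^{1})$; by linearity the same holds for all $c\in\mathfrak{A}_q$, and, $\mathfrak{A}_q$ being dense in $\mathcal{O}_q(\mathbb{C}_{xy})$ and multiplication being jointly continuous on the Fr\'{e}chet algebra $\mathcal{A}$, we obtain $N\subseteq\overline{\operatorname{im}(d^{1})}$. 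Thus $\overline{\operatorname{im}(d^{1})}=\overline{N}$; when $\operatorname{im}(d^{1})$ is closed this reads $\operatorname{im}(d^{1})=\overline{N}$, and since the complex places $\mathcal{A}^{\widehat{\otimes}2}$ in top degree,
\[
H^{2}\!\left(\mathcal{R}\!\left(\mathcal{A}^{\widehat{\otimes}2}\right)\right)=\mathcal{A}^{\widehat{\otimes}2}/\operatorname{im}(d^{1})=\mathcal{A}^{\widehat{\otimes}2}/\overline{N}=\mathcal{A}\widehat{\otimes}_{\mathcal{O}_q(\mathbb{C}_{xy})}\mathcal{A},
\]
which is the first assertion.

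\emph{Step 2: $\operatorname{im}(d^{1})=\ker(\pi)$ under dense range.} One always has $\operatorname{im}(d^{1})\subseteq\ker(\pi)$, since $\pi d^{1}=0$ (expand and use associativity of $\mathcal{A}$). As $\operatorname{im}(d^{1})=\overline{N}$ is now closed, it remains to prove $\ker(\pi)\subseteq\overline{N}$. Here $1=\varphi(1)$ is a two-sided identity of $\mathcal{A}$ — it is one on the dense subalgebra $\varphi(\mathcal{O}_q(\mathbb{C}_{xy}))$ — so $\pi$ admits the continuous section $s\colon a\mapsto a\otimes 1$. For $u\in\ker(\pi)$, write $u=\lim_n u_n$ with $u_n$ finite tensors; then $u=u-s\pi(u)=\lim_n\bigl(u_n-s\pi(u_n)\bigr)$, and each $u_n-s\pi(u_n)$ is a finite sum of terms $a\otimes b-ab\otimes 1$. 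Now $ab\otimes 1-a\otimes b=a\varphi(c)\otimes 1-a\otimes\varphi(c)\cdot 1\in N$ whenever $b=\varphi(c)$, and since $\varphi$ has dense range and multiplication is jointly continuous, $ab\otimes 1-a\otimes b\in\overline{N}$ for all $a,b\in\mathcal{A}$; hence $u_n-s\pi(u_n)\in\overline{N}$ and, $\overline{N}$ being closed, $u\in\overline{N}=\operatorname{im}(d^{1})$. Equivalently, the induced map $\mathcal{A}\widehat{\otimes}_{\mathcal{O}_q(\mathbb{C}_{xy})}\mathcal{A}\to\mathcal{A}$ is a topological isomorphism with inverse $c\mapsto c\otimes 1$.

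The routine items — $\pi d^{1}=0$, joint continuity of the tensor and multiplication maps, the explicit $\xi_j,\eta_j$ in the telescoping — take only a line each. The one slightly delicate spot is the passage from $\mathfrak{A}_q$ to $\mathcal{O}_q(\mathbb{C}_{xy})$ in Step 1 (and, symmetrically, from $\varphi(\mathcal{O}_q(\mathbb{C}_{xy}))$ to all of $\mathcal{A}$ in Step 2), which rests solely on density together with joint continuity of the operations; I do not expect a genuine obstacle, because the one truly analytic ingredient, closedness of $\operatorname{im}(d^{1})$, is a hypothesis here and will be checked case by case for $\mathcal{O}(U_x)\left[\left[y\right]\right]$, $\left[\left[x\right]\right]\mathcal{O}(U_y)$, $\mathcal{F}_q(U)$ and $\mathbb{C}_q\left[\left[x,y\right]\right]$ separately.
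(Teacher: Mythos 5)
Your proof is correct and follows essentially the same route as the paper's: Step 1 is identical (polynomials give elements of $\operatorname{im}(d^1)$, then continuity plus closedness pushes this to all of $\mathcal{O}_q(\mathbb{C}_{xy})$, identifying $\operatorname{im}(d^1)$ with the closed defining subspace of the balanced tensor product). For Step 2 the paper invokes the standard identity $\mathcal{A}\widehat{\otimes}_{\mathcal{A}}\mathcal{A}=\mathcal{A}$ together with the inclusion $\operatorname{im}(d^1)\subseteq\ker(\pi)$, whereas you unwind this by exhibiting the section $a\mapsto a\otimes 1$ and showing $ab\otimes 1-a\otimes b\in\overline{N}$ directly via the dense range; this is the same mechanism made explicit, not a genuinely different argument.
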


\begin{proof}
First notice that $ax\otimes b-a\otimes xb$ and $ay\otimes b-a\otimes yb$
belong to $\operatorname{im}\left(  d^{1}\right)  $ for all $a,b\in
\mathcal{A}$. In particular, $ap\otimes b-a\otimes pb\in\operatorname{im}%
\left(  d^{1}\right)  $ for every polynomial $p\in\mathfrak{A}_{q}$. Using the
continuity argument, we deduce that $af\otimes b-a\otimes fb\in
\operatorname{im}\left(  d^{1}\right)  $ for all $f\in\mathcal{O}_{q}\left(
\mathbb{C}_{xy}\right)  $ whenever $\operatorname{im}\left(  d^{1}\right)  $
is closed. It follows that $\operatorname{im}\left(  d^{1}\right)  $ is the
closed linear span of all elements $af\otimes b-a\otimes fb$, $a,b\in
\mathcal{A}$, $f\in\mathcal{O}_{q}\left(  \mathbb{C}_{xy}\right)  $. Hence
\[
H^{2}\left(  \mathcal{R}\left(  \mathcal{A}^{\widehat{\otimes}2}\right)
\right)  =\mathcal{A}^{\widehat{\otimes}2}/\operatorname{im}\left(
d^{1}\right)  =\mathcal{A}\widehat{\otimes}_{\mathcal{O}_{q}\left(
\mathbb{C}_{xy}\right)  }\mathcal{A}.
\]
Finally, if $\mathcal{O}_{q}\left(  \mathbb{C}_{xy}\right)  \rightarrow
\mathcal{A}$ has the dense range, then $H^{2}\left(  \mathcal{R}\left(
\mathcal{A}^{\widehat{\otimes}2}\right)  \right)  =\mathcal{A}\widehat{\otimes
}_{\mathcal{A}}\mathcal{A=A}$. But $\mathcal{A}^{\widehat{\otimes}2}%
/\ker\left(  \pi\right)  =\mathcal{A}$ holds too, and $\operatorname{im}%
\left(  d^{1}\right)  \subseteq\ker\left(  \pi\right)  $. Therefore
$\operatorname{im}\left(  d^{1}\right)  =\ker\left(  \pi\right)  $.
\end{proof}

\subsection{The noncommutative analytic space $\left\{  \left(  0,0\right)
\right\}  $\label{subsecFQM}}

The space $\mathbb{C}\left[  \left[  x,y\right]  \right]  $ of all formal
power series in variables $x$ and $y$ is a Fr\'{e}chet space equipped with the
direct product topology of $\prod\limits_{i,k}\mathbb{C}x^{i}y^{k}$. If $x$
and $y$ are the generators of the quantum plane $\mathfrak{A}_{q}$ then
$\mathbb{C}\left[  \left[  x,y\right]  \right]  $ can be equipped with a
formal $q$-multiplication so that the canonical embedding $\mathfrak{A}%
_{q}\rightarrow\mathbb{C}\left[  \left[  x,y\right]  \right]  $ is an algebra
homomorphism. Namely, if $f=\sum_{i,k}a_{ik}x^{i}y^{k}$ and $g=\sum
_{i,k}b_{ik}x^{i}y^{k}$, then we define their formal $q$-multiplication to be
the following formal power series%
\begin{equation}
f\cdot g=\sum_{m,n}\left(  \sum_{s+t=m,i+j=n}a_{si}q^{it}b_{tj}\right)
x^{m}y^{n}. \label{mulFr}%
\end{equation}
One can easily verify that this is an associative multiplication, it defines
an Arens-Michael-Fr\'{e}chet algebra structure on $\mathbb{C}\left[  \left[
x,y\right]  \right]  $, and $\mathfrak{A}_{q}\rightarrow\mathbb{C}\left[
\left[  x,y\right]  \right]  $ is a (unital) algebra homomorphism. In
particular, $\mathbb{C}\left[  \left[  x,y\right]  \right]  $ is a Fr\'{e}chet
$\mathcal{O}_{q}\left(  \mathbb{C}_{xy}\right)  $-algebra denoted by
$\mathbb{C}_{q}\left[  \left[  x,y\right]  \right]  $. Using the canonical
representations $\mathbb{C}_{q}\left[  \left[  x,y\right]  \right]
=\mathbb{C}\left[  \left[  x\right]  \right]  \left[  \left[  y\right]
\right]  =\left[  \left[  x\right]  \right]  \mathbb{C}\left[  \left[
y\right]  \right]  $, the formal $q$-multiplication in $\mathbb{C}_{q}\left[
\left[  x,y\right]  \right]  $ can be defined in two different formulas
\begin{equation}
f\cdot g=\sum_{n}\left(  \sum_{i+j=n}f_{i}\left(  x\right)  g_{j}\left(
q^{i}x\right)  \right)  y^{n}\text{,\quad\ }f,g\in\mathbb{C}\left[  \left[
x\right]  \right]  \left[  \left[  y\right]  \right]  \text{,} \label{5p2}%
\end{equation}%
\begin{equation}
f\cdot g=\sum_{n}x^{n}\left(  \sum_{i+j=n}f_{i}\left(  q^{j}y\right)
g_{j}\left(  y\right)  \right)  \text{, \quad}f,g\in\left[  \left[  x\right]
\right]  \mathbb{C}\left[  \left[  y\right]  \right]  \label{pp4}%
\end{equation}
by extending the original multiplication of the $q$-plane $\mathfrak{A}_{q}$
(see \cite{Dosi25}). In particular,
\[
y^{n}g\left(  x\right)  =g\left(  q^{n}x\right)  y^{n}\quad\text{and\quad
}f\left(  y\right)  x^{n}=x^{n}f\left(  q^{n}y\right)
\]
for all $g\in\mathbb{C}\left[  \left[  x\right]  \right]  $, $f\in
\mathbb{C}\left[  \left[  y\right]  \right]  $ and $n\geq0$. The algebra
$\mathbb{C}_{q}\left[  \left[  x,y\right]  \right]  $ has the continuous
trivial character $\left(  0,0\right)  :\mathbb{C}_{q}\left[  \left[
x,y\right]  \right]  \rightarrow\mathbb{C}$ annihilating both variables $x$
and $y$. That is the only continuous character of the
Arens-Michael-Fr\'{e}chet algebra $\mathbb{C}_{q}\left[  \left[  x,y\right]
\right]  $ denoted by $\left(  0,0\right)  $ representing the point of
$\mathbb{C}_{xy}$. Actually, the algebra $\mathbb{C}_{q}\left[  \left[
x,y\right]  \right]  $ is local with $\operatorname{Rad}\mathbb{C}_{q}\left[
\left[  x,y\right]  \right]  =\ker\left(  0,0\right)  $ to be the closed two
sided ideal generated by $x$ and $y$. Thus $\operatorname{Spec}\left(
\mathbb{C}_{q}\left[  \left[  x,y\right]  \right]  \right)  =\left\{  \left(
0,0\right)  \right\}  $ (see \cite[Lemma 5.1]{Dosi25}).

\begin{definition}
\label{def1}The noncommutative analytic space representing the intersection
point $\left(  0,0\right)  $ of the complex lines $\mathbb{C}_{x}$ and
$\mathbb{C}_{y}$ in $\mathbb{C}_{xy}$ is called the ringed space $\left(
\left\{  \left(  0,0\right)  \right\}  ,\mathbb{C}_{q}\left[  \left[
x,y\right]  \right]  \right)  $, which consists of the topological space
$\left\{  \left(  0,0\right)  \right\}  $ and the constant structure
Fr\'{e}chet $\mathcal{O}_{q}\left(  \mathbb{C}_{xy}\right)  $-algebra sheaf
$\mathbb{C}_{q}\left[  \left[  x,y\right]  \right]  $.
\end{definition}

Roughly speaking, the ringed space $\left(  \left\{  \left(  0,0\right)
\right\}  ,\mathbb{C}_{q}\left[  \left[  x,y\right]  \right]  \right)  $ is a
geometric model of $\mathfrak{A}_{q}$ (actually of its certain multinormed
envelope) representing just a single point, and it has a unique stalk
$\mathbb{C}_{q}\left[  \left[  x,y\right]  \right]  $ to be a local algebra.
If $X$ is a left Fr\'{e}chet $\mathcal{O}_{q}\left(  \mathbb{C}_{xy}\right)
$-module given by a couple $\left(  T,S\right)  $ of continuous linear
operators on $X$ with the relation $TS=q^{-1}ST$, then the transversality
relation (see below Section \ref{secJS}) of $X$ with respect to the sheaf
$\mathbb{C}_{q}\left[  \left[  x,y\right]  \right]  $ is solved in terms of
the Taylor spectrum of the module $X$ \cite[Proposition 6.4]{Dosi25}. Below we
investigate the localization property of the structure sheaf $\mathbb{C}%
_{q}\left[  \left[  x,y\right]  \right]  $ of this noncommutative space.

Put $\mathcal{A}=\mathbb{C}_{q}\left[  \left[  x,y\right]  \right]  $ and take
its two copies $\mathbb{C}_{q}\left[  \left[  x_{i},y_{i}\right]  \right]  $,
$i=1,2$. Since $\mathcal{A}=\mathbb{C}_{q}\left[  \left[  x_{i}\right]
\right]  \left[  \left[  y_{i}\right]  \right]  $, we conclude that
\begin{equation}
\mathcal{A}^{\widehat{\otimes}2}=\mathbb{C}_{q}\left[  \left[  x_{1}\right]
\right]  \left[  \left[  y_{1}\right]  \right]  \widehat{\otimes}%
\mathbb{C}_{q}\left[  \left[  x_{2}\right]  \right]  \left[  \left[
y_{2}\right]  \right]  =\mathbb{C}\left[  \left[  x_{1},x_{2}\right]  \right]
\left[  \left[  y_{1}\right]  \right]  \left[  \left[  y_{2}\right]  \right]
\label{AlI}%
\end{equation}
up to the canonical identification of the Fr\'{e}chet spaces. For a while we
fix this identification and modify the complex (\ref{Rsp}). First, the right
multiplication (or shift) operators $R_{y_{i}}\in\mathcal{L}\left(
\mathbb{C}_{q}\left[  \left[  x_{i}\right]  \right]  \left[  \left[
y_{i}\right]  \right]  \right)  $ define their inflations $N_{y_{1}}=R_{y_{1}%
}\otimes1$ and $N_{y_{2}}=1\otimes R_{y_{2}}$ on $\mathcal{A}%
^{\widehat{\otimes}2}$ such that
\[
N_{y_{1}}\left(  h\right)  =\sum_{n,m}h_{nm}\left(  x_{1},x_{2}\right)
y_{1}^{n+1}y_{2}^{m}\text{\quad and\quad}N_{y_{2}}\left(  h\right)
=\sum_{n,m}h_{nm}\left(  x_{1},x_{2}\right)  y_{1}^{n}y_{2}^{m+1}%
\]
for every $h=\sum_{n,m}h_{nm}\left(  x_{1},x_{2}\right)  y_{1}^{n}y_{2}^{m}%
\in\mathcal{A}^{\widehat{\otimes}2}$ up to the identification from
(\ref{AlI}). Notice also that $L_{x_{1}}\otimes1$ and $1\otimes L_{x_{2}}$ are
acting as the diagonal operators $x_{1}\otimes1$ and $1\otimes x_{2}$ obtained
by the multiplication operators $x_{1}$ and $x_{2}$ on $\mathbb{C}\left[
\left[  x_{1},x_{2}\right]  \right]  $. We use the same notations $x_{1}$ and
$x_{2}$ instead of $x_{1}\otimes1$ and $1\otimes x_{2}$. There are also the
diagonal operators $D_{q},\Delta_{q}:\mathcal{A\rightarrow A}$ acting by
$D_{q}\left(  f\right)  =\sum_{n}q^{n}f_{n}\left(  x\right)  y^{n}$ and
$\Delta_{q}\left(  f\right)  =\sum_{n}f_{n}\left(  qx\right)  y^{n}$ for every
$f=\sum_{n}f_{n}\left(  x\right)  y^{n}$. Let us pick up all necessary
formulas in the following assertion.

\begin{lemma}
\label{lemCLR}The following operator equalities%
\[
N_{y_{1}}=R_{y_{1}}\otimes1,\quad N_{y_{2}}=1\otimes R_{y_{2}},\quad L_{x_{1}%
}\otimes1=x_{1}\text{,\quad}1\otimes L_{x_{2}}=x_{2},
\]%
\[
R_{x_{1}}\otimes1=x_{1}\left(  D_{q}\otimes1\right)  \text{,\quad}1\otimes
L_{y_{2}}=\left(  1\otimes\Delta_{q}\right)  N_{y_{2}}%
\]
hold on $\mathcal{A}^{\widehat{\otimes}2}$ up to the identification (\ref{AlI}).
\end{lemma}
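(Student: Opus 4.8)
The plan is to reduce every identity to a direct computation on a single copy of $\mathcal{A}=\mathbb{C}_q\left[\left[x,y\right]\right]$ and then inflate to $\mathcal{A}^{\widehat{\otimes}2}$. The first two equalities require nothing: by construction $N_{y_1}$ and $N_{y_2}$ were defined precisely as the inflations $R_{y_1}\otimes1$ and $1\otimes R_{y_2}$ of the right shift operators, so the content there is only an unwinding of notation. Moreover each of the operators $L_{x_i}$, $R_{x_i}$, $L_{y_i}$, $D_q$, $\Delta_q$ acts on a single one of the two tensor factors, so once a single-factor identity such as $L_x=x$ is known on $\mathcal{A}$, the corresponding statements $L_{x_1}\otimes1=x_1$ and $1\otimes L_{x_2}=x_2$ on $\mathcal{A}^{\widehat{\otimes}2}$ follow immediately under the identification \eqref{AlI}, and likewise for the rest. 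Thus the whole lemma is equivalent to the three single-factor identities $L_x=x$, $R_x=xD_q$ and $L_y=\Delta_q N_y$ on $\mathbb{C}_q\left[\left[x,y\right]\right]$.

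For those I would substitute the relevant generator as one of the two factors in the closed multiplication formula \eqref{5p2} (equivalently, use the commutation rules $y^n g(x)=g(q^nx)y^n$ and $f(y)x^n=x^nf(q^ny)$). Writing a general element as $f=\sum_n f_n(x)y^n$: taking the left factor to be $x$ in \eqref{5p2} leaves only the $i=0$ term and gives $x\cdot f=\sum_n xf_n(x)y^n$, i.e. $L_x f=xf$; taking the right factor to be $x$ leaves only the $j=0$ term and produces the dilation $f_n(x)\mapsto q^nx f_n(x)$, so $f\cdot x=x\sum_n q^n f_n(x)y^n=xD_q(f)$, i.e. $R_x=xD_q$; and taking the left factor to be $y$ leaves only the $i=1$ term, whence $y\cdot f=\sum_{n\geq1}f_{n-1}(qx)y^n=\Delta_q N_y(f)$, i.e. $L_y=\Delta_q N_y$. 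Inflating to the appropriate factor then yields $R_{x_1}\otimes1=(xD_q)\otimes1=x_1(D_q\otimes1)$ and $1\otimes L_{y_2}=1\otimes(\Delta_q R_{y_2})=(1\otimes\Delta_q)(1\otimes R_{y_2})=(1\otimes\Delta_q)N_{y_2}$, which are the two remaining assertions.

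There is essentially no genuine obstacle here; the lemma is a bookkeeping statement, and its proof is a short direct verification. The only point requiring care is to check that the chain of canonical identifications $\mathcal{A}^{\widehat{\otimes}2}=\mathbb{C}_q\left[\left[x_1\right]\right]\left[\left[y_1\right]\right]\widehat{\otimes}\mathbb{C}_q\left[\left[x_2\right]\right]\left[\left[y_2\right]\right]=\mathbb{C}\left[\left[x_1,x_2\right]\right]\left[\left[y_1\right]\right]\left[\left[y_2\right]\right]$ of \eqref{AlI} carries the multiplication operators $x_i\otimes1$, $1\otimes x_i$ on $\mathbb{C}\left[\left[x_1,x_2\right]\right]$ to the operators written $x_1$, $x_2$ after \eqref{AlI}, and that $D_q$, $\Delta_q$, $N_{y_i}$ genuinely act on the single variable they are meant to. This is routine: each operator is continuous, and each identity above already holds on the dense subalgebra of polynomials, where the defining formula \eqref{mulFr} applies verbatim, so the identities extend to all of $\mathcal{A}^{\widehat{\otimes}2}$ by continuity.
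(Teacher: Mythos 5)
Your proof is correct and follows essentially the same route as the paper: both reduce the two nontrivial identities to a direct computation on elementary tensors using the commutation rules $y^n g(x)=g(q^nx)y^n$ and $f(y)x^n=x^nf(q^ny)$ encoded in \eqref{5p2}, treating the remaining four as definitional unwinding. Your packaging as single-factor identities $L_x=x$, $R_x=xD_q$, $L_y=\Delta_qN_y$ followed by inflation is a tidy reorganization, and it even bypasses the paper's final remark that $N_{y_2}$ and $1\otimes\Delta_q$ commute, since you land directly on $(1\otimes\Delta_q)N_{y_2}$; the appeal to density of polynomials at the end is harmless but unnecessary, as \eqref{5p2} already applies to arbitrary formal series.
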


\begin{proof}
Take $h=f\otimes g\in\mathcal{A}^{\widehat{\otimes}2}$ with $f=\sum_{n}%
f_{n}\left(  x_{1}\right)  y_{1}^{n}$ and $g=\sum_{m}g_{m}\left(
x_{2}\right)  y_{2}^{m}$. Based on (\ref{AlI}), $h$ can be identified with
$\sum_{n,m}f_{n}\left(  x_{1}\right)  g_{m}\left(  x_{2}\right)  y_{1}%
^{n}y_{2}^{m}$ to be an element of $\mathbb{C}\left[  \left[  x_{1}%
,x_{2}\right]  \right]  \left[  \left[  y_{1}\right]  \right]  \left[  \left[
y_{2}\right]  \right]  $. Then
\[
\left(  R_{x_{1}}\otimes1\right)  \left(  h\right)  =\sum_{n}f_{n}\left(
x_{1}\right)  y_{1}^{n}x_{1}\otimes g=\sum_{n}f_{n}\left(  x_{1}\right)
q^{n}x_{1}y_{1}^{n}\otimes g=x_{1}\left(  D_{q}\otimes1\right)  \left(
h\right)  ,
\]
that is, $R_{x_{1}}\otimes1=x_{1}\left(  D_{q}\otimes1\right)  $ (see
(\ref{5p2})). Further, we have
\begin{align*}
\left(  1\otimes L_{y_{2}}\right)  \left(  h\right)   &  =f\otimes\sum
_{m}y_{2}g_{m}\left(  x_{2}\right)  y_{2}^{m}=f\otimes\sum_{m}g_{m}\left(
qx_{2}\right)  y_{2}^{m+1}=N_{y_{2}}\left(  f\otimes\sum_{m}g_{m}\left(
qx_{2}\right)  y_{2}^{m}\right) \\
&  =N_{y_{2}}\left(  f\otimes\Delta_{q}\left(  g\right)  \right)  =N_{y_{2}%
}\left(  1\otimes\Delta_{q}\right)  \left(  h\right)  .
\end{align*}
Finally, notice that the operators $N_{y_{2}}$ and $1\otimes\Delta_{q}$ commute.
\end{proof}

\begin{corollary}
\label{corCLR1}If $\mathcal{A}^{\widehat{\otimes}2}$ is identified with
$\left[  \left[  x_{1}\right]  \right]  \left[  \left[  x_{2}\right]  \right]
\mathbb{C}\left[  \left[  y_{1},y_{2}\right]  \right]  $, then
\[
N_{x_{1}}=L_{x_{1}}\otimes1,\quad N_{x_{2}}=1\otimes L_{x_{2}},\quad R_{y_{1}%
}\otimes1=y_{1}\text{,\quad}1\otimes R_{y_{2}}=y_{2},
\]%
\[
R_{x_{1}}\otimes1=N_{x_{1}}\left(  \Delta_{q}\otimes1\right)  \text{,\quad
}1\otimes L_{y_{2}}=\left(  1\otimes D_{q}\right)  y_{2}%
\]
hold.
\end{corollary}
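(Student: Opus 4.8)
The plan is to prove Corollary~\ref{corCLR1} by mirroring the proof of Lemma~\ref{lemCLR}, interchanging throughout the roles of the variables $x$ and $y$ (and, correspondingly, of the diagonal operators $D_{q}$ and $\Delta_{q}$, which get swapped when one passes from the representation $\mathbb{C}_{q}\left[\left[x,y\right]\right]=\mathbb{C}\left[\left[x\right]\right]\left[\left[y\right]\right]$ to $\mathbb{C}_{q}\left[\left[x,y\right]\right]=\left[\left[x\right]\right]\mathbb{C}\left[\left[y\right]\right]$). The starting point is the canonical identification of Fr\'{e}chet spaces
\[
\mathcal{A}^{\widehat{\otimes}2}=\left[\left[x_{1}\right]\right]\mathbb{C}\left[\left[y_{1}\right]\right]\widehat{\otimes}\left[\left[x_{2}\right]\right]\mathbb{C}\left[\left[y_{2}\right]\right]=\left[\left[x_{1}\right]\right]\left[\left[x_{2}\right]\right]\mathbb{C}\left[\left[y_{1},y_{2}\right]\right],
\]
which is the exact analogue of (\ref{AlI}). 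As in Lemma~\ref{lemCLR}, all operators occurring in the statement are continuous, so it suffices to verify the claimed identities on the dense subspace of elementary tensors $h=f\otimes g$ with $f=\sum_{n}x_{1}^{n}f_{n}\left(y_{1}\right)$ and $g=\sum_{m}x_{2}^{m}g_{m}\left(y_{2}\right)$.

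Then I would go through the six equalities in turn. The identities $N_{x_{1}}=L_{x_{1}}\otimes1$ and $N_{x_{2}}=1\otimes L_{x_{2}}$ are immediate, since in the representation $\left[\left[x\right]\right]\mathbb{C}\left[\left[y\right]\right]$ left multiplication $L_{x}$ by $x$ is nothing but the shift $f\mapsto xf$. The identities $R_{y_{1}}\otimes1=y_{1}$ and $1\otimes R_{y_{2}}=y_{2}$ follow from the second formula in (\ref{5p2}) applied with $g=y$: only the index $j=0$ contributes there, so $f\cdot y$ reduces to ordinary commutative multiplication by $y$. For $R_{x_{1}}\otimes1=N_{x_{1}}(\Delta_{q}\otimes1)$ I would again use the second formula in (\ref{5p2}), now with $g=x$: only $j=1$ survives, giving $f\cdot x=\sum_{m}x_{1}^{m+1}f_{m}(q y_{1})$, which is the shift $N_{x_{1}}$ followed by the substitution $y_{1}\mapsto q y_{1}$, i.e.\ by $\Delta_{q}\otimes1$ in the present representation. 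Finally, for $1\otimes L_{y_{2}}=(1\otimes D_{q})\,y_{2}$ I would invoke the relation $f(y)x^{n}=x^{n}f(q^{n}y)$ recorded in Subsection~\ref{subsecFQM}: it yields $y_{2}\cdot g=\sum_{m}x_{2}^{m}(q^{m}y_{2})g_{m}(y_{2})=y_{2}\cdot g(qx_{2},y_{2})$, i.e.\ multiplication by $y_{2}$ composed with the scaling of the $x_{2}^{m}$-coefficients by $q^{m}$, which is $1\otimes D_{q}$. As at the end of the proof of Lemma~\ref{lemCLR}, the two operators in each composition act on disjoint groups of variables and hence commute, so the compositions may be written in either order.

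There is no genuine obstacle here: the statement is the $x\leftrightarrow y$ mirror image of Lemma~\ref{lemCLR}, and the work is entirely routine. The only points demanding care are bookkeeping ones — keeping straight which of the two canonical representations of $\mathbb{C}_{q}\left[\left[x,y\right]\right]$ is in force (and hence the concrete shape taken by $D_{q}$ and $\Delta_{q}$), and confirming that the identification $\mathcal{A}^{\widehat{\otimes}2}=\left[\left[x_{1}\right]\right]\left[\left[x_{2}\right]\right]\mathbb{C}\left[\left[y_{1},y_{2}\right]\right]$ is canonical and that all the operators are continuous, so that checking the equalities on elementary tensors is legitimate.
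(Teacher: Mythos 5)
Your proposal is correct and follows exactly the paper's own route: the paper's ``proof'' of Corollary~\ref{corCLR1} is nothing more than the one-line instruction to repeat the arguments of Lemma~\ref{lemCLR} with the roles of $x$ and $y$ interchanged, and your mirrored computations on elementary tensors (using the second line of (\ref{5p2}) and the commutation relations $f(y)x^{n}=x^{n}f(q^{n}y)$, $yx^{n}=q^{n}x^{n}y$) verify all six identities correctly.

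One caveat on the write-up. Your assertion that $D_{q}$ and $\Delta_{q}$ ``get swapped'' on passing to $\left[\left[x\right]\right]\mathbb{C}\left[\left[y\right]\right]$ should be read as a relabeling, not a change of operators. Subsection~\ref{subsecCQxy} fixes $D_{q}$ and $\Delta_{q}$ as maps on $\mathcal{A}$: from $D_{q}(f)=\sum_{n}q^{n}f_{n}(x)y^{n}$ and $\Delta_{q}(f)=\sum_{n}f_{n}(qx)y^{n}$, the operator $D_{q}$ is the substitution $y\mapsto qy$ and $\Delta_{q}$ is the substitution $x\mapsto qx$, independent of how elements of $\mathcal{A}$ are written. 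Read literally, your own calculation (e.g., $f\cdot x_{1}=\sum_{n}x_{1}^{n+1}f_{n}(qy_{1})$, the $x_{1}$-shift applied to the $y_{1}\mapsto qy_{1}$ substitute of $f$) gives
\[
R_{x_{1}}\otimes1=N_{x_{1}}\left(D_{q}\otimes1\right),\qquad 1\otimes L_{y_{2}}=\left(1\otimes\Delta_{q}\right)y_{2},
\]
i.e., $D_{q}$ and $\Delta_{q}$ appear in positions opposite to those in the Corollary's statement. The Corollary as printed, and its subsequent use in Subsection~\ref{subsecCQxy} where $(q\otimes D_{q})y_{2}$ must restrict to $qy_{2}D_{x_{2},q}$, is consistent only if the symbols $D_{q}$ and $\Delta_{q}$ there denote the ``scale the outer variable's coefficients by $q^{n}$'' and ``substitute $q$ into the inner variable'' operators of the ambient representation — which in $\left[\left[x\right]\right]\mathbb{C}\left[\left[y\right]\right]$ are the maps $x\mapsto qx$ and $y\mapsto qy$ respectively. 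Your computation in fact identifies exactly those operators, so the mathematics is sound; but you should state this relabeling explicitly rather than asserting that the operators themselves are interchanged, since by the definitions of Subsection~\ref{subsecCQxy} they are not.
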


\begin{proof}
One needs to use the same arguments from the proof of Lemma \ref{lemCLR}.
\end{proof}

\subsection{The localization property of $\mathbb{C}_{q}\left[  \left[
x,y\right]  \right]  $\label{subsecCQxy}}

Let us analyze the complex (\ref{Rsp}) based on the identification
(\ref{AlI}). Taking into account that
\[
\mathcal{A}^{\widehat{\otimes}2}\oplus\mathcal{A}^{\widehat{\otimes}2}=\left(
\mathbb{C}\left[  \left[  x_{1},x_{2}\right]  \right]  \left[  \left[
y_{1}\right]  \right]  \oplus\mathbb{C}\left[  \left[  x_{1},x_{2}\right]
\right]  \left[  \left[  y_{1}\right]  \right]  \right)  \left[  \left[
y_{2}\right]  \right]  ,
\]
one can transform (\ref{Rsp}) into the following complex
\[
0\rightarrow\mathbb{C}\left[  \left[  x_{1},x_{2}\right]  \right]  \left[
\left[  y_{1}\right]  \right]  \left[  \left[  y_{2}\right]  \right]
\overset{d^{0}}{\longrightarrow}\left(
\begin{array}
[c]{c}%
\mathbb{C}\left[  \left[  x_{1},x_{2}\right]  \right]  \left[  \left[
y_{1}\right]  \right] \\
\oplus\\
\mathbb{C}\left[  \left[  x_{1},x_{2}\right]  \right]  \left[  \left[
y_{1}\right]  \right]
\end{array}
\right)  \left[  \left[  y_{2}\right]  \right]  \overset{d^{1}%
}{\longrightarrow}\mathbb{C}\left[  \left[  x_{1},x_{2}\right]  \right]
\left[  \left[  y_{1}\right]  \right]  \left[  \left[  y_{2}\right]  \right]
\overset{\pi}{\longrightarrow}\mathcal{A}\rightarrow0.
\]
The related differentials can be calculated based on Lemma \ref{lemCLR}.
Namely, (see (\ref{d}))%
\[
d^{0}=\left[
\begin{array}
[c]{c}%
R_{y_{1}}\otimes1-q\otimes L_{y_{2}}\\
1\otimes L_{x_{2}}-R_{x_{1}}\otimes q
\end{array}
\right]  =\left[
\begin{array}
[c]{c}%
N_{y_{1}}\\
x_{2}-x_{1}\left(  D_{q}\otimes q\right)
\end{array}
\right]  +\left[
\begin{array}
[c]{c}%
-\left(  q\otimes\Delta_{q}\right)  N_{y_{2}}\\
0
\end{array}
\right]  .
\]
In a similar way, we have%
\begin{align*}
d^{1}  &  =\left[
\begin{array}
[c]{cc}%
1\otimes L_{x_{2}}-R_{x_{1}}\otimes1 & 1\otimes L_{y_{2}}-R_{y_{1}}\otimes1
\end{array}
\right] \\
&  =\left[
\begin{array}
[c]{cc}%
x_{2}-x_{1}\left(  D_{q}\otimes1\right)  & -N_{y_{1}}%
\end{array}
\right]  +\left[
\begin{array}
[c]{cc}%
0 & \left(  1\otimes\Delta_{q}\right)  N_{y_{2}}%
\end{array}
\right]  .
\end{align*}
Notice that for every $h=\sum_{n}h_{n}\left(  x_{1},x_{2}\right)  y_{1}^{n}%
\in\mathbb{C}\left[  \left[  x_{1},x_{2}\right]  \right]  \left[  \left[
y_{1}\right]  \right]  $, we have
\begin{align*}
\left(  x_{2}-x_{1}\left(  D_{q}\otimes q\right)  \right)  \left(  hy_{2}%
^{m}\right)   &  =\sum_{n}\left(  x_{2}-q^{n+1}x_{1}\right)  h_{n}\left(
x_{1},x_{2}\right)  y_{1}^{n}y_{2}^{m}\\
&  =\prod\limits_{n}\left(  x_{2}-q^{n+1}x_{1}\right)  \left(  h\right)
y_{2}^{m},
\end{align*}
that is, $\left(  x_{2}-x_{1}\left(  D_{q}\otimes q\right)  \right)
|\mathbb{C}\left[  \left[  x_{1},x_{2}\right]  \right]  \left[  \left[
y_{1}\right]  \right]  y_{2}^{m}=x_{2}-qx_{1}D_{y_{1},q}$ (see (\ref{Fdif})).
In a similar way, we have $x_{2}-x_{1}\left(  D_{q}\otimes1\right)
|\mathbb{C}\left[  \left[  x_{1},x_{2}\right]  \right]  \left[  \left[
y_{1}\right]  \right]  y_{2}^{m}=x_{2}-x_{1}D_{y_{1},q}$. Thus $d^{0}%
|\mathbb{C}\left[  \left[  x_{1},x_{2}\right]  \right]  \left[  \left[
y_{1}\right]  \right]  y_{2}^{m}=d_{y_{1}}^{0}+N_{y_{2}}^{0}$ with%
\[
d_{y_{1}}^{0}=\left[
\begin{array}
[c]{c}%
N_{y_{1}}\\
x_{2}-qx_{1}D_{y_{1},q}%
\end{array}
\right]  :\mathbb{C}\left[  \left[  x_{1},x_{2}\right]  \right]  \left[
\left[  y_{1}\right]  \right]  y_{2}^{m}\rightarrow\mathbb{C}\left[  \left[
x_{1},x_{2}\right]  \right]  \left[  \left[  y_{1}\right]  \right]  ^{\oplus
2}y_{2}^{m},
\]%
\[
N_{y_{2}}^{0}=\left[
\begin{array}
[c]{c}%
-\left(  q\otimes\Delta_{q}\right)  N_{y_{2}}\\
0
\end{array}
\right]  :\mathbb{C}\left[  \left[  x_{1},x_{2}\right]  \right]  \left[
\left[  y_{1}\right]  \right]  y_{2}^{m}\rightarrow\mathbb{C}\left[  \left[
x_{1},x_{2}\right]  \right]  \left[  \left[  y_{1}\right]  \right]  ^{\oplus
2}y_{2}^{m+1}.
\]
In a similar way, we have $d^{1}|\mathbb{C}\left[  \left[  x_{1},x_{2}\right]
\right]  \left[  \left[  y_{1}\right]  \right]  ^{\oplus2}y_{2}^{m}=d_{y_{1}%
}^{1}+N_{y_{2}}^{1}$ with
\[
d_{y_{1}}^{1}=\left[
\begin{array}
[c]{cc}%
x_{2}-x_{1}D_{y_{1},q} & -N_{y_{1}}%
\end{array}
\right]  :\mathbb{C}\left[  \left[  x_{1},x_{2}\right]  \right]  \left[
\left[  y_{1}\right]  \right]  ^{\oplus2}y_{2}^{m}\rightarrow\mathbb{C}\left[
\left[  x_{1},x_{2}\right]  \right]  \left[  \left[  y_{1}\right]  \right]
y_{2}^{m},
\]%
\[
N_{y_{2}}^{1}=\left[
\begin{array}
[c]{cc}%
0 & \left(  1\otimes\Delta_{q}\right)  N_{y_{2}}%
\end{array}
\right]  :\mathbb{C}\left[  \left[  x_{1},x_{2}\right]  \right]  \left[
\left[  y_{1}\right]  \right]  ^{\oplus2}y_{2}^{m}\rightarrow\mathbb{C}\left[
\left[  x_{1},x_{2}\right]  \right]  \left[  \left[  y_{1}\right]  \right]
y_{2}^{m+1}.
\]
It follows that the differentials $d^{i}$, $i=0,1$ have lower triangular
operator matrices%
\[
d^{0}=\left[
\begin{array}
[c]{ccc}%
\ddots &  & 0\\
N_{y_{2}}^{0} & d_{y_{1}}^{0} & \\
0 & N_{y_{2}}^{0} & \ddots
\end{array}
\right]  ,\quad d^{1}=\left[
\begin{array}
[c]{ccc}%
\ddots &  & 0\\
N_{y_{2}}^{1} & d_{y_{1}}^{1} & \\
0 & N_{y_{2}}^{1} & \ddots
\end{array}
\right]  .
\]
Moreover, $d_{y_{1}}^{i}$, $i=0,1$ are differentials of the $y_{1}$-diagonal
formal $q$-complex $\mathfrak{D}_{q}\left(  y_{1}\right)  $ considered above
in Subsection \ref{subsecLDFC}. Further, take $h=f\otimes g\left(
x_{2}\right)  y_{2}^{m}\in\mathbb{C}\left[  \left[  x_{1},x_{2}\right]
\right]  \left[  \left[  y_{1}\right]  \right]  y_{2}^{m}$ with $f\in
\mathcal{A}$. Since $\pi:\mathbb{C}\left[  \left[  x_{1},x_{2}\right]
\right]  \left[  \left[  y_{1}\right]  \right]  \left[  \left[  y_{2}\right]
\right]  \longrightarrow\mathbb{C}_{q}\left[  \left[  x\right]  \right]
\left[  \left[  y\right]  \right]  \mathcal{\ }$is the multiplication
morphism, it follows that
\begin{align*}
\pi\left(  h\right)   &  =\pi\left(  \sum_{k}f_{k}\left(  x_{1}\right)
y_{1}^{k}\otimes g\left(  x_{2}\right)  y_{2}^{m}\right)  =\sum_{k}%
f_{k}\left(  x\right)  y^{k}g\left(  x\right)  y^{m}\\
&  =\sum_{k}f_{k}\left(  x\right)  g\left(  q^{k}x\right)  y^{k+m}%
=f_{0}\left(  x\right)  g\left(  x\right)  y^{m}+f_{1}\left(  x\right)
g\left(  qx\right)  y^{m+1}+\cdots.
\end{align*}
In particular, we have
\[
\pi\left(  h\right)  =\pi\left(  \sum_{n}h_{n}\left(  x_{1},x_{2}\right)
y_{1}^{n}y_{2}^{m}\right)  =h_{0}\left(  x,x\right)  y^{m}+h_{1}\left(
x,qx\right)  y^{m+1}+\cdots
\]
for all $h=\sum_{n}h_{n}\left(  x_{1},x_{2}\right)  y_{1}^{n}y_{2}^{m}%
\in\mathbb{C}\left[  \left[  x_{1},x_{2}\right]  \right]  \left[  \left[
y_{1}\right]  \right]  y_{2}^{m}$. Thus $\pi$ has a lower triangular operator
matrix
\[
\pi=\left[
\begin{array}
[c]{ccc}%
\ddots &  & 0\\
& \pi_{y_{1}} & \\
\ast &  & \ddots
\end{array}
\right]  ,
\]
where $\pi_{y_{1}}:\mathbb{C}\left[  \left[  x_{1},x_{2}\right]  \right]
\left[  \left[  y_{1}\right]  \right]  y_{2}^{m}\rightarrow\mathbb{C}\left[
\left[  x\right]  \right]  y^{m}$, $\pi_{y_{1}}\left(  h\right)  =h_{0}\left(
x,x\right)  y^{m}$ is the morphism of the $y_{1}$-diagonal formal $q$-complex
$\mathfrak{D}_{q}\left(  y_{1}\right)  $.

\begin{proposition}
\label{propLoc1}The noncommutative analytic space $\left(  \left\{  \left(
0,0\right)  \right\}  ,\mathbb{C}_{q}\left[  \left[  x,y\right]  \right]
\right)  $ has the localization property, that is, the canonical homomorphism
$\mathfrak{A}_{q}\rightarrow\mathbb{C}_{q}\left[  \left[  x,y\right]  \right]
$ is a localization, where $q\in\mathbb{C}\backslash\left\{  0,1\right\}  $.
\end{proposition}

\begin{proof}
Since we come up with the nuclear algebra $\mathcal{A}=\mathbb{C}_{q}\left[
\left[  x,y\right]  \right]  $, it suffices to prove that the complex
(\ref{Rsp}) is exact. But $\mathcal{A}=\mathbb{C}_{q}\left[  \left[  x\right]
\right]  \left[  \left[  y\right]  \right]  $, and as we have seen above
(\ref{Rsp}) can be transformed into a complex of the formal power series over
$y_{2}$ whose differentials have a lower triangular operator matrices. The
related diagonal complexes are in turn reduced to the same $y_{1}$-diagonal
formal $q$-complex $\mathfrak{D}_{q}\left(  y_{1}\right)  $. By Lemma
\ref{lemLDc1}, the complex $\mathfrak{D}_{q}\left(  y_{1}\right)  $ is exact,
that is, all diagonal complexes of (\ref{Rsp}) are exact. Using Lemma
\ref{lemTri}, we deduce that (\ref{Rsp}) remains exact.
\end{proof}

The same argument is applicable to the representation of $\mathbb{C}%
_{q}\left[  \left[  x,y\right]  \right]  $ as $\left[  \left[  x\right]
\right]  \mathbb{C}_{q}\left[  \left[  y\right]  \right]  $ with the
$q$-multiplication (\ref{pp4}) too. In this case, (\ref{Rsp}) is transformed
into the following complex
\[
0\rightarrow\left[  \left[  x_{1}\right]  \right]  \left[  \left[
x_{2}\right]  \right]  \mathbb{C}\left[  \left[  y_{1},y_{2}\right]  \right]
\overset{d^{0}}{\longrightarrow}\left[  \left[  x_{1}\right]  \right]  \left(
\begin{array}
[c]{c}%
\left[  \left[  x_{2}\right]  \right]  \mathbb{C}\left[  \left[  y_{1}%
,y_{2}\right]  \right] \\
\oplus\\
\left[  \left[  x_{2}\right]  \right]  \mathbb{C}\left[  \left[  y_{1}%
,y_{2}\right]  \right]
\end{array}
\right)  \overset{d^{1}}{\longrightarrow}\left[  \left[  x_{1}\right]
\right]  \left[  \left[  x_{2}\right]  \right]  \mathbb{C}\left[  \left[
y_{1},y_{2}\right]  \right]  \overset{\pi}{\longrightarrow}\mathcal{A}%
\rightarrow0.
\]
The related differentials can be calculated based on Corollary \ref{corCLR1}.
Namely, as above we have%
\begin{align*}
d^{0}  &  =\left[
\begin{array}
[c]{c}%
R_{y_{1}}\otimes1-q\otimes L_{y_{2}}\\
1\otimes L_{x_{2}}-R_{x_{1}}\otimes q
\end{array}
\right]  =\left[
\begin{array}
[c]{c}%
y_{1}-\left(  q\otimes D_{q}\right)  y_{2}\\
N_{x_{2}}%
\end{array}
\right]  +\left[
\begin{array}
[c]{c}%
0\\
-N_{x_{1}}\left(  \Delta_{q}\otimes q\right)
\end{array}
\right] \\
&  =d_{x_{2}}^{0}+N_{x_{1}}^{0},\\
d^{1}  &  =\left[
\begin{array}
[c]{cc}%
1\otimes L_{x_{2}}-R_{x_{1}}\otimes1 & 1\otimes L_{y_{2}}-R_{y_{1}}\otimes1
\end{array}
\right] \\
&  =\left[
\begin{array}
[c]{cc}%
N_{x_{2}} & \left(  1\otimes D_{q}\right)  y_{2}-y_{1}%
\end{array}
\right]  +\left[
\begin{array}
[c]{cc}%
-N_{x_{1}}\left(  \Delta_{q}\otimes1\right)  & 0
\end{array}
\right]  =d_{x_{2}}^{1}+N_{x_{1}}^{1}%
\end{align*}
(see Subsection \ref{subsecRDC}). The differentials $d^{i}$, $i=0,1$ have
lower operator matrices%
\[
d^{0}=\left[
\begin{array}
[c]{ccc}%
\ddots &  & 0\\
N_{x_{1}}^{0} & d_{x_{2}}^{0} & \\
0 & N_{x_{1}}^{0} & \ddots
\end{array}
\right]  ,\quad d^{1}=\left[
\begin{array}
[c]{ccc}%
\ddots &  & 0\\
N_{x_{1}}^{1} & d_{x_{2}}^{1} & \\
0 & N_{x_{1}}^{1} & \ddots
\end{array}
\right]
\]
whose diagonal are differentials $d_{x_{2}}^{i}$ of the $x_{2}$-diagonal
formal $q$-complex $\mathfrak{D}_{q}\left(  x_{2}\right)  $. Finally, take
$h=x_{1}^{m}f\left(  y_{1}\right)  \otimes g\in x_{1}^{m}\left[  \left[
x_{2}\right]  \right]  \mathbb{C}\left[  \left[  y_{1},y_{2}\right]  \right]
$ with $g\in\mathcal{A}$. Then $\pi\left(  h\right)  \in\left[  \left[
x\right]  \right]  \mathbb{C}_{q}\left[  \left[  y\right]  \right]  $ such
that
\begin{align*}
\pi\left(  h\right)   &  =\pi\left(  \sum_{n}x_{1}^{m}f\left(  y_{1}\right)
\otimes x_{2}^{n}g_{n}\left(  y_{2}\right)  \right)  =\sum_{n}x^{m}f\left(
y\right)  x^{n}g_{n}\left(  y\right)  =\sum_{n}x^{m+n}f\left(  q^{n}y\right)
g_{n}\left(  y\right) \\
&  =x^{m}f\left(  y\right)  g_{0}\left(  y\right)  +x^{m+1}f\left(  qy\right)
g_{1}\left(  y\right)  +\cdots.
\end{align*}
Hence $\pi\left(  h\right)  =x^{m}h_{0}\left(  y,y\right)  +x^{m+1}%
h_{1}\left(  qy,y\right)  +\cdots$ holds for every $h=\sum_{n}x_{1}^{m}%
x_{2}^{n}h_{n}\left(  y_{1},y_{2}\right)  \in x_{1}^{m}\left[  \left[
x_{2}\right]  \right]  \mathbb{C}\left[  \left[  y_{1},y_{2}\right]  \right]
$, which means that $\pi$ has the following lower triangular operator matrix
\[
\pi=\left[
\begin{array}
[c]{ccc}%
\ddots &  & 0\\
& \pi_{x_{2}} & \\
\ast &  & \ddots
\end{array}
\right]  .
\]
Thus the differentials of the complex (\ref{Rsp}) have lower triangular
operator matrices, and the related diagonal complexes are reduced to the same
$\mathfrak{D}_{q}\left(  x_{2}\right)  $.

\subsection{Noncommutative analytic spaces $\mathbb{C}_{x}$ and $\mathbb{C}%
_{y}$\label{subsecHC}}

Now assume that $\left\vert q\right\vert <1$, and let $U_{x}\subseteq
\mathbb{C}_{x}$ be a $q$-open subset. The Fr\'{e}chet space $\mathcal{O}%
\left(  U_{x}\right)  \left[  \left[  y\right]  \right]  $ of all formal power
series in $y$ over the (commutative) Fr\'{e}chet algebra $\mathcal{O}\left(
U_{x}\right)  $ can be equipped with the $q$-multiplication from (\ref{5p2}).
It turns out that $\mathcal{O}\left(  U_{x}\right)  \left[  \left[  y\right]
\right]  $ is an Arens-Michael-Fr\'{e}chet algebra, and the canonical (left
representation) mapping
\[
l:\mathcal{O}_{q}\left(  \mathbb{C}_{xy}\right)  \rightarrow\mathcal{O}\left(
U_{x}\right)  \left[  \left[  y\right]  \right]  ,\quad f=\sum_{i,k}%
a_{ik}x^{i}y^{k}\mapsto l\left(  f\right)  =\sum_{n}\left(  \sum_{i}%
a_{in}z^{i}\right)  y^{n}%
\]
is a continuous algebra homomorphism. Thus we come up with a Fr\'{e}chet
$\mathcal{O}_{q}\left(  \mathbb{C}_{xy}\right)  $-algebra sheaf $\mathcal{O}%
\left[  \left[  y\right]  \right]  $ on the line $\mathbb{C}_{x}$, and
$\operatorname{Spec}\left(  \mathcal{O}\left(  U_{x}\right)  \left[  \left[
y\right]  \right]  \right)  =U_{x}$ (see \cite{Dosi25} for the details).
Moreover, $\mathcal{O}\left(  \mathbb{C}\right)  \left[  \left[  y\right]
\right]  $ is the algebra of all global sections of $\mathcal{O}\left[
\left[  y\right]  \right]  $, that is a multinormed envelope of $\mathfrak{A}%
_{q}$. Namely, $\mathcal{O}\left(  \mathbb{C}\right)  \left[  \left[
y\right]  \right]  $ is the universal object in the class of all Banach
algebra actions of $\mathfrak{A}_{q}$ with nilpotent images of $y$. We skip
the details, the multinormed envelopes are not discussed in the present work.

In a similar way, if $U_{y}\subseteq\mathbb{C}_{y}$ is a $q$-open subset, then
$\left[  \left[  x\right]  \right]  \mathcal{O}\left(  U_{y}\right)  $ is an
Arens-Michael-Fr\'{e}chet algebra equipped with the $q$-multiplication
(\ref{pp4}), and the canonical (right representation) mapping
\[
r:\mathcal{O}_{q}\left(  \mathbb{C}_{xy}\right)  \rightarrow\left[  \left[
x\right]  \right]  \mathcal{O}\left(  U_{y}\right)  ,\quad f=\sum_{i,k}%
a_{ik}x^{i}y^{k}\mapsto r\left(  f\right)  =\sum_{n}x^{n}\left(  \sum
_{k}a_{nk}w^{k}\right)
\]
is a continuous algebra homomorphism. As above, $\left[  \left[  x\right]
\right]  \mathcal{O}$ is a Fr\'{e}chet $\mathcal{O}_{q}\left(  \mathbb{C}%
_{xy}\right)  $-algebra sheaf on the complex line $\mathbb{C}_{y}$ with
$\operatorname{Spec}\left(  \left[  \left[  x\right]  \right]  \mathcal{O}%
\left(  U_{y}\right)  \right)  =U_{y}$ for every $q$-open subset
$U_{y}\subseteq\mathbb{C}_{y}$, and $\left[  \left[  x\right]  \right]
\mathcal{O}\left(  \mathbb{C}\right)  $ is a certain multinormed envelope of
$\mathfrak{A}_{q}$.

\begin{definition}
\label{def2}The ringed spaces $\left(  \mathbb{C}_{x},\mathcal{O}\left[
\left[  y\right]  \right]  \right)  $ and $\left(  \mathbb{C}_{y},\left[
\left[  x\right]  \right]  \mathcal{O}\right)  $ are called noncommutative
analytic spaces of the multinormed envelopes $\mathcal{O}\left(
\mathbb{C}\right)  \left[  \left[  y\right]  \right]  $ and $\left[  \left[
x\right]  \right]  \mathcal{O}\left(  \mathbb{C}\right)  $ of the contractive
quantum plane $\mathfrak{A}_{q}$, respectively.
\end{definition}

Thus $\left(  \mathbb{C}_{x},\mathcal{O}\left[  \left[  y\right]  \right]
\right)  $ and $\left(  \mathbb{C}_{y},\left[  \left[  x\right]  \right]
\mathcal{O}\right)  $ are the geometric models of $\mathcal{O}\left(
\mathbb{C}\right)  \left[  \left[  y\right]  \right]  $ and $\left[  \left[
x\right]  \right]  \mathcal{O}\left(  \mathbb{C}\right)  $ (or $\mathfrak{A}%
_{q}$) representing the related complex lines $\mathbb{C}_{x}$ and
$\mathbb{C}_{y}$ within $\mathbb{C}_{xy}$, respectively. The canonical
inclusion of their intersection point $\left\{  \left(  0,0\right)  \right\}
$ (see Definition \ref{def1}) into these lines define the morphisms of the
related ringed spaces considered below in Section \ref{sectionFGq}. The
transversality relation of a left Banach $\mathcal{O}_{q}\left(
\mathbb{C}_{xy}\right)  $-module $X$ with respect to the sheaves
$\mathcal{O}\left[  \left[  y\right]  \right]  $ and $\left[  \left[
x\right]  \right]  \mathcal{O}$ are solved in terms of the Taylor spectrum too
\cite[Proposition 6.4]{Dosi25}.

The localization property of these structure sheaves is solved in the same
pattern as it was for $\mathbb{C}_{q}\left[  \left[  x,y\right]  \right]  $.
Namely, the identification (\ref{AlI}) for $\mathcal{A}=\mathcal{O}\left(
U_{x}\right)  \left[  \left[  y\right]  \right]  $ is reduced to
\[
\mathcal{A}^{\widehat{\otimes}2}=\mathcal{O}\left(  U_{x_{1}}\right)  \left[
\left[  y_{1}\right]  \right]  \widehat{\otimes}\mathcal{O}\left(  U_{x_{2}%
}\right)  \left[  \left[  y_{2}\right]  \right]  =\mathcal{O}\left(  U_{x_{1}%
}\times U_{x_{2}}\right)  \left[  \left[  y_{1}\right]  \right]  \left[
\left[  y_{2}\right]  \right]
\]
and the structure of the considered operators are the same. The complex
(\ref{Rsp}) is reduced to the following
\[
0\rightarrow\mathcal{O}\left(  U_{x_{1}}\times U_{x_{2}}\right)  \left[
\left[  y_{1}\right]  \right]  \left[  \left[  y_{2}\right]  \right]
\overset{d^{0}}{\longrightarrow}\left(
\begin{array}
[c]{c}%
\mathcal{O}\left(  U_{x_{1}}\times U_{x_{2}}\right)  \left[  \left[
y_{1}\right]  \right] \\
\oplus\\
\mathcal{O}\left(  U_{x_{1}}\times U_{x_{2}}\right)  \left[  \left[
y_{1}\right]  \right]
\end{array}
\right)  \left[  \left[  y_{2}\right]  \right]  \overset{d^{1}%
}{\longrightarrow}%
\]%
\[
\overset{d^{1}}{\longrightarrow}\mathcal{O}\left(  U_{x_{1}}\times U_{x_{2}%
}\right)  \left[  \left[  y_{1}\right]  \right]  \left[  \left[  y_{2}\right]
\right]  \overset{\pi}{\longrightarrow}\mathcal{A}\rightarrow0
\]
with the same type of differentials. The complex (\ref{Rsp}) can also be
transformed for the algebra $\mathcal{A}=\left[  \left[  x\right]  \right]
\mathcal{O}\left(  U_{y}\right)  $.

\begin{proposition}
\label{propLoc2}The canonical homomorphisms $\mathfrak{A}_{q}\rightarrow
\mathcal{O}\left(  U_{x}\right)  \left[  \left[  y\right]  \right]  $ and
$\mathfrak{A}_{q}\rightarrow\left[  \left[  x\right]  \right]  \mathcal{O}%
\left(  U_{y}\right)  $ are localizations for all $q$-open subsets
$U_{x}\subseteq\mathbb{C}_{x}$ and $U_{y}\subseteq\mathbb{C}_{y}$.
\end{proposition}

\begin{proof}
As in the proof of Proposition \ref{propLoc1}, one needs to prove that the
complex (\ref{Rsp}) for $\mathcal{A}=\mathcal{O}\left(  U_{x}\right)  \left[
\left[  y\right]  \right]  $ (resp., $\mathcal{A}=\left[  \left[  x\right]
\right]  \mathcal{O}\left(  U_{y}\right)  $) is exact. Using the same
triangular representations for the differentials of the complex (\ref{Rsp}),
we come up with the diagonal holomorphic $q$-complexes $\mathcal{O}_{q}\left(
y\right)  $ (resp., $\mathcal{O}_{q}\left(  x\right)  $) considered above in
Subsection \ref{subsecDHC}. By Proposition \ref{propLDc1}, the diagonal
holomorphic $q$-complexes $\mathcal{O}_{q}\left(  y\right)  $ and
$\mathcal{O}_{q}\left(  x\right)  $ are exact. Using again Lemma \ref{lemTri},
we deduce that (\ref{Rsp}) remains exact.
\end{proof}

Thus both noncommutative analytic spaces $\left(  \mathbb{C}_{x}%
,\mathcal{O}\left[  \left[  y\right]  \right]  \right)  $ and $\left(
\mathbb{C}_{y},\left[  \left[  x\right]  \right]  \mathcal{O}\right)  $
possess the localization property.

\section{The formal geometry of the contractive quantum plane}

In this section we briefly review the formal geometry of $\mathfrak{A}_{q}$
given by the noncommutative analytic space $\left(  \mathbb{C}_{xy}%
,\mathcal{F}_{q}\right)  $ \cite{Dosi25}, and consider the Fr\'{e}chet space
sheaves $\mathcal{O}\left(  d\right)  $, $d\in\mathbb{Z}_{+}$ on the
commutative analytic space $\left(  \mathbb{C}_{xy},\mathcal{O}\right)  $, and
some key operators acting over them. One needs to refer to these sheaves as
the commutative building blocks of the structure sheaf $\mathcal{F}_{q}$.

\subsection{Noncommutative analytic space $\mathbb{C}_{xy}$\label{subsecNCCP}}

The canonical inclusions of the intersection point $\left(  0,0\right)  $ into
the complex lines $\mathbb{C}_{x}$ and $\mathbb{C}_{y}$ define the morphisms%
\[
\left(  \left\{  \left(  0,0\right)  \right\}  ,\mathbb{C}_{q}\left[  \left[
x,y\right]  \right]  \right)  \rightarrow\left(  \mathbb{C}_{x},\mathcal{O}%
\left[  \left[  y\right]  \right]  \right)  \text{\quad and\quad}\left(
\left\{  \left(  0,0\right)  \right\}  ,\mathbb{C}_{q}\left[  \left[
x,y\right]  \right]  \right)  \rightarrow\left(  \mathbb{C}_{y},\left[
\left[  x\right]  \right]  \mathcal{O}\right)
\]
of the ringed spaces, respectively. They are morphisms of the noncommutative
analytic spaces introduced in Definitions \ref{def1} and \ref{def2}. The
related morphisms of the sheaves are given by the power series expansions of
the related sections about the intersection point. Namely, if $U_{x}%
\subseteq\mathbb{C}_{x}$ and $U_{y}\subseteq\mathbb{C}_{y}$ are $q$-open
subsets, then the continuous algebra homomorphisms
\begin{align*}
s_{x}  &  :\mathcal{O}\left(  U_{x}\right)  \left[  \left[  y\right]  \right]
\rightarrow\mathbb{C}_{q}\left[  \left[  x,y\right]  \right]  ,\quad
s_{x}\left(  \sum_{n}f_{n}\left(  z\right)  y^{n}\right)  =\sum_{s,i}%
\dfrac{f_{i}^{\left(  s\right)  }\left(  0\right)  }{s!}x^{s}y^{i},\\
s_{y}  &  :\left[  \left[  x\right]  \right]  \mathcal{O}\left(  U_{y}\right)
\rightarrow\mathbb{C}_{q}\left[  \left[  x,y\right]  \right]  ,\quad
s_{y}\left(  \sum_{n}x^{n}g_{n}\left(  w\right)  \right)  =\sum_{t,j}%
\dfrac{g_{t}^{\left(  j\right)  }\left(  0\right)  }{j!}x^{t}y^{j}%
\end{align*}
define the the related sheaf morphisms.

Now let us consider the whole space $\mathbb{C}_{xy}$ equipped withe the
$\mathfrak{q}$-topology (see Subsection \ref{subsecAMEC}). If $U\subseteq
\mathbb{C}_{xy}$ is a nonempty $q$-open subset, then it has two (nonempty)
$q$-open components $U_{x}\subseteq\mathbb{C}_{x}$ and $U_{y}\subseteq
\mathbb{C}_{y}$. We define the Arens-Michael-Fr\'{e}chet algebra
$\mathcal{F}_{q}\left(  U\right)  $ to be the fibered product $\mathcal{O}%
\left(  U_{x}\right)  \left[  \left[  y\right]  \right]  \underset{\mathbb{C}%
\left[  \left[  x,y\right]  \right]  }{\times}\left[  \left[  x\right]
\right]  \mathcal{O}\left(  U_{y}\right)  $ defined in terms of the following
commutative diagram%
\[%
\begin{array}
[c]{ccccc}
&  & \mathcal{F}_{q}\left(  U\right)  &  & \\
& ^{\pi_{x}}\swarrow &  & \searrow^{\pi_{y}} & \\
\mathcal{O}\left(  U_{x}\right)  \left[  \left[  y\right]  \right]  &  &  &  &
\left[  \left[  x\right]  \right]  \mathcal{O}\left(  U_{y}\right) \\
& _{s_{x}}\searrow &  & \swarrow_{s_{y}} & \\
&  & \mathbb{C}_{q}\left[  \left[  x,y\right]  \right]  &  &
\end{array}
\]
with the canonical projections $\pi_{x}$, $\pi_{y}$, and the continuous
algebra homomorphisms $s_{x}$ and $s_{y}$ introduced above. It turns out that
\begin{equation}
\mathcal{F}_{q}\left(  U\right)  =\left\{  \left(  f,g\right)  \in
\mathcal{O}\left(  U_{x}\right)  \left[  \left[  y\right]  \right]
\oplus\left[  \left[  x\right]  \right]  \mathcal{O}\left(  U_{y}\right)
:\dfrac{f_{k}^{\left(  i\right)  }\left(  0\right)  }{i!}=\dfrac
{g_{i}^{\left(  k\right)  }\left(  0\right)  }{k!},i,k\in\mathbb{Z}%
_{+}\right\}  . \label{FqF}%
\end{equation}
Thus $\mathcal{F}_{q}=\mathcal{O}\left[  \left[  y\right]  \right]
\underset{\mathbb{C}_{q}\left[  \left[  x,y\right]  \right]  }{\times}\left[
\left[  x\right]  \right]  \mathcal{O}$ is a Fr\'{e}chet algebra sheaf on
$\mathbb{C}_{xy}$ being the fibered product of the Fr\'{e}chet algebra sheaves
$\mathcal{O}\left[  \left[  y\right]  \right]  $ and $\left[  \left[
x\right]  \right]  \mathcal{O}$ over the constant sheaf $\mathbb{C}_{q}\left[
\left[  x,y\right]  \right]  $. It can be proven that $\mathcal{F}_{q}\left(
\mathbb{C}_{xy}\right)  $ is reduced to the PI-envelope of $\mathfrak{A}_{q}$
(we skip the details).

\begin{definition}
\label{def3}The ringed space $\left(  \mathbb{C}_{xy},\mathcal{F}_{q}\right)
$ is called a noncommutative analytic space of the PI-envelope $\mathcal{F}%
_{q}\left(  \mathbb{C}_{xy}\right)  $ of the contractive quantum plane
$\mathfrak{A}_{q}$.
\end{definition}

Thus $\left(  \mathbb{C}_{xy},\mathcal{F}_{q}\right)  $ is a standard
geometric model of the PI-envelope of $\mathfrak{A}_{q}$ called also the
formal geometry of $\mathfrak{A}_{q}$. The canonical inclusions of the complex
lines $\mathbb{C}_{x}$ and $\mathbb{C}_{y}$ into the plane $\mathbb{C}_{xy}$
followed by the inclusions of the intersection point $\left(  0,0\right)  $
into these complex lines $\mathbb{C}_{x}$ and $\mathbb{C}_{y}$ define the
related morphisms of noncommutative analytic spaces%
\[%
\begin{array}
[c]{ccccc}
&  & \mathbb{C}_{xy} &  & \\
& \nearrow &  & \nwarrow & \\
\mathbb{C}_{x} &  &  &  & \mathbb{C}_{y}\\
& \nwarrow &  & \nearrow & \\
&  & \left\{  \left(  0,0\right)  \right\}  &  &
\end{array}
,\quad%
\begin{array}
[c]{ccccc}
&  & \mathcal{F}_{q} &  & \\
& \swarrow &  & \searrow & \\
\mathcal{O}\left[  \left[  y\right]  \right]  &  &  &  & \left[  \left[
x\right]  \right]  \mathcal{O}\\
& \searrow &  & \swarrow & \\
&  & \mathbb{C}_{q}\left[  \left[  x,y\right]  \right]  &  &
\end{array}
\]
Moreover, the morphisms $l$ and $r$ (see Subsection \ref{subsecHC}) define the
Fr\'{e}chet algebra sheaf morphism $\mathcal{O}_{q}\left(  \mathbb{C}%
_{xy}\right)  \rightarrow\mathcal{F}_{q}$, that turns $\mathcal{F}_{q}$ into a
Fr\'{e}chet algebra sheaf over $\mathcal{O}_{q}\left(  \mathbb{C}_{xy}\right)
$ (see \cite{Dosi25} for the details).

\subsection{The Fr\'{e}chet space sheaves $\mathcal{O}\left(  d\right)  $ on
$\mathbb{C}_{xy}$\label{subsecOQ}}

Now we consider the commutative analytic space $\left(  \mathbb{C}%
_{xy},\mathcal{O}\right)  $ of the plane $\mathbb{C}_{xy}$ equipped with the
$\mathfrak{q}$-topology. The standard commutative Fr\'{e}chet algebra sheaf
$\mathcal{O}$ of germs of holomorphic functions on $\mathbb{C}_{xy}$ can be
treated as the fibered product $\mathcal{O}_{x}\underset{\mathbb{C}}{\times
}\mathcal{O}_{y}$, where $\mathcal{O}_{x}$ and $\mathcal{O}_{y}$ are copies
(on $\mathbb{C}_{x}$ and $\mathbb{C}_{y}$, respectively) of the standard sheaf
$\mathcal{O}$ on $\mathbb{C}$. Namely, for every $q$-open subset
$U\subseteq\mathbb{C}_{xy}$ we put
\[
\mathcal{O}\left(  U\right)  =\mathcal{O}\left(  U_{x}\right)
\underset{\mathbb{C}}{\times}\mathcal{O}\left(  U_{y}\right)  =\left\{
\left(  f\left(  z\right)  ,g\left(  w\right)  \right)  \in\mathcal{O}\left(
U_{x}\right)  \oplus\mathcal{O}\left(  U_{y}\right)  :f\left(  0\right)
=g\left(  0\right)  \right\}
\]
to be a closed subalgebra of the Fr\'{e}chet sum $\mathcal{O}\left(
U_{x}\right)  \oplus\mathcal{O}\left(  U_{y}\right)  $. In this case, the
equality $\operatorname{Spec}\left(  \mathcal{O}\left(  U\right)  \right)  =U$
holds up to the canonical identification (see \cite{Dosi25}).

Let $\left\{  \mathcal{O}_{x}\left(  d\right)  \right\}  $ be a filtration of
$\mathcal{O}_{x}$ given by the subsheaves of the $z^{d}$-principal ideals (see
Subsection \ref{subsecFSO}). In a similar way, there is a subsheaf filtration
$\left\{  \mathcal{O}_{y}\left(  d\right)  \right\}  $ of $w^{d}$-principal
ideals of $\mathcal{O}_{y}$ on $\mathbb{C}_{y}$. The continuous (evaluation)
linear maps
\begin{align*}
s_{d}  &  :\mathcal{O}_{x}\left(  d\right)  \rightarrow\mathbb{C},\quad
f\left(  z\right)  \mapsto\left(  d!\right)  ^{-1}f^{\left(  d\right)
}\left(  0\right)  ,\\
t_{d}  &  :\mathcal{O}_{y}\left(  d\right)  \rightarrow\mathbb{C},\quad
g\left(  w\right)  \mapsto\left(  d!\right)  ^{-1}g^{\left(  d\right)
}\left(  0\right)
\end{align*}
define the fibered product
\[
\mathcal{O}\left(  d\right)  =\mathcal{O}_{x}\left(  d\right)
\underset{\mathbb{C}}{\times}\mathcal{O}_{y}\left(  d\right)
\]
to be a Fr\'{e}chet space sheaf on $\mathbb{C}_{xy}$. It is uniquely given by
the following commutative diagram
\[%
\begin{array}
[c]{ccccc}
&  & \mathcal{O}\left(  d\right)  &  & \\
& ^{p_{x}}\swarrow &  & \searrow^{p_{y}} & \\
\mathcal{O}_{x}\left(  d\right)  &  &  &  & \mathcal{O}_{y}\left(  d\right) \\
& _{s_{d}}\searrow &  & \swarrow_{t_{d}} & \\
&  & \mathbb{C} &  &
\end{array}
\]
with the canonical projections $p_{x}$ and $p_{y}$. Notice that $\mathcal{O}%
\left(  0\right)  \left(  U\right)  =\mathcal{O}\left(  U_{x}\right)
\underset{\mathbb{C}}{\times}\mathcal{O}\left(  U_{y}\right)  =\mathcal{O}%
\left(  U\right)  $ is the algebra of all holomorphic functions on a $q$-open
subset $U\subseteq\mathbb{C}_{xy}$. But for every $f\in\mathcal{O}_{x}\left(
d\right)  \left(  U_{x}\right)  $, we have
\[
s_{0}\left(  z^{-d}f\left(  z\right)  \right)  =\left(  z^{-d}f\left(
z\right)  \right)  |_{z=0}=\left(  d!\right)  ^{-1}f^{\left(  d\right)
}\left(  0\right)  =s_{d}\left(  f\left(  z\right)  \right)  .
\]
In a similar way, we have $t_{0}\left(  w^{-d}g\left(  w\right)  \right)
=t_{d}\left(  g\left(  w\right)  \right)  $ for all $g\in\mathcal{O}%
_{y}\left(  d\right)  \left(  U_{y}\right)  $. Thus the sheaf isomorphisms
$z^{-d}:\mathcal{O}_{x}\left(  d\right)  \rightarrow\mathcal{O}_{x}$ and
$w^{-d}:\mathcal{O}_{y}\left(  d\right)  \rightarrow\mathcal{O}_{y}$ from
Lemma \ref{lemMD}, and the identity map $1:\mathbb{C}\rightarrow\mathbb{C}$ of
the constant sheaves are compatible with the evaluations maps. They in turn
define the following topological isomorphism
\begin{equation}
z^{-d}\times_{1}w^{-d}:\mathcal{O}\left(  d\right)  \rightarrow\mathcal{O}%
,\quad\left(  f\left(  z\right)  ,g\left(  w\right)  \right)  \mapsto\left(
z^{-d}f\left(  z\right)  ,w^{-d}g\left(  w\right)  \right)  \label{imoo}%
\end{equation}
of the Fr\'{e}chet sheaves called \textit{the }$d$\textit{-isomorphism}, whose
inverse is given by $z^{d}\times_{1}w^{d}$. Thus $\left\{  \mathcal{O}\left(
d\right)  \right\}  $ are isomorphic copies of the Fr\'{e}chet sheaf
$\mathcal{O}$ on $\mathbb{C}_{xy}$. Everywhere below we also use the notation
$\mathcal{O}_{d}$ instead of $\mathcal{O}$ whenever we wish to indicate that
it is the $d$-isomorphic copy of $\mathcal{O}\left(  d\right)  $ through
(\ref{imoo}).

\subsection{The operators over the filtration $\left\{  \mathcal{O}\left(
d\right)  \right\}  $\label{subsecOF}}

The multiplication operator $z$ on $\mathcal{O}_{x}\left(  d\right)  $ (resp.,
$w$ on $\mathcal{O}_{y}\left(  d\right)  $) is converted into the
multiplication operator by $\left(  z,0\right)  $ (resp., $\left(  0,w\right)
$) on $\mathcal{O}\left(  d\right)  $. Namely,%
\[
s_{d}\left(  zf\left(  z\right)  \right)  =\left(  d!\right)  ^{-1}\left(
zf\left(  z\right)  \right)  ^{\left(  d\right)  }\left(  0\right)  =\left(
\left(  d-1\right)  !\right)  ^{-1}f^{\left(  d-1\right)  }\left(  0\right)
=0
\]
for all $f\in\mathcal{O}_{x}\left(  d\right)  \left(  U_{x}\right)  $, which
means that $z\times_{0}0:\mathcal{O}\left(  d\right)  \left(  U\right)
\rightarrow\mathcal{O}\left(  d\right)  \left(  U\right)  $ is a well defined
continuous linear operator such that $p_{x}\left(  z\times_{0}0\right)
=zp_{x}$ and $p_{y}\left(  z\times_{0}0\right)  =0$. In a similar way, we have
the operator $0\times_{0}w$. Notice that $z\times_{0}0=\left(  z,0\right)  $
and $0\times_{0}w=\left(  0,w\right)  $ are the multiplication operators. For
brevity we use the notation $z$ (resp., $w$) instead of $\left(  z,0\right)  $
(resp., $\left(  0,w\right)  $) too. Thus%
\begin{equation}
z,w:\mathcal{O}\left(  d\right)  \rightarrow\mathcal{O}\left(  d\right)
,\quad z\left(  f\left(  z\right)  ,g\left(  w\right)  \right)  =\left(
zf\left(  z\right)  ,0\right)  ,\quad w\left(  f\left(  z\right)  ,g\left(
w\right)  \right)  =\left(  0,wg\left(  w\right)  \right)  \label{RzRw1}%
\end{equation}
are the multiplication operators. The isomorphism (\ref{imoo}) preserves both
operators $z$ and $w$ (see Lemma \ref{lemMD}). Thus $z,w:\mathcal{O}%
_{d}\rightarrow\mathcal{O}_{d}$ are acting in the same way as (\ref{RzRw1}).

Now consider the following continuous linear operators
\begin{equation}
N_{x,d},\text{ }N_{y,d}:\mathcal{O}\left(  d\right)  \rightarrow
\mathcal{O}\left(  d+1\right)  ,\quad M_{x,l},\text{ }M_{y,l}:\mathcal{O}%
\left(  l\right)  \rightarrow\mathcal{O}\left(  l+1\right)  \label{Nd}%
\end{equation}%
\begin{align*}
N_{x,d}\left(  \zeta\right)   &  =\left(  P_{d+1}\left(  f\left(  z\right)
\right)  ,\dfrac{f^{d+1}\left(  0\right)  }{\left(  d+1\right)  !}%
w^{d+1}\right)  ,\quad N_{y,d}\left(  \zeta\right)  =\left(  q^{d+1}%
\dfrac{g^{d+1}\left(  0\right)  }{\left(  d+1\right)  !}z^{d+1},P_{d+1}%
\Delta_{q}\left(  g\left(  w\right)  \right)  \right)  ,\\
M_{x,l}\left(  \zeta\right)   &  =\left(  \dfrac{g^{l+1}\left(  0\right)
}{\left(  l+1\right)  !}z^{l+1},P_{l+1}\left(  g\right)  \left(  w\right)
\right)  ,\quad M_{y,l}\left(  \zeta\right)  =\left(  P_{l+1}\Delta_{q}\left(
f\left(  z\right)  \right)  ,q^{l+1}\dfrac{f^{l+1}\left(  0\right)  }{\left(
l+1\right)  !}w^{l+1}\right)
\end{align*}
for all $\zeta=\left(  f\left(  z\right)  ,g\left(  w\right)  \right)  $ from
$\mathcal{O}\left(  d\right)  \left(  U\right)  $ or $\mathcal{O}\left(
l\right)  \left(  U\right)  $, $d,l\in\mathbb{Z}_{+}$, where $P_{d+1}$ (resp.,
$P_{l+1}$) is the projection onto $\mathcal{O}_{x}\left(  d+1\right)  \left(
U_{x}\right)  $ or $\mathcal{O}_{y}\left(  d+1\right)  \left(  U_{y}\right)  $
(resp., $\mathcal{O}_{x}\left(  l+1\right)  \left(  U_{x}\right)  $ or
$\mathcal{O}_{y}\left(  l+1\right)  \left(  U_{y}\right)  $) from the proof of
Lemma \ref{lemMD}, and $\Delta_{q}$ was defined in (\ref{Delq}). Notice that
\[
P_{d+1}\left(  f\left(  z\right)  \right)  =f\left(  z\right)  -\dfrac
{f^{d}\left(  0\right)  }{d!}z^{d}\in\mathcal{O}_{x}\left(  d+1\right)
\left(  U_{x}\right)  ,\quad\dfrac{f^{d+1}\left(  0\right)  }{\left(
d+1\right)  !}w^{d+1}\in\mathcal{O}_{y}\left(  d+1\right)  \left(
U_{y}\right)  ,
\]
and
\[
s_{d+1}\left(  P_{d+1}\left(  f\left(  z\right)  \right)  \right)
=\dfrac{f^{d+1}\left(  0\right)  }{\left(  d+1\right)  !}=t_{d+1}\left(
\dfrac{f^{d+1}\left(  0\right)  }{\left(  d+1\right)  !}w^{d+1}\right)  ,
\]
which means that $N_{x,d}$ is a well defined continuous linear operator. In a
similar way, we have
\[
q^{d+1}\dfrac{g^{d+1}\left(  0\right)  }{\left(  d+1\right)  !}z^{d+1}%
\in\mathcal{O}_{x}\left(  d+1\right)  \left(  U_{x}\right)  ,\quad
P_{d+1}\Delta_{q}\left(  g\left(  w\right)  \right)  =g\left(  qw\right)
-q^{d}\dfrac{g^{d}\left(  0\right)  }{d!}w^{d}\in\mathcal{O}_{y}\left(
d+1\right)  \left(  U_{y}\right)  ,
\]
and
\[
s_{d+1}\left(  q^{d+1}\dfrac{g^{d+1}\left(  0\right)  }{\left(  d+1\right)
!}z^{d+1}\right)  =q^{d+1}\dfrac{g^{d+1}\left(  0\right)  }{\left(
d+1\right)  !}=t_{d+1}\left(  P_{d+1}\Delta_{q}\left(  g\left(  w\right)
\right)  \right)  ,
\]
which means that $N_{y,d}$ is well defined. In the same way, we can verify
that $M_{x,l}$, $M_{y,l}$ are well defined operators too.

\begin{lemma}
\label{lemTech1}The operators from (\ref{Nd}) are identified with the
operators $N_{x,d},$ $N_{y,d}:\mathcal{O}_{d}\rightarrow\mathcal{O}_{d+1}$,
$M_{x,l},$ $M_{y,l}:\mathcal{O}_{l}\rightarrow\mathcal{O}_{l+1}$ up the
topological isomorphisms (\ref{imoo}), and they act in the following ways
\begin{align*}
N_{x,d}\left(  \zeta\right)   &  =\left(  \frac{f\left(  z\right)  -f\left(
0\right)  }{z},f^{\prime}\left(  0\right)  \right)  ,\quad N_{y,d}\left(
\zeta\right)  =q^{d+1}\left(  g^{\prime}\left(  0\right)  ,\frac{g\left(
qw\right)  -g\left(  0\right)  }{qw}\right)  ,\\
M_{x,l}\left(  \zeta\right)   &  =\left(  g^{\prime}\left(  0\right)
,\frac{g\left(  w\right)  -g\left(  0\right)  }{w}\right)  ,\quad
M_{y,l}\left(  \zeta\right)  =q^{l+1}\left(  \frac{f\left(  qz\right)
-f\left(  0\right)  }{qz},f^{\prime}\left(  0\right)  \right)
\end{align*}
for every $\zeta=\left(  f,g\right)  $.
\end{lemma}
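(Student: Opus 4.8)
The plan is to prove all four identities by transporting each operator in (\ref{Nd}) along the $d$-isomorphism (\ref{imoo}) and computing directly. Fix a $q$-open set $U=U_x\cup U_y\subseteq\mathbb{C}_{xy}$ and write a section of $\mathcal{O}_d(U)$ as $\zeta=(f(z),g(w))$ with $f\in\mathcal{O}_x(U_x)$, $g\in\mathcal{O}_y(U_y)$ and $f(0)=g(0)$. Its preimage in $\mathcal{O}(d)(U)$ under (\ref{imoo}) is $(z^d f(z),\,w^d g(w))$, and the single elementary fact that drives the whole computation is that $z^d f(z)$ has vanishing Taylor coefficients in all degrees below $d$, with $z^d$-coefficient equal to $f(0)$; hence $P_{d+1}(z^d f(z))=z^d f(z)-f(0)z^d=z^d(f(z)-f(0))$ and $\frac{1}{(d+1)!}(z^d f(z))^{(d+1)}(0)=f'(0)$, with the analogous statements on the $w$-side.

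For $N_{x,d}$ I would substitute $(z^d f(z),w^d g(w))$ into the defining formula in (\ref{Nd}); by the fact just recorded, its image is $(z^d(f(z)-f(0)),\,f'(0)w^{d+1})\in\mathcal{O}(d+1)(U)$. Pushing this forward along the $(d+1)$-isomorphism $z^{-(d+1)}\times_{1} w^{-(d+1)}$ gives $(\frac{f(z)-f(0)}{z},\,f'(0))$, which is the asserted form. The verification for $M_{x,l}$ is the mirror image obtained after renaming $d\mapsto l$ and reading the defining formula off the $w$-component. Here the only point to keep straight is the direction of (\ref{imoo}): one pulls back along it in degree $d$ and pushes forward along it in degree $d+1$, which is legitimate because (\ref{imoo}) is a topological isomorphism of Fr\'{e}chet sheaves by Lemma \ref{lemMD}, and the result automatically lands in the fibered product since the maps in (\ref{Nd}) were already seen to be well defined.

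For $N_{y,d}$ and $M_{y,l}$ the same scheme applies, now with the extra ingredient $\Delta_q$ from (\ref{Delq}). Acting on the shifted section one has $\Delta_q(w^d g(w))=(qw)^d g(qw)=q^d w^d g(qw)$, so that $P_{d+1}\Delta_q(w^d g(w))=q^d w^d(g(qw)-g(0))$ (using $\Delta_q P_d=P_d\Delta_q$), while the $z^{d+1}$-coefficient of $w^d g(w)$ produces the factor $q^{d+1}g'(0)$ in the first slot. Pushing forward along the $(d+1)$-isomorphism then yields $q^{d+1}(g'(0),\,\frac{g(qw)-g(0)}{qw})$, after rewriting $q^d w^{-1}=q^{d+1}(qw)^{-1}$; the computation for $M_{y,l}$ is the mirror image carried out on the $z$-component and produces $q^{l+1}(\frac{f(qz)-f(0)}{qz},\,f'(0))$.

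I do not expect a genuine obstacle here; the statement is a bookkeeping exercise. The one place that requires care is tracking the scalar $q^d$ generated when $\Delta_q$ hits the monomial $z^d$ (or $w^d$) against the extra factor of $q$ recovered when the $(d+1)$-isomorphism divides by $z$ (or $w$), so that the overall prefactors emerge as the claimed $q^{d+1}$ and $q^{l+1}$. Everything else is forced by the definitions of $P_{d+1}$ and $\Delta_q$ together with the fact, from Lemma \ref{lemMD}, that (\ref{imoo}) intertwines the multiplication operators $z$ and $w$.
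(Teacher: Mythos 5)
Your proposal is correct and follows essentially the same route as the paper: pull $\zeta=(f,g)$ back to $(z^{d}f,w^{d}g)\in\mathcal{O}(d)(U)$, apply the defining formulas from (\ref{Nd}), then push forward along the $(d+1)$-isomorphism, with the factor $q^{d+1}$ (resp.\ $q^{l+1}$) arising exactly as you describe from $\Delta_{q}$ acting on $z^{d}$ or $w^{d}$ together with the explicit scalar in the definition. The aside about $\Delta_{q}P_{d}=P_{d}\Delta_{q}$ is not actually needed for the direct substitution you perform, and the phrase ``the $z^{d+1}$-coefficient of $w^{d}g(w)$'' should read ``the $w^{d+1}$-coefficient,'' but neither affects the validity of the argument.
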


\begin{proof}
Take $\zeta=\left(  f,g\right)  \in\mathcal{O}\left(  U\right)  $, which is
identified with $\left(  z^{-d}\times_{1}w^{-d}\right)  \left(  F,G\right)  $
for some $\left(  F,G\right)  \in\mathcal{O}\left(  d\right)  \left(
U\right)  $. Then $F\left(  z\right)  =z^{d}f\left(  z\right)  $, $G\left(
w\right)  =w^{d}g\left(  w\right)  $ and $N_{x,d}\left(  \zeta\right)  $ is
identified with the element $\left(  z^{-d-1}\times_{1}w^{-d-1}\right)
\left(  N_{x,d}\left(  F,G\right)  \right)  $. Namely,
\begin{align*}
N_{x,d}\left(  \zeta\right)   &  =\left(  z^{-d-1}\times_{1}w^{-d-1}\right)
\left(  P_{d+1}\left(  F\left(  z\right)  \right)  ,\dfrac{F^{d+1}\left(
0\right)  }{\left(  d+1\right)  !}w^{d+1}\right) \\
&  =\left(  z^{-d-1}\left(  F\left(  z\right)  -\dfrac{F^{d}\left(  0\right)
}{d!}z^{d}\right)  ,\dfrac{F^{d+1}\left(  0\right)  }{\left(  d+1\right)
!}\right)  =\left(  \left(  f\left(  z\right)  -f\left(  0\right)  \right)
z^{-1},f^{\prime}\left(  0\right)  \right)  .
\end{align*}
In a similar way, we have
\begin{align*}
N_{y,d}\left(  \zeta\right)   &  =\left(  z^{-d-1}\times_{1}w^{-d-1}\right)
\left(  q^{d+1}\dfrac{G^{d+1}\left(  0\right)  }{\left(  d+1\right)  !}%
z^{d+1},P_{d+1}\Delta_{q}\left(  G\left(  w\right)  \right)  \right) \\
&  =\left(  q^{d+1}g^{\prime}\left(  0\right)  ,w^{-d-1}G\left(  qw\right)
-q^{d}\dfrac{G^{d}\left(  0\right)  }{d!}w^{-1}\right)  =\left(
q^{d+1}g^{\prime}\left(  0\right)  ,q^{d}w^{-1}g\left(  qw\right)
-q^{d}g\left(  0\right)  w^{-1}\right) \\
&  =q^{d+1}\left(  g^{\prime}\left(  0\right)  ,\frac{g\left(  qw\right)
-g\left(  0\right)  }{qw}\right)  .
\end{align*}
The case of operators $M_{x,l}$ and $M_{y,l}$ are similar to $N_{y,d}$ and
$N_{x,d}$, respectively.
\end{proof}

\subsection{The decomposition theorem}

Now consider again the noncommutative analytic space $\left(  \mathbb{C}%
_{xy},\mathcal{F}_{q}\right)  $ (see Definition \ref{def3}). We briefly review
the main decomposition theorem of the sheaf $\mathcal{F}_{q}$ from
\cite{Dosi25}. For every integer $d\in\mathbb{Z}_{+}$ we define the following
closed subspaces
\begin{align*}
\mathcal{O}\left(  U_{x}\right)  \left[  \left[  y\right]  \right]  ^{d}  &
=\mathcal{O}\left(  U_{x}\right)  y^{d}\times\prod\limits_{k>d}\left(
\mathbb{C}z^{d}\right)  y^{k}\subseteq\mathcal{O}\left(  U_{x}\right)  \left[
\left[  y\right]  \right]  \text{,}\\
\left[  \left[  x\right]  \right]  ^{d}\mathcal{O}\left(  U_{y}\right)   &
=x^{d}\mathcal{O}\left(  U_{y}\right)  \times\prod\limits_{i>d}x^{i}\left(
\mathbb{C}w^{d}\right)  \subseteq\left[  \left[  x\right]  \right]
\mathcal{O}\left(  U_{y}\right)  ,
\end{align*}
where $\mathbb{C}z^{d}\subseteq\mathcal{O}\left(  U_{x}\right)  $ (resp.,
$\mathbb{C}w^{d}\subseteq\mathcal{O}\left(  U_{y}\right)  $) is the subspace
generated by the monomial. They consist of certain elements of the order $d$
(the least nonzero term occurs at $d$). We also put
\[
\mathcal{F}_{q}\left(  U\right)  ^{d}=\mathcal{O}\left(  U_{x}\right)  \left[
\left[  y\right]  \right]  ^{d}\underset{\mathbb{C}\left[  \left[  x,y\right]
\right]  }{\times}\left[  \left[  x\right]  \right]  ^{d}\mathcal{O}\left(
U_{y}\right)
\]
to be a closed subspace of $\mathcal{F}_{q}\left(  U\right)  $. For every
$d\in\mathbb{Z}_{+}$ the following continuous linear operator
\begin{align*}
p_{d}\left(  U\right)   &  :\mathcal{F}_{q}\left(  U\right)  \rightarrow
\mathcal{F}_{q}\left(  U\right)  ,\quad p_{d}\left(  U\right)  \left(
f,g\right)  =\left(  \overline{f},\overline{g}\right)  ,\\
\overline{f}  &  =\left(  f_{d}\left(  z\right)  -\sum_{i=0}^{d-1}\dfrac
{g_{i}^{\left(  d\right)  }\left(  0\right)  }{d!}z^{i}\right)  y^{d}%
+\sum_{k>d}\left(  \dfrac{g_{d}^{\left(  k\right)  }\left(  0\right)  }%
{k!}z^{d}\right)  y^{k},\\
\overline{g}  &  =x^{d}\left(  g_{d}\left(  w\right)  -\sum_{i=0}^{d-1}%
\dfrac{f_{i}^{\left(  d\right)  }\left(  0\right)  }{d!}w^{i}\right)
+\sum_{i>d}x^{i}\left(  \dfrac{f_{d}^{\left(  i\right)  }\left(  0\right)
}{i!}w^{d}\right)  .
\end{align*}
defines a continuous projection onto $\mathcal{F}_{q}\left(  U\right)  ^{d}$
such that
\[
p\left(  U\right)  =\prod_{d\in\mathbb{Z}_{+}}p_{d}\left(  U\right)
:\mathcal{F}_{q}\left(  U\right)  \rightarrow\prod_{d\in\mathbb{Z}_{+}%
}\mathcal{F}_{q}\left(  U\right)  ^{d},\quad p\left(  U\right)  \left(
\zeta\right)  =\left(  p_{d}\left(  U\right)  \left(  \zeta\right)  \right)
_{d}%
\]
implements a topological isomorphism of $\mathcal{F}_{q}\left(  U\right)  $
onto the direct product of the Fr\'{e}chet spaces. Moreover, the linear
mapping $\alpha_{d}:\mathcal{O}\left(  d\right)  \left(  U\right)
\rightarrow\mathcal{F}_{q}\left(  U\right)  ^{d},$
\[
\alpha_{d}\left(  f,g\right)  =\left(  f\left(  z\right)  y^{d}+\sum
_{k>d}\left(  \dfrac{g^{\left(  k\right)  }\left(  0\right)  }{k!}%
z^{d}\right)  y^{k},x^{d}g\left(  w\right)  +\sum_{i>d}x^{i}\left(
\dfrac{f^{\left(  i\right)  }\left(  0\right)  }{i!}w^{d}\right)  \right)
\]
implements a topological isomorphism of the related Fr\'{e}chet\ spaces.

\begin{theorem}
\label{thmainDecom}Let $U\subseteq\mathbb{C}_{xy}$ be a $q$-open subset. The
mapping $\Lambda\left(  U\right)  :\mathcal{F}_{q}\left(  U\right)
\rightarrow\mathcal{O}\left(  U\right)  $, $\Lambda\left(  U\right)  \left(
f,g\right)  =\left(  f_{0},g_{0}\right)  $ is a retraction of the Fr\'{e}chet
spaces with its right inverse $\alpha_{0}\left(  U\right)  :\mathcal{O}\left(
U\right)  \rightarrow\mathcal{F}_{q}\left(  U\right)  ^{0}$, which allows us
to identify $\mathcal{O}\left(  U\right)  $ with the complemented subspace
$\mathcal{F}_{q}\left(  U\right)  ^{0}$ of $\mathcal{F}_{q}\left(  U\right)
$. In this case,
\[
\mathcal{F}_{q}\left(  U\right)  =\mathcal{O}\left(  U\right)  \oplus
\operatorname{Rad}\mathcal{F}_{q}\left(  U\right)
\]
is a topological direct sum of the closed subspaces,
\[
\operatorname{Rad}\mathcal{F}_{q}\left(  U\right)  =\prod\limits_{d\in
\mathbb{N}}\mathcal{F}_{q}\left(  U\right)  ^{d}%
\]
up to a topological isomorphism of the Fr\'{e}chet spaces, and
$\operatorname{Spec}\left(  \mathcal{F}_{q}\left(  U\right)  \right)  =U$.
\end{theorem}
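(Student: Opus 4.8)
The plan is to extract the whole statement from the explicit operators recalled immediately before it, and only then to identify $\ker\Lambda(U)$ with the Jacobson radical. Concretely, the maps $\Lambda(U)$, $\alpha_{0}(U)$, $p_{d}(U)$ are already written down, $\alpha_{d}\colon\mathcal{O}\left(  d\right)  \left(  U\right)  \overset{\simeq}{\longrightarrow}\mathcal{F}_{q}\left(  U\right)  ^{d}$ is a topological isomorphism (with $\mathcal{O}\left(  0\right)  \left(  U\right)  =\mathcal{O}\left(  U\right)  $, cf. Section \ref{subsecOQ}), and $p\left(  U\right)  =\prod_{d}p_{d}\left(  U\right)  \colon\mathcal{F}_{q}\left(  U\right)  \overset{\simeq}{\longrightarrow}\prod_{d}\mathcal{F}_{q}\left(  U\right)  ^{d}$ is a topological isomorphism; I only have to put these together. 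First I would check that $\Lambda(U)$ is a well defined continuous unital algebra homomorphism onto $\mathcal{O}\left(  U\right)  $: it takes values in $\mathcal{O}\left(  U\right)  =\mathcal{O}\left(  U_{x}\right)  \underset{\mathbb{C}}{\times}\mathcal{O}\left(  U_{y}\right)  $ because the case $i=k=0$ of the compatibility relations (\ref{FqF}) is exactly $f_{0}\left(  0\right)  =g_{0}\left(  0\right)  $; it is continuous since it only reads off the $y^{0}$- and $x^{0}$-coefficients; and it is multiplicative because, by (\ref{5p2}), the $y^{0}$-coefficient of a product in $\mathcal{O}\left(  U_{x}\right)  \left[  \left[  y\right]  \right]  $ is $f_{0}\left(  z\right)  f_{0}^{\prime}\left(  z\right)  $ and the $x^{0}$-coefficient of a product in $\left[  \left[  x\right]  \right]  \mathcal{O}\left(  U_{y}\right)  $ is $g_{0}\left(  w\right)  g_{0}^{\prime}\left(  w\right)  $.

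Comparing the definitions of $\alpha_{0}$ and of $p_{0}(U)$ gives the identity $p_{0}(U)=\alpha_{0}(U)\circ\Lambda(U)$; since $\alpha_{0}(U)$ is injective this yields $\Lambda(U)\circ\alpha_{0}(U)=\operatorname{id}_{\mathcal{O}(U)}$, so $\Lambda(U)$ is a retraction, and since $\alpha_{0}\colon\mathcal{O}\left(  U\right)  \to\mathcal{F}_{q}\left(  U\right)  ^{0}$ is a topological isomorphism, $\alpha_{0}(U)$ identifies $\mathcal{O}\left(  U\right)  $ with the complemented closed subspace $\mathcal{F}_{q}\left(  U\right)  ^{0}$, with complementary continuous projection $p_{0}(U)$. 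The direct sum decomposition is then formal: the continuous idempotent $p_{0}(U)=\alpha_{0}(U)\Lambda(U)$ has range $\mathcal{F}_{q}\left(  U\right)  ^{0}\cong\mathcal{O}\left(  U\right)  $ and kernel $\ker\Lambda(U)$, whence $\mathcal{F}_{q}\left(  U\right)  =\mathcal{O}\left(  U\right)  \oplus\ker\Lambda(U)$ as a topological direct sum of closed subspaces. One checks directly from the formulas that $\ker p_{0}(U)=\{\left(  f,g\right)  \in\mathcal{F}_{q}\left(  U\right)  :f_{0}\equiv0,\ g_{0}\equiv0\}=\ker\Lambda(U)$, and applying $p\left(  U\right)  =\prod_{d}p_{d}\left(  U\right)  $ carries this subspace onto $\{0\}\times\prod_{d\geq1}\mathcal{F}_{q}\left(  U\right)  ^{d}$; hence $\ker\Lambda(U)\cong\prod_{d\in\mathbb{N}}\mathcal{F}_{q}\left(  U\right)  ^{d}$ as Fr\'{e}chet spaces.

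It remains to recognise $\ker\Lambda(U)$ as $\operatorname{Rad}\mathcal{F}_{q}\left(  U\right)  $ and to compute the spectrum. Since $\Lambda(U)$ is an algebra homomorphism, $\ker\Lambda(U)$ is a closed two-sided ideal with $\mathcal{F}_{q}\left(  U\right)  /\ker\Lambda(U)\cong\mathcal{O}\left(  U\right)  $; as $\mathcal{O}\left(  U\right)  $ has zero Jacobson radical (its characters are the point evaluations, cf. the identification $\operatorname{Spec}\left(  \mathcal{O}\left(  U\right)  \right)  =U$ in Section \ref{subsecOQ}), we get $\operatorname{Rad}\mathcal{F}_{q}\left(  U\right)  \subseteq\ker\Lambda(U)$. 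For the reverse inclusion take $a\in\ker\Lambda(U)$ and $b\in\mathcal{F}_{q}\left(  U\right)  $; then $ba=\left(  \widetilde{f},\widetilde{g}\right)  $ again has $\widetilde{f}_{0}\equiv0$, $\widetilde{g}_{0}\equiv0$, i.e. $\widetilde{f}$ lies in the ideal $\left(  y\right)  $ of $\mathcal{O}\left(  U_{x}\right)  \left[  \left[  y\right]  \right]  $ and $\widetilde{g}$ in the ideal $\left(  x\right)  $ of $\left[  \left[  x\right]  \right]  \mathcal{O}\left(  U_{y}\right)  $. Because the $q$-multiplication (\ref{mulFr}) raises the total order in $x,y$ additively (a product of elements of orders $\geq d$ and $\geq e$ has order $\geq d+e$), the Neumann series $\sum_{n\geq0}\widetilde{f}^{\,n}$ and $\sum_{n\geq0}\widetilde{g}^{\,n}$ converge in the respective Fr\'{e}chet algebras (only finitely many terms contribute to each coefficient), so $1-\widetilde{f}$ and $1-\widetilde{g}$ are invertible, and their inverses agree with $\bigl(1-s_{x}(\widetilde{f})\bigr)^{-1}=\bigl(1-s_{y}(\widetilde{g})\bigr)^{-1}$ in $\mathbb{C}_{q}\left[  \left[  x,y\right]  \right]  $; hence $1-ba$ is invertible in the fibered product $\mathcal{F}_{q}\left(  U\right)  $. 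Therefore $\ker\Lambda(U)\subseteq\operatorname{Rad}\mathcal{F}_{q}\left(  U\right)  $, so $\operatorname{Rad}\mathcal{F}_{q}\left(  U\right)  =\ker\Lambda(U)=\prod_{d\in\mathbb{N}}\mathcal{F}_{q}\left(  U\right)  ^{d}$. Finally every continuous character of $\mathcal{F}_{q}\left(  U\right)  $ kills $\operatorname{Rad}\mathcal{F}_{q}\left(  U\right)  $, hence factors through $\mathcal{O}\left(  U\right)  $, while every character of $\mathcal{O}\left(  U\right)  $ pulls back along $\Lambda(U)$; thus $\operatorname{Spec}\left(  \mathcal{F}_{q}\left(  U\right)  \right)  =\operatorname{Spec}\left(  \mathcal{O}\left(  U\right)  \right)  =U$.

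The step I expect to be the real obstacle is the inclusion $\ker\Lambda(U)\subseteq\operatorname{Rad}\mathcal{F}_{q}\left(  U\right)  $: one must be careful that the Neumann inverses built in the two factors genuinely lie inside the fibered product $\mathcal{F}_{q}\left(  U\right)  $ and that the series converge in the correct (product) Fr\'{e}chet topology rather than only formally. The filtered structure of the $q$-multiplication that makes this work is essentially contained in \cite{Dosi25}, so in the write-up this can largely be quoted; everything else is bookkeeping with the explicit formulas for $\Lambda(U)$, $\alpha_{0}(U)$, $p_{d}(U)$ already recorded above.
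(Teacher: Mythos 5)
The paper itself does not prove this theorem: it states it and refers the reader to \cite{Dosi25} (``That is the Decomposition Theorem from \cite{Dosi25}''). So there is no in-paper proof to compare against, but your reconstruction is a natural and correct assembly of the facts recorded just before the theorem (the projections $p_{d}(U)$, the isomorphism $p(U)=\prod_{d}p_{d}(U)$, and the isomorphisms $\alpha_{d}$) together with the Neumann-series identification of $\ker\Lambda(U)$ with $\operatorname{Rad}\mathcal{F}_{q}(U)$.

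A few small points worth tidying. In the multiplicativity check of $\Lambda(U)$ you write that the $y^{0}$-coefficient of a product is $f_{0}(z)f_{0}'(z)$; from (\ref{5p2}) it is $f_{0}(z)g_{0}(z)$ (the constant coefficient of the \emph{other} factor, not a derivative), and similarly $g_{0}(w)v_{0}(w)$ in the right model. In deducing $\Lambda(U)\circ\alpha_{0}(U)=\operatorname{id}$ from $p_{0}(U)=\alpha_{0}(U)\circ\Lambda(U)$, the injectivity of $\alpha_{0}$ alone is not enough; you also need that $p_{0}(U)$ restricts to the identity on $\mathcal{F}_{q}(U)^{0}=\operatorname{im}\alpha_{0}(U)$ (which is exactly the statement that $p_{0}(U)$ is a projection \emph{onto} $\mathcal{F}_{q}(U)^{0}$, recorded in the paper), after which $\alpha_{0}\Lambda\alpha_{0}=\alpha_{0}$ and injectivity of $\alpha_{0}$ give the claim. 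Finally, for $\ker\Lambda(U)\subseteq\operatorname{Rad}\mathcal{F}_{q}(U)$ it is cleanest to phrase the Neumann argument as: $\ker\Lambda(U)$ is a closed two-sided ideal, every $a=(\widetilde{f},\widetilde{g})$ in it has $\widetilde{f}_{0}=0$, $\widetilde{g}_{0}=0$, and since by (\ref{5p2}) the $y^{k}$-coefficient of $\widetilde{f}^{\,n}$ vanishes for $n>k$ (and symmetrically for $\widetilde{g}$), the partial sums of $\sum_{n}\widetilde{f}^{\,n}$ stabilize coordinatewise in the product Fr\'{e}chet topology; the resulting inverses are compatible under $s_{x},s_{y}$ because those are continuous algebra homomorphisms, so $1-a$ is invertible in the fibered product. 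With that, the standard characterization of the Jacobson radical via a two-sided ideal of quasi-invertibles gives the inclusion. None of this changes the substance: the proof is correct.
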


That is the Decomposition Theorem from \cite{Dosi25}, which states that the
noncommutative analytic space $\left(  \mathbb{C}_{xy},\mathcal{F}_{q}\right)
$ can be obtained by means of the quantization of the commutative analytic
space $\left(  \mathbb{C}_{xy},\mathcal{O}\right)  $ in the following way
\[
\mathcal{F}_{q}=\prod\limits_{d\in\mathbb{Z}_{+}}\mathcal{O}\left(  d\right)
\]
up to a Fr\'{e}chet space sheaf isomorphism. Moreover, $\mathcal{O}$ can be
restored as the quotient $\mathcal{F}_{q}/\operatorname{Rad}\mathcal{F}%
_{q}=\mathcal{O}$, where $\operatorname{Rad}\mathcal{F}_{q}$ is the sheaf
associated to the presheaf $U\mapsto\operatorname{Rad}\mathcal{F}_{q}\left(
U\right)  $.

\subsection{Runge $q$-open subsets}

Recall that an open subset $V\subseteq\mathbb{C}$ in the standard topology is
said to be a Runge open subset if it has no holes or every component of $V$ is
a simply connected domain. A Runge open subset $V$ can also be defined in
terms of the density of the polynomial algebra $\mathbb{C}\left[  z\right]  $
in the Fr\'{e}chet algebra $\mathcal{O}\left(  V\right)  $.

\begin{definition}
\label{def6}A $q$-open and Runge open subset of the complex plane $\mathbb{C}$
is called \textit{a Runge }$q$\textit{-open set}.
\end{definition}

The basic $q$-open subsets considered in \cite{Dosi24} are all Runge $q$-open
subsets. Namely, take $\lambda\in\mathbb{C}\backslash\left\{  0\right\}  $ and
its $\mathfrak{q}$-neighborhood $U$, which is a $q$-open subset containing
$\lambda$. The $q$-hull $\left\{  \lambda\right\}  _{q}$ of $\lambda$ (see
Subsection \ref{subsecQT}) is contained in $U$. Notice that $B\left(
0,\varepsilon\right)  \subseteq U$ for a small $\varepsilon>0$, and there are
just finitely many of $q^{m}\lambda$ outside of the disk $B\left(
0,\varepsilon\right)  $, say $\left\{  \lambda,q\lambda,\ldots q^{n}%
\lambda\right\}  $. Pick up small disks $B\left(  q^{m}\lambda,\left\vert
q\right\vert ^{m}\delta\right)  \subseteq U$, $0\leq m\leq n$ for $\delta>0$.
Then $U_{\varepsilon,\delta}=B\left(  0,\varepsilon\right)  \cup
\bigcup\limits_{m=0}^{n}B\left(  q^{m}\lambda,\left\vert q\right\vert
^{m}\delta\right)  $ is a Runge $q$-open neighborhood of $\left\{
\lambda\right\}  _{q}$ contained in $U$. A $q$-open subset $U\subseteq
\mathbb{C}_{xy}$ is said to be \textit{a Runge }$q$\textit{-open subset }if
both parts $U_{x}$ and $U_{y}$ are Runge $q$-open subsets of the complex plane
in the sense of Definition \ref{def6}. Thus Runge $q$-open subsets in the
$q$-plane $\mathbb{C}_{xy}$ form a topology basis.

\begin{proposition}
\label{propRunge}Let $U\subseteq\mathbb{C}_{xy}$ be a $q$-open subset. Then
$U$ is a Runge $q$-open subset if and only if $\mathfrak{A}_{q}$ is dense in
$\mathcal{F}_{q}\left(  U\right)  $.
\end{proposition}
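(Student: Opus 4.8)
The plan is to prove both implications through the Decomposition Theorem (Theorem \ref{thmainDecom}), which provides a topological isomorphism $\mathcal{F}_q(U)\cong\prod_{d\in\mathbb{Z}_+}\mathcal{O}(d)(U)\cong\prod_{d\in\mathbb{Z}_+}\mathcal{O}(U)$, the last step coming from the $d$-isomorphisms (\ref{imoo}) together with the maps $\alpha_d$; in this picture the Fr\'echet topology of $\mathcal{F}_q(U)$ is componentwise convergence in copies of $\mathcal{O}(U)=\mathcal{O}(U_x)\times_{\mathbb{C}}\mathcal{O}(U_y)$. The first thing I would do is identify the image of a polynomial $p=\sum_{i,k}a_{ik}x^iy^k\in\mathfrak{A}_q$ in this decomposition. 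Writing out $p_d(U)$ on $(l(p),r(p))$ and then applying $\alpha_d^{-1}$ and the $d$-shift $z^{-d}\times_1 w^{-d}$, one finds that the $d$-th component of $p$ equals the pair $\bigl(\sum_{j\geq0}a_{d+j,\,d}z^j,\ \sum_{j\geq0}a_{d,\,d+j}w^j\bigr)$ in $\mathcal{O}(U_x)\times_{\mathbb{C}}\mathcal{O}(U_y)$. Hence this component depends only on the entries of $p$ sitting in the hook $H_d=\{(i,d):i\geq d\}\cup\{(d,k):k\geq d\}=\{(i,k):\min(i,k)=d\}$; since the hooks $\{H_d\}_{d\in\mathbb{Z}_+}$ partition $\mathbb{Z}_+\times\mathbb{Z}_+$, the components of $p$ belonging to distinct $d$ involve disjoint sets of coefficients and may be chosen independently of one another.

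For the implication ``$U$ Runge $\Rightarrow\mathfrak{A}_q$ dense'', fix $\zeta\in\mathcal{F}_q(U)$ with components $\zeta^{(d)}=(\phi_d,\psi_d)\in\mathcal{O}(U_x)\times_{\mathbb{C}}\mathcal{O}(U_y)$, a basic neighbourhood of $\zeta$ --- it constrains only finitely many components, say those with $d\leq D$, and only finitely many seminorms --- and $\varepsilon>0$. For each $d\leq D$ I would choose, using the density of $\mathbb{C}[z]$ in $\mathcal{O}(U_x)$ and of $\mathbb{C}[w]$ in $\mathcal{O}(U_y)$ guaranteed by the Runge hypothesis, polynomials approximating $\phi_d$ in $\mathcal{O}(U_x)$ and $\psi_d$ in $\mathcal{O}(U_y)$, and then subtract off their constant terms and insert the common value $\phi_d(0)=\psi_d(0)$ (a change of order $\varepsilon$, since evaluation at $0\in U_x$ is continuous on $\mathcal{O}(U_x)$ and likewise on $U_y$); this yields polynomials $\tilde\phi_d,\tilde\psi_d$ with $\tilde\phi_d(0)=\tilde\psi_d(0)$, so that $(\tilde\phi_d,\tilde\psi_d)$ is an honest element of $\mathcal{O}(U_x)\times_{\mathbb{C}}\mathcal{O}(U_y)$ still within $\varepsilon$ of $\zeta^{(d)}$. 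Now define $p\in\mathfrak{A}_q$ by prescribing, on each hook $H_d$ with $d\leq D$, the coefficients read off from $\tilde\phi_d$ and $\tilde\psi_d$ (they are compatible at the corner cell $(d,d)$ exactly because the constant terms were matched) and setting all other coefficients to zero. By the computation of the first paragraph, $p$ has $d$-th component $(\tilde\phi_d,\tilde\psi_d)$ for $d\leq D$ and $0$ for $d>D$, hence lies in the given neighbourhood of $\zeta$; therefore $\mathfrak{A}_q$ is dense in $\mathcal{F}_q(U)$.

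For the converse, I would use the continuous linear map $\mathcal{F}_q(U)\to\mathcal{O}(U_x)$ sending $(f,g)$ to the $y^0$-coefficient $f_0$. It carries $\mathfrak{A}_q$ onto $\mathbb{C}[z]$, and it is surjective: for $h\in\mathcal{O}(U_x)$ the pair $\bigl(h,\ \sum_{i\geq0}\tfrac{h^{(i)}(0)}{i!}x^i\bigr)$ belongs to $\mathcal{F}_q(U)$ by the description (\ref{FqF}) and is sent to $h$. A surjective continuous linear map of Fr\'echet spaces is open, so it maps the dense set $\mathfrak{A}_q$ to a dense set; thus $\mathbb{C}[z]$ is dense in $\mathcal{O}(U_x)$ and $U_x$ is a Runge $q$-open set. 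The symmetric argument, using $(f,g)\mapsto g_0\in\mathcal{O}(U_y)$ and interchanging the roles of $x$ and $y$, shows $U_y$ is Runge, whence $U$ is a Runge $q$-open subset. I expect the only real difficulty to be the explicit bookkeeping in the first paragraph --- checking that $p_d(U)$, $\alpha_d$ and the $d$-shift compose to the stated hook formula --- because it is precisely this decoupling of the components along the hooks $H_d$ that legitimises the independent hook-by-hook approximation used in the ``Runge $\Rightarrow$ dense'' direction; everything else is routine once Theorem \ref{thmainDecom} is in hand.
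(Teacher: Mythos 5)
Your argument is correct and follows essentially the same strategy as the paper's proof: reduce to density in the factors of the decomposition $\mathcal{F}_q(U)\cong\prod_d\mathcal{O}(d)$ via Theorem \ref{thmainDecom}, approximate in each factor using the Runge hypothesis, and for the converse project onto $\mathcal{O}(U_x)$ and $\mathcal{O}(U_y)$ by the $y^0$- (resp.\ $x^0$-) coefficient maps. Your explicit ``hook'' formula --- that the $d$-th component of a polynomial $p=\sum a_{ik}x^iy^k$ is $\bigl(\sum_j a_{d+j,d}z^j,\ \sum_j a_{d,d+j}w^j\bigr)$, depending only on $\{(i,k):\min(i,k)=d\}$ --- is a correct and slightly more concrete reorganization of what the paper does implicitly: the paper instead shows that, for each fixed $d$, the polynomials $\alpha_d(u_n,v_n)$ (obtained from approximating polynomials $u_n\to f$, $v_n\to g$ after projection by $P_d$ and a $\lambda_n z^d$ correction) form a sequence in $\mathfrak{A}_q\cap\mathcal{F}_q(U)^d$ dense in $\mathcal{F}_q(U)^d$, and then uses density of $\oplus_d\mathcal{F}_q(U)^d$ in the product; these $\alpha_d(u_n,v_n)$ are exactly the hook-supported polynomials you describe. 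Your constant-term matching at the corner cell $(d,d)$ corresponds to the paper's $\lambda_n z^d$ adjustment. For the converse, your map $(f,g)\mapsto f_0$ is the same as the paper's composition of $\pi_x$ with the quotient by $\operatorname{Rad}\mathcal{O}(U_x)[[y]]$; you add the explicit surjectivity check $h\mapsto\bigl(h,\sum_i\tfrac{h^{(i)}(0)}{i!}x^i\bigr)$, which the paper asserts without comment.
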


\begin{proof}
First assume that $U$ is a Runge $q$-open subset. Take $h=\left(  f,g\right)
\in\mathcal{O}\left(  d\right)  \left(  U\right)  $ to be identified with
\[
\alpha_{d}\left(  h\right)  =\left(  f\left(  z\right)  y^{d}+\sum
_{k>d}\left(  a_{k}z^{d}\right)  y^{k},x^{d}g\left(  w\right)  +\sum
_{i>d}x^{i}\left(  b_{i}w^{d}\right)  \right)  \in\mathcal{F}_{q}\left(
U\right)  ^{d},
\]
where $a_{k}=\dfrac{g^{\left(  k\right)  }\left(  0\right)  }{k!}$ and
$b_{i}=\dfrac{f^{\left(  i\right)  }\left(  0\right)  }{i!}$ for all $k,i>d$.
There are polynomials $\left\{  u_{n}\right\}  \subseteq\mathbb{C}\left[
z\right]  $ and $\left\{  v_{n}\right\}  \subseteq\mathbb{C}\left[  w\right]
$ such that $\lim_{n}u_{n}=f$ in $\mathcal{O}\left(  U_{x}\right)  $ and
$\lim_{n}v_{n}=g$ in $\mathcal{O}\left(  U_{y}\right)  $. Based on the
continuity of the differentiation operator on the Fr\'{e}chet spaces
$\mathcal{O}\left(  U_{x}\right)  $ and $\mathcal{O}\left(  U_{y}\right)  $,
we derive that $a_{k}=\lim_{n}\dfrac{v_{n}^{\left(  k\right)  }\left(
0\right)  }{k!}$ and $b_{i}=\lim_{n}\dfrac{u_{n}^{\left(  i\right)  }\left(
0\right)  }{i!}$. Since $f^{\left(  i\right)  }\left(  0\right)  =0$,
$g^{\left(  k\right)  }\left(  0\right)  =0$, $i,k<d$, we can also assume that
$u_{n}^{\left(  i\right)  }\left(  0\right)  =0$, $v_{n}^{\left(  k\right)
}\left(  0\right)  =0$, $i,k<d$ for all $n$. One needs to replace $u_{n}$,
$v_{n}$ by $P_{d}\left(  u_{n}\right)  $, $P_{d}\left(  v_{n}\right)  $ using
the projection $P_{d}$ from the proof of Lemma \ref{lemMD}. Moreover,
replacing $u_{n}\left(  z\right)  $ by $u_{n}\left(  z\right)  +\lambda
_{n}z^{d}$, $\lambda_{n}=\dfrac{v_{n}^{\left(  d\right)  }\left(  0\right)
}{d!}-\dfrac{u_{n}^{\left(  d\right)  }\left(  0\right)  }{d!}$, we can assume
that $h_{n}=\left(  u_{n},v_{n}\right)  \in\mathcal{O}\left(  d\right)
\left(  U\right)  $ for all $n$. Since $\lim_{n}\lambda_{n}=0$, it follows
that $\lim_{n}h_{n}=h$ in $\mathcal{O}\left(  d\right)  \left(  U\right)  $.
Further, $h_{n}$ is identified with
\[
\alpha_{d}\left(  h_{n}\right)  =\left(  u_{n}\left(  z\right)  y^{d}%
+\sum_{k>d}\left(  a_{k}^{\left(  n\right)  }z^{d}\right)  y^{k},x^{d}%
v_{n}\left(  w\right)  +\sum_{i>d}x^{i}\left(  b_{i}^{\left(  n\right)  }%
w^{d}\right)  \right)  \in\mathcal{F}_{q}\left(  U\right)  ^{d},
\]
where $a_{k}^{\left(  n\right)  }=\dfrac{v_{n}^{\left(  k\right)  }\left(
0\right)  }{k!}$ and $b_{i}^{\left(  n\right)  }=\dfrac{u_{n}^{\left(
i\right)  }\left(  0\right)  }{i!}$ for all $k,i>d$. Notice that $\left\{
\alpha_{d}\left(  h_{n}\right)  \right\}  \subseteq\mathfrak{A}_{q}$ and
$\alpha_{d}\left(  h\right)  =\lim_{n}\alpha_{d}\left(  h_{n}\right)  $. Thus
$\mathfrak{A}_{q}$ is dense in $\mathcal{O}\left(  d\right)  $, that is,
$\mathcal{O}\left(  d\right)  \subseteq\mathfrak{A}_{q}^{-}$. It follows that
$\oplus_{d}\mathcal{O}\left(  d\right)  \subseteq\mathfrak{A}_{q}^{-}$ holds
too. But $\oplus_{d}\mathcal{O}\left(  d\right)  $ is dense in $\mathcal{F}%
_{q}\left(  U\right)  $ by the Decomposition Theorem \ref{thmainDecom}. Hence
$\mathfrak{A}_{q}^{-}=\mathcal{F}_{q}\left(  U\right)  $.

Conversely, assume that $\mathfrak{A}_{q}$ is dense $\mathcal{F}_{q}\left(
U\right)  $. Let us consider the quotient map $\mathcal{O}\left(
U_{x}\right)  \left[  \left[  y\right]  \right]  \rightarrow\mathcal{O}\left(
U_{x}\right)  $ modulo the Jacobson radical $\operatorname{Rad}\mathcal{O}%
\left(  U_{x}\right)  \left[  \left[  y\right]  \right]  $ followed by the
canonical projection $\pi_{x}:\mathcal{F}_{q}\left(  U\right)  \rightarrow
\mathcal{O}\left(  U_{x}\right)  \left[  \left[  y\right]  \right]  $. Thus we
have the surjective continuous homomorphism $\mathcal{F}_{q}\left(  U\right)
\rightarrow\mathcal{O}\left(  U_{x}\right)  $ annihilating $y$. Since the
image of $\mathfrak{A}_{q}$ under that map is reduced to the $\mathbb{C}%
\left[  z\right]  $, we obtain that $\mathbb{C}\left[  z\right]  $ is dense in
$\mathcal{O}\left(  U_{x}\right)  $, that is, $U_{x}$ is a Runge open subset.
In a similar way, by using the projection $\pi_{y}$, we conclude that $U_{y}$
is a Runge open subset. Whence $U$ is a Runge $q$-open subset.
\end{proof}

\subsection{The multiplication operators on $\mathcal{F}_{q}$}

Now we calculate the multiplication operators on $\mathcal{F}_{q}$ based on
the Decomposition Theorem \ref{thmainDecom}.

\begin{lemma}
\label{lemMO1}Let $L_{x,d}$, $L_{y,d}$, $R_{x,d}$ and $R_{y,d}$ be the
multiplication operators by $x$ and $y$ on $\mathcal{F}_{q}$ restricted to the
complemented subspace $\mathcal{O}\left(  d\right)  $. Then
\[
L_{x,d}=z+M_{x,d},\quad L_{y,d}=q^{d}w+M_{y,d},\quad R_{x,d}=q^{d}%
z+N_{y,d}\text{,\quad}R_{y,d}=w+N_{x,d},
\]
where $M_{y,d}$, $N_{x,d}$ are the operators from (\ref{Nd}).
\end{lemma}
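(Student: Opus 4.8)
The plan is to prove this by a direct computation based on the Decomposition Theorem \ref{thmainDecom}, using the explicit formulas for $\alpha_d$ and the projections $p_{d'}(U)$ recalled above. Fix a $q$-open $U\subseteq\mathbb{C}_{xy}$ and identify the complemented subspace $\mathcal{O}(d)$ of $\mathcal{F}_q$ with $\mathcal{F}_q(U)^d$ through $\alpha_d$. For $\zeta=(f,g)\in\mathcal{O}(d)(U)$ I would first write out $\alpha_d(\zeta)=(F,G)$ with $F=f(z)y^d+\sum_{k>d}\tfrac{g^{(k)}(0)}{k!}z^dy^k\in\mathcal{O}(U_x)[[y]]$ and $G=x^dg(w)+\sum_{i>d}\tfrac{f^{(i)}(0)}{i!}x^iw^d\in[[x]]\mathcal{O}(U_y)$, and then compute each of the four products $x\cdot\alpha_d(\zeta)$, $\alpha_d(\zeta)\cdot x$, $y\cdot\alpha_d(\zeta)$, $\alpha_d(\zeta)\cdot y$ inside $\mathcal{F}_q(U)\subseteq\mathcal{O}(U_x)[[y]]\oplus[[x]]\mathcal{O}(U_y)$. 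Here the formal $q$-multiplication \eqref{5p2} (equivalently the commutation rules $y^ng(x)=g(q^nx)y^n$, $f(y)x^n=x^nf(q^ny)$ of Subsection \ref{subsecFQM}) enters coordinatewise: on $\mathcal{O}(U_x)[[y]]$ left multiplication by $x$ multiplies each $y^n$-coefficient by $z$, right multiplication by $x$ multiplies it by $q^nz$, left multiplication by $y$ sends $F=\sum_nF_n(z)y^n$ to $\sum_nF_{n-1}(qz)y^n$ and right multiplication by $y$ is the plain shift; on $[[x]]\mathcal{O}(U_y)$ left multiplication by $x$ is the shift $x^i\mapsto x^{i+1}$, right multiplication by $x$ sends $\sum_ix^ig_i(w)$ to $\sum_ix^{i+1}g_i(qw)$, left multiplication by $y$ sends it to $\sum_iq^ix^i\,wg_i(w)$ and right multiplication by $y$ to $\sum_ix^i\,wg_i(w)$. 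These are exactly the rules recorded in Lemma \ref{lemCLR} and Corollary \ref{corCLR1}.

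The next step is to apply the projections $p_{d'}(U)$, $d'\in\mathbb{Z}_+$, to each of the four resulting elements of $\mathcal{F}_q(U)$ and to check that only the components $d'=d$ and $d'=d+1$ survive. The vanishing for $d'<d$ is immediate: on the $\mathcal{O}(U_x)[[y]]$-coordinate the $y^n$-coefficients for $n<d$ remain zero, and on the $[[x]]\mathcal{O}(U_y)$-coordinate the lowest occurring power of $x$ is at least $d$, so every ingredient of $p_{d'}(U)$ is zero. The vanishing for $d'>d+1$ is the only part requiring a genuine (but short) computation: the correction terms $\sum_{i<d'}\tfrac{(\cdot)_i^{(d')}(0)}{d'!}z^i$ built into $p_{d'}(U)$ out of the $[[x]]\mathcal{O}(U_y)$-coordinate exactly cancel the surviving $y^{d'}$-coefficient produced by the $\mathcal{O}(U_x)[[y]]$-coordinate, while the remaining higher-in-$y$ contributions vanish because they involve $d'$-th derivatives at the origin of the monomials $z^d$ or $w^d$ of degree $d<d'$.

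Finally, transporting the surviving $d$- and $(d+1)$-components back through $\alpha_d^{-1}$ and $\alpha_{d+1}^{-1}$ and matching with \eqref{RzRw1} and \eqref{Nd}, the $d$-components come out as $(zf,0)$, $(0,q^dwg)$, $(q^dzf,0)$, $(0,wg)$ for $L_{x,d}$, $L_{y,d}$, $R_{x,d}$, $R_{y,d}$ respectively, i.e. the operators $z$, $q^dw$, $q^dz$, $w$ of \eqref{RzRw1}; and the $(d+1)$-components come out as $M_{x,d}\zeta$, $M_{y,d}\zeta$, $N_{y,d}\zeta$, $N_{x,d}\zeta$, once one recognizes the identities $f-\tfrac{f^{(d)}(0)}{d!}z^d=P_{d+1}(f)$ and $\Delta_qf-\tfrac{q^df^{(d)}(0)}{d!}z^d=P_{d+1}(\Delta_qf)$ (and their $w$-analogues) that produce the difference and differentiation expressions of \eqref{Nd}. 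The main obstacle is purely organizational — carrying all the Taylor coefficients simultaneously through $\alpha_d$, the four multiplication rules, and the projections $p_{d'}(U)$ — together with identifying the $(d+1)$-component with the named operators $M_{x,d}$, $M_{y,d}$, $N_{x,d}$, $N_{y,d}$; once the two-step pattern (the $d$-part a scalar multiple of $z$ or $w$, the $(d+1)$-part a difference/differentiation operator) is isolated, each of the four cases is a routine verification entirely parallel to the computations in Lemma \ref{lemTech1} and Lemma \ref{lemTech3}.
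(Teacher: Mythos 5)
Your proposal is correct and follows essentially the same route as the paper: identify $\mathcal{O}(d)$ with $\mathcal{F}_q(U)^d$ via $\alpha_d$, compute the four products coordinatewise using the $q$-commutation rules, and recognize that only the $d$- and $(d+1)$-components survive, matching them with $z$, $q^dw$, $q^dz$, $w$ and $M_{x,d}$, $M_{y,d}$, $N_{y,d}$, $N_{x,d}$. The paper obtains the decomposition by directly rewriting the product as $\alpha_d(\cdot)+\alpha_{d+1}(\cdot)$ (and refers to \cite{Dosi25} for $R_{x,d}$, $R_{y,d}$) rather than by applying the projections $p_{d'}(U)$ for all $d'$, but this is only an expository difference, not a mathematical one.
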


\begin{proof}
Take $\zeta=\left(  f,g\right)  \in\mathcal{O}\left(  d\right)  $ to be
identified with
\[
\alpha_{d}\left(  f,g\right)  =\left(  f\left(  z\right)  y^{d}+\sum
_{k>d}\left(  a_{k}z^{d}\right)  y^{k},x^{d}g\left(  w\right)  +\sum
_{i>d}x^{i}\left(  b_{i}w^{d}\right)  \right)  ,
\]
where $a_{k}=\dfrac{g^{\left(  k\right)  }\left(  0\right)  }{k!}$ and
$b_{i}=\dfrac{f^{\left(  i\right)  }\left(  0\right)  }{i!}$ for all $k,i>d$.
Thus $\zeta\in\mathcal{F}_{q}\left(  U\right)  ^{d}$ and we have (see
(\ref{RzRw1}))
\begin{align*}
L_{x}\zeta &  =x\alpha_{d}\left(  \zeta\right)  =\left(  zf\left(  z\right)
y^{d}+\sum_{k>d}\left(  a_{k}z^{d+1}\right)  y^{k},x^{d+1}g\left(  w\right)
+\sum_{i>d+1}x^{i}\left(  b_{i-1}w^{d}\right)  \right) \\
&  =\left(  zf\left(  z\right)  y^{d},\sum_{i>d}x^{i}\left(  \dfrac{f^{\left(
i-1\right)  }\left(  0\right)  }{\left(  i-1\right)  !}w^{d}\right)  \right)
+\left(  \sum_{k>d}\left(  a_{k}z^{d+1}\right)  y^{k},x^{d+1}\left(  g\left(
w\right)  -\dfrac{f^{\left(  d\right)  }\left(  0\right)  }{d!}w^{d}\right)
\right) \\
&  =\alpha_{d}\left(  z\zeta\right)  +\left(  \left(  a_{d+1}z^{d+1}\right)
y^{d+1}+\sum_{k>d+1}\left(  a_{k}z^{d+1}\right)  y^{k},x^{d+1}\left(  g\left(
w\right)  -\dfrac{g^{\left(  d\right)  }\left(  0\right)  }{d!}w^{d}\right)
\right) \\
&  =\alpha_{d}\left(  z\zeta\right)  +\alpha_{d+1}\left(  \dfrac{g^{\left(
d+1\right)  }\left(  0\right)  }{\left(  d+1\right)  !}z^{d+1},P_{d+1}\left(
g\right)  \left(  w\right)  \right)  =\alpha_{d}\left(  z\zeta\right)
+\alpha_{d+1}\left(  M_{x,d}\zeta\right)  ,
\end{align*}
where $M_{x,d}:\mathcal{O}\left(  d\right)  \rightarrow\mathcal{O}\left(
d+1\right)  $ is the operator introduced above in (\ref{Nd}). Thus the
equality $L_{x}\zeta=z\zeta+M_{x,d}\zeta$ holds up to the topological
isomorphisms. In a similar way using (\ref{5p2}), we derive that
\begin{align*}
L_{y}\zeta &  =y\alpha_{d}\left(  \zeta\right)  =\left(  f\left(  qz\right)
y^{d+1}+\sum_{k>d}\left(  a_{k}q^{d}z^{d}\right)  y^{k+1},q^{d}x^{d}wg\left(
w\right)  +\sum_{i>d}q^{i}x^{i}\left(  b_{i}w^{d+1}\right)  \right) \\
&  =\left(  \sum_{k>d}\left(  a_{k-1}q^{d}z^{d}\right)  y^{k},q^{d}%
x^{d}wg\left(  w\right)  \right)  +\left(  \left(  f\left(  qz\right)
-\dfrac{g^{\left(  d\right)  }\left(  0\right)  }{d!}q^{d}z^{d}\right)
y^{d+1},\sum_{i>d}x^{i}\left(  q^{i}b_{i}w^{d+1}\right)  \right) \\
&  =q^{d}\alpha_{d}\left(  w\zeta\right)  +\left(  \left(  f\left(  qz\right)
-\dfrac{f^{\left(  d\right)  }\left(  0\right)  }{d!}q^{d}z^{d}\right)
y^{d+1},\sum_{i>d}x^{i}\left(  q^{i}\dfrac{f^{\left(  i\right)  }\left(
0\right)  }{i!}w^{d+1}\right)  \right) \\
&  =q^{d}\alpha_{d}\left(  w\zeta\right)  +\alpha_{d+1}\left(  P_{d+1}%
\Delta_{q}\left(  f\right)  ,q^{d+1}\dfrac{f^{\left(  d+1\right)  }\left(
0\right)  }{\left(  d+1\right)  !}w^{d+1}\right)  ,
\end{align*}
that is, $L_{y}\zeta=w\zeta+M_{y,d}\zeta$. The rest formulas were proved in
\cite{Dosi25}.
\end{proof}

\section{The localization property of the sheaf $\mathcal{F}_{q}%
$\label{sectionFGq}}

In this section we solve the localization problem for the algebras of the
structure sheaf $\mathcal{F}_{q}$ over Runge $q$-open subsets.

\subsection{The free $\mathcal{F}_{q}\left(  U\right)  $-bimodule
$\mathcal{F}_{q}\left(  U\right)  ^{\protect\widehat{\otimes}2}$ and the index
order\label{subsecIO}}

Let us fix two copies $U_{1}=U_{z_{1}}\cup U_{w_{1}}$ and $U_{2}=U_{z_{2}}\cup
U_{w_{2}}$ of the same $q$-open subset $U\subseteq\mathbb{C}_{xy}$. The
Fr\'{e}chet space $\mathcal{O}\left(  d\right)  \left(  U_{1}\right)
\widehat{\otimes}\mathcal{O}\left(  l\right)  \left(  U_{2}\right)  $ (or the
related presheaf) is an isomorphic copy of $\mathcal{O}\left(  U_{1}\right)
\widehat{\otimes}\mathcal{O}\left(  U_{2}\right)  $ through the topological
isomorphism
\[
\left(  z^{-d}\times_{1}w^{-d}\right)  \otimes\left(  z^{-l}\times_{1}%
w^{-l}\right)  :\mathcal{O}\left(  d\right)  \left(  U_{1}\right)
\widehat{\otimes}\mathcal{O}\left(  l\right)  \left(  U_{2}\right)
\rightarrow\mathcal{O}\left(  U_{1}\right)  \widehat{\otimes}\mathcal{O}%
\left(  U_{2}\right)
\]
called \textit{the }$\left(  d,l\right)  $\textit{-isomorphism, }which is
combined from $d$ and $l$ isomorphisms (see (\ref{imoo})) of the Fr\'{e}chet
space sheaves over $\mathbb{C}_{xy}$. For brevity, we skip $U$ in the
notations below, and put $\mathcal{O}\left(  d,l\right)  =\mathcal{O}\left(
d\right)  \left(  U_{1}\right)  \widehat{\otimes}\mathcal{O}\left(  l\right)
\left(  U_{2}\right)  $. We also use the notation $\mathcal{O}_{d,l}$ instead
of $\mathcal{O}\left(  U_{1}\right)  \widehat{\otimes}\mathcal{O}\left(
U_{2}\right)  $ to specify the $\left(  d,l\right)  $-isomorphic copy of
$\mathcal{O}\left(  d,l\right)  $. In particular, $\mathcal{O}_{0,0}%
=\mathcal{O}\left(  0,0\right)  =\mathcal{O}\left(  U_{1}\right)
\widehat{\otimes}\mathcal{O}\left(  U_{2}\right)  $.

Recall that the sheaf $\mathcal{F}_{q}=\mathcal{O}\left[  \left[  y\right]
\right]  \underset{\mathbb{C}_{q}\left[  \left[  x,y\right]  \right]
}{\times}\left[  \left[  x\right]  \right]  \mathcal{O}$ was introduced as the
structure sheaf of the noncommutative analytic space of the PI-envelope
$\mathcal{F}_{q}\left(  \mathbb{C}_{xy}\right)  $ of the contractive quantum
plane $\mathfrak{A}_{q}$ (see Definition \ref{def3}). The Decomposition
Theorem \ref{thmainDecom} states that $\mathcal{F}_{q}\left(  U\right)
=\prod\limits_{d\in\mathbb{Z}_{+}}\mathcal{O}\left(  d\right)  \left(
U\right)  $ as a Fr\'{e}chet space. Since the projective tensor product is
compatible with the direct products \cite[II.5.19]{HelHom}, we deduce that
\[
\mathcal{F}_{q}\left(  U\right)  ^{\widehat{\otimes}2}=\prod\limits_{d,l\in
\mathbb{Z}_{+}}\mathcal{O}\left(  d,l\right)  =\prod\limits_{\left(
d,l\right)  \in\mathbb{Z}_{+}^{2}}\mathcal{O}_{d,l}%
\]
up to the topological $\left(  d,l\right)  $-isomorphisms (see Theorem
\ref{thmainDecom}). We can convert all operators occurred in the complex
(\ref{Rsp}) (for the algebra $\mathcal{A=F}_{q}\left(  U\right)  $) into their
$\left(  d,l\right)  $-isomorphic (and $d$-isomorphic) copies. Some long
calculations are placed into the Appendix Section \ref{sectionDCG}.

The index set $\mathbb{Z}_{+}^{2}$ of the decomposition can be (linearly)
ordered in the following natural way. Put
\begin{equation}
\left(  i,j\right)  <\left(  s,t\right)  \text{ if }i+j<s+t\text{, or
}i<s\text{ if }i+j=s+t\text{.}\label{oI}%
\end{equation}
Thus $\mathbb{Z}_{+}^{2}\left(  0\right)  =\left\{  \left(  0,0\right)
\right\}  $ is the least index and the homogeneous components
\[
\mathbb{Z}_{+}^{2}\left(  n\right)  =\left\{  \left(  i,j\right)
:i+j=n\right\}  ,\quad n\geq0
\]
are strongly increasing, whereas in every $\mathbb{Z}_{+}^{2}\left(  n\right)
$ we have the following left-increasing order
\[
\left(  0,n\right)  <\left(  1,n-1\right)  <\cdots<\left(  n,0\right)  .
\]
Notice that every $\mathbb{Z}_{+}^{2}\left(  n\right)  $ can also be ordered
in the right-increasing way. Both cases of the linear order can be used below.

Fix a couple $\left(  d,l\right)  \in\mathbb{Z}_{+}^{2}$. Based on Lemma
\ref{lemMO1} and the notations from Subsections \ref{subsecOF} and
\ref{subsecQRP}, we obtain that
\begin{align*}
\left(  R_{x}\otimes1\right)  |\mathcal{O}_{d,l}  &  =R_{x,d}\otimes
1_{l}=z\otimes q^{d}+N_{y,d}\otimes1=q^{d}z_{1}+N_{y_{1},d},\\
\left(  R_{y}\otimes1\right)  |\mathcal{O}_{d,l}  &  =R_{y,d}\otimes
1_{l}=w\otimes1+N_{x,d}\otimes1=w_{1}+N_{x_{1},d},\\
\left(  1\otimes L_{x}\right)  |\mathcal{O}_{d,l}  &  =1_{d}\otimes
L_{x,l}=1\otimes z+1\otimes M_{x,l}=z_{2}+M_{x_{2},l},\\
\left(  1\otimes L_{y}\right)  |\mathcal{O}_{d,l}  &  =1_{d}\otimes
L_{y,l}=q^{l}\otimes w+1\otimes M_{y,l}=q^{l}w_{2}+M_{y_{2},l},
\end{align*}
where $1_{l}$ and $1_{d}$ are the identity maps. As above consider the complex
(\ref{Rsp}) for the algebra $\mathcal{A=F}_{q}\left(  U\right)  $. It tuns out
to be the following complex
\begin{equation}
0\rightarrow\prod\limits_{\left(  d,l\right)  \in\mathbb{Z}_{+}^{2}%
}\mathcal{O}_{d,l}\overset{d^{0}}{\longrightarrow}\prod\limits_{\left(
d,l\right)  \in\mathbb{Z}_{+}^{2}}\left(
\begin{array}
[c]{c}%
\mathcal{O}_{d,l}\\
\oplus\\
\mathcal{O}_{d,l}%
\end{array}
\right)  \overset{d^{1}}{\longrightarrow}\prod\limits_{\left(  d,l\right)
\in\mathbb{Z}_{+}^{2}}\mathcal{O}_{d,l}\overset{\pi}{\longrightarrow}%
\prod\limits_{n\in\mathbb{Z}_{+}}\mathcal{O}_{n}\rightarrow0 \label{Oc}%
\end{equation}
up to the family of $\left(  d,l\right)  $-isomorphisms and $d$-isomorphisms
(\ref{imoo}). The differentials are acting (see (\ref{d})) in the following
ways%
\begin{align*}
d^{0}|\mathcal{O}_{d,l}  &  =\left[
\begin{array}
[c]{c}%
R_{y,d}\otimes1_{l}-q1_{d}\otimes L_{y,l}\\
1_{d}\otimes L_{x,l}-R_{x,d}\otimes q1_{l}%
\end{array}
\right]  =\left[
\begin{array}
[c]{c}%
w_{1}+N_{x_{1},d}-q^{l+1}w_{2}-qM_{y_{2},l}\\
z_{2}+M_{x_{2},l}-q^{d+1}z_{1}-qN_{y_{1},d}%
\end{array}
\right] \\
&  =\partial_{d,l}^{0}+M_{d,l}^{0}+N_{d,l}^{0},
\end{align*}
where $\partial_{d,l}^{0}=\left[
\begin{array}
[c]{c}%
w_{1}-q^{l+1}w_{2}\\
z_{2}-q^{d+1}z_{1}%
\end{array}
\right]  :\mathcal{O}_{d,l}\rightarrow\mathcal{O}_{d,l}^{\oplus2}$ whereas
\[
M_{d,l}^{0}=\left[
\begin{array}
[c]{c}%
-qM_{y_{2},l}\\
M_{x_{2},l}%
\end{array}
\right]  :\mathcal{O}_{d,l}\rightarrow\mathcal{O}_{d,l+1}^{\oplus2},\quad
N_{d,l}^{0}=\left[
\begin{array}
[c]{c}%
N_{x_{1},d}\\
-qN_{y_{1},d}%
\end{array}
\right]  :\mathcal{O}_{d,l}\rightarrow\mathcal{O}_{d+1,l}^{\oplus2}%
\]
(see Subsection \ref{subsecDFC1}). In the same way, we obtain that
\begin{align*}
d^{1}|\left(  \mathcal{O}_{d,l}\right)  ^{\oplus2}  &  =\left[
\begin{array}
[c]{cc}%
1_{d}\otimes L_{x,l}-R_{x,d}\otimes1_{l} & 1_{d}\otimes L_{y,l}-R_{y,d}%
\otimes1_{l}%
\end{array}
\right] \\
&  =\left[
\begin{array}
[c]{cc}%
z_{2}+M_{x_{2},l}-q^{d}z_{1}-N_{y_{1},d} & q^{l}w_{2}+M_{y_{2},l}%
-w_{1}-N_{x_{1},d}%
\end{array}
\right] \\
&  =\partial_{d,l}^{1}+M_{d,l}-N_{d,l},
\end{align*}
where $\partial_{d,l}^{1}=\left[
\begin{array}
[c]{cc}%
z_{2}-q^{d}z_{1} & q^{l}w_{2}-w_{1}%
\end{array}
\right]  :\mathcal{O}_{d,l}^{\oplus2}\rightarrow\mathcal{O}_{d,l}$ whereas
\[
M_{d,l}=\left[
\begin{array}
[c]{cc}%
M_{x_{2},l} & M_{y_{2},l}%
\end{array}
\right]  :\mathcal{O}_{d,l}^{\oplus2}\rightarrow\mathcal{O}_{d,l+1}%
\text{,\quad}N_{d,l}=\left[
\begin{array}
[c]{cc}%
N_{y_{1},d} & N_{x_{1},d}%
\end{array}
\right]  :\mathcal{O}_{d,l}^{\oplus2}\rightarrow\mathcal{O}_{d+1,l}%
\]
are the partial differential and difference operators considered in Subsection
\ref{subsecPDE}. Thus
\begin{equation}
d^{0}|\mathcal{O}_{d,l}=\partial_{d,l}^{0}+M_{d,l}^{0}+N_{d,l}^{0}%
\quad\text{and\quad}d^{1}|\left(  \mathcal{O}_{d,l}\right)  ^{\oplus
2}=\partial_{d,l}^{1}+M_{d,l}-N_{d,l} \label{dd01}%
\end{equation}
hold, which means that%
\begin{equation}
d^{0}=\left[
\begin{array}
[c]{ccc}%
\ddots &  & 0\\
& \partial_{d,l}^{0} & \\
\ast &  & \ddots
\end{array}
\right]  ,\quad d^{1}=\left[
\begin{array}
[c]{ccc}%
\ddots &  & 0\\
& \partial_{d,l}^{1} & \\
\ast &  & \ddots
\end{array}
\right]  \label{d01}%
\end{equation}
have lower triangular operator matrix shapes with respect to the linear order
introduced above.

\begin{lemma}
\label{dmn}the following operator identities
\begin{align*}
\partial_{i,j}^{1}M_{i,j-1}^{0}  &  =-M_{i,j-1}\partial_{i,j-1}^{0}%
,\quad\partial_{i,j}^{1}N_{i-1,j}^{0}=N_{i-1,j}\partial_{i-1,j}^{0},\quad
M_{i,j-1}M_{i,j-2}^{0}=0,\\
M_{i,j-1}N_{i-1,j-1}^{0}  &  =N_{i-1,j}M_{i-1,j-1}^{0},\quad N_{i-1,j}%
N_{i-2,j}^{0}=0
\end{align*}
for all $i,j$.
\end{lemma}

\begin{proof}
Take $\zeta\in\prod\limits_{\left(  d,l\right)  \in\mathbb{Z}_{+}^{2}%
}\mathcal{O}_{d,l}$. The identity $d^{1}d^{0}\zeta=0$ implies that
\begin{align*}
0  &  =\partial_{i,j}^{1}\left(  d^{0}\zeta\right)  _{i,j}+M_{i,j-1}\left(
d^{0}\zeta\right)  _{i,j-1}-N_{i-1,j}\left(  d^{0}\zeta\right)  _{i-1,j}%
=\partial_{i,j}^{1}M_{i,j-1}^{0}\zeta_{i,j-1}+\partial_{i,j}^{1}N_{i-1,j}%
^{0}\zeta_{i-1,j}\\
&  +M_{i,j-1}\partial_{i,j-1}^{0}\zeta_{i,j-1}+M_{i,j-1}M_{i,j-2}^{0}%
\zeta_{i,j-2}+M_{i,j-1}N_{i-1,j-1}^{0}\zeta_{i-1,j-1}\\
&  -N_{i-1,j}\partial_{i-1,j}^{0}\zeta_{i-1,j}-N_{i-1,j}M_{i-1,j-1}^{0}%
\zeta_{i-1,j-1}-N_{i-1,j}N_{i-2,j}^{0}\zeta_{i-2,j}\\
&  =\left(  \partial_{i,j}^{1}M_{i,j-1}^{0}+M_{i,j-1}\partial_{i,j-1}%
^{0}\right)  \zeta_{i,j-1}+\left(  \partial_{i,j}^{1}N_{i-1,j}^{0}%
-N_{i-1,j}\partial_{i-1,j}^{0}\right)  \zeta_{i-1,j}+M_{i,j-1}M_{i,j-2}%
^{0}\zeta_{i,j-2}\\
&  +\left(  M_{i,j-1}N_{i-1,j-1}^{0}-N_{i-1,j}M_{i-1,j-1}^{0}\right)
\zeta_{i-1,j-1}-N_{i-1,j}N_{i-2,j}^{0}\zeta_{i-2,j}.
\end{align*}
Thus $d^{1}d^{0}=0$ is equivalent to the operator identities indicated above.
\end{proof}

\subsection{The multiplication morphism}

Let $U\subseteq\mathbb{C}_{xy}$ be a $q$-open subset. Let us analyze the
multiplication morphism $\pi:\mathcal{F}_{q}\left(  U\right)
^{\widehat{\otimes}2}\rightarrow\mathcal{F}_{q}\left(  U\right)  $ by reducing
it to the subspace $\mathcal{O}_{d,l}$. Let us consider the following morphism
$\pi_{d,l}:\mathcal{O}\left(  d,l\right)  \rightarrow\mathcal{O}\left(
d+l\right)  $, $\pi_{d,l}\left(  \zeta\otimes\eta\right)  =\left(  f\left(
z\right)  u\left(  q^{d}z\right)  ,g\left(  q^{l}w\right)  v\left(  w\right)
\right)  $, $\zeta=\left(  f,g\right)  \in\mathcal{O}\left(  d\right)  $ and
$\eta=\left(  u,v\right)  \in\mathcal{O}\left(  l\right)  $ from Subsection
\ref{subsecQRP}.

\begin{lemma}
\label{lemODL}The multiplication morphism $\pi:\prod\limits_{\left(
d,l\right)  \in\mathbb{Z}_{+}^{2}}\mathcal{O}\left(  d,l\right)
\longrightarrow\prod\limits_{n\in\mathbb{Z}_{+}}\mathcal{O}\left(  n\right)  $
restricted to $\mathcal{O}_{d,l}$ has the following decomposition (up to the
$\left(  d,l\right)  $-isomorphism)
\[
\pi|\mathcal{O}_{d,l}:\mathcal{O}_{d,l}\rightarrow\prod\limits_{m\geq
d+l}\mathcal{O}_{m},\quad\pi|\mathcal{O}_{d,l}=\pi_{dl}+\sum_{m>d+l}\Gamma
_{m}^{d,l},
\]
for some uniquely defined continuous linear (differential) operators
$\Gamma_{m}^{d,l}:\mathcal{O}_{d,l}\rightarrow\mathcal{O}_{m}$, $m>d+l$. In
particular, for every $\psi\in\prod\limits_{\left(  i,j\right)  \in
\mathbb{Z}_{+}^{2}}\mathcal{O}_{i,j}^{\oplus2}$ the identity
\[
\sum_{d+l=n}\pi_{d,l}\left(  M_{d,l-1}\psi_{d,l-1}-N_{d-1,l}\psi
_{d-1,l}\right)  +\sum_{\left(  i,j\right)  ,i+j<n}\Gamma_{n}^{i,j}\left(
\left(  d^{1}\psi\right)  _{i,j}\right)  =0
\]
holds in $\mathcal{O}_{n}$ for every $n$.
\end{lemma}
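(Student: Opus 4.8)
The plan is to establish the decomposition of $\pi|\mathcal{O}_{d,l}$ by analyzing the multiplication morphism $\pi:\mathcal{F}_q^{\widehat{\otimes}2}\to\mathcal{F}_q$ through the Decomposition Theorem \ref{thmainDecom}. First I would use the identification $\mathcal{F}_q^{\widehat{\otimes}2}=\prod_{(d,l)}\mathcal{O}_{d,l}$ and $\mathcal{F}_q=\prod_m\mathcal{O}_m$ from Subsection \ref{subsecIO}, so that the morphism $\pi$ decomposes into block components $\pi_{m}^{d,l}:\mathcal{O}_{d,l}\to\mathcal{O}_m$ indexed by pairs $(d,l)$ and integers $m$. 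The key observation is a grading/order argument: multiplication in $\mathcal{F}_q$ respects the filtration by order in the sense that the product of an order-$d$ element with an order-$l$ element has order at least $d+l$. Working through the explicit formula for $\alpha_d$ and the formal $q$-multiplication (\ref{mulFr}), (\ref{5p2}), one sees that $\pi$ applied to $\alpha_d(\zeta)\otimes\alpha_l(\eta)$ lands in $\prod_{m\geq d+l}\mathcal{O}_m$, with lowest-order term $m=d+l$ given precisely by $\pi_{d,l}$ from Subsection \ref{subsecQRP} (this matches the Leibniz-type computation already displayed just before Lemma \ref{lemPidl}). The higher-order corrections $\Gamma_m^{d,l}$, $m>d+l$, are then defined as the remaining block components; their continuity and uniqueness follow from the fact that $\pi$ is a continuous linear map between Fréchet spaces that are topological direct products, so each block is a well-defined continuous operator, and the decomposition as a (convergent, since it lands in a product) sum $\pi_{dl}+\sum_{m>d+l}\Gamma_m^{d,l}$ is forced.

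For the operator identity in the second part, I would exploit the relation $\pi\circ d^1=0$, which holds because $\mathcal{R}(\mathcal{F}_q^{\widehat{\otimes}2})\to\mathcal{F}_q\to 0$ is a complex (the augmented bimodule resolution). The plan is to take an arbitrary $\psi\in\prod_{(i,j)}\mathcal{O}_{i,j}^{\oplus 2}$, apply $d^1$ using its lower-triangular decomposition $d^1|\mathcal{O}_{i,j}^{\oplus2}=\partial_{i,j}^1+M_{i,j}-N_{i,j}$ from (\ref{d01}) and Subsection \ref{subsecPDE}, and then apply $\pi$ componentwise. The $n$-th component of $\pi d^1\psi$ in $\mathcal{O}_n$ receives contributions from $(d^1\psi)_{d,l}$ for $d+l=n$ via the leading term $\pi_{d,l}$, and from $(d^1\psi)_{i,j}$ for $i+j<n$ via the correction operators $\Gamma_n^{i,j}$. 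Collecting the $\partial^1$-part: $\sum_{d+l=n}\pi_{d,l}(\partial_{d,l}^1\psi_{d,l})$ vanishes identically because $\pi_{d,l}\partial_{d,l}^1=0$ (established in Subsection \ref{subsecDFC1}, where the diagonal complex $\mathcal{O}_q(d,l)$ was shown to be a cochain complex). What survives is exactly the mixed term $\sum_{d+l=n}\pi_{d,l}(M_{d,l-1}\psi_{d,l-1}-N_{d-1,l}\psi_{d-1,l})$ together with $\sum_{i+j<n}\Gamma_n^{i,j}((d^1\psi)_{i,j})$, and setting the total equal to zero gives the claimed identity.

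The main obstacle I anticipate is the careful bookkeeping of which block of $\mathcal{O}_{i,j}^{\oplus 2}$ maps into which $\mathcal{O}_m$ under the composite $\pi\circ d^1$, since $d^1$ itself is only lower-triangular (raising the index by at most one homogeneous degree in each of $M$ and $N$) while $\pi$ spreads a single block $\mathcal{O}_{d,l}$ across all $\mathcal{O}_m$ with $m\geq d+l$. Thus one must verify that the $\mathcal{O}_n$-component of $\pi d^1\psi$ decomposes cleanly into the "leading" contributions (from $\partial^1_{d,l}$ with $d+l=n$, handled by $\pi_{d,l}$, and from $M_{d,l-1}, N_{d-1,l}$ with $(d-1)+l=d+(l-1)=n-1$... wait, more precisely $M_{d,l-1}:\mathcal{O}_{d,l-1}^{\oplus2}\to\mathcal{O}_{d,l}$ so the target has $d+l=n$, matching $\pi_{d,l}$) and the "lower" contributions absorbed into the $\Gamma_n^{i,j}$ with $i+j<n$. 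I would organize this by reading off, for fixed $n$, the $\mathcal{O}_n$-component of $\pi d^1\psi=0$: the $\pi_{d,l}$-pieces with $d+l=n$ see only $(d^1\psi)_{d,l}=\partial_{d,l}^1\psi_{d,l}+M_{d,l-1}\psi_{d,l-1}-N_{d-1,l}\psi_{d-1,l}$, while every $(d^1\psi)_{i,j}$ with $i+j<n$ contributes through $\Gamma_n^{i,j}$. After cancelling the $\partial^1$-terms via $\pi_{d,l}\partial_{d,l}^1=0$, the identity follows. The secondary technical point is making sure the infinite sums converge in the direct-product topology, which is automatic since $\mathcal{O}_n$ is a fixed Fréchet factor and only finitely many source blocks $\mathcal{O}_{i,j}$ with $i+j\leq n$ are involved in producing a given $\mathcal{O}_n$-component after one application of $d^1$ followed by $\pi$—here I should double-check that $\Gamma_n^{i,j}$ is nonzero only for $i+j\leq n$, which is guaranteed by the order-raising property of $q$-multiplication.
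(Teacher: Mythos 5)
Your proposal is correct and follows essentially the same route as the paper: decompose the elementary tensor $\alpha_d(\zeta)\otimes\alpha_l(\eta)$ via the Decomposition Theorem to see that the product lands in $\prod_{m\geq d+l}\mathcal{O}(m)$ with leading component $\pi_{d,l}$ and higher corrections defining $\Gamma_m^{d,l}$, and then unwind $\pi d^1=0$ componentwise using the lower-triangular structure of $d^1$ and $\pi_{d,l}\partial_{d,l}^1=0$ to get the identity. The one place where the paper works harder than your sketch is the verification that the remainder $\xi=\zeta\eta-\pi_{d,l}(h)$ really vanishes in the components $p_n(U)$ for $n\leq d+l$ — this is not a formal grading fact (the decomposition $\mathcal{F}_q=\prod_d\mathcal{O}(d)$ is a Fréchet-space isomorphism, not a graded-algebra decomposition), and the paper establishes it by writing out the coefficients $A_k$, $B_i$, $\varphi_{d+l+k}$, $\psi_{d+l+i}$ of the $q$-product and checking the compatibility/vanishing conditions explicitly; your appeal to "one sees that ... lands in $\prod_{m\geq d+l}\mathcal{O}_m$" is accurate but would need that computation to stand as a proof.
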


\begin{proof}
Since $\mathcal{O}\left(  d,l\right)  =\mathcal{O}\left(  d\right)
\widehat{\otimes}\mathcal{O}\left(  l\right)  $, it suffices to prove the
statement for an elementary tensor $h=\zeta\otimes\eta$ with $\zeta=\left(
f,g\right)  \in\mathcal{O}\left(  d\right)  $ and $\eta=\left(  u,v\right)
\in\mathcal{O}\left(  l\right)  $. By Theorem \ref{thmainDecom}, $\zeta$ is
identified with $\alpha_{d}\left(  f,g\right)  $ (as an elements of the
algebra $\mathcal{F}_{q}$) whereas $\eta$ with $\alpha_{l}\left(  u,v\right)
$. Thus
\begin{align*}
f  &  =f\left(  z\right)  y^{d}+\sum_{k>d}\left(  a_{k}z^{d}\right)
y^{k}\text{, }a_{k}=\dfrac{g^{\left(  k\right)  }\left(  0\right)  }%
{k!}\text{,\quad}g=x^{d}g\left(  w\right)  +\sum_{i>d}x^{i}\left(  b_{i}%
w^{d}\right)  \text{, }b_{i}=\dfrac{f^{\left(  i\right)  }\left(  0\right)
}{i!}\text{, }k,i>d,\\
u  &  =u\left(  z\right)  y^{l}+\sum_{k>l}\left(  \alpha_{k}z^{l}\right)
y^{k}\text{, }\alpha_{k}=\dfrac{v^{\left(  k\right)  }\left(  0\right)  }%
{k!}\text{,\quad}v=x^{l}v\left(  w\right)  +\sum_{i>l}x^{i}\left(  \beta
_{i}w^{l}\right)  \text{, }\beta_{i}=\dfrac{u^{\left(  i\right)  }\left(
0\right)  }{i!},\text{ }k,i>l
\end{align*}
to be the elements of the fibered product $\mathcal{F}_{q}=\mathcal{O}\left[
\left[  y\right]  \right]  \underset{\mathbb{C}\left[  \left[  x,y\right]
\right]  }{\times}\left[  \left[  x\right]  \right]  \mathcal{O}$. It follows
that $\zeta\eta=\left(  fu,gv\right)  \in\mathcal{F}_{q}$ such that (see
(\ref{5p2}))
\begin{align*}
fu  &  =f\left(  z\right)  u\left(  q^{d}z\right)  y^{d+l}+\sum_{k>d+l}\left(
\sum_{i+j=k}a_{i}\alpha_{j}q^{il}\right)  z^{d+l}y^{k}+\varphi,\\
gv  &  =x^{d+l}g\left(  q^{l}w\right)  v\left(  w\right)  +\sum_{i>d+l}%
x^{i}\left(  \sum_{s+t=i}b_{s}\beta_{t}q^{td}\right)  w^{d+l}+\psi,
\end{align*}
where
\begin{align*}
\varphi &  =\sum_{k>l}q^{ld}\alpha_{k}f\left(  z\right)  z^{l}y^{d+k}%
+\sum_{k>d}a_{k}u\left(  q^{k}z\right)  z^{d}y^{k+l}=\sum_{k=1}^{\infty
}\left(  q^{ld}\alpha_{l+k}f\left(  z\right)  z^{l}+a_{d+k}u\left(
q^{d+k}z\right)  z^{d}\right)  y^{d+l+k},\\
\psi &  =\sum_{i>l}x^{d+i}\beta_{i}g\left(  q^{i}w\right)  w^{l}+\sum
_{i>d}q^{ld}x^{l+i}b_{i}v\left(  w\right)  w^{d}=\sum_{i=1}^{\infty}%
x^{d+l+i}\left(  \beta_{l+i}g\left(  q^{l+i}w\right)  w^{l}+q^{ld}%
b_{d+i}v\left(  w\right)  w^{d}\right)  .
\end{align*}
Put $A_{k}=\sum_{i+j=k}a_{i}\alpha_{j}q^{il}$ and $B_{i}=\sum_{s+t=i}%
b_{s}\beta_{t}q^{td}$. Notice that%
\begin{align*}
A_{k}  &  =\sum_{i+j=k}\dfrac{g^{\left(  i\right)  }\left(  0\right)  }%
{i!}\dfrac{v^{\left(  j\right)  }\left(  0\right)  }{j!}q^{il}=\dfrac{1}%
{k!}\sum_{i}\dbinom{k}{i}g^{\left(  i\right)  }\left(  q^{l}w\right)
|_{w=0}v^{\left(  k-i\right)  }\left(  0\right)  =\dfrac{1}{k!}\frac
{d^{k}g\left(  q^{l}w\right)  v\left(  w\right)  }{dw^{k}}\left(  0\right)
,\\
B_{i}  &  =\dfrac{1}{i!}\frac{d^{i}f\left(  z\right)  u\left(  q^{d}z\right)
}{dz^{i}}\left(  0\right)  .
\end{align*}
The element $\pi_{d,l}\left(  h\right)  =\left(  f\left(  z\right)  u\left(
q^{d}z\right)  ,g\left(  q^{l}w\right)  v\left(  w\right)  \right)
\in\mathcal{O}\left(  d+l\right)  $ is identified with $\alpha_{d+l}\left(
\pi_{d,l}\left(  h\right)  \right)  $. Then
\[
\pi_{d,l}\left(  h\right)  =\left(  f\left(  z\right)  u\left(  q^{d}z\right)
y^{d+l}+\sum_{k>d+l}A_{k}z^{d+l}y^{k},x^{d+l}g\left(  q^{l}w\right)  v\left(
w\right)  +\sum_{i>d+l}x^{i}B_{i}w^{d+l}\right)  .
\]
Put $\xi=\zeta\eta-\pi_{d,l}\left(  h\right)  $ to be the element $\xi=\left(
\varphi,\psi\right)  \in\mathcal{F}_{q}$. Since
\begin{align*}
\varphi &  =\sum_{k=1}^{\infty}\varphi_{d+l+k}y^{d+l+k}\in\mathcal{O}\left(
U_{x}\right)  \left[  \left[  y\right]  \right]  \text{,\quad}\varphi
_{d+l+k}=q^{ld}\alpha_{l+k}f\left(  z\right)  z^{l}+a_{d+k}u\left(
q^{d+k}z\right)  z^{d},\\
\psi &  =\sum_{i=1}^{\infty}x^{d+l+i}\psi_{l+d+i}\in\left[  \left[  x\right]
\right]  \mathcal{O}\left(  U_{y}\right)  \text{,\quad}\psi_{l+d+i}%
=\beta_{l+i}g\left(  q^{l+i}w\right)  w^{l}+q^{ld}b_{d+i}v\left(  w\right)
w^{d},
\end{align*}
if follows that $\varphi_{n}=\psi_{n}=0$ for all $n\leq d+l$. Note that
$f^{\left(  i\right)  }\left(  0\right)  =0$, $i<d$, $u^{\left(  j\right)
}\left(  0\right)  =0$, $j<l$, and $v^{\left(  k\right)  }\left(  0\right)
=0$, $k<l$, $g^{\left(  j\right)  }\left(  0\right)  =0$, $j<d$. Since
\begin{align*}
\varphi_{d+l+k}  &  =q^{ld}\alpha_{l+k}f\left(  z\right)  z^{l}+a_{d+k}%
u\left(  q^{d+k}z\right)  z^{d}=q^{ld}\dfrac{v^{\left(  l+k\right)  }\left(
0\right)  }{\left(  l+k\right)  !}f\left(  z\right)  z^{l}+\dfrac{g^{\left(
d+k\right)  }\left(  0\right)  }{\left(  d+k\right)  !}u\left(  q^{d+k}%
z\right)  z^{d},\\
\psi_{l+d+i}  &  =\beta_{l+i}g\left(  q^{l+i}w\right)  w^{l}+q^{ld}%
b_{d+i}v\left(  w\right)  w^{d}=q^{ld}\dfrac{f^{\left(  d+i\right)  }\left(
0\right)  }{\left(  d+i\right)  !}v\left(  w\right)  w^{d}+\dfrac{u^{\left(
l+i\right)  }\left(  0\right)  }{\left(  l+i\right)  !}g\left(  q^{l+i}%
w\right)  w^{l},
\end{align*}
it follows that $\varphi_{d+l+k}^{\left(  i\right)  }\left(  0\right)  =0$,
$i<d+l$, $\psi_{l+d+i}^{\left(  k\right)  }\left(  0\right)  =0$, $k<l+d$,
and
\[
\dfrac{\varphi_{d+l+k}^{\left(  d+l+i\right)  }\left(  0\right)  }{\left(
d+l+i\right)  !}=q^{ld}\dfrac{v^{\left(  l+k\right)  }\left(  0\right)
}{\left(  l+k\right)  !}\dfrac{f^{\left(  d+i\right)  }\left(  0\right)
}{\left(  d+i\right)  !}+q^{\left(  d+k\right)  \left(  l+i\right)  }%
\dfrac{g^{\left(  d+k\right)  }\left(  0\right)  }{\left(  d+k\right)
!}\dfrac{u^{\left(  l+i\right)  }\left(  0\right)  }{\left(  l+i\right)
!}=\dfrac{\psi_{l+d+i}^{\left(  d+l+k\right)  }\left(  0\right)  }{\left(
d+l+k\right)  !}%
\]
for all $i,k\geq0$ (see (\ref{FqF})). Hence $\xi\in\mathcal{F}_{q}$ and
\begin{align*}
p_{n}\left(  U\right)  \xi &  =\overline{\xi}=\left(  \overline{\varphi
},\overline{\psi}\right)  ,\\
\overline{\varphi}  &  =\left(  \varphi_{n}\left(  z\right)  -\sum_{i=0}%
^{n-1}\dfrac{\psi_{i}^{\left(  n\right)  }\left(  0\right)  }{n!}z^{i}\right)
y^{n}+\sum_{k>n}\left(  \dfrac{\psi_{n}^{\left(  k\right)  }\left(  0\right)
}{k!}z^{n}\right)  y^{k}=0,\\
\overline{\psi}  &  =x^{n}\left(  \psi_{n}\left(  w\right)  -\sum_{i=0}%
^{n-1}\dfrac{\varphi_{i}^{\left(  n\right)  }\left(  0\right)  }{n!}%
w^{i}\right)  +\sum_{i>n}x^{i}\left(  \dfrac{\varphi_{n}^{\left(  i\right)
}\left(  0\right)  }{i!}w^{d}\right)  =0
\end{align*}
for all $n\leq d+l$, where $p_{n}\left(  U\right)  $ is the continuous
projection onto $\mathcal{O}\left(  n\right)  $. But $\sum_{n=0}^{\infty}%
p_{n}\left(  U\right)  \xi=\xi$ thanks to the Decomposition Theorem
\ref{thmainDecom}, therefore $\xi\in\prod\limits_{t\geq1}\mathcal{O}\left(
d+l+t\right)  $. In particular, $p_{m}\left(  U\right)  \xi=\Gamma_{m}^{d,l}h$
define the continuous linear operators $\Gamma_{m}^{d,l}:\mathcal{O}\left(
d,l\right)  \rightarrow\mathcal{O}\left(  m\right)  $, $m>d+l$. Thus
\[
\pi\left(  h\right)  =\pi\left(  \zeta\otimes\eta\right)  =\pi_{d,l}\left(
h\right)  +\xi=\pi_{d,l}\left(  h\right)  +\sum_{m>d+l}\Gamma_{m}^{d,l}%
h\in\mathcal{O}\left(  d+l\right)  \oplus\prod\limits_{m>d+l}\mathcal{O}%
\left(  m\right)  .
\]
By Lemma \ref{lemPidl}, the mapping $\pi_{d,l}$ is reduced to $\pi
_{d,l}:\mathcal{O}_{d,l}\longrightarrow\mathcal{O}_{d+l}$, $\pi_{d,l}\left(
h\left(  z_{1},w_{1};z_{2},w_{2}\right)  \right)  =\left(  h_{1}\left(
z_{1},q^{d}z_{1}\right)  ,h_{4}\left(  q^{l}w_{2},w_{2}\right)  \right)  $ up
to the topological isomorphisms (\ref{imoo}). The operators $\Gamma_{m}^{d,l}$
can also be replaced by their isomorphic actions $\Gamma_{m}^{d,l}%
:\mathcal{O}_{d,l}\rightarrow\mathcal{O}_{m}$, $m>d+l$, so that $\pi
|\mathcal{O}_{d,l}=\pi_{dl}+\sum_{m>d+l}\Gamma_{m}^{d,l}$.

Finally, by the construction, $\pi d^{1}=0$ and $d^{1}$ has the lower
triangular matrix (\ref{d01}). If $\psi=\left(  \psi_{i,j}\right)  _{i,j}%
\in\prod\limits_{\left(  i,j\right)  \in\mathbb{Z}_{+}^{2}}\mathcal{O}%
_{i,j}^{\oplus2}$, then $d^{1}\psi\in\prod\limits_{\left(  i,j\right)
\in\mathbb{Z}_{+}^{2}}\mathcal{O}_{i,j}$ with $\left(  d^{1}\psi\right)
_{i,j}=\partial_{i,j}^{1}\psi_{i,j}+M_{i,j-1}\psi_{i,j-1}-N_{i-1,j}%
\psi_{i-1,j}$. Taking into account that $\pi_{d,l}\partial_{d,l}^{1}=0$ for
all $d,l$ (see Subsection \ref{subsecDFC1}), we deduce from $\left(  \pi
d^{1}\right)  \left(  \psi\right)  =0$ that
\[
\sum_{d+l=n}\pi_{d,l}\left(  M_{d,l-1}\psi_{d,l-1}-N_{d-1,l}\psi
_{d-1,l}\right)  +\sum_{\left(  i,j\right)  ,i+j<n}\Gamma_{n}^{i,j}\left(
\left(  d^{1}\psi\right)  _{i,j}\right)  =0\text{ in }\mathcal{O}_{n}%
\]
for every $n$. In the latter expression, the vectors $\psi_{s,t}$ of the
homogenous index $s+t$ less than $n$ have only been involved.
\end{proof}

Thus the multiplication morphism $\pi$ does not admit a triangular shape
caused mainly by the complicated structure of the sheaf $\mathcal{F}_{q}$. As
we have seen above in Subsection \ref{subsecCQxy}, the triangular shapes take
place for the sheaves $\mathcal{O}\left[  \left[  y\right]  \right]  $,
$\left[  \left[  x\right]  \right]  \mathcal{O}$ and $\mathbb{C}_{q}\left[
\left[  x,y\right]  \right]  $.

Now we can prove the following key result on the image of $d^{1}$ using the
diagonal cohomology construction from Appendix Section \ref{sectionDCG}.

\begin{proposition}
\label{propD1P}The image of the differential $d^{1}:\prod\limits_{\left(
d,l\right)  \in\mathbb{Z}_{+}^{2}}\mathcal{O}_{d,l}^{\oplus2}\longrightarrow
\prod\limits_{\left(  d,l\right)  \in\mathbb{Z}_{+}^{2}}\mathcal{O}_{d,l}$ is closed.
\end{proposition}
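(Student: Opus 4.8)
The plan is to prove closedness of $\operatorname{im}(d^{1})$ by combining the triangular structure from (\ref{d01}) with the diagonal results of Section~\ref{sectionDCG}, but being careful that Lemma~\ref{lemTri} alone does not suffice here, because the diagonal cochains $\mathcal{O}_q(d,l)$ are \emph{not} exact (their first cohomology is $\mathcal{O}(U)$ by Theorem~\ref{thMain1}), and Lemma~\ref{lemODL} shows $\pi$ is not triangular. So the argument must proceed by a direct limit-chasing: suppose $\eta^{(k)} = d^{1}\psi^{(k)} \to \eta$ in $\prod_{(d,l)}\mathcal{O}_{d,l}$, and show $\eta \in \operatorname{im}(d^{1})$. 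Writing everything componentwise with respect to the linear order on $\mathbb{Z}_+^2$, the relation $(d^{1}\psi^{(k)})_{i,j} = \partial_{i,j}^{1}\psi_{i,j}^{(k)} + M_{i,j-1}\psi_{i,j-1}^{(k)} - N_{i-1,j}\psi_{i-1,j}^{(k)}$ converges for each $(i,j)$.

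The key step is an induction on the homogeneous index $n = i+j$. For the base case $n=0$: $\eta_{0,0}^{(k)} = \partial_{0,0}^{1}\psi_{0,0}^{(k)} \to \eta_{0,0}$, and since $\operatorname{im}(\partial_{0,0}^{1})$ is closed (Theorem~\ref{thMain1}, which also gives $H^2_{d,l}=\{0\}$, hence $\operatorname{im}\partial^1_{d,l}$ closed), we get $\eta_{0,0} = \partial_{0,0}^{1}\psi_{0,0}$ for some $\psi_{0,0}$. The inductive step is where the real work lies. Suppose we have already produced $\psi_{i,j}$ for all $i+j < n$ with the correct partial sums converging. For $i+j = n$, the convergent quantity is $\partial_{i,j}^{1}\psi_{i,j}^{(k)} + (\text{terms involving }\psi_{i,j-1}^{(k)}, \psi_{i-1,j}^{(k)})$. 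The obstacle is that the lower-index vectors $\psi_{i,j-1}^{(k)}$, $\psi_{i-1,j}^{(k)}$ themselves need not converge --- only certain combinations do. This is precisely the scenario addressed by Lemma~\ref{lemOMN}: one needs to extract, using the projections $p_{i,j}$, $p_{i,j}^0$, $p_{i,j}^1$ from Proposition~\ref{propTechE} and Corollary~\ref{corDecom}, the ``diagonal'' pieces $s_{i,j}^{(k)} \in \operatorname{im}(p_{i,j})$ of $\psi_{i,j}^{(k)}$ and to verify that the limiting data $\chi_{i,j}$ lands in $\operatorname{im}(p_{i,j}^0)$ so that Lemma~\ref{lemOMN} applies and yields convergence of the $s_{i,j}$.

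The hard part will be verifying the hypothesis of Lemma~\ref{lemOMN}, namely that the limit $\chi_{i,j} = \lim_k \big(\partial_{i,j}^1\omega_{i,j}^{(k)} + M_{i,j-1}s_{i,j-1}^{(k)} - N_{i-1,j}s_{i-1,j}^{(k)}\big)$ lies in $\operatorname{im}(p_{i,j}^0)$. This is where one must exploit $\pi d^1 = 0$ together with the non-triangular identity from Lemma~\ref{lemODL}:
\[
\sum_{d+l=n}\pi_{d,l}\big(M_{d,l-1}\psi_{d,l-1}^{(k)} - N_{d-1,l}\psi_{d-1,l}^{(k)}\big) + \sum_{i+j<n}\Gamma_n^{i,j}\big((d^1\psi^{(k)})_{i,j}\big) = 0.
\]
Since $(d^1\psi^{(k)})_{i,j} \to \eta_{i,j}$ for $i+j<n$ by induction, the second sum converges, hence so does $\sum_{d+l=n}\pi_{d,l}(M_{d,l-1}\psi_{d,l-1}^{(k)} - N_{d-1,l}\psi_{d-1,l}^{(k)})$. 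Combined with the key technical Lemma~\ref{lemDDkey}, which says that $\beta \in \ker(\partial_{d,l}^1)$ with $M_{d,l}\beta \in \ker(\pi_{d,l+1})$ or $N_{d,l}\beta \in \ker(\pi_{d+1,l})$ forces $\beta \in \operatorname{im}(\partial_{d,l}^0)$, one should be able to decompose the mismatch terms so that the obstruction to $\chi_{i,j}\in\operatorname{im}(p_{i,j}^0)$ vanishes. Once Lemma~\ref{lemOMN} delivers $s_{i,j} = \lim_k s_{i,j}^{(k)}$ and $\partial_{i,j}^1\omega_{i,j} = \lim_k \partial_{i,j}^1\omega_{i,j}^{(k)}$ for all $i+j \le n$, one sets $\psi_{i,j} = \omega_{i,j} + (\text{reconstruction of the diagonal part from }s_{i,j})$, checks that $(d^1\psi)_{i,j} = \eta_{i,j}$, and the induction closes; taking the product over all $(i,j)$ gives $\eta = d^1\psi \in \operatorname{im}(d^1)$, proving closedness.
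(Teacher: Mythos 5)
Your overall strategy — induction on the homogeneous degree $n=i+j$, exploiting the triangular shape (\ref{d01}), using the direct-sum decompositions of Corollary~\ref{corDecom}, and feeding the extracted diagonal pieces $s_{i,j}^{(k)}\in\operatorname{im}(p_{i,j})$ into Lemma~\ref{lemOMN} — is exactly the skeleton of the paper's proof, and your opening observation that Lemma~\ref{lemTri} alone cannot apply because the diagonal cochains are not exact is the correct reason why a genuine limit-chasing argument is needed here. Where the proposal goes wrong is precisely at what you call ``the hard part.''

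You claim that verifying the hypothesis of Lemma~\ref{lemOMN} (that $\chi_{i,j}$ converges and lands in $\operatorname{im}(p_{i,j}^0)$) requires invoking $\pi d^1=0$, the non-triangular identity of Lemma~\ref{lemODL}, and the kernel criterion of Lemma~\ref{lemDDkey}. None of these is needed, and none of them would do the job. In the actual proof, at stage $n=d+l$ one already knows the limit $\varphi_{d,l}=\lim_k(d^1\psi^{(k)})_{d,l}$, and by the induction hypothesis one already has $\theta_{i,j}$ for $i+j<n$. The element $\chi_{d,l}$ is then \emph{defined} as the $\operatorname{im}(p_{d,l}^0)$-component in the unique splitting $\varphi_{d,l}-M_{d,l-1}\theta_{d,l-1}+N_{d-1,l}\theta_{d-1,l}=\chi_{d,l}+\partial_{d,l}^1\gamma_{d,l}$ afforded by $\mathcal{O}_{d,l}=\operatorname{im}(p_{d,l}^0)\oplus\operatorname{im}(\partial_{d,l}^1)$ (Corollary~\ref{corDecom}); the membership $\chi_{d,l}\in\operatorname{im}(p_{d,l}^0)$ is therefore automatic, not a constraint to be verified. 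The real work consists in rewriting $\chi_{d,l}$ as a limit of the required form $\partial_{d,l}^1\omega_{d,l}^{(k)}+M_{d,l-1}s_{d,l-1}^{(k)}-N_{d-1,l}s_{d-1,l}^{(k)}$: this is done by substituting the inductive estimates $\omega_{i,j}^{(k)}-s_{i,j}^{(k)}-\partial_{i,j}^0 t_{i,j}^{(k)}\to 0$ (valid for $i+j<n$) and absorbing the stray $\partial_{i,j}^0 t_{i,j}^{(k)}$ pieces into a corrected $\omega_{d,l}^{(k)}$ by means of the operator identities (\ref{dmn}), such as $M_{d,l-1}\partial_{d,l-1}^0=-\partial_{d,l}^1 M_{d,l-1}^0$. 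No appeal to the multiplication morphism is made.

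The tools you reach for instead — $\pi d^1=0$, the identity from Lemma~\ref{lemODL}, and Lemma~\ref{lemDDkey} — belong to a different part of the argument: they are used in the proof of Theorem~\ref{thCENTER} to establish $\ker(d^1)=\operatorname{im}(d^0)$, i.e.\ exactness at degree one. Lemma~\ref{lemDDkey} is a statement about elements of $\ker(\partial_{d,l}^1)$ being in $\operatorname{im}(\partial_{d,l}^0)$ and has nothing to say about the range of $\partial_{d,l}^1$ itself, while the Lemma~\ref{lemODL} identity constrains the limit $\varphi$ but provides no handle on the individual sequences $\psi_{i,j}^{(k)}$ from which the $s_{i,j}^{(k)}$ must be extracted. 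The sentence ``one should be able to decompose the mismatch terms so that the obstruction\ldots vanishes'' is where the proposal becomes hand-waving: no concrete mechanism is given and, because the wrong lemmas are in play, no such mechanism exists along that route. Replacing that paragraph by the direct-sum decomposition argument above closes the gap.
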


\begin{proof}
Take a sequence $\left\{  \psi^{\left(  k\right)  }\right\}  \subseteq
\prod\limits_{\left(  d,l\right)  \in\mathbb{Z}_{+}^{2}}\mathcal{O}%
_{d,l}^{\oplus2}$ such that $d^{1}\psi^{\left(  k\right)  }$ converges to some
$\varphi\in\prod\limits_{\left(  d,l\right)  \in\mathbb{Z}_{+}^{2}}%
\mathcal{O}_{d,l}$. Using (\ref{d01}), we derive that
\begin{equation}
\varphi_{d,l}=\lim_{k}\left(  d^{1}\psi^{\left(  k\right)  }\right)
_{d,l}=\lim_{k}\partial_{d,l}^{1}\psi_{d,l}^{\left(  k\right)  }+M_{d,l-1}%
\psi_{d,l-1}^{\left(  k\right)  }-N_{d-1,l}\psi_{d-1,l}^{\left(  k\right)  }
\label{fi1}%
\end{equation}
for all $d$, $l$. One needs to prove that $\varphi_{d,l}=\partial_{d,l}%
^{1}\theta_{d,l}+M_{d,l-1}\theta_{d,l-1}-N_{d-1,l}\theta_{d-1,l}$ for some
$\theta=\left(  \theta_{d,l}\right)  _{d,l}\in\prod\limits_{\left(
d,l\right)  \in\mathbb{Z}_{+}^{2}}\mathcal{O}_{d,l}^{\oplus2}$, that is,
$d^{1}\theta=\varphi$. Let us construct these elements by induction on the
homogeneous order $n$.

If $n=0$, then $\varphi_{0,0}=\lim_{k}\partial_{0,0}^{1}\psi_{0,0}^{\left(
k\right)  }$ in $\mathcal{O}_{0,0}$. By Theorem \ref{thMain1}, $H_{0,0}%
^{2}=\left\{  0\right\}  $ or $\operatorname{im}\left(  \partial_{0,0}%
^{1}\right)  =\ker\left(  \pi_{0,0}\right)  $ is closed. Therefore
$\varphi_{0,0}=\partial_{0,0}^{1}\theta_{0,0}$ for some $\theta_{0,0}%
\in\mathcal{O}_{0,0}^{\oplus2}$, and
\[
\lim_{k}\partial_{0,0}^{1}\omega_{0,0}^{\left(  k\right)  }=0\quad
\text{with\quad}\omega_{0,0}^{\left(  k\right)  }=\psi_{0,0}^{\left(
k\right)  }-\theta_{0,0}.
\]
Based on Corollary \ref{corDecom}, we have
\[%
\begin{array}
[c]{cc}%
\mathcal{O}_{0,0}^{\oplus2} & \\
\parallel & \\
\operatorname{im}\left(  p_{0,0}^{1}\right)  & \overset{\partial_{0,0}%
^{1}}{\longrightarrow}\\
\oplus & \\
\operatorname{im}\left(  p_{0,0}\right)  \oplus\operatorname{im}\left(
\partial_{0,0}^{0}\right)  & =\ker\left(  \partial_{0,0}^{1}\right)
\end{array}%
\begin{array}
[c]{c}%
\\
\\
\mathcal{O}_{0,0}\\
\\
\end{array}
\]
It follows that
\begin{equation}
\omega_{0,0}^{\left(  k\right)  }-s_{0,0}^{\left(  k\right)  }-\partial
_{0,0}^{0}t_{0,0}^{\left(  k\right)  }\rightarrow0 \label{fi12}%
\end{equation}
for some $\left\{  s_{0,0}^{\left(  k\right)  }\right\}  \subseteq
\operatorname{im}\left(  p_{0,0}\right)  $ and $\left\{  t_{0,0}^{\left(
k\right)  }\right\}  \subseteq\mathcal{O}_{0,0}$.

In the case of $n=1$, we have
\begin{equation}
\varphi_{0,1}=\lim_{k}\partial_{0,1}^{1}\psi_{0,1}^{\left(  k\right)
}+M_{0,0}\psi_{0,0}^{\left(  k\right)  }\quad\text{and\quad}\varphi_{1,0}%
=\lim_{k}\partial_{1,0}^{1}\psi_{1,0}^{\left(  k\right)  }-N_{0,0}\psi
_{0,0}^{\left(  k\right)  }. \label{fi11}%
\end{equation}
Then $\varphi_{0,1}-M_{00}\theta_{0,0}=\lim_{k}\partial_{0,1}^{1}\psi
_{0,1}^{\left(  k\right)  }+M_{0,0}\omega_{0,0}^{\left(  k\right)  }$. But
$\mathcal{O}_{0,1}=\operatorname{im}\left(  p_{0,1}^{0}\right)  \oplus
\operatorname{im}\left(  \partial_{0,1}^{1}\right)  $ by Corollary
\ref{corDecom}, therefore $\varphi_{0,1}-M_{00}\theta_{0,0}=\chi
_{0,1}+\partial_{0,1}^{1}\gamma_{0,1}$ with $\chi_{01}\in\operatorname{im}%
\left(  p_{0,1}^{0}\right)  $ and $\gamma_{0,1}\in\mathcal{O}_{0,1}^{\oplus2}%
$. Using (\ref{fi11}), (\ref{fi12}) and the fact that $M_{0,0}$ is continuous,
we deduce that
\begin{align*}
\chi_{0,1}  &  =\varphi_{0,1}-\partial_{0,1}^{1}\gamma_{0,1}-M_{00}%
\theta_{0,0}=\lim_{k}\partial_{0,1}^{1}\left(  \psi_{0,1}^{\left(  k\right)
}-\gamma_{0,1}\right)  +M_{0,0}\omega_{0,0}^{\left(  k\right)  }\\
&  =\lim_{k}\partial_{0,1}^{1}\left(  \psi_{0,1}^{\left(  k\right)  }%
-\gamma_{0,1}\right)  +M_{0,0}\left(  s_{0,0}^{\left(  k\right)  }%
+\partial_{0,0}^{0}t_{0,0}^{\left(  k\right)  }\right)  =\lim_{k}%
\partial_{0,1}^{1}\left(  \omega_{0,1}^{\left(  k\right)  }\right)
+M_{0,0}\left(  s_{0,0}^{\left(  k\right)  }\right)  ,
\end{align*}
where
\begin{equation}
\omega_{0,1}^{\left(  k\right)  }=\psi_{0,1}^{\left(  k\right)  }-\gamma
_{0,1}-M_{0,0}^{0}t_{0,0}^{\left(  k\right)  } \label{fi13}%
\end{equation}
(recall that $M_{0,0}\partial_{0,0}^{0}=-\partial_{0,1}^{1}M_{0,0}^{0}$ by
Lemma \ref{dmn}). In a similar way, we have $\varphi_{1,0}+N_{00}\theta
_{0,0}=\chi_{1,0}+\partial_{1,0}^{1}\gamma_{1,0}$ with $\chi_{1,0}%
\in\operatorname{im}\left(  p_{1,0}^{0}\right)  $, and
\[
\chi_{1,0}=\varphi_{1,0}-\partial_{1,0}^{1}\gamma_{1,0}+N_{00}\theta
_{0,0}=\lim_{k}\partial_{1,0}^{1}\left(  \omega_{1,0}^{\left(  k\right)
}\right)  -N_{0,0}\left(  s_{0,0}^{\left(  k\right)  }\right)  ,
\]
where $\omega_{1,0}^{\left(  k\right)  }=\psi_{1,0}^{\left(  k\right)
}-\gamma_{1,0}-N_{0,0}^{0}t_{0,0}^{\left(  k\right)  }$. By Lemma \ref{lemOMN}
(applied to the case of $n=1$ with $\chi_{0,0}=0$), we conclude that the
limits $\lim_{k}s_{0,0}^{\left(  k\right)  }=s_{0,0}$, $\lim_{k}\partial
_{0,1}^{1}\left(  \omega_{0,1}^{\left(  k\right)  }\right)  =\partial
_{0,1}^{1}\left(  \omega_{0,1}\right)  $, $\lim_{k}\partial_{1,0}^{1}\left(
\omega_{1,0}^{\left(  k\right)  }\right)  =\partial_{1,0}^{1}\left(
\omega_{1,0}\right)  $ do exist, and
\[
\chi_{0,1}=\partial_{0,1}^{1}\left(  \omega_{0,1}\right)  +M_{0,0}\left(
s_{0,0}\right)  ,\quad\chi_{1,0}=\partial_{1,0}^{1}\left(  \omega
_{1,0}\right)  -N_{0,0}\left(  s_{0,0}\right)  .
\]
It follows that $\varphi_{0,1}=\partial_{0,1}^{1}\left(  \omega_{0,1}%
+\gamma_{0,1}\right)  +M_{0,0}\left(  s_{0,0}+\theta_{0,0}\right)  $ and
$\varphi_{1,0}=\partial_{1,0}^{1}\left(  \omega_{1,0}+\gamma_{1,0}\right)
-N_{0,0}\left(  s_{0,0}+\theta_{0,0}\right)  $. Since $s_{0,0}\in\ker\left(
\partial_{0,0}^{1}\right)  $ and $\varphi_{0,0}=\partial_{0,0}^{1}\theta
_{0,0}$, one can replace $\theta_{0,0}$ by $s_{0,0}+\theta_{0,0}$, and we put
$\theta_{0,1}=\omega_{0,1}+\gamma_{0,1}$ and $\theta_{1,0}=\omega_{1,0}%
+\gamma_{1,0}$. Thus
\[
\varphi_{0,1}=\partial_{0,1}^{1}\left(  \theta_{0,1}\right)  +M_{0,0}\left(
\theta_{0,0}\right)  ,\quad\varphi_{1,0}=\partial_{1,0}^{1}\left(
\theta_{1,0}\right)  -N_{0,0}\left(  \theta_{0,0}\right)  ,
\]
and in this case, we have (see (\ref{fi13}))
\begin{align*}
\partial_{0,1}^{1}\left(  \theta_{0,1}\right)   &  =\partial_{0,1}^{1}\left(
\omega_{0,1}+\gamma_{0,1}\right)  =\lim_{k}\partial_{0,1}^{1}\left(
\omega_{0,1}^{\left(  k\right)  }+\gamma_{0,1}\right)  =\lim_{k}\partial
_{0,1}^{1}\left(  \phi_{0,1}^{\left(  k\right)  }\right)  ,\\
\partial_{1,0}^{1}\left(  \theta_{1,0}\right)   &  =\partial_{1,0}^{1}\left(
\omega_{1,0}+\gamma_{1,0}\right)  =\lim_{k}\partial_{1,0}^{1}\left(
\omega_{1,0}^{\left(  k\right)  }+\gamma_{1,0}\right)  =\lim_{k}\partial
_{1,0}^{1}\left(  \phi_{1,0}^{\left(  k\right)  }\right)  .
\end{align*}
where $\phi_{0,1}^{\left(  k\right)  }=\psi_{0,1}^{\left(  k\right)  }%
-M_{0,0}^{0}t_{0,0}^{\left(  k\right)  }$ and $\phi_{1,0}^{\left(  k\right)
}=\psi_{1,0}^{\left(  k\right)  }-N_{0,0}^{0}t_{0,0}^{\left(  k\right)  }$.
Moreover, based on Lemma \ref{dmn}, we have
\begin{align*}
M_{0,1}\phi_{0,1}^{\left(  k\right)  }  &  =M_{0,1}\psi_{0,1}^{\left(
k\right)  }-M_{0,1}M_{0,0}^{0}t_{0,0}^{\left(  k\right)  }=M_{0,1}\psi
_{0,1}^{\left(  k\right)  },\\
M_{1,0}\phi_{1,0}^{\left(  k\right)  }-N_{0,1}\phi_{0,1}^{\left(  k\right)  }
&  =M_{1,0}\psi_{1,0}^{\left(  k\right)  }-M_{1,0}N_{0,0}^{0}t_{0,0}^{\left(
k\right)  }-N_{0,1}\psi_{0,1}^{\left(  k\right)  }+N_{0,1}M_{0,0}^{0}%
t_{0,0}^{\left(  k\right)  }=M_{1,0}\psi_{1,0}^{\left(  k\right)  }%
-N_{0,1}\psi_{0,1}^{\left(  k\right)  },\\
N_{1,0}\phi_{1,0}^{\left(  k\right)  }  &  =N_{1,0}\psi_{1,0}^{\left(
k\right)  }-N_{1,0}N_{0,0}^{0}t_{0,0}^{\left(  k\right)  }=N_{1,0}\psi
_{1,0}^{\left(  k\right)  }.
\end{align*}
Thus
\begin{align*}
\varphi_{0,2}  &  =\lim_{k}\partial_{0,2}^{1}\psi_{0,2}^{\left(  k\right)
}+M_{0,1}\psi_{0,1}^{\left(  k\right)  }=\lim_{k}\partial_{0,2}^{1}\psi
_{0,2}^{\left(  k\right)  }+M_{0,1}\phi_{0,1}^{\left(  k\right)  }\\
\varphi_{1,1}  &  =\lim_{k}\partial_{1,1}^{1}\psi_{d,1}^{\left(  k\right)
}+M_{1,0}\psi_{1,0}^{\left(  k\right)  }-N_{0,1}\psi_{0,1}^{\left(  k\right)
}=\lim_{k}\partial_{1,1}^{1}\psi_{d,1}^{\left(  k\right)  }+M_{1,0}\phi
_{1,0}^{\left(  k\right)  }-N_{0,1}\phi_{0,1}^{\left(  k\right)  },\\
\varphi_{2,0}  &  =\lim_{k}\partial_{2,0}^{1}\psi_{2,0}^{\left(  k\right)
}-N_{1,0}\psi_{1,0}^{\left(  k\right)  }=\lim_{k}\partial_{2,0}^{1}\psi
_{2,0}^{\left(  k\right)  }-N_{1,0}\phi_{1,0}^{\left(  k\right)  },
\end{align*}
which means that $\phi_{0,1}^{\left(  k\right)  }$ can be replaced by
$\psi_{0,1}^{\left(  k\right)  }$, and $\phi_{1,0}^{\left(  k\right)  }$ by
$\psi_{1,0}^{\left(  k\right)  }$, and we can assume that $\lim_{k}%
\partial_{0,1}^{1}\left(  \psi_{0,1}^{\left(  k\right)  }-\theta_{0,1}\right)
=0$ and $\lim_{k}\partial_{1,0}^{1}\left(  \psi_{1,0}^{\left(  k\right)
}-\theta_{1,0}\right)  =0$ hold. Notice that $\psi_{i,j}^{\left(  k\right)  }%
$, $i+j=1$ are only used for $\varphi_{i,j}$ with $i+j=2$.

Now suppose that there exist $\theta_{i,j}\in\mathcal{O}_{i,j}^{\oplus2}$,
$i+j<n$ such that $\varphi_{i,j}=\partial_{i,j}^{1}\theta_{i,j}+M_{i,j-1}%
\theta_{i,j-1}-N_{i-1,j}\theta_{i-1,j}$ and $\lim_{k}\partial_{i,j}^{1}\left(
\psi_{i,j}^{\left(  k\right)  }-\theta_{i,j}\right)  =0$ hold for all $i+j<n$.
Put $\omega_{i,j}^{\left(  k\right)  }=\psi_{i,j}^{\left(  k\right)  }%
-\theta_{i,j}$. As above, using Corollary \ref{corDecom}, we have
\[%
\begin{array}
[c]{cc}%
\mathcal{O}_{i,j}^{\oplus2} & \\
\parallel & \\
\operatorname{im}\left(  p_{i,j}^{1}\right)  & \overset{\partial_{i,j}%
^{1}}{\longrightarrow}\\
\oplus & \\
\operatorname{im}\left(  p_{i,j}\right)  \oplus\operatorname{im}\left(
\partial_{i,j}^{0}\right)  & =\ker\left(  \partial_{i,j}^{1}\right)
\end{array}%
\begin{array}
[c]{c}%
\\
\\
\mathcal{O}_{i,j}\\
\\
\end{array}
\]
Thus, there are sequences $\left\{  s_{i,j}^{\left(  k\right)  }\right\}
\subseteq\operatorname{im}\left(  p_{i,j}\right)  $ and $\left\{
t_{i,j}^{\left(  k\right)  }\right\}  \subseteq\mathcal{O}_{i,j}$ such that
\begin{equation}
\omega_{i,j}^{\left(  k\right)  }-s_{i,j}^{\left(  k\right)  }-\partial
_{i,j}^{0}t_{i,j}^{\left(  k\right)  }\rightarrow0\quad\text{for all\quad
}i+j<n. \label{fi2}%
\end{equation}
Fix $d,l$ with $d+l\leq n$. Using (\ref{fi1}), we obtain that
\begin{equation}
\varphi_{d,l}-M_{d,l-1}\theta_{d,l-1}+N_{d-1,l}\theta_{d-1,l}=\lim_{k}%
\partial_{d,l}^{1}\psi_{d,l}^{\left(  k\right)  }+M_{d,l-1}\left(
\omega_{d,l-1}^{\left(  k\right)  }\right)  -N_{d-1,l}\left(  \omega
_{d-1,l}^{\left(  k\right)  }\right)  . \label{fi3}%
\end{equation}
Moreover, taking into account the decomposition $\mathcal{O}_{d,l}%
=\operatorname{im}\left(  p_{d,l}^{0}\right)  \oplus\operatorname{im}\left(
\partial_{d,l}^{1}\right)  $ (see Corollary \ref{corDecom}), we have
$\varphi_{d,l}-M_{d,l-1}\theta_{d,l-1}+N_{d-1,l}\theta_{d-1,l}=\chi
_{d,l}+\partial_{d,l}^{1}\gamma_{d,l}$ with $\chi_{d,l}\in\operatorname{im}%
\left(  p_{d,l}^{0}\right)  $ and $\gamma_{d,l}\in\mathcal{O}_{d,l}^{\oplus2}%
$. Put
\begin{equation}
\omega_{d,l}^{\left(  k\right)  }=\psi_{d,l}^{\left(  k\right)  }-\gamma
_{d,l}-M_{d,l-1}^{0}t_{d,l-1}^{\left(  k\right)  }-N_{d-1,l}^{0}%
t_{d-1,l}^{\left(  k\right)  }\in\mathcal{O}_{d,l}^{\oplus2}. \label{fi4}%
\end{equation}
Using (\ref{fi3}), (\ref{fi2}), and the continuity of the operators
$M_{d,l-1}$ and $N_{d-1,l}$, we deduce that
\begin{align*}
\chi_{d,l}  &  =\lim_{k}\partial_{d,l}^{1}\left(  \psi_{d,l}^{\left(
k\right)  }-\gamma_{d,l}\right)  +M_{d,l-1}\left(  \omega_{d,l-1}^{\left(
k\right)  }\right)  -N_{d-1,l}\left(  \omega_{d-1,l}^{\left(  k\right)
}\right) \\
&  =\lim_{k}\partial_{d,l}^{1}\left(  \psi_{d,l}^{\left(  k\right)  }%
-\gamma_{d,l}\right)  +M_{d,l-1}\left(  s_{d,l-1}^{\left(  k\right)
}+\partial_{d,l-1}^{0}t_{d,l-1}^{\left(  k\right)  }\right)  -N_{d-1,l}\left(
s_{d-1,l}^{\left(  k\right)  }+\partial_{d-1,l}^{0}t_{d-1,l}^{\left(
k\right)  }\right) \\
&  =\lim_{k}\partial_{d,l}^{1}\left(  \omega_{d,l}^{\left(  k\right)
}\right)  +M_{d,l-1}\left(  s_{d,l-1}^{\left(  k\right)  }\right)
-N_{d-1,l}\left(  s_{d-1,l}^{\left(  k\right)  }\right)  ,
\end{align*}
Recall again that $M_{d,l-1}\partial_{d,l-1}^{0}=-\partial_{d,l}^{1}%
M_{d,l-1}^{0}$ and $N_{d-1,l}\partial_{d-1,l}^{0}=\partial_{d,l}^{1}%
N_{d-1,l}^{0}$ thanks to Lemma \ref{dmn}. By Lemma \ref{lemOMN}, there are
limits $\lim_{k}s_{d,l-1}^{\left(  k\right)  }=s_{d,l-1}$, $\lim_{k}%
s_{d-1,l}^{\left(  k\right)  }=s_{d-1,l}$ and $\lim_{k}\partial_{d,l}%
^{1}\left(  \omega_{d,l}^{\left(  k\right)  }\right)  =\partial_{d,l}%
^{1}\left(  \omega_{d,l}\right)  $ with $\chi_{d,l}=\partial_{d,l}^{1}\left(
\omega_{d,l}\right)  +M_{d,l-1}\left(  s_{d,l-1}\right)  -N_{d-1,l}\left(
s_{d-1,l}\right)  $. It follows that
\[
\varphi_{d,l}=\partial_{d,l}^{1}\left(  \omega_{d,l}+\gamma_{d,l}\right)
+M_{d,l-1}\left(  s_{d,l-1}+\theta_{d,l-1}\right)  -N_{d-1,l}\left(
s_{d-1,l}+\theta_{d-1,l}\right)  .
\]
Now we can replace $\theta_{d,l-1}$ by $s_{d,l-1}+\theta_{d,l-1}$ (resp.,
$\theta_{d-1,l}$ by $s_{d-1,l}+\theta_{d-1,l}$) just in one degree less terms.
Indeed, $s_{d,l-1}\in\ker\left(  \partial_{d,l-1}^{1}\right)  $, $s_{d-1,l}%
\in\ker\left(  \partial_{d-1,l}^{1}\right)  $, and $\theta_{d,l-1}$,
$\theta_{d-1,l}$ occur only in the following expressions
\begin{align*}
\varphi_{d,l-1}  &  =\partial_{d,l-1}^{1}\theta_{d,l-1}+M_{d,l-2}%
\theta_{d,l-2}-N_{d-1,l-1}\theta_{d-1,l-1},\\
\varphi_{d-1,l}  &  =\partial_{d-1,l}^{1}\theta_{d-1,l}+M_{d-1,l-1}%
\theta_{d-1,l-1}-N_{d-2,l}\theta_{d-2,l}.
\end{align*}
Put $\theta_{d,l}=\omega_{d,l}+\gamma_{d,l}$. Then%
\[
\varphi_{d,l}=\partial_{d,l}^{1}\theta_{d,l}+M_{d,l-1}\theta_{d,l-1}%
-N_{d-1,l}\theta_{d-1,l}%
\]
as it was required. Moreover, using (\ref{fi4}). we have
\[
\partial_{d,l}^{1}\left(  \theta_{d,l}\right)  =\lim_{k}\partial_{d,l}%
^{1}\left(  \omega_{d,l}^{\left(  k\right)  }+\gamma_{d,l}\right)  =\lim
_{k}\partial_{d,l}^{1}\phi_{d,l}^{\left(  k\right)  },
\]
where $\phi_{d,l}^{\left(  k\right)  }=\psi_{d,l}^{\left(  k\right)
}-M_{d,l-1}^{0}t_{d,l-1}^{\left(  k\right)  }-N_{d-1,l}^{0}t_{d-1,l}^{\left(
k\right)  }$. In this case, as above using Lemma \ref{dmn}, we conclude that
\begin{align*}
M_{d,l}\phi_{d,l}^{\left(  k\right)  }-N_{d-1,l+1}\phi_{d-1,l+1}^{\left(
k\right)  }  &  =M_{d,l}\psi_{d,l}^{\left(  k\right)  }-N_{d-1,l+1}%
\psi_{d-1,l+1}^{\left(  k\right)  }\\
&  -M_{d,l}M_{d,l-1}^{0}t_{d,l-1}^{\left(  k\right)  }-M_{d,l}N_{d-1,l}%
^{0}t_{d-1,l}^{\left(  k\right)  }\\
&  +N_{d-1,l+1}M_{d-1,l}^{0}t_{d-1,l}^{\left(  k\right)  }+N_{d-1,l+1}%
N_{d-2,l+1}^{0}t_{d-2,l+1}^{\left(  k\right)  }\\
&  =M_{d,l}\psi_{d,l}^{\left(  k\right)  }-N_{d-1,l+1}\psi_{d-1,l+1}^{\left(
k\right)  }.
\end{align*}
By (\ref{fi1}), we obtain that
\[
\varphi_{d,l+1}=\lim_{k}\partial_{d,l+1}^{1}\psi_{d,l+1}^{\left(  k\right)
}+M_{d,l}\phi_{d,l}^{\left(  k\right)  }-N_{d-1,l+1}\phi_{d-1,l+1}^{\left(
k\right)  }.
\]
In a similar way, we have $M_{d+1,l-1}\phi_{d+1,l-1}^{\left(  k\right)
}-N_{d,l}\phi_{d,l}^{\left(  k\right)  }=M_{d+1,l-1}\psi_{d+1,l-1}^{\left(
k\right)  }-N_{d,l}\psi_{d,l}^{\left(  k\right)  }$ and the same limit holds
for $\varphi_{d+1,l}$. Thus we can replace $\phi_{i,j}^{\left(  k\right)  }$
by $\psi_{i,j}^{\left(  k\right)  }$ and assume that $\lim_{k}\partial
_{i,j}^{1}\left(  \psi_{i,j}^{\left(  k\right)  }-\theta_{i,j}\right)  =0$
whenever $i+j\leq n$. Hence there exists $\theta=\left(  \theta_{d,l}\right)
_{d,l}\in\prod\limits_{\left(  d,l\right)  \in\mathbb{Z}_{+}^{2}}%
\mathcal{O}_{d,l}^{\oplus2}$ such that $d^{1}\theta=\varphi$, which means that
$\varphi$ belongs to $\operatorname{im}\left(  d^{1}\right)  $.
\end{proof}

\subsection{The main result on localizations}

Now we can prove the main result of the paper.

\begin{theorem}
\label{thCENTER}Let $U\subseteq\mathbb{C}_{xy}$ be a $q$-open subset. The
canonical embeddings of the contractive quantum plane $\mathfrak{A}_{q}$ into
the Arens-Michael-Fr\'{e}chet algebras $\mathbb{C}_{q}\left[  \left[
x,y\right]  \right]  $, $\mathcal{O}\left(  U_{x}\right)  \left[  \left[
y\right]  \right]  $ and $\left[  \left[  x\right]  \right]  \mathcal{O}%
\left(  U_{y}\right)  $ are localizations. Moreover, $\mathfrak{A}%
_{q}\rightarrow\mathcal{F}_{q}\left(  U\right)  $ is a localization whenever
$U$ is a Runge $q$-open subset.
\end{theorem}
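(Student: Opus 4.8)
The plan is to reduce the Runge case to the exactness of the bimodule complex (\ref{Rsp}) for $\mathcal{A}=\mathcal{F}_{q}(U)$ and then establish that exactness term by term, the genuinely new input being an order-by-order elimination of the diagonal cohomology $H_{d,l}^{1}$. The three algebras $\mathbb{C}_{q}[[x,y]]$, $\mathcal{O}(U_{x})[[y]]$ and $[[x]]\mathcal{O}(U_{y})$ are already settled by Propositions \ref{propLoc1} and \ref{propLoc2}, so only $\mathfrak{A}_{q}\to\mathcal{F}_{q}(U)$ with $U$ Runge remains. By (\ref{FqF}) the algebra $\mathcal{F}_{q}(U)$ is a closed subspace of a countable product of the nuclear Fr\'{e}chet spaces $\mathcal{O}(U_{x})[[y]]$ and $[[x]]\mathcal{O}(U_{y})$, hence is itself a nuclear Fr\'{e}chet space; likewise every term of (\ref{Rsp}) equals, up to the $(d,l)$-isomorphisms, the product $\prod_{(d,l)}\mathcal{O}_{d,l}$ of nuclear spaces and so is nuclear. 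Therefore, by the nuclear criterion recalled in Subsection \ref{subSth} (\cite[Proposition 1.6]{Tay2}), it suffices to prove that (\ref{Rsp}) is exact for $\mathcal{A}=\mathcal{F}_{q}(U)$, after which \cite[Proposition 1.8]{Tay2} transfers the localization property between $\mathcal{O}_{q}(\mathbb{C}_{xy})\to\mathcal{F}_{q}(U)$ and $\mathfrak{A}_{q}\to\mathcal{F}_{q}(U)$.

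Exactness of (\ref{Rsp}) at $\mathcal{F}_{q}(U)$ is immediate since $\mathcal{F}_{q}(U)$ is unital, so $\pi(a\otimes 1)=a$. Injectivity of $d^{0}$ follows from the lower triangular shape (\ref{d01}): by induction on the homogeneous order $n=d+l$, an element of $\ker d^{0}$ has each component $\theta_{d,l}$ killed by the diagonal operator $\partial_{d,l}^{0}$ once the lower-order components vanish, and $\partial_{d,l}^{0}$ is injective because $H_{d,l}^{0}=\{0\}$ (Theorem \ref{thMain1}). The identity $\operatorname{im}(d^{1})=\ker(\pi)$ is where the Runge hypothesis enters: $\operatorname{im}(d^{1})$ is closed by Proposition \ref{propD1P}, and by Proposition \ref{propRunge} the subalgebra $\mathfrak{A}_{q}$ — hence a fortiori $\mathcal{O}_{q}(\mathbb{C}_{xy})$ — is dense in $\mathcal{F}_{q}(U)$ precisely because $U$ is Runge; Proposition \ref{propKD} then yields $\operatorname{im}(d^{1})=\ker(\pi)$. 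For non-Runge $U$ this step would fail, in agreement with the hypothesis of the theorem.

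The remaining and hardest point is $\ker(d^{1})=\operatorname{im}(d^{0})$. Here the diagonal cochains are \emph{not} exact, since $H_{d,l}^{1}=\mathcal{O}(U)\neq\{0\}$ by Theorem \ref{thMain1}, so Lemma \ref{lemTri} does not apply and this first cohomology has to be killed by the off-diagonal parts of $d^{0}$, $d^{1}$ and the non-triangular morphism $\pi$. Given $\psi=(\psi_{d,l})$ with $d^{1}\psi=0$, I would build $\theta$ with $d^{0}\theta=\psi$ by induction on $n=d+l$, treating the couples of order $n$ in the left-increasing order $(0,n)<(1,n-1)<\cdots<(n,0)$. At the stage $(d,l)$ one forms $\rho_{d,l}=\psi_{d,l}-M_{d,l-1}^{0}\theta_{d,l-1}-N_{d-1,l}^{0}\theta_{d-1,l}$; a computation from $d^{1}\psi=0$, the operator identities (\ref{dmn}), and the already established lower-order equalities $(d^{0}\theta)_{i,j}=\psi_{i,j}$ shows $\rho_{d,l}\in\ker(\partial_{d,l}^{1})$, so by Proposition \ref{propTechE} and Corollary \ref{corDecom} we may write $\rho_{d,l}=s_{d,l}+\partial_{d,l}^{0}\theta_{d,l}$ with $s_{d,l}\in\operatorname{im}(p_{d,l})$. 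The crux is $s_{d,l}=0$: feeding $\psi_{d,l}$ and $\psi_{d-1,l+1}$ (whose $s$-component is zero by the induction inside the batch) into the order-$(n+1)$ relation $(d^{1}\psi)_{d,l+1}=0$ and using (\ref{dmn}) again, one obtains $M_{d,l}s_{d,l}=-\partial_{d,l+1}^{1}(\psi_{d,l+1}-M_{d,l}^{0}\theta_{d,l}-N_{d-1,l+1}^{0}\theta_{d-1,l+1})\in\operatorname{im}(\partial_{d,l+1}^{1})=\ker(\pi_{d,l+1})$ by Theorem \ref{thMain1}; then Lemma \ref{lemDDkey} forces $s_{d,l}\in\operatorname{im}(\partial_{d,l}^{0})$, and since $\ker(\partial_{d,l}^{1})=\operatorname{im}(p_{d,l})\oplus\operatorname{im}(\partial_{d,l}^{0})$ this gives $s_{d,l}=0$, hence $(d^{0}\theta)_{d,l}=\psi_{d,l}$ and the induction continues. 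Because we work over a direct product of Fr\'{e}chet spaces, assembling $\theta=(\theta_{d,l})$ involves no convergence obstruction (unlike Proposition \ref{propD1P}). The main difficulty is exactly this bookkeeping: the linear order on $\mathbb{Z}_{+}^{2}$ must be chosen so that at each couple of order $n$ the vanishing of the neighbouring same-order $s$-terms lying earlier in the order is already in force, and the telescoping cancellations rely on the complete list of identities (\ref{dmn}). Once the four exactness conditions are in place, nuclearity upgrades them to the statement that $\mathfrak{A}_{q}\to\mathcal{F}_{q}(U)$ is a localization.
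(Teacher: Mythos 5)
Your proposal is correct and follows essentially the same route as the paper: reduce to exactness of the complex (\ref{Rsp}) for the nuclear algebra $\mathcal{F}_q(U)$, use the triangular shapes (\ref{d01}) and the diagonal cohomology from Theorem \ref{thMain1}, kill the nontrivial $H^1_{d,l}$ via Lemma \ref{lemDDkey}, and settle $\operatorname{im}(d^1)=\ker(\pi)$ with Propositions \ref{propD1P}, \ref{propRunge} and \ref{propKD}. The one genuine (if modest) difference is your treatment of $\ker(d^1)=\operatorname{im}(d^0)$: you construct the preimage $\theta$ directly by the double induction (across and within the homogeneous batches), explicitly decomposing $\rho_{d,l}=s_{d,l}+\partial_{d,l}^0\theta_{d,l}$ and proving $s_{d,l}=0$ via the order-$(n+1)$ identity and Lemma \ref{lemDDkey}. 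The paper instead runs a reduction: it applies Lemma \ref{lemDDkey} directly to the minimal nonzero component $\xi_{d,l}$ of a kernel element (exploiting the two orders inside a batch for the $M$- and $N$-sides of the lemma), subtracts $d^0\alpha$, iterates, and then invokes topological injectivity of $d^0$ (via Lemma \ref{lemTri}) to identify $\operatorname{im}(d^0)$ with its closure. Your direct-construction variant sidesteps that closedness argument, since you exhibit $\theta$ with $d^0\theta=\psi$ component by component; the trade-off is the heavier intra-batch bookkeeping, but the computation with (\ref{dmn}) that you outline does close the loop, so the argument is sound.
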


\begin{proof}
Based on Propositions \ref{propLoc1} and \ref{propLoc2}, we conclude that the
canonical morphisms of $\mathfrak{A}_{q}$ into the Arens-Michael-Fr\'{e}chet
algebras $\mathbb{C}_{q}\left[  \left[  x,y\right]  \right]  $, $\mathcal{O}%
\left(  U_{x}\right)  \left[  \left[  y\right]  \right]  $ and $\left[
\left[  x\right]  \right]  \mathcal{O}\left(  U_{y}\right)  $ are
localizations. It remains to prove the exactness of the related complex
(\ref{Rsp}) for $\mathcal{A=F}_{q}\left(  U\right)  $ whenever $U$ is a Runge
$q$-open subset. Recall that the complex (\ref{Rsp}) for $\mathcal{A=F}%
_{q}\left(  U\right)  $ is reduced to the complex (\ref{Oc}).

Based on (\ref{d01}), the differentials $d^{0}$ and $d^{1}$ of the complex
(\ref{Rsp}) have lower triangular operator matrices, that is, we come up with
a triangular complex (see Subsection \ref{subsecTCs}). The related diagonal
complexes are the cochain complexes $\mathcal{O}_{q}\left(  d,l\right)  $,
$\left(  d,l\right)  \in\mathbb{Z}_{+}^{2}$ (see (\ref{OqD})) considered in
Appendix Subsection \ref{subsecDFC1}, whose cohomology groups are denoted by
$H_{d,l}^{i}$, $0\leq i\leq3$. By Theorem \ref{thMain1}, $H_{d,l}^{0}=\left\{
0\right\}  $, $H_{d,l}^{1}=\mathcal{O}\left(  U\right)  $, $H_{d,l}%
^{2}=\left\{  0\right\}  $ and $H_{d,l}^{3}=\left\{  0\right\}  $. Since
$H_{d,l}^{0}=\left\{  0\right\}  $ and $H_{d,l}^{1}$ are Hausdorff topological
spaces, it follows that all diagonal differentials $\partial_{d,l}^{0}$ are
topologically injective linear maps. But they are diagonals of the lower
triangular operator matrix $d^{0}$. By Lemma \ref{lemTri}, we conclude that
$d^{0}$ is topologically injective.

Let us prove that $\ker\left(  d^{1}\right)  =\operatorname{im}\left(
d^{0}\right)  $, where $d^{i}$, $i=0,1$ are the differentials of the complex
(\ref{Oc}). Take a nonzero $\xi\in\ker\left(  d^{1}\right)  $, which has the
form $\xi=\left(  \xi_{i,j}\right)  \in\prod\limits_{\left(  i,j\right)
\in\mathbb{Z}_{+}^{2}}\mathcal{O}_{i,j}^{\oplus2}$. Recall that $\mathcal{O}%
_{i,j}$ is the $\left(  i,j\right)  $-isomorphic copy of $\mathcal{O}\left(
i,j\right)  $ (see Subsection \ref{subsecIO}). Assume that $\left(
d,l\right)  $ is the largest in $\mathbb{Z}_{+}^{2}$ (with respect to the
linear order from (\ref{oI})) with $\xi_{d,l}\neq0$, that is, $\xi_{i,j}=0$
for all $\left(  i,j\right)  <\left(  d,l\right)  $ (see Subsection
\ref{subsecIO}). Since $\left(  d,l-1\right)  ,\left(  d-1,l\right)  <\left(
d,l\right)  $ and $\xi_{d,l-1}=\xi_{d-1,l}=0$, it follows using (\ref{dd01})
that
\[
\partial_{d,l}^{1}\xi_{d,l}=\partial_{d,l}^{1}\xi_{d,l}+M_{d,l-1}\xi
_{d,l-1}-N_{d-1,l}\xi_{d-1,l}=\left(  d^{1}\xi\right)  _{d,l}=0,
\]
that is, $\xi_{d,l}\in\ker\left(  \partial_{d,l}^{1}\right)  $. Recall (see
Subsection \ref{subsecIO}) that
\[
M_{d,l-1}=\left[
\begin{array}
[c]{cc}%
M_{x_{2},l-1} & M_{y_{2},l-1}%
\end{array}
\right]  :\mathcal{O}_{d,l-1}^{\oplus2}\rightarrow\mathcal{O}_{d,l}\text{,
\quad}N_{d-1,l}=\left[
\begin{array}
[c]{cc}%
N_{y_{1},d-1} & N_{x_{1},d-1}%
\end{array}
\right]  :\mathcal{O}_{d-1,l}^{\oplus2}\rightarrow\mathcal{O}_{d,l},
\]
whose entries are the operators (\ref{MTN}) introduced in Subsection
\ref{subsecQRP}. Actually these entries are obtained by the tensor inflating
of the following operators
\[
N_{x,d-1},N_{y,d-1}:\mathcal{O}\left(  d-1\right)  \rightarrow\mathcal{O}%
\left(  d\right)  ,\quad M_{x,l-1},M_{y,l-1}:\mathcal{O}\left(  l-1\right)
\rightarrow\mathcal{O}\left(  l\right)
\]
considered in (\ref{Nd}). Notice that $H_{d,l}^{1}\neq\left\{  0\right\}  $.
But $M_{d,l}\xi_{d,l}\in\mathcal{O}_{d,l+1}$ and $N_{d-1,l+1}\xi_{d-1,l+1}%
\in\mathcal{O}_{d,l+1}$ with $\left(  d-1,l+1\right)  <\left(  d,l\right)  $.
It follows that $\xi_{d-1,l+1}=0$. Hence $d^{1}\xi=0$ implies that
\[
M_{d,l}\xi_{d,l}+\partial_{d,l+1}^{1}\xi_{d,l+1}=\partial_{d,l+1}^{1}%
\xi_{d,l+1}+M_{d,l}\xi_{d,l}-N_{d-1,l+1}\xi_{d-1,l+1}=\left(  d^{1}\xi\right)
_{d,l+1}=0.
\]
In the case of the opposite linear order for the homogeneous parts
$\mathbb{Z}_{+}^{2}\left(  n\right)  $, $n\in\mathbb{Z}_{+}$, we obtain that
$\partial_{d+1,l}^{1}\xi_{d+1,l}-N_{d,l}\xi_{d,l}=0$. Thus $M_{d,l}\xi
_{d,l}\in\operatorname{im}\left(  \partial_{d,l+1}^{1}\right)  $ (or
$N_{d,l}\xi_{d,l}\in\operatorname{im}\left(  \partial_{d+1,l}^{1}\right)  $).
But $\operatorname{im}\left(  \partial_{d,l+1}^{1}\right)  =\ker\left(
\pi_{d,l+1}\right)  $ or $H_{d,l+1}^{2}=\left\{  0\right\}  $ (see Theorem
\ref{thMain1}), that is, $M_{d,l}\xi_{d,l}\in\ker\left(  \pi_{d,l+1}\right)
$. Using Lemma \ref{lemDDkey}, we conclude that $\xi_{d,l}\in\operatorname{im}%
\left(  \partial_{d,l}^{0}\right)  $, that is, $\xi_{d,l}=\partial_{d,l}%
^{0}\alpha_{d,l}$ for some $\alpha_{d,l}\in\mathcal{O}_{d,l}$. Put $\alpha
\in\mathcal{F}_{q}\left(  U\right)  ^{\widehat{\otimes}2}$ with $\alpha
_{i,j}=0$ whenever $\left(  i,j\right)  \neq\left(  d,l\right)  $. Using again
(\ref{dd01}), we derive that
\[
\left(  d^{0}\alpha\right)  _{i,j}=\partial_{i,j}^{0}\alpha_{i,j}%
+M_{i,j-1}^{0}\alpha_{i,j-1}+N_{i-1,j}^{0}\alpha_{i-1,j}=\partial_{i,j}%
^{0}\alpha_{i,j}%
\]
for all $\left(  i,j\right)  \leq\left(  d,l\right)  $, where
\[
M_{i,j-1}^{0}=\left[
\begin{array}
[c]{c}%
-qM_{y_{2},j-1}\\
M_{x_{2},j-1}%
\end{array}
\right]  :\mathcal{O}_{i,j-1}\rightarrow\mathcal{O}_{i,j}^{\oplus2},\quad
N_{i-1,j}^{0}=\left[
\begin{array}
[c]{c}%
N_{x_{1},i-1}\\
-qN_{y_{1},i-1}%
\end{array}
\right]  :\mathcal{O}_{i-1,j}\rightarrow\mathcal{O}_{i,j}^{\oplus2}%
\]
are the operators considered in Subsection \ref{subsecIO}. Thus $\left(
\xi-d^{0}\alpha\right)  _{i,j}=0$ for all $\left(  i,j\right)  \leq\left(
d,l\right)  $, and we come up with a new kernel element $\xi-d^{0}\alpha
\in\ker\left(  d^{1}\right)  $ whose entries are vanishing till $\left(
d,l\right)  $ included. Namely, $\xi-d^{0}\alpha\in\prod\limits_{\left(
i,j\right)  >\left(  d,l\right)  }\mathcal{O}_{i,j}^{\oplus2}$. By induction
on $\left(  i,j\right)  $, we obtain that $\xi-\sum_{k=1}^{n}d^{0}%
\alpha^{\left(  k\right)  }$ is small enough in $\prod\limits_{\left(
i,j\right)  \in\mathbb{Z}_{+}^{2}}\mathcal{O}_{i,j}^{\oplus2}$ for a sequence
$\left\{  \alpha^{\left(  k\right)  }\right\}  \subseteq\mathcal{F}_{q}\left(
U\right)  ^{\widehat{\otimes}2}$. Hence $\xi=\sum_{k}d^{0}\alpha^{\left(
k\right)  }$ belongs to the closure of $\operatorname{im}\left(  d^{0}\right)
$. But $\operatorname{im}\left(  d^{0}\right)  $ is closed. Hence $\xi
\in\operatorname{im}\left(  d^{0}\right)  $.

Finally, the image $\operatorname{im}\left(  d^{1}\right)  $ is closed thanks
to Proposition \ref{propD1P}, and $\mathfrak{A}_{q}$ is dense in
$\mathcal{F}_{q}\left(  U\right)  $ by Proposition \ref{propRunge}. Using
Proposition \ref{propKD}, we conclude $\operatorname{im}\left(  d^{1}\right)
=\ker\left(  \pi\right)  $. The fact that $\pi$ is onto immediate. Whence
(\ref{Rsp}) for $\mathcal{A=F}_{q}\left(  U\right)  $ is exact.
\end{proof}

Thus, the noncommutative analytic space $\left(  \mathbb{C}_{xy}%
,\mathcal{F}_{q}\right)  $ possesses the localization property.

\begin{corollary}
Let $U\subseteq\mathbb{C}_{xy}$ be a Runge $q$-open subset. Then both
projections $\pi_{x}:\mathcal{F}_{q}\left(  U\right)  \rightarrow
\mathcal{O}\left(  U_{x}\right)  \left[  \left[  y\right]  \right]  $,
$\pi_{y}:\mathcal{F}_{q}\left(  U\right)  \rightarrow\left[  \left[  x\right]
\right]  \mathcal{O}\left(  U_{y}\right)  $ and the canonical morphism
$\mathcal{F}_{q}\left(  U\right)  \rightarrow\mathbb{C}_{q}\left[  \left[
x,y\right]  \right]  $ are all localizations.
\end{corollary}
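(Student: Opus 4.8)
The plan is to derive the corollary formally from the main theorem together with the cancellation property of localizations, so that no further homological computation is needed. First I would record the four localizations that are already available out of the \emph{single} algebra $\mathfrak{A}_q$ when $U$ is a Runge $q$-open subset: by Theorem \ref{thCENTER} the canonical embeddings $\mathfrak{A}_q\rightarrow\mathcal{F}_q\left(U\right)$, $\mathfrak{A}_q\rightarrow\mathcal{O}\left(U_x\right)\left[\left[y\right]\right]$, $\mathfrak{A}_q\rightarrow\left[\left[x\right]\right]\mathcal{O}\left(U_y\right)$ and $\mathfrak{A}_q\rightarrow\mathbb{C}_q\left[\left[x,y\right]\right]$ are all localizations (the last three also via Propositions \ref{propLoc1} and \ref{propLoc2}), and all target algebras are nuclear Fr\'{e}chet $\mathcal{O}_q\left(\mathbb{C}_{xy}\right)$-algebras.

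Next I would check that the relevant triangles of continuous algebra homomorphisms commute. By the very construction of $\mathcal{F}_q\left(U\right)$ as the fibered product $\mathcal{O}\left(U_x\right)\left[\left[y\right]\right]\underset{\mathbb{C}\left[\left[x,y\right]\right]}{\times}\left[\left[x\right]\right]\mathcal{O}\left(U_y\right)$, the projections $\pi_x$, $\pi_y$ are $\mathcal{O}_q\left(\mathbb{C}_{xy}\right)$-algebra morphisms, the composite $\mathcal{O}_q\left(\mathbb{C}_{xy}\right)\rightarrow\mathcal{F}_q\left(U\right)\overset{\pi_x}{\rightarrow}\mathcal{O}\left(U_x\right)\left[\left[y\right]\right]$ is exactly the morphism $l$ and $\mathcal{O}_q\left(\mathbb{C}_{xy}\right)\rightarrow\mathcal{F}_q\left(U\right)\overset{\pi_y}{\rightarrow}\left[\left[x\right]\right]\mathcal{O}\left(U_y\right)$ is $r$ (see Subsection \ref{subsecHC}). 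Precomposing with $\mathfrak{A}_q\rightarrow\mathcal{O}_q\left(\mathbb{C}_{xy}\right)$, the triangles
\[
\mathfrak{A}_q\longrightarrow\mathcal{F}_q\left(U\right)\overset{\pi_x}{\longrightarrow}\mathcal{O}\left(U_x\right)\left[\left[y\right]\right],\qquad\mathfrak{A}_q\longrightarrow\mathcal{F}_q\left(U\right)\overset{\pi_y}{\longrightarrow}\left[\left[x\right]\right]\mathcal{O}\left(U_y\right)
\]
commute, and likewise the canonical morphism $s_x\pi_x=s_y\pi_y:\mathcal{F}_q\left(U\right)\rightarrow\mathbb{C}_q\left[\left[x,y\right]\right]$ precomposed with $\mathfrak{A}_q\rightarrow\mathcal{F}_q\left(U\right)$ is the canonical embedding $\mathfrak{A}_q\rightarrow\mathbb{C}_q\left[\left[x,y\right]\right]$ of Subsection \ref{subsecFQM}.

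Then I would invoke the cancellation property of localizations in the form of \cite[Proposition 1.8]{Tay2}: if $B\rightarrow A$ and $B\rightarrow C$ are localizations and $\phi:A\rightarrow C$ is a continuous algebra homomorphism with $\phi\circ\left(B\rightarrow A\right)=\left(B\rightarrow C\right)$, then $\phi$ is a localization. Indeed, for every Fr\'{e}chet $C$-bimodule $M$ (regarded as an $A$- and a $B$-bimodule by pull back) the functoriality of Hochschild homology in the algebra variable gives a commuting factorization
\[
H_n\left(B,M\right)\longrightarrow H_n\left(A,M\right)\longrightarrow H_n\left(C,M\right)
\]
whose composite is the canonical map, and the outer arrows are isomorphisms, so $H_n\left(A,M\right)\rightarrow H_n\left(C,M\right)$ is an isomorphism for all $n\geq0$. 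Applying this with $B=\mathfrak{A}_q$, $A=\mathcal{F}_q\left(U\right)$ and $C$ equal successively to $\mathcal{O}\left(U_x\right)\left[\left[y\right]\right]$, $\left[\left[x\right]\right]\mathcal{O}\left(U_y\right)$ and $\mathbb{C}_q\left[\left[x,y\right]\right]$ yields that $\pi_x$, $\pi_y$ and $\mathcal{F}_q\left(U\right)\rightarrow\mathbb{C}_q\left[\left[x,y\right]\right]$ are localizations. The main point is not a new estimate but verifying that the cancellation lemma is applicable: this rests precisely on Theorem \ref{thCENTER}, which guarantees that $\mathfrak{A}_q\rightarrow\mathcal{F}_q\left(U\right)$ is itself a localization for Runge $U$, while the needed compatibility of the homology maps is just the functoriality recalled above together with the commuting diagrams of the previous paragraph.
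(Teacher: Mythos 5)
Your proof is correct and follows essentially the same route as the paper, which simply cites Theorem~\ref{thCENTER} together with \cite[Proposition 1.8]{Tay2}; you have merely spelled out the cancellation lemma (the two-out-of-three property for Hochschild homology isomorphisms induced by localizations) and verified the required commutativity of the triangles through $\mathfrak{A}_q\to\mathcal{F}_q(U)$, both of which are implicit in the paper's one-line proof.
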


\begin{proof}
One needs to use Theorem \ref{thCENTER} and \cite[Proposition 1.8]{Tay2}.
\end{proof}

\section{Applications to the joint spectra\label{secJS}}

In this final section we consider the joint spectra within the general
framework of noncommutative functional calculus considered in \cite{DosJOT10},
and prove the key results on Putinar and Taylor spectra based on our main
Theorem \ref{thCENTER} on localizations.

\subsection{Taylor spectrum}

Let $X$ be a left Fr\'{e}chet $\mathcal{O}_{q}\left(  \mathbb{C}^{2}\right)
$-module given by a couple $\left(  T,S\right)  $ of continuous linear
operator on $X$ such that $TS=q^{-1}ST$. For brevity we say that $X$ is a left
Fr\'{e}chet $q$-module. If $X$ is a left Banach $\mathfrak{A}_{q}$-module,
then the homomorphism $\mathfrak{A}_{q}\rightarrow\mathcal{L}\left(  X\right)
$, $x\mapsto T$, $y\mapsto S$ into the Banach algebra $\mathcal{L}\left(
X\right)  $ has a unique continuous (algebra homomorphism) extension
$\mathcal{O}_{q}\left(  \mathbb{C}^{2}\right)  \rightarrow\mathcal{L}\left(
X\right)  $ to its Arens-Michael envelope $\mathcal{O}_{q}\left(
\mathbb{C}^{2}\right)  $ (see Subsection \ref{subsecAME}). It turns out that
$X$ is a left Fr\'{e}chet $q$-module automatically. Recall \cite{Dosi242} that
the resolvent set $\operatorname{res}\left(  T,S\right)  $ of these operators
(or the $q$-module $X$) is defined as a set of those $\gamma\in\mathbb{C}%
_{xy}$ such that the transversality relation $\mathbb{C}\left(  \gamma\right)
\perp X$ holds, that is, $\operatorname{Tor}_{k}^{\mathfrak{A}_{q}}\left(
\mathbb{C}\left(  \gamma\right)  ,X\right)  =\left\{  0\right\}  $ for all
$k\geq0$. The complement
\[
\sigma\left(  T,S\right)  =\mathbb{C}_{xy}^{2}\backslash\operatorname{res}%
\left(  T,S\right)
\]
defines the Taylor spectrum of the operator couple $\left(  T,S\right)  $. If
$U\subseteq\mathbb{C}_{xy}$ is a nonempty $q$-open subset such that
$U\cap\sigma\left(  T,S\right)  =\varnothing$ (or $U\subseteq
\operatorname{res}\left(  T,S\right)  $), then the transversality relation
$\mathcal{F}_{q}\left(  U\right)  \perp X$ holds whenever $X$ is a left Banach
$q$-module \cite[Theorem 6.5]{Dosi25}. But the reverse implication is strongly
depends on the localization property of the noncommutative analytic space
$\left(  \mathbb{C}_{xy},\mathcal{F}_{q}\right)  $ (see Definition
\ref{def3}). Based on Theorem \ref{thCENTER}, we conclude that $\mathcal{F}%
_{q}\left(  U\right)  \perp X$ holds iff $U\cap\sigma\left(  T,S\right)
=\varnothing$, whenever $U$ is a Runge $q$-open subset and $X$ is a left
Banach $q$-module. The same statement holds for the noncommutative analytic
spaces $\left(  \left\{  \left(  0,0\right)  \right\}  ,\mathbb{C}_{q}\left[
\left[  x,y\right]  \right]  \right)  $, $\left(  \mathbb{C}_{x}%
,\mathcal{O}\left[  \left[  y\right]  \right]  \right)  $ and $\left(
\mathbb{C}_{y},\left[  \left[  x\right]  \right]  \mathcal{O}\right)  $ (see
Definitions \ref{def1} and \ref{def2}) too.

\subsection{Putinar spectrum}

The Putinar spectrum of a left Fr\'{e}chet module with respect to a
Fr\'{e}chet algebra sheaf was introduced in \cite{DosiMS10}, \cite{DosJOT10},
that plays a central role in noncommutative functional calculus problems.
Based on \cite[Definition 4.2]{DosJOT10}, we define \textit{the resolvent set
}$\operatorname*{res}\left(  \mathcal{F}_{q},X\right)  $ of a left Fr\'{e}chet
$q$-module $X$ with respect to the sheaf $\mathcal{F}_{q}$ to be a subset of
those $\gamma\in\mathbb{C}_{xy}$ such that there exists a $q$-open
neighborhood $U_{\gamma}$ of $\gamma$ with the transversality property
$\mathcal{F}_{q}\left(  V\right)  \perp X$ for each $q$-open subset
$V\subseteq U_{\gamma}$. Actually, in \cite[Definition 4.2]{DosJOT10} it is
assumed that $\mathcal{F}_{q}\left(  V\right)  \perp X$ holds only for every
$q$-open $\mathcal{F}_{q}$-acyclic subset $V\subseteq U_{\gamma}$. But we can
skip this moment taking into account that every $q$-open subset of
$\mathbb{C}_{xy}$ is $\mathcal{F}_{q}$-acyclic \cite{DosiSS}. By its very
definition, $\operatorname*{res}\left(  \mathcal{F}_{q},X\right)  $ is a
$q$-open subset whereas that is not the case for $\operatorname{res}\left(
T,S\right)  $ (see below Example). The complement%
\[
\sigma\left(  \mathcal{F}_{q},X\right)  =\mathbb{C}_{xy}\backslash
\operatorname*{res}\left(  \mathcal{F}_{q},X\right)
\]
is called \textit{the Putinar spectrum }of the left Fr\'{e}chet $q$-module $X$
with respect to the sheaf $\mathcal{F}_{q}$.

\begin{lemma}
\label{corFUV}Let $U\subseteq\mathbb{C}_{xy}$ be a Runge $q$-open subset,
$V\subseteq U$ its $q$-open subset, and let $X$ be a left Fr\'{e}chet
$q$-module. If $\mathcal{F}_{q}\left(  U\right)  \perp X$ then $\mathcal{F}%
_{q}\left(  V\right)  \perp X$ holds too.
\end{lemma}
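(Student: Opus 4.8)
The plan is to reduce the transversality statement $\mathcal{F}_q(V)\perp X$ to $\mathcal{F}_q(U)\perp X$ by a localization argument. By Theorem \ref{thCENTER}, since $U$ is a Runge $q$-open subset, the canonical embedding $\mathfrak{A}_q\rightarrow\mathcal{F}_q(U)$ is a localization; equivalently $\mathcal{O}_q(\mathbb{C}_{xy})\rightarrow\mathcal{F}_q(U)$ is a localization. The key reduction is that transversality can be computed over either the base algebra or the localizing algebra: if $A$ is a localization of $B$, then for any left Fréchet $A$-module (here $X$, viewed through the $\mathfrak{A}_q$-action) and any Fréchet $A$-bimodule one has $\operatorname{Tor}^B_n=\operatorname{Tor}^A_n$. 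First I would record that $\mathcal{F}_q(U)\perp X$ means $\operatorname{Tor}^{\mathfrak{A}_q}_n(\mathcal{F}_q(U),X)=\{0\}$ for all $n\ge 0$, where $\mathcal{F}_q(U)$ is a right $\mathfrak{A}_q$-module via the structure homomorphism and $X$ a left one.

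Next I would exploit the restriction morphism $\rho\colon\mathcal{F}_q(U)\rightarrow\mathcal{F}_q(V)$, which is a morphism of Fréchet $\mathcal{O}_q(\mathbb{C}_{xy})$-algebras. The goal is to show $\mathcal{F}_q(V)$ is obtained from $\mathcal{F}_q(U)$ by a further localization, i.e. $\mathcal{F}_q(U)\rightarrow\mathcal{F}_q(V)$ is a localization. Granting that, one gets $\operatorname{Tor}^{\mathfrak{A}_q}_n(\mathcal{F}_q(V),X)=\operatorname{Tor}^{\mathcal{F}_q(U)}_n(\mathcal{F}_q(V),X)$ for $X$ regarded as a left $\mathcal{F}_q(U)$-module. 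But the $\mathfrak{A}_q$-action on $X$ need not extend to a $\mathcal{F}_q(U)$-action a priori; this is exactly where the hypothesis $\mathcal{F}_q(U)\perp X$ is used: the localization property of $\mathfrak{A}_q\rightarrow\mathcal{F}_q(U)$ gives that the multiplication $\mathcal{F}_q(U)\widehat{\otimes}_{\mathfrak{A}_q}\mathcal{F}_q(U)\rightarrow\mathcal{F}_q(U)$ is a topological isomorphism, and more relevantly that the categories of Fréchet $\mathcal{F}_q(U)$-modules embed fully into Fréchet $\mathfrak{A}_q$-modules with $\operatorname{Tor}$ preserved. Since Theorem \ref{thCENTER} actually establishes the absolute (stably flat) localization via exactness of the bimodule complex \eqref{Rsp}, the cleanest route is: compute $\operatorname{Tor}^{\mathfrak{A}_q}_n(\mathcal{F}_q(V),X)$ by resolving $\mathcal{F}_q(V)$ as an $\mathfrak{A}_q$-bimodule using the complex \eqref{Rsp} for $\mathcal{A}=\mathcal{F}_q(V)$ — valid because $V$ need not be Runge, but we only need $\mathcal{F}_q(U)$ Runge for the comparison — actually here I would instead invoke \cite[Proposition 1.8]{Tay2} together with the standard fact that a composition of localizations is a localization, once $\mathcal{F}_q(U)\rightarrow\mathcal{F}_q(V)$ is shown to be one.

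To prove $\mathcal{F}_q(U)\rightarrow\mathcal{F}_q(V)$ is a localization, I would mimic the proof of Theorem \ref{thCENTER}: the relevant bimodule complex for $\mathcal{F}_q(V)$ over $\mathcal{F}_q(U)$ decomposes, via the Decomposition Theorem \ref{thmainDecom}, into diagonal pieces governed by the sheaves $\mathcal{O}(d)$, and on each diagonal the question becomes the exactness of a Čech-type complex for the inclusion $V\subseteq U$ of $q$-open sets in $\mathbb{C}_{xy}$ — i.e. the statement that $\mathcal{O}(U_x)\rightarrow\mathcal{O}(V_x)$ and $\mathcal{O}(U_y)\rightarrow\mathcal{O}(V_y)$ are localizations of commutative Fréchet algebras of holomorphic functions, which is classical for restriction maps between Stein (here $q$-open, Runge) open subsets; the triangular-matrix lemma (Lemma \ref{lemTri}) then propagates exactness from the diagonals, and Proposition \ref{propD1P}-style closedness arguments handle $\operatorname{im}(d^1)$. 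The main obstacle I anticipate is precisely the bookkeeping for the restriction complex: ensuring that the differentials again acquire lower-triangular form with respect to the index order of Subsection \ref{subsecIO}, and that the diagonal complexes are genuinely exact for an \emph{arbitrary} $q$-open $V\subseteq U$ (not just Runge $V$) — this requires checking that the classical Stein-restriction localization still works for the possibly-disconnected, possibly-non-Runge $q$-open subsets in play, which is where one must be careful about the density/closedness hypotheses rather than appealing blindly to the Runge case. Once $\mathcal{F}_q(U)\rightarrow\mathcal{F}_q(V)$ is a localization, transversality with $X$ is inherited formally and the lemma follows.
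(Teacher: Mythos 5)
Your strategy hinges on proving that the restriction morphism $\mathcal{F}_{q}\left(U\right)\rightarrow\mathcal{F}_{q}\left(V\right)$ is itself a localization, and then composing localizations. But that claim is stronger than what the lemma asserts and is almost certainly false for non-Runge $V$: since localizations compose, it would force $\mathfrak{A}_{q}\rightarrow\mathcal{F}_{q}\left(V\right)$ to be a localization for every $q$-open $V\subseteq U$, whereas Theorem \ref{thCENTER} only establishes this for Runge $V$, and Proposition \ref{propRunge} combined with Proposition \ref{propKD} indicates why it should fail otherwise: when $\mathfrak{A}_{q}$ is not dense in $\mathcal{F}_{q}\left(V\right)$, the image of $d^{1}$ is strictly smaller than $\ker\left(\pi\right)$, so the complex (\ref{Rsp}) for $\mathcal{A}=\mathcal{F}_{q}\left(V\right)$ has nonvanishing $H^{2}$ and the multiplication map $\mathcal{F}_{q}\left(V\right)\widehat{\otimes}_{\mathfrak{A}_{q}}\mathcal{F}_{q}\left(V\right)\rightarrow\mathcal{F}_{q}\left(V\right)$ is not an isomorphism. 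You flag ``one must be careful about the density/closedness hypotheses'' as a checkpoint, but this is not a bookkeeping issue to be patched: the Runge hypothesis is exactly the density hypothesis, and without it your intermediate claim collapses.

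The paper's proof avoids this by invoking a strictly weaker notion than localization, namely Taylor's concept of domination. Because $U$ is Runge, $\mathfrak{A}_{q}\rightarrow\mathcal{F}_{q}\left(U\right)$ is a localization (Theorem \ref{thCENTER}), and the restriction map makes $\mathcal{F}_{q}\left(V\right)$ a Fréchet $\mathcal{F}_{q}\left(U\right)$-bimodule by the sheaf property. Taylor's Proposition 2.1 then says that a localizing algebra dominates every Fréchet bimodule over it: $\operatorname{Tor}_{n}^{\mathcal{O}_{q}\left(\mathbb{C}_{xy}\right)}\left(\mathcal{F}_{q}\left(U\right),\mathcal{F}_{q}\left(V\right)\right)=\left\{0\right\}$ for $n>0$, and $\mathcal{F}_{q}\left(V\right)\rightarrow\mathcal{F}_{q}\left(U\right)\widehat{\otimes}_{\mathcal{O}_{q}\left(\mathbb{C}_{xy}\right)}\mathcal{F}_{q}\left(V\right)$ is a topological isomorphism. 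Then Taylor's Proposition 2.7(b), using the nuclearity of $\mathcal{F}_{q}\left(U\right)$, transports the transversality $\mathcal{F}_{q}\left(U\right)\perp X$ to $\mathcal{F}_{q}\left(V\right)\perp X$. No new localization, no new resolution of $\mathcal{F}_{q}\left(V\right)$, and no Runge condition on $V$ is needed. You should replace your reduction to a localization $\mathcal{F}_{q}\left(U\right)\rightarrow\mathcal{F}_{q}\left(V\right)$ with this domination argument.
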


\begin{proof}
Put $A=\mathcal{O}_{q}\left(  \mathbb{C}_{xy}\right)  $. As we have mentioned
above (see Subsection \ref{subsecBRQP}) the canonical map $\mathfrak{A}%
_{q}\rightarrow A$ is an absolute localization \cite{Pir}. It follows that $A$
has the finite free $A$-bimodule resolution $\mathcal{R}\left(
A^{\widehat{\otimes}2}\right)  $, which in turn implies that $A$ is of finite
type in the sense of \cite[Definition 2.4]{Tay2}. By the sheaf property of
$\mathcal{F}_{q}$ (see Subsection \ref{subsecNCCP}), we deduce that
$\mathcal{F}_{q}\left(  V\right)  $ is a Fr\'{e}chet $\mathcal{F}_{q}\left(
U\right)  $-bimodule. Actually, we have the dominance property $\mathcal{F}%
_{q}\left(  U\right)  \gg\mathcal{F}_{q}\left(  V\right)  $ \cite[Definition
2.2]{Tay2} in the following sense that $\operatorname{Tor}_{n}^{A}\left(
\mathcal{F}_{q}\left(  U\right)  ,\mathcal{F}_{q}\left(  V\right)  \right)
=\left\{  0\right\}  $, $n>0$, and the natural mapping $\mathcal{F}_{q}\left(
V\right)  \rightarrow\mathcal{F}_{q}\left(  U\right)  \widehat{\otimes}%
_{A}\mathcal{F}_{q}\left(  V\right)  $, $\zeta\mapsto1\otimes\zeta$ is a
topological isomorphism. Indeed, using the fact that $A\rightarrow
\mathcal{F}_{q}\left(  U\right)  $ is a localization (Theorem \ref{thCENTER}),
and the assertion from \cite[Proposition 2.1]{Tay2}, we deduce that
$\mathcal{F}_{q}\left(  U\right)  $ is dominating over $\mathcal{F}_{q}\left(
V\right)  $. Taking into account the nuclearity property of $\mathcal{F}%
_{q}\left(  U\right)  $, we conclude that $\mathcal{F}_{q}\left(  U\right)
\perp X$ and $\mathcal{F}_{q}\left(  U\right)  \gg\mathcal{F}_{q}\left(
V\right)  $ imply $\mathcal{F}_{q}\left(  V\right)  \perp X$ \cite[Proposition
2.7 (b)]{Tay2}.
\end{proof}

Based on Lemma \ref{corFUV}, we obtain that $\gamma\in\operatorname*{res}%
\left(  \mathcal{F}_{q},X\right)  $ if and only if $\mathcal{F}_{q}\left(
U_{\gamma}\right)  \perp X$ for a certain Runge $q$-open subset $U_{\gamma}$
containing $\gamma$. Recall that the Runge $q$-open subsets form a topology
base of the $q$-topology in $\mathbb{C}_{xy}$. Thus $\sigma\left(
\mathcal{F}_{q},X\right)  $ is a $q$-closed subset of $\mathbb{C}_{xy}$.

\begin{theorem}
\label{corPT}Let $X$ be a left Fr\'{e}chet $q$-module given by an operator
couple $\left(  T,S\right)  $. Then $\sigma\left(  T,S\right)  ^{-q}%
\subseteq\sigma\left(  \mathcal{F}_{q},X\right)  $ holds, where $\sigma\left(
T,S\right)  ^{-q}$ is the $q$-closure of the Taylor spectrum $\sigma\left(
T,S\right)  $. If $X$ is a left Banach $q$-module, then
\[
\sigma\left(  \mathcal{F}_{q},X\right)  =\sigma\left(  T,S\right)  ^{-q}.
\]

\end{theorem}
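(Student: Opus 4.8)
The two inclusions will be proved separately. The inclusion $\sigma(T,S)^{-q}\subseteq\sigma(\mathcal{F}_q,X)$ holds for \emph{every} left Fréchet $q$-module, and follows from a change-of-rings argument using that $\mathfrak{A}_q\rightarrow\mathcal{F}_q(U)$ is a localization; the converse uses the Banach hypothesis in an essential way, via the triangular decomposition of the relevant Koszul complex.

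\textbf{The inclusion $\sigma(T,S)^{-q}\subseteq\sigma(\mathcal{F}_q,X)$.} Since $\sigma(\mathcal{F}_q,X)$ is $q$-closed and $\sigma(T,S)^{-q}$ is the $q$-closure of $\sigma(T,S)$, it suffices to show $\operatorname*{res}(\mathcal{F}_q,X)\subseteq\operatorname{res}(T,S)$. Fix $\gamma\in\operatorname*{res}(\mathcal{F}_q,X)$; by Lemma~\ref{corFUV} and the remark after it there is a Runge $q$-open $U\ni\gamma$ with $\mathcal{F}_q(U)\perp X$. By Theorem~\ref{thmainDecom}, $\gamma\in\operatorname{Spec}(\mathcal{F}_q(U))=U$, so $\mathbb{C}(\gamma)$ is a trivial $\mathcal{F}_q(U)$-module, compatibly with its $\mathcal{O}_q(\mathbb{C}_{xy})$-structure. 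Tensoring the \emph{admissible} free bimodule resolution $\mathcal{R}(\mathcal{O}_q(\mathbb{C}_{xy})^{\widehat{\otimes}2})\rightarrow\mathcal{O}_q(\mathbb{C}_{xy})$ through by $X$ gives an admissible free left-module resolution $P_\bullet\rightarrow X$; then $\mathcal{F}_q(U)\widehat{\otimes}_{\mathcal{O}_q(\mathbb{C}_{xy})}P_\bullet$ is a complex of free $\mathcal{F}_q(U)$-modules which, since $\mathcal{F}_q(U)\perp X$ (so $\mathcal{F}_q(U)\widehat{\otimes}_{\mathcal{O}_q(\mathbb{C}_{xy})}X=\operatorname{Tor}_0=0$), is the image of a contractible complex under an additive functor and is therefore split exact. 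Applying the additive functor $\mathbb{C}(\gamma)\widehat{\otimes}_{\mathcal{F}_q(U)}(\,\cdot\,)$ preserves exactness, and the result is canonically the ($q$-deformed) Koszul complex $\wedge^{\bullet}\mathbb{C}^{2}\widehat{\otimes}X$ computing $\operatorname{Tor}^{\mathfrak{A}_q}_\ast(\mathbb{C}(\gamma),X)$. Hence $\mathbb{C}(\gamma)\perp X$, i.e. $\gamma\in\operatorname{res}(T,S)$.

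\textbf{The inclusion $\sigma(\mathcal{F}_q,X)\subseteq\sigma(T,S)^{-q}$ for Banach $X$.} Equivalently, each $\gamma\notin\sigma(T,S)^{-q}$ lies in $\operatorname*{res}(\mathcal{F}_q,X)$. Because a $q$-open set is $q$-spiraling, $\bigcup_{k\ge0}q^{k}U=U$ for any $q$-open $U$, whence $U\cap\sigma(T,S)^{-q}=\emptyset\iff U\cap\sigma(T,S)=\emptyset$; so $\gamma$ has a Runge $q$-open neighbourhood $U$ with $U\cap\sigma(T,S)=\emptyset$, and by Lemma~\ref{corFUV} it is enough to prove $\mathcal{F}_q(U)\perp X$. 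I would compute $\operatorname{Tor}^{\mathcal{O}_q(\mathbb{C}_{xy})}_\ast(\mathcal{F}_q(U),X)$ by applying $\mathcal{F}_q(U)\widehat{\otimes}_{\mathcal{O}_q(\mathbb{C}_{xy})}(\,\cdot\,)$ to $P_\bullet\rightarrow X$: by the Decomposition Theorem~\ref{thmainDecom} and the triangular formulas $L_{x,d}=z+M_{x,d}$, $R_{x,d}=q^{d}z+N_{y,d}$, $R_{y,d}=w+N_{x,d}$ of Lemma~\ref{lemMO1} (exactly as in Subsection~\ref{subsecIO}), the resulting length-two complex $\mathcal{F}_q(U)\widehat{\otimes}\wedge^{\bullet}\mathbb{C}^{2}\widehat{\otimes}X=\prod_{d}\mathcal{O}(d)(U)\widehat{\otimes}\wedge^{\bullet}\mathbb{C}^{2}\widehat{\otimes}X$ acquires lower-triangular differentials in the $d$-grading, whose diagonal block at level $d$ is, after the $d$-isomorphism~(\ref{imoo}), the $q^{d}$-twisted Koszul complex
\[
0\rightarrow\mathcal{O}(U)\widehat{\otimes}X\overset{\delta_{1}^{(d)}}{\longrightarrow}\bigl(\mathcal{O}(U)\widehat{\otimes}X\bigr)^{2}\overset{\delta_{0}^{(d)}}{\longrightarrow}\mathcal{O}(U)\widehat{\otimes}X\rightarrow0,
\]
with $\delta_{0}^{(d)}$ built from the pair $(1\otimes T-q^{d}z\otimes1,\ 1\otimes S-w\otimes1)$ and $\delta_{1}^{(d)}$ from the $q$-twisted pair $(w\otimes1-q\,(1\otimes S),\ 1\otimes T-q^{d+1}z\otimes1)$, where $z=(z,0)$, $w=(0,w)$ act on $\mathcal{O}(U)=\mathcal{O}(U_x)\underset{\mathbb{C}}{\times}\mathcal{O}(U_y)$ (so $zw=0$, which is precisely what makes $\delta_{0}^{(d)}\delta_{1}^{(d)}=0$).

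\textbf{Main obstacle.} The crux is the exactness, with topologically injective leading map, of each diagonal complex. It computes the $q$-deformed Koszul homology of the $\mathfrak{A}_q$-module $X$ against $\mathcal{O}(U)$ equipped with its $q^{d}$-twisted action; the commuting multiplication operators $(z,0),(0,w)$ on $\mathcal{O}(U)$ have joint spectrum $U$, so by the classical Taylor theory — together with a perturbation argument absorbing the quantum twist, for which the \emph{Banach} hypothesis on $X$ (giving compactness of $\sigma(T,S)$ and a genuine holomorphic functional calculus, hence topological rather than merely algebraic exactness) is exactly what is needed — the $d$-th complex is exact precisely when $q^{d}U$ avoids the relevant part of $\sigma(T,S)$; since $U$ is $q$-spiraling, the union of these obstructions over $d\ge0$ collapses to $U\cap\sigma(T,S)=\emptyset$. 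Once all diagonals are exact, Lemma~\ref{lemTri} forces the whole complex $\mathcal{F}_q(U)\widehat{\otimes}\wedge^{\bullet}\mathbb{C}^{2}\widehat{\otimes}X$ to be exact, so $\operatorname{Tor}^{\mathcal{O}_q(\mathbb{C}_{xy})}_\ast(\mathcal{F}_q(U),X)=0$, i.e. $\mathcal{F}_q(U)\perp X$, and the proof concludes by Lemma~\ref{corFUV}. (A routine secondary point is the Fréchet homological bookkeeping in the first part — admissibility passing through the additive functors — which is standard topological homology in the sense of \cite{HelHom},\cite{Tay2}.)
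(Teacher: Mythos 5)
Your first inclusion $\sigma(T,S)^{-q}\subseteq\sigma(\mathcal{F}_q,X)$ follows the same route as the paper (Runge neighborhood, localization, change of rings), but your key technical step is wrong as stated. You claim that $\mathcal{F}_q(U)\widehat{\otimes}_{A}P_\bullet$ (with $A=\mathcal{O}_q(\mathbb{C}_{xy})$) ``is the image of a contractible complex under an additive functor and is therefore split exact.'' It is not: $P_\bullet\to X$ is admissible as a complex of Fr\'echet \emph{spaces}, not of $A$-modules, so the Fr\'echet-space contracting homotopy is destroyed by the non-additive functor $\mathcal{F}_q(U)\widehat{\otimes}_A(\,\cdot\,)$. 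What you actually have is that $\mathcal{F}_q(U)\widehat{\otimes}_A P_\bullet$ has vanishing homology (exactness), and applying $\mathbb{C}(\gamma)\widehat{\otimes}_{\mathcal{F}_q(U)}(\,\cdot\,)$ does \emph{not} preserve exactness in general. The correct way to close this gap is precisely what the paper does: invoke Taylor's Proposition~2.7(b), which exploits the localization structure (and nuclearity) to deduce $\mathbb{C}(\gamma)\perp X$ from $\mathcal{F}_q(U)\perp X$. You gesture at the localization property but then bypass it with an unjustified split-exactness claim.

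For the reverse inclusion, you genuinely deviate from the paper, and this is where the serious gap is. The paper's proof is a one-line citation to \cite[Theorem 6.2]{Dosi25}, which asserts that if $X$ is a \emph{Banach} $q$-module and $\mathbb{C}(\lambda)\perp X$ for every $\lambda$ in a Runge $q$-open $U$, then $\mathcal{F}_q(U)\perp X$. You do not cite this and instead try to prove it from scratch by tensoring the bimodule resolution against $X$, invoking the Decomposition Theorem and Lemma~\ref{lemMO1} to obtain a triangular complex over the $d$-grading, and then appealing to Lemma~\ref{lemTri}. That skeleton is plausible and is in fact roughly the structure one would expect the cited theorem's proof to have, but the proof you sketch does not close: you never actually establish that each diagonal block is exact with topologically injective leading map. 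The phrase ``a perturbation argument absorbing the quantum twist, for which the Banach hypothesis $\dots$ is exactly what is needed'' is a promissory note, not an argument. The diagonal blocks are \emph{not} Koszul complexes for commuting operators — the operators $z=(z,0)$, $w=(0,w)$ on $\mathcal{O}(U)$ satisfy $zw=0$, the module pair $(T,S)$ satisfies $TS=q^{-1}ST$, and the $q^d$-twist couples the two levels — so ``classical Taylor theory'' does not directly apply, and the precise perturbation statement you would need is exactly the nontrivial content of the Banach case. As written, your argument for $\sigma(\mathcal{F}_q,X)\subseteq\sigma(T,S)^{-q}$ therefore has a genuine, unfilled gap; you should either cite the companion-paper result (as the paper does) or supply a complete proof of the diagonal exactness.
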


\begin{proof}
First take $\gamma\in\operatorname*{res}\left(  \mathcal{F}_{q},X\right)  $.
Then $\mathcal{F}_{q}\left(  U_{\gamma}\right)  \perp X$ for a certain Runge
$q$-open subset $U_{\gamma}$ containing $\gamma$. Since $\mathbb{C}\left(
\gamma\right)  $ is a Fr\'{e}chet $\mathcal{F}_{q}\left(  U_{\gamma}\right)
$-bimodule and $\mathfrak{A}_{q}\rightarrow\mathcal{F}_{q}\left(  U\right)  $
is a localization (Theorem \ref{thCENTER}), it follows that $\mathbb{C}\left(
\gamma\right)  \perp X$ holds too (see \cite[Proposition 2.7 (b)]{Tay2}). Thus
$\operatorname*{res}\left(  \mathcal{F}_{q},X\right)  \subseteq
\operatorname{res}\left(  T,S\right)  $ or $\sigma\left(  T,S\right)
\subseteq\sigma\left(  \mathcal{F}_{q},X\right)  $. But $\sigma\left(
\mathcal{F}_{q},X\right)  $ is a $q$-closed subset (Lemma \ref{corFUV}),
therefore $\sigma\left(  T,S\right)  ^{-q}\subseteq\sigma\left(
\mathcal{F}_{q},X\right)  $.

Further, assume that $X$ is a left Banach $q$-module, and take $\gamma
\notin\sigma\left(  T,S\right)  ^{-q}$. Then $U_{\gamma}\cap\sigma\left(
T,S\right)  =\varnothing$ for a certain Runge $q$-open subset $U_{\gamma}$
containing $\gamma$. It follows that $\mathbb{C}\left(  \lambda\right)  \perp
X$ for all $\lambda\in U_{\gamma}$. Using \cite[Theorem 6.2]{Dosi25}, we
deduce that $\mathcal{F}_{q}\left(  U_{\gamma}\right)  \perp X$. By Lemma
\ref{corFUV}, we conclude that $\gamma\in\operatorname*{res}\left(
\mathcal{F}_{q},X\right)  $. Whence $\sigma\left(  \mathcal{F}_{q},X\right)
=\sigma\left(  T,S\right)  ^{-q}$.
\end{proof}

Now let us illustrate the statement of Theorem \ref{corPT} in the following
example to see the gap between these spectra.

\textbf{Example. }Put $X=\ell_{p}\left(  \mathbb{Z}_{+}\right)  $, $1\leq
p<\infty$ with its standard topological basis $\left\{  e_{n}\right\}
_{n\in\mathbb{Z}_{+}}$, and consider the unilateral shift operator $T$ and the
diagonal $q$-operator $S$ on $\ell_{p}$. Thus $T,S\in\mathcal{B}\left(
X\right)  $ with $T\left(  e_{n}\right)  =e_{n+1}$ and $S\left(  e_{n}\right)
=q^{n}e_{n}$ for all $n\in\mathbb{Z}_{+}$. In the case of $\left\vert
q\right\vert <1$, the operator $S$ is compact with its spectrum $\sigma\left(
S\right)  =\left\{  q^{n}:n\in\mathbb{Z}_{+}\right\}  \cup\left\{  0\right\}
$, which is the $q$-hull of $\left\{  1\right\}  $, and $\sigma\left(
T\right)  =\mathbb{D}_{1}$ is the closed unit disk, which is $q$-dense in
$\mathbb{C}$. Since $TS=q^{-1}ST$, it follows that $X$ is a left Banach
$q$-module. It is proved (see \cite{Dosi242}) that
\[
\sigma\left(  T,S\right)  =\left(  \left(  \mathbb{D}_{\left\vert q\right\vert
^{-1}}\backslash\mathbb{D}_{1}^{\circ}\right)  \times\left\{  0\right\}
\right)  \cup\left\{  \left(  0,1\right)  \right\}  ,
\]
where $\mathbb{D}_{1}^{\circ}=B\left(  0,1\right)  $. The $q$-closure of
$\sigma\left(  T,S\right)  \cap\mathbb{C}_{x}$ equals to the complement of the
disk $\mathbb{D}_{1}^{\circ}\times\left\{  0\right\}  $. Indeed, for every
$z\in\mathbb{C}$ with $\left\vert z\right\vert >\left\vert q\right\vert ^{-1}$
there is a positive integer $n\geq1$ such that $\left\vert q\right\vert
^{-n}<\left\vert z\right\vert \leq\left\vert q\right\vert ^{-n-1}$. It follows
that $q^{n}z\in\mathbb{D}_{\left\vert q\right\vert ^{-1}}\backslash
\mathbb{D}_{1}^{\circ}$ or $\left(  q^{n}z,0\right)  \in\sigma\left(
T,S\right)  $. Thus $\left(  z,0\right)  =q^{-n}\left(  q^{n}z,0\right)
\in\sigma\left(  T,S\right)  ^{-q}$ (see Subsection \ref{subsecQT}). Moreover,
the $q$-closure of $\sigma\left(  T,S\right)  \cap\mathbb{C}_{y}$ equals to
$\left\{  \left(  0,q^{-n}\right)  :n\geq0\right\}  $. By Theorem \ref{corPT},
we conclude that%
\begin{align*}
\sigma\left(  \mathcal{F}_{q},X\right)   &  =\sigma\left(  T,S\right)
^{-q}=\left(  \left(  \mathbb{D}_{\left\vert q\right\vert ^{-1}}%
\backslash\mathbb{D}_{1}^{\circ}\right)  \times\left\{  0\right\}  \right)
^{-q}\cup\left\{  \left(  0,1\right)  \right\}  ^{-q}\\
&  =\left(  \mathbb{C}\backslash\mathbb{D}_{1}^{\circ}\times\left\{
0\right\}  \right)  \cup\left\{  \left(  0,q^{-n}\right)  :n\geq0\right\}  .
\end{align*}

\textbf{The concluding remark. }It turns out that the Taylor spectrum
$\sigma\left(  T,S\right)  $ in the $q$-theory could admit a large $q$-closure
that results in the Putinar spectrum. This moment explains the lack of the
conventional projection property for the Taylor spectrum observed in
\cite{Dosi242}. Since Putinar spectrum plays a central role in noncommutative
functional calculus problem and the spectral mapping property \cite{DosJOT10},
we are expecting some key properties for the $q$-closure of $\sigma\left(
T,S\right)  $ rather than itself. Certainly, this serious gap between spectra
is mainly caused by the weak, non-Hausdorff $q$-topology of $\mathbb{C}_{xy}$.
This is a new phenomenon of noncommutative analytic geometry, which makes the
$q$-theory more attractive and interesting.

\section{Appendix: The diagonal cohomology groups\label{sectionDCG}}

In this section we provide the details of the diagonal Fr\'{e}chet space
complexes occurred in the localization problem for $\mathcal{F}_{q}\left(
U\right)  $ and their cohomology groups. For brevity, we say that they are
\textit{the diagonal cohomology groups.}

\subsection{The operators on the compatible quadruples\label{subsecQRP}}

As above, let us take two copies $U_{1}=U_{z_{1}}\cup U_{w_{1}}$ and
$U_{2}=U_{z_{2}}\cup U_{w_{2}}$ of the same $q$-open subset $U\subseteq
\mathbb{C}_{xy}$. For brevity, we use the notations $\mathcal{O}\left(
z_{i}\right)  $, $\mathcal{O}\left(  w_{j}\right)  $ instead of $\mathcal{O}%
\left(  U_{z_{i}}\right)  $, $\mathcal{O}\left(  U_{w_{j}}\right)  $,
respectively. Moreover, $\mathcal{O}\left(  z_{i},w_{j}\right)  $ (with their
possible combinations) denotes $\mathcal{O}\left(  U_{z_{i}}\times U_{w_{j}%
}\right)  $ (or the related presheaf). As above we use the notation
$\mathcal{O}_{d,l}$ instead of $\mathcal{O}\widehat{\otimes}\mathcal{O}$ to
indicate its $\left(  d,l\right)  $-isomorphic copy.

\begin{lemma}
\label{lemTech2}The projective tensor product $\mathcal{O}\widehat{\otimes
}\mathcal{O}$ is identified with the fibered product
\[
\left(  \mathcal{O}\left(  z_{1},z_{2}\right)  \underset{\mathcal{O}\left(
z_{1}\right)  }{\times}\mathcal{O}\left(  z_{1},w_{2}\right)  \right)
\underset{\mathcal{O}\left(  z_{2},w_{2}\right)  }{\times}\left(
\mathcal{O}\left(  w_{1},z_{2}\right)  \underset{\mathcal{O}\left(
w_{1}\right)  }{\times}\mathcal{O}\left(  w_{1},w_{2}\right)  \right)
\]
up to the canonical topological isomorphism of the Fr\'{e}chet spaces (or
presheaves). In particular, every $\mathcal{O}_{d,l}$ consists of the
following compatible quadruples $\zeta=\zeta\left(  z_{1},w_{1};z_{2}%
,w_{2}\right)  $ such that
\[
\zeta=\left(  \left(  \zeta_{1}\left(  z_{1},z_{2}\right)  ,\zeta_{2}\left(
z_{1},w_{2}\right)  \right)  ,\left(  \zeta_{3}\left(  w_{1},z_{2}\right)
,\zeta_{4}\left(  w_{1},w_{2}\right)  \right)  \right)  ,
\]%
\begin{align*}
\zeta_{1}\left(  z_{1},0\right)   &  =\zeta_{2}\left(  z_{1},0\right)
,\quad\zeta_{3}\left(  w_{1},0\right)  =\zeta_{4}\left(  w_{1},0\right)  ,\\
\zeta_{1}\left(  0,z_{2}\right)   &  =\zeta_{3}\left(  0,z_{2}\right)
,\quad\zeta_{2}\left(  0,w_{2}\right)  =\zeta_{4}\left(  0,w_{2}\right)  .
\end{align*}

\end{lemma}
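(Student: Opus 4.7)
The plan is to derive the claimed identification by iterated application of the fibered-product presentation of $\mathcal{O}\left(  U_{i}\right)  $, exploiting exactness of the projective tensor product against nuclear Fr\'{e}chet spaces.

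First I would observe that, by construction, $\mathcal{O}\left(  U_{i}\right)  =\mathcal{O}\left(  z_{i}\right)  \underset{\mathbb{C}}{\times}\mathcal{O}\left(  w_{i}\right)  $ is the kernel of the continuous linear surjection $\mathcal{O}\left(  z_{i}\right)  \oplus\mathcal{O}\left(  w_{i}\right)  \rightarrow\mathbb{C}$, $\left(  f,g\right)  \mapsto f\left(  0\right)  -g\left(  0\right)  $, which admits a one-dimensional continuous splitting. Hence the short exact sequence
\[
0\rightarrow\mathcal{O}\left(  U_{i}\right)  \rightarrow\mathcal{O}\left(  z_{i}\right)  \oplus\mathcal{O}\left(  w_{i}\right)  \rightarrow\mathbb{C}\rightarrow0
\]
is admissible. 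Since each $\mathcal{O}\left(  V\right)  $ ($V$ a $q$-open subset of $\mathbb{C}$) is a nuclear Fr\'{e}chet space, applying $(-)\widehat{\otimes}\mathcal{O}\left(  U_{2}\right)  $ to the sequence for $i=1$ preserves exactness and yields
\[
\mathcal{O}\left(  U_{1}\right)  \widehat{\otimes}\mathcal{O}\left(  U_{2}\right)  =\left(  \mathcal{O}\left(  z_{1}\right)  \widehat{\otimes}\mathcal{O}\left(  U_{2}\right)  \right)  \underset{\mathcal{O}\left(  U_{2}\right)  }{\times}\left(  \mathcal{O}\left(  w_{1}\right)  \widehat{\otimes}\mathcal{O}\left(  U_{2}\right)  \right)  ,
\]
the gluing being equality of images under evaluation at $z_{1}=0$ and $w_{1}=0$ respectively.

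Next I would apply the same procedure inside each tensor factor on the right, now using $\mathcal{O}\left(  U_{2}\right)  =\mathcal{O}\left(  z_{2}\right)  \underset{\mathbb{C}}{\times}\mathcal{O}\left(  w_{2}\right)  $ and exactness against the nuclear spaces $\mathcal{O}\left(  z_{1}\right)  $, $\mathcal{O}\left(  w_{1}\right)  $. Combined with the classical identifications $\mathcal{O}\left(  z_{i}\right)  \widehat{\otimes}\mathcal{O}\left(  z_{j}\right)  =\mathcal{O}\left(  z_{i},z_{j}\right)  $ and the three analogues, this produces
\[
\mathcal{O}\left(  z_{1}\right)  \widehat{\otimes}\mathcal{O}\left(  U_{2}\right)  =\mathcal{O}\left(  z_{1},z_{2}\right)  \underset{\mathcal{O}\left(  z_{1}\right)  }{\times}\mathcal{O}\left(  z_{1},w_{2}\right)  ,\quad\mathcal{O}\left(  w_{1}\right)  \widehat{\otimes}\mathcal{O}\left(  U_{2}\right)  =\mathcal{O}\left(  w_{1},z_{2}\right)  \underset{\mathcal{O}\left(  w_{1}\right)  }{\times}\mathcal{O}\left(  w_{1},w_{2}\right)  ,
\]
and substitution into the first decomposition yields the claimed four-term fibered product.

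Unpacking the nested fibrations in components, an element is a quadruple $\zeta=\left(  \zeta_{1},\zeta_{2},\zeta_{3},\zeta_{4}\right)  $ with $\zeta_{1}\in\mathcal{O}\left(  z_{1},z_{2}\right)  $, $\zeta_{2}\in\mathcal{O}\left(  z_{1},w_{2}\right)  $, $\zeta_{3}\in\mathcal{O}\left(  w_{1},z_{2}\right)  $, $\zeta_{4}\in\mathcal{O}\left(  w_{1},w_{2}\right)  $. The two inner fibrations over $\mathcal{O}\left(  z_{1}\right)  $ and $\mathcal{O}\left(  w_{1}\right)  $ impose $\zeta_{1}\left(  z_{1},0\right)  =\zeta_{2}\left(  z_{1},0\right)  $ and $\zeta_{3}\left(  w_{1},0\right)  =\zeta_{4}\left(  w_{1},0\right)  $, while the outer fibration -- equality in $\mathcal{O}\left(  U_{2}\right)  $ after evaluating $z_{1}=0$ and $w_{1}=0$ -- produces $\zeta_{1}\left(  0,z_{2}\right)  =\zeta_{3}\left(  0,z_{2}\right)  $ and $\zeta_{2}\left(  0,w_{2}\right)  =\zeta_{4}\left(  0,w_{2}\right)  $, matching the list in the statement. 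The general $\mathcal{O}_{d,l}$ version is transported from $\mathcal{O}_{0,0}=\mathcal{O}\widehat{\otimes}\mathcal{O}$ by the $\left(  d,l\right)  $-isomorphism $\left(  z^{-d}\times_{1}w^{-d}\right)  \otimes\left(  z^{-l}\times_{1}w^{-l}\right)  $, which is a topological isomorphism of Fr\'{e}chet spaces. The main obstacle is the clean verification that $\widehat{\otimes}$ commutes with the kernel (fibered product) construction in each step: this is where one needs both the admissibility of the evaluation-difference sequence (furnished by the finite-dimensional splitting) and the nuclearity of the tensor partners to guarantee that strict exactness is preserved under tensoring.
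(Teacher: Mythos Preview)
Your proposal is correct and follows essentially the same approach as the paper. The paper invokes \cite[Proposition 3.2]{Dosi25} to pass the fibered product through $\widehat{\otimes}$, whereas you supply that step directly via the admissibility of the evaluation-difference sequence and nuclearity; the subsequent unpacking of the compatibility conditions on the quadruple $(\zeta_{1},\zeta_{2},\zeta_{3},\zeta_{4})$ is identical.
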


\begin{proof}
Taking into account that $\mathcal{O}\left(  U_{i}\right)  =\mathcal{O}\left(
z_{i}\right)  \underset{\mathbb{C}}{\times}\mathcal{O}\left(  w_{i}\right)  $,
$i=1,2$, we deduce by using \cite[Proposition 3.2]{Dosi25} that
\begin{align*}
\mathcal{O}\left(  U_{1}\right)  \widehat{\otimes}\mathcal{O}\left(
U_{2}\right)   &  =\left(  \mathcal{O}\left(  z_{1}\right)
\underset{\mathbb{C}}{\times}\mathcal{O}\left(  w_{1}\right)  \right)
\widehat{\otimes}\mathcal{O}\left(  U_{2}\right)  =\mathcal{O}\left(
z_{1}\right)  \widehat{\otimes}\mathcal{O}\left(  U_{2}\right)
\underset{\mathcal{O}\left(  U_{2}\right)  }{\times}\mathcal{O}\left(
w_{1}\right)  \widehat{\otimes}\mathcal{O}\left(  U_{2}\right) \\
&  =\left(  \mathcal{O}\left(  z_{1},z_{2}\right)  \underset{\mathcal{O}%
\left(  z_{1}\right)  }{\times}\mathcal{O}\left(  z_{1},w_{2}\right)  \right)
\underset{\mathcal{O}\left(  U_{2}\right)  }{\times}\left(  \mathcal{O}\left(
w_{1},z_{2}\right)  \underset{\mathcal{O}\left(  w_{1}\right)  }{\times
}\mathcal{O}\left(  w_{1},w_{2}\right)  \right)
\end{align*}
up to the canonical topological isomorphisms of the Fr\'{e}chet spaces. Thus
every $\zeta\in\mathcal{O}_{d,l}$ has the following components
\[
\zeta=\left(  \left(  \zeta_{1},\zeta_{2}\right)  ,\left(  \zeta_{3},\zeta
_{4}\right)  \right)  \text{, \quad}\zeta_{1}=\zeta_{1}\left(  z_{1}%
,z_{2}\right)  ,\text{ }\zeta_{2}=\zeta_{2}\left(  z_{1},w_{2}\right)  \text{,
}\zeta_{3}=\zeta_{3}\left(  w_{1},z_{2}\right)  \text{, }\zeta_{4}=\zeta
_{4}\left(  w_{1},w_{2}\right)  .
\]
In this case, $\left(  \zeta_{1},\zeta_{2}\right)  $ and $\left(  \zeta
_{3},\zeta_{4}\right)  $ are compatible elements of the fibered products over
$\mathcal{O}\left(  z_{1}\right)  $ and $\mathcal{O}\left(  w_{1}\right)  $,
respectively. Therefore
\[
\zeta_{1}\left(  z_{1},0\right)  =\zeta_{2}\left(  z_{1},0\right)  \text{ in
}\mathcal{O}\left(  z_{1}\right)  ,\quad\zeta_{3}\left(  w_{1},0\right)
=\zeta_{4}\left(  w_{1},0\right)  \text{ in }\mathcal{O}\left(  w_{1}\right)
.
\]
Moreover, the couples $\left(  \zeta_{1},\zeta_{2}\right)  $ and $\left(
\zeta_{3},\zeta_{4}\right)  $ should be compatible over $\mathcal{O}\left(
U_{2}\right)  $, which means that $\left(  \zeta_{1}\left(  0,z_{2}\right)
,\zeta_{2}\left(  0,w_{2}\right)  \right)  =\left(  \zeta_{3}\left(
0,z_{2}\right)  ,\zeta_{4}\left(  0,w_{2}\right)  \right)  $ in $\mathcal{O}%
\left(  U_{2}\right)  $, that is,
\[
\zeta_{1}\left(  0,z_{2}\right)  =\zeta_{3}\left(  0,z_{2}\right)  ,\quad
\zeta_{2}\left(  0,w_{2}\right)  =\zeta_{4}\left(  0,w_{2}\right)  .
\]
Thus $\zeta$ does satisfy all indicated conditions.
\end{proof}

The multiplication operators by $z_{i}$, $w_{j}$ are lifted to $\mathcal{O}%
_{d,l}$ too, and we have $z_{1}=z\otimes1$, $z_{2}=1\otimes z$, $w_{1}%
=w\otimes1$ and $w_{2}=1\otimes w$ (see (\ref{RzRw1})).

\begin{corollary}
\label{corTech1}If $\zeta\in\mathcal{O}_{d,l}$ is a compatible quadruple then
\begin{align*}
z_{1}\left(  \zeta\right)   &  =\left(  \left(  z_{1}\zeta_{1}\left(
z_{1},z_{2}\right)  ,z_{1}\zeta_{2}\left(  z_{1},w_{2}\right)  \right)
,\left(  0,0\right)  \right)  ,\quad z_{2}\left(  \zeta\right)  =\left(
\left(  z_{2}\zeta_{1}\left(  z_{1},z_{2}\right)  ,0\right)  ,\left(
z_{2}\zeta_{3}\left(  w_{1},z_{2}\right)  ,0\right)  \right)  ,\\
w_{1}\left(  \zeta\right)   &  =\left(  \left(  0,0\right)  ,\left(
w_{1}\zeta_{3}\left(  w_{1},z_{2}\right)  ,w_{1}\zeta_{4}\left(  w_{1}%
,w_{2}\right)  \right)  \right)  ,\quad w_{2}\left(  \zeta\right)  =\left(
\left(  0,w_{2}\zeta_{2}\left(  z_{1},w_{2}\right)  \right)  ,\left(
0,w_{2}\zeta_{4}\left(  w_{1},w_{2}\right)  \right)  \right)  .
\end{align*}

\end{corollary}

\begin{proof}
One needs to use (\ref{RzRw1}) and Lemma \ref{lemTech2}, and verify the
equalities over the elementary tensors $\zeta=\alpha_{1}\otimes\alpha_{2}$
(see below to the proof of Lemma \ref{lemTech3}).
\end{proof}

The operators (\ref{Nd}) act on $\mathcal{O}_{d,l}$ too. Namely, we only
consider \ the following operators
\begin{align}
N_{x_{1},d}  &  =N_{x,d}\otimes1,\quad N_{y_{1},d}=N_{y,d}\otimes
1:\mathcal{O}_{d,l}\rightarrow\mathcal{O}_{d+1,l},\label{MTN}\\
M_{x_{2},l}  &  =1\otimes M_{x,l},\quad M_{y_{2},l}=1\otimes M_{y,l}%
:\mathcal{O}_{d,l}\rightarrow\mathcal{O}_{d,l+1},\nonumber
\end{align}
where $l,d\in\mathbb{Z}_{+}$.

\begin{lemma}
\label{lemTech3}If $\zeta\in\mathcal{O}_{d,l}$ is a compatible quadruple, then
the following identities
\begin{align*}
M_{x_{2},l}\left(  \zeta\right)   &  =\left(  \left(  \frac{\partial\zeta_{2}%
}{\partial w_{2}}\left(  z_{1},0\right)  ,\left(  \zeta_{2}\right)  _{w_{2}%
}\left(  z_{1},w_{2}\right)  \right)  ,\left(  \frac{\partial\zeta_{4}%
}{\partial w_{2}}\left(  w_{1},0\right)  ,\left(  \zeta_{4}\right)  _{w_{2}%
}\left(  w_{1},w_{2}\right)  \right)  \right)  ,\\
M_{y_{2},l}\left(  \zeta\right)   &  =q^{l+1}\left(  \left(  \left(  \zeta
_{1}\right)  _{z_{2}}\left(  z_{1},qz_{2}\right)  ,\frac{\partial\zeta_{1}%
}{\partial z_{2}}\left(  z_{1},0\right)  \right)  ,\left(  \left(  \zeta
_{3}\right)  _{z_{2}}\left(  w_{1},qz_{2}\right)  ,\frac{\partial\zeta_{3}%
}{\partial z_{2}}\left(  w_{1},0\right)  \right)  \right)  ,\\
N_{x_{1},d}\left(  \zeta\right)   &  =\left(  \left(  \left(  \zeta
_{1}\right)  _{z_{1}}\left(  z_{1},z_{2}\right)  ,\left(  \zeta_{2}\right)
_{z_{1}}\left(  z_{1},w_{2}\right)  \right)  ,\left(  \frac{\partial\zeta_{1}%
}{\partial z_{1}}\left(  0,z_{2}\right)  ,\frac{\partial\zeta_{2}}{\partial
z_{1}}\left(  0,w_{2}\right)  \right)  \right)  ,\\
N_{y_{1},d}\left(  \zeta\right)   &  =q^{d+1}\left(  \left(  \frac
{\partial\zeta_{3}}{\partial w_{1}}\left(  0,z_{2}\right)  ,\frac
{\partial\zeta_{4}}{\partial w_{1}}\left(  0,w_{2}\right)  \right)  ,\left(
\left(  \zeta_{3}\right)  _{w_{1}}\left(  qw_{1},z_{2}\right)  ,\left(
\zeta_{4}\right)  _{w_{1}}\left(  qw_{1},w_{2}\right)  \right)  \right)
\end{align*}
hold.
\end{lemma}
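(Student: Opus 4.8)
The plan is to verify each of the four identities on elementary tensors and then extend by linearity and continuity, exactly as suggested in the proof of Corollary \ref{corTech1}. So I would fix an elementary tensor $\zeta=\alpha\otimes\beta$ with $\alpha=\left(a\left(z_{1}\right),b\left(w_{1}\right)\right)\in\mathcal{O}\left(U_{1}\right)$ and $\beta=\left(c\left(z_{2}\right),d\left(w_{2}\right)\right)\in\mathcal{O}\left(U_{2}\right)$. Under the identification of Lemma \ref{lemTech2}, its four components are $\zeta_{1}\left(z_{1},z_{2}\right)=a\left(z_{1}\right)c\left(z_{2}\right)$, $\zeta_{2}\left(z_{1},w_{2}\right)=a\left(z_{1}\right)d\left(w_{2}\right)$, $\zeta_{3}\left(w_{1},z_{2}\right)=b\left(w_{1}\right)c\left(z_{2}\right)$ and $\zeta_{4}\left(w_{1},w_{2}\right)=b\left(w_{1}\right)d\left(w_{2}\right)$, the compatibility conditions of Lemma \ref{lemTech2} being automatic since $a\left(0\right)=b\left(0\right)=\alpha\left(0\right)$ and $c\left(0\right)=d\left(0\right)=\beta\left(0\right)$.

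Next, for $N_{x_{1},d}=N_{x,d}\otimes1$ I would apply Lemma \ref{lemTech1} to the first factor, so that $N_{x,d}\left(\alpha\right)=\left(z^{-1}\left(a\left(z\right)-a\left(0\right)\right),a^{\prime}\left(0\right)\right)$ in $\mathcal{O}$; tensoring with $\beta$ and reading off the components through Lemma \ref{lemTech2}, the first two become $z_{1}^{-1}\left(a\left(z_{1}\right)-a\left(0\right)\right)c\left(z_{2}\right)$ and $z_{1}^{-1}\left(a\left(z_{1}\right)-a\left(0\right)\right)d\left(w_{2}\right)$, which are exactly $\left(\zeta_{1}\right)_{z_{1}}\left(z_{1},z_{2}\right)$ and $\left(\zeta_{2}\right)_{z_{1}}\left(z_{1},w_{2}\right)$ by the definition of the difference operators, while the last two are the functions $a^{\prime}\left(0\right)c\left(z_{2}\right)$ and $a^{\prime}\left(0\right)d\left(w_{2}\right)$ (not depending on $w_{1}$), that is $\tfrac{\partial\zeta_{1}}{\partial z_{1}}\left(0,z_{2}\right)$ and $\tfrac{\partial\zeta_{2}}{\partial z_{1}}\left(0,w_{2}\right)$. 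This gives the stated formula for $N_{x_{1},d}$. The other three cases run identically: for $N_{y_{1},d}=N_{y,d}\otimes1$ one starts from $N_{y,d}\left(\alpha\right)=q^{d+1}\left(b^{\prime}\left(0\right),\left(qw\right)^{-1}\left(b\left(qw\right)-b\left(0\right)\right)\right)$, which is the source of the scalar $q^{d+1}$, of the dilation $w_{1}\mapsto qw_{1}$ inside $\left(\zeta_{3}\right)_{w_{1}}$, $\left(\zeta_{4}\right)_{w_{1}}$, and of the derivative terms $\tfrac{\partial\zeta_{3}}{\partial w_{1}}\left(0,\cdot\right)$, $\tfrac{\partial\zeta_{4}}{\partial w_{1}}\left(0,\cdot\right)$; for $M_{x_{2},l}=1\otimes M_{x,l}$ and $M_{y_{2},l}=1\otimes M_{y,l}$ one applies the corresponding formulas of Lemma \ref{lemTech1} to the second factor $\beta$, now producing difference quotients and derivatives in the variables $w_{2}$ and $z_{2}$, respectively.

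Finally, I would observe that both sides of each of the four identities are continuous linear operators from $\mathcal{O}\widehat{\otimes}\mathcal{O}$ to itself — the difference operators $\theta\mapsto\theta_{z}$, $\theta\mapsto\theta_{w}$ and the partial-derivative-at-$0$ evaluations are continuous on the relevant Fr\'{e}chet spaces of germs of holomorphic functions — so agreement on the dense subspace spanned by the elementary tensors forces agreement everywhere, and the identities follow. The only genuinely delicate point is the bookkeeping: keeping straight which of $z_{1},w_{1},z_{2},w_{2}$ is the free variable in each of the four slots, which tensor factor a given operator acts on, and the powers of $q$ together with the dilations $w_{i}\mapsto qw_{i}$ (resp. $z_{i}\mapsto qz_{i}$) coming from $\Delta_{q}$; once the elementary-tensor computation is laid out with care, nothing further is required.
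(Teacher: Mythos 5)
Your proposal is correct and follows essentially the same route as the paper's own proof: reduce to elementary tensors $\zeta=\alpha\otimes\beta$, apply the formulas of Lemma \ref{lemTech1} to the factor on which each operator acts, read off the four components via the identification of Lemma \ref{lemTech2}, and recognize them as the stated difference quotients and partial derivatives at $0$. The closing remark about continuity and density of elementary tensors just makes explicit the reduction that the paper also invokes when it says it suffices to verify the identities on elementary tensors.
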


\begin{proof}
One suffices to prove the equality over elementary tensors $\zeta=\alpha
_{1}\otimes\alpha_{2}\in\mathcal{O}\left(  U_{1}\right)  \widehat{\otimes
}\mathcal{O}\left(  U_{2}\right)  $ corresponding to $\alpha_{i}=\left(
f_{i}\left(  z_{i}\right)  ,g_{i}\left(  w_{i}\right)  \right)  $, $i=1,2$. By
Lemma \ref{lemTech2}, $\zeta$ is identified with the quadruple such that
$\zeta_{1}=f_{1}\left(  z_{1}\right)  f_{2}\left(  z_{2}\right)  $, $\zeta
_{2}=f_{1}\left(  z_{1}\right)  g_{2}\left(  w_{2}\right)  $, $\zeta_{3}%
=g_{1}\left(  w_{1}\right)  f_{2}\left(  z_{2}\right)  $ and $\zeta_{4}%
=g_{1}\left(  w_{1}\right)  g_{2}\left(  w_{2}\right)  $. Using Lemma
\ref{lemTech1}, we derive that
\[
M_{x_{2},l}\left(  \zeta\right)  =\alpha_{1}\otimes M_{x,l}\left(  \alpha
_{2}\right)  =\alpha_{1}\otimes\left(  g_{2}^{\prime}\left(  0\right)
,\frac{g_{2}\left(  w_{2}\right)  -g_{2}\left(  0\right)  }{w_{2}}\right)
\]%
\begin{align*}
&  =\left(  \left(  f_{1}\left(  z_{1}\right)  g_{2}^{\prime}\left(  0\right)
,f_{1}\left(  z_{1}\right)  \frac{g_{2}\left(  w_{2}\right)  -g_{2}\left(
0\right)  }{w_{2}}\right)  ,\left(  g_{1}\left(  w_{1}\right)  g_{2}^{\prime
}\left(  0\right)  ,g_{1}\left(  w_{1}\right)  \frac{g_{2}\left(
w_{2}\right)  -g_{2}\left(  0\right)  }{w_{2}}\right)  \right) \\
&  =\left(  \left(  \frac{\partial\zeta_{2}}{\partial w_{2}}\left(
z_{1},0\right)  ,\frac{\zeta_{2}\left(  z_{1},w_{2}\right)  -\zeta_{2}\left(
z_{1},0\right)  }{w_{2}}\right)  ,\left(  \frac{\partial\zeta_{4}}{\partial
w_{2}}\left(  w_{1},0\right)  ,\frac{\zeta_{4}\left(  w_{1},w_{2}\right)
-\zeta_{4}\left(  w_{1},0\right)  }{w_{2}}\right)  \right)  .
\end{align*}
In the case of $M_{y_{2},l}$, we have%
\[
q^{-l-1}M_{y_{2},l}\left(  \zeta\right)  =q^{-l-1}\alpha_{1}\otimes
M_{y,l}\left(  \alpha_{2}\right)  =\alpha_{1}\otimes\left(  \frac{f_{2}\left(
qz_{2}\right)  -f_{2}\left(  0\right)  }{qz_{2}},f_{2}^{\prime}\left(
0\right)  \right)
\]%
\begin{align*}
&  =\left(  \left(  f_{1}\left(  z_{1}\right)  \frac{f_{2}\left(
qz_{2}\right)  -f_{2}\left(  0\right)  }{qz_{2}},f_{1}\left(  z_{1}\right)
f_{2}^{\prime}\left(  0\right)  \right)  ,\left(  g_{1}\left(  w_{1}\right)
\frac{f_{2}\left(  qz_{2}\right)  -f_{2}\left(  0\right)  }{qz_{2}}%
,g_{1}\left(  w_{1}\right)  f_{2}^{\prime}\left(  0\right)  \right)  \right)
\\
&  =\left(  \left(  \frac{\zeta_{1}\left(  z_{1},qz_{2}\right)  -\zeta
_{1}\left(  z_{1},0\right)  }{qz_{2}},\frac{\partial\zeta_{1}}{\partial z_{2}%
}\left(  z_{1},0\right)  \right)  ,\left(  \frac{\zeta_{3}\left(  w_{1}%
,qz_{2}\right)  -\zeta_{3}\left(  w_{1},0\right)  }{qz_{2}},\frac
{\partial\zeta_{3}}{\partial z_{2}}\left(  w_{1},0\right)  \right)  \right)  .
\end{align*}
Now consider the operators $N_{x_{1},d}$ and $N_{y_{1},d}$. As above, we have%
\[
N_{x_{1},d}\left(  \zeta\right)  =N_{x,d}\left(  \alpha_{1}\right)
\otimes\alpha_{2}=\left(  \frac{f_{1}\left(  z_{1}\right)  -f_{1}\left(
0\right)  }{z_{1}},f_{1}^{\prime}\left(  0\right)  \right)  \otimes\alpha_{2}%
\]%
\begin{align*}
&  =\left(  \left(  \frac{f_{1}\left(  z_{1}\right)  -f_{1}\left(  0\right)
}{z_{1}}f_{2}\left(  z_{2}\right)  ,\frac{f_{1}\left(  z_{1}\right)
-f_{1}\left(  0\right)  }{z_{1}}g_{2}\left(  w_{2}\right)  \right)  ,\left(
f_{1}^{\prime}\left(  0\right)  f_{2}\left(  z_{2}\right)  ,f_{1}^{\prime
}\left(  0\right)  g_{2}\left(  w_{2}\right)  \right)  \right) \\
&  =\left(  \left(  \frac{\zeta_{1}\left(  z_{1},z_{2}\right)  -\zeta
_{1}\left(  0,z_{2}\right)  }{z_{1}},\frac{\zeta_{2}\left(  z_{1}%
,w_{2}\right)  -\zeta_{2}\left(  0,w_{2}\right)  }{z_{1}}\right)  ,\left(
\frac{\partial\zeta_{1}}{\partial z_{1}}\left(  0,z_{2}\right)  ,\frac
{\partial\zeta_{2}}{\partial z_{1}}\left(  0,w_{2}\right)  \right)  \right)  .
\end{align*}
Finally, we derive that%
\[
q^{-d-1}N_{y_{1},d}\left(  \zeta\right)  =q^{-d-1}N_{y,d}\left(  \alpha
_{1}\right)  \otimes\alpha_{2}=\left(  g_{1}^{\prime}\left(  0\right)
,\frac{g_{1}\left(  qw_{1}\right)  -g_{1}\left(  0\right)  }{qw_{1}}\right)
\otimes\alpha_{2}%
\]%
\begin{align*}
&  =\left(  \left(  g_{1}^{\prime}\left(  0\right)  f_{2}\left(  z_{2}\right)
,g_{1}^{\prime}\left(  0\right)  g_{2}\left(  w_{2}\right)  \right)  ,\left(
\frac{g_{1}\left(  qw_{1}\right)  -g_{1}\left(  0\right)  }{qw_{1}}%
f_{2}\left(  z_{2}\right)  ,\frac{g_{1}\left(  qw_{1}\right)  -g_{1}\left(
0\right)  }{qw_{1}}g_{2}\left(  w_{2}\right)  \right)  \right) \\
&  =\left(  \left(  \frac{\partial\zeta_{3}}{\partial w_{1}}\left(
0,z_{2}\right)  ,\frac{\partial\zeta_{4}}{\partial w_{1}}\left(
0,w_{2}\right)  \right)  ,\left(  \frac{\zeta_{3}\left(  qw_{1},z_{2}\right)
-\zeta_{3}\left(  0,z_{2}\right)  }{qw_{1}},\frac{\zeta_{4}\left(
qw_{1},w_{2}\right)  -\zeta_{4}\left(  0,w_{2}\right)  }{qw_{1}}\right)
\right)  .
\end{align*}
Thus all required identities hold.
\end{proof}

Let us also consider the following continuous linear mapping
\[
\pi_{d,l}:\mathcal{O}\left(  d,l\right)  \longrightarrow\mathcal{O}\left(
d+l\right)  ,\quad\pi_{d,l}\left(  \zeta\otimes\eta\right)  =\left(  f\left(
z\right)  u\left(  q^{d}z\right)  ,g\left(  q^{l}w\right)  v\left(  w\right)
\right)  ,
\]
whenever $\zeta=\left(  f,g\right)  \in\mathcal{O}\left(  d\right)  $ and
$\eta=\left(  u,v\right)  \in\mathcal{O}\left(  l\right)  $. Notice that
\begin{align*}
\frac{1}{\left(  d+l\right)  !}\frac{d^{d+l}f\left(  z\right)  u\left(
q^{d}z\right)  }{dz^{l+d}}|_{z=0}  &  =q^{dl}\frac{f^{\left(  d\right)
}\left(  0\right)  }{d!}\frac{u^{\left(  l\right)  }\left(  0\right)  }%
{l!}=q^{ld}\frac{g^{\left(  d\right)  }\left(  0\right)  }{d!}\frac{v^{\left(
l\right)  }\left(  0\right)  }{l!}\\
&  =\frac{1}{\left(  d+l\right)  !}\frac{d^{l+d}g\left(  q^{l}w\right)
v\left(  w\right)  }{dw^{l+d}}|_{w=0},
\end{align*}
which means that $\pi_{d,l}$ is a well defined mapping.

\begin{lemma}
\label{lemPidl}The mapping $\pi_{d,l}$ is reduced to the following continuous
linear mapping
\[
\pi_{d,l}:\mathcal{O}_{d,l}\longrightarrow\mathcal{O}_{d+l},\quad\pi
_{d,l}\left(  h\left(  z_{1},w_{1};z_{2},w_{2}\right)  \right)  =q^{dl}\left(
h_{1}\left(  z_{1},q^{d}z_{1}\right)  ,h_{4}\left(  q^{l}w_{2},w_{2}\right)
\right)
\]
up to the topological ($\left(  d,l\right)  $ and $d+l$) isomorphisms
(\ref{imoo}).
\end{lemma}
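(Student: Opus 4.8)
The plan is to check the asserted formula on elementary tensors and then extend it by continuity, since finite linear combinations of elementary tensors $\zeta\otimes\eta$ are dense in the projective tensor product $\mathcal{O}(d,l)=\mathcal{O}(d)(U_{1})\widehat{\otimes}\mathcal{O}(l)(U_{2})$, and both $\pi_{d,l}$ and the candidate map $h\mapsto q^{dl}(h_{1}(z_{1},q^{d}z_{1}),h_{4}(q^{l}w_{2},w_{2}))$ are continuous linear maps. Continuity of the latter follows because the component projections $h\mapsto h_{j}$ supplied by Lemma~\ref{lemTech2} are continuous and because the restriction along the diagonal $z_{2}=q^{d}z_{1}$ (resp. $w_{1}=q^{l}w_{2}$) is a well defined continuous operation: here one uses that $U_{x}$ is $q$-spiraling, so $z_{1}\in U_{x}$ forces $q^{d}z_{1}\in U_{x}$, and symmetrically on the $w$-side, so the diagonal really lands inside the domain.

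For the verification, take $\zeta=(f,g)\in\mathcal{O}(d)(U_{1})$ and $\eta=(u,v)\in\mathcal{O}(l)(U_{2})$, and use Lemma~\ref{lemMD} to write $f(z)=z^{d}\varphi(z)$, $g(w)=w^{d}\gamma(w)$, $u(z)=z^{l}\psi(z)$, $v(w)=w^{l}\delta(w)$; then $(\varphi,\gamma)$ and $(\psi,\delta)$ are precisely the images of $\zeta$ and $\eta$ under the $d$- and $l$-isomorphisms (\ref{imoo}), so under the $(d,l)$-isomorphism $\zeta\otimes\eta$ corresponds to $(\varphi,\gamma)\otimes(\psi,\delta)$, whose quadruple (in the sense of Lemma~\ref{lemTech2}, following the computation in the proof of Lemma~\ref{lemTech3}) has components $h_{1}=\varphi(z_{1})\psi(z_{2})$, $h_{2}=\varphi(z_{1})\delta(w_{2})$, $h_{3}=\gamma(w_{1})\psi(z_{2})$, $h_{4}=\gamma(w_{1})\delta(w_{2})$. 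On the other hand $\pi_{d,l}(\zeta\otimes\eta)=(f(z)u(q^{d}z),\,g(q^{l}w)v(w))$, and since $u(q^{d}z)=(q^{d}z)^{l}\psi(q^{d}z)=q^{dl}z^{l}\psi(q^{d}z)$ and $g(q^{l}w)=(q^{l}w)^{d}\gamma(q^{l}w)=q^{dl}w^{d}\gamma(q^{l}w)$, this equals $(q^{dl}z^{d+l}\varphi(z)\psi(q^{d}z),\,q^{dl}w^{d+l}\gamma(q^{l}w)\delta(w))$; applying the $(d+l)$-isomorphism $z^{-d-l}\times_{1}w^{-d-l}$ then gives $q^{dl}(\varphi(z)\psi(q^{d}z),\,\gamma(q^{l}w)\delta(w))$ in $\mathcal{O}_{d+l}$, which is exactly $q^{dl}(h_{1}(z_{1},q^{d}z_{1}),\,h_{4}(q^{l}w_{2},w_{2}))$ after identifying $z_{1}$ with $z$ and $w_{2}$ with $w$. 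This settles the identity on elementary tensors, hence everywhere.

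Finally, one should record that the right-hand side genuinely lies in $\mathcal{O}_{d+l}=\mathcal{O}_{x}\times_{\mathbb{C}}\mathcal{O}_{y}$, i.e. its two components agree at the origin: the required equality $h_{1}(0,0)=h_{4}(0,0)$ is immediate from the compatibility relations of Lemma~\ref{lemTech2}, since $h_{1}(0,z_{2})=h_{3}(0,z_{2})$ and $h_{3}(w_{1},0)=h_{4}(w_{1},0)$ yield $h_{1}(0,0)=h_{3}(0,0)=h_{4}(0,0)$. I expect no genuine obstacle: the computation is pure bookkeeping, and the only two points needing attention are the provenance of the scalar $q^{dl}$ — it arises from pulling the dilation factor through the order of vanishing at the origin (the $l$-fold zero of $u$ on the $z$-branch and the $d$-fold zero of $g$ on the $w$-branch) — and the fact that only the ``diagonal'' quadruple components $h_{1}$ and $h_{4}$ survive, reflecting that $\pi_{d,l}$ multiplies separately within each of the two irreducible branches $\mathbb{C}_{x}$ and $\mathbb{C}_{y}$.
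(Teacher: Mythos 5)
Your proposal is correct and takes essentially the same approach as the paper's own proof: verify the formula on elementary tensors by factoring out $z^d$, $w^d$, $z^l$, $w^l$ via Lemma~\ref{lemMD}, track the scalar $q^{dl}$ that arises from $(q^d z)^l$ and $(q^l w)^d$, and conjugate by the $(d,l)$- and $(d+l)$-isomorphisms. You add a few useful remarks the paper leaves implicit — that the diagonal substitutions $z_2=q^dz_1$ and $w_1=q^lw_2$ stay inside the domain because the $q$-open sets are $q$-spiraling, and that $h_1(0,0)=h_4(0,0)$ so the output does land in the fibered product — but the substance of the argument coincides with the paper's.
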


\begin{proof}
One needs to prove the statement over elementary tensors $h=\zeta\otimes\eta$
with $\zeta=\left(  f,g\right)  \in\mathcal{O}_{d}$ and $\eta=\left(
u,v\right)  \in\mathcal{O}_{l}$. As above, we have $h_{1}=f\left(
z_{1}\right)  u\left(  z_{2}\right)  $, $h_{2}=f\left(  z_{1}\right)  v\left(
w_{2}\right)  $, $h_{3}=g\left(  w_{1}\right)  u\left(  z_{2}\right)  $ and
$h_{4}=g\left(  w_{1}\right)  v\left(  w_{2}\right)  $. If $t_{d,l}=\left(
z^{-d}\times_{1}w^{-d}\right)  \otimes\left(  z^{-l}\times_{1}w^{-l}\right)  $
is the $\left(  d,l\right)  $-isomorphism $\mathcal{O}\left(  d,l\right)
\rightarrow\mathcal{O}_{d,l}$, then
\begin{align*}
\pi_{d,l}t_{d,l}^{-1}\left(  h\right)   &  =\pi_{d,l}\left(  \left(
z^{d}f\left(  z\right)  ,w^{d}g\left(  w\right)  \right)  \otimes\left(
z^{l}u\left(  z\right)  ,w^{l}v\left(  w\right)  \right)  \right) \\
&  =q^{dl}\left(  z^{d+l}f\left(  z\right)  u\left(  q^{d}z\right)
,w^{d+l}g\left(  q^{l}w\right)  v\left(  w\right)  \right)  .
\end{align*}
It follows that
\[
\left(  z^{-d-l}\times_{1}w^{-d-l}\right)  \pi_{d,l}t_{d,l}^{-1}\left(
h\right)  =q^{dl}\left(  f\left(  z\right)  u\left(  q^{d}z\right)  ,g\left(
q^{l}w\right)  v\left(  w\right)  \right)  =q^{dl}\left(  h_{1}\left(
z,q^{d}z\right)  ,h_{4}\left(  q^{l}w,w\right)  \right)  .
\]
The rest is clear.
\end{proof}

Thus the coefficient $q^{dl}$ can be ignored and we can assume that $\pi
_{d,l}$ has the action $h\mapsto h\left(  z,q^{l}w;q^{d}z,w\right)  $ over the
quadruples $h$.

\subsection{The diagonal Fr\'{e}chet space complexes\label{subsecDFC1}}

As above we fix a $q$-open subset $U\subseteq\mathbb{C}_{xy}$, and use the
$\left(  d,l\right)  $-copies $\mathcal{O}_{d,l}$ of $\mathcal{O}\left(
d,l\right)  $, $d,l\in\mathbb{Z}_{+}$. Based on the multiplication operators
$z_{i}$, $w_{j}$ acting on $\mathcal{O}_{d,l}$, we define the following
cochain complex
\begin{equation}
\mathcal{O}_{q}\left(  d,l\right)  :0\rightarrow\mathcal{O}_{d,l}%
\overset{\partial_{d,l}^{0}}{\longrightarrow}%
\begin{array}
[c]{c}%
\mathcal{O}_{d,l}\\
\oplus\\
\mathcal{O}_{d,l}%
\end{array}
\overset{\partial_{d,l}^{1}}{\longrightarrow}\mathcal{O}_{d,l}\overset{\pi
_{d,l}}{\longrightarrow}\mathcal{O}_{d+l}\rightarrow0, \label{OqD}%
\end{equation}
whose differentials are acting in the following ways
\[
\partial_{d,l}^{0}=\left[
\begin{array}
[c]{c}%
w_{1}-q^{l+1}w_{2}\\
z_{2}-q^{d+1}z_{1}%
\end{array}
\right]  ,\quad\partial_{d,l}^{1}=\left[
\begin{array}
[c]{cc}%
z_{2}-q^{d}z_{1} & q^{l}w_{2}-w_{1}%
\end{array}
\right]  ,
\]%
\[
\pi_{d,l}\left(  h\left(  z_{1},w_{1};z_{2},w_{2}\right)  \right)  =h\left(
z,q^{l}w;q^{d}z,w\right)  .
\]
One can easily verify that $\partial_{d,l}^{1}\partial_{d,l}^{0}=0$ and
$\pi_{d,l}\partial_{d,l}^{1}=0$. Notice that
\[
\partial_{d,l}^{0}=\left[
\begin{array}
[c]{c}%
w\otimes1-q^{l+1}\otimes w\\
1\otimes z-z\otimes q^{d+1}%
\end{array}
\right]  ,\quad\partial_{d,l}^{1}=\left[
\begin{array}
[c]{cc}%
1\otimes z-z\otimes q^{d} & q^{l}\otimes w-w\otimes1
\end{array}
\right]
\]
whereas $\pi_{d,l}\left(  h\left(  z_{1},w_{1};z_{2},w_{2}\right)  \right)
=\left(  h_{1}\left(  z_{1},q^{d}z_{1}\right)  ,h_{4}\left(  q^{l}w_{2}%
,w_{2}\right)  \right)  $. The cohomology groups of $\mathcal{O}_{q}\left(
d,l\right)  $ are denoted by $H_{d,l}^{i}$, $0\leq i\leq3$.

\subsection{The operator $T$ on the free quadruples\label{subsecOT}}

Let us introduce the free quadruples to be the elements of the following
Fr\'{e}chet space
\[
\mathcal{Z}\left(  U\right)  =\mathcal{O}\left(  z_{1},z_{2}\right)
\times\mathcal{O}\left(  z_{1},w_{2}\right)  \times\mathcal{O}\left(
w_{1},z_{2}\right)  \times\mathcal{O}\left(  w_{1},w_{2}\right)  ,
\]
which contains $\mathcal{O}_{d,l}$ as a closed subspace (see Lemma
\ref{lemTech2}). The following continuous linear operator
\[
T:\mathcal{Z}\left(  U\right)  \longrightarrow%
\begin{array}
[c]{c}%
\mathcal{O}_{d,l}\\
\oplus\\
\mathcal{O}_{d,l}%
\end{array}
,\quad T\left(  \theta\right)  =\left[
\begin{array}
[c]{c}%
\zeta\\
\eta
\end{array}
\right]  =\left[
\begin{array}
[c]{c}%
\left(  \left(  0,-q^{l+1}w_{2}\theta_{2}\left(  z_{1},w_{2}\right)  \right)
,\left(  w_{1}\theta_{3}\left(  w_{1},z_{2}\right)  ,\zeta_{4}\right)  \right)
\\
\left(  \left(  \eta_{1},-q^{d+1}z_{1}\theta_{2}\left(  z_{1},w_{2}\right)
\right)  ,\left(  z_{2}\theta_{3}\left(  w_{1},z_{2}\right)  ,0\right)
\right)
\end{array}
\right]
\]
plays a key role in the description of the cohomology groups $H_{d,l}^{i}$,
where $\zeta_{4}$ and $\eta_{1}$ are obtained by
\begin{align*}
\zeta_{4}\left(  w_{1},w_{2}\right)   &  =w_{1}\theta_{3}\left(
w_{1},0\right)  -q^{l+1}w_{2}\theta_{2}\left(  0,w_{2}\right)  +w_{1}%
w_{2}\theta_{4}\left(  w_{1},w_{2}\right)  ,\\
\eta_{1}\left(  z_{1},z_{2}\right)   &  =-q^{d+1}z_{1}\theta_{2}\left(
z_{1},0\right)  +z_{2}\theta_{3}\left(  0,z_{2}\right)  +z_{1}z_{2}\theta
_{1}\left(  z_{1},z_{2}\right)  .
\end{align*}
By Lemma \ref{lemTech2}, one can easily verify that $\zeta$ and $\eta$ are
compatible quadruples from $\mathcal{O}_{d,l}$. Indeed, we have
\begin{align*}
\zeta_{1}\left(  z_{1},0\right)   &  =0=\zeta_{2}\left(  z_{1},0\right)
,\quad\zeta_{3}\left(  w_{1},0\right)  =w_{1}\theta_{3}\left(  w_{1},0\right)
=\zeta_{4}\left(  w_{1},0\right)  \text{, }\\
\zeta_{1}\left(  0,z_{2}\right)   &  =0=\zeta_{3}\left(  0,z_{2}\right)
\text{,\quad}\zeta_{2}\left(  0,w_{2}\right)  =-q^{l+1}w_{2}\theta_{2}\left(
0,w_{2}\right)  =\zeta_{4}\left(  0,w_{2}\right)  ,
\end{align*}%
\begin{align*}
\eta_{1}\left(  z_{1},0\right)   &  =-q^{d+1}z_{1}\theta_{2}\left(
z_{1},0\right)  =\eta_{2}\left(  z_{1},0\right)  ,\quad\eta_{3}\left(
w_{1},0\right)  =0=\eta_{4}\left(  w_{1},0\right)  ,\\
\eta_{1}\left(  0,z_{2}\right)   &  =z_{2}\theta_{3}\left(  0,z_{2}\right)
=\eta_{3}\left(  0,z_{2}\right)  ,\quad\eta_{2}\left(  0,w_{2}\right)
=0=\eta_{4}\left(  0,w_{2}\right)  .
\end{align*}
Thus $T$ is a well defined continuous linear operator.

\subsection{The diagonal cohomology group $H_{d,l}^{1}$\label{subsecDC}}

It turns out that the image of $T$ provides the kernel of the differential
$\partial_{d,l}^{1}$ of the complex $\mathcal{O}_{q}\left(  d,l\right)  $.

\begin{lemma}
\label{lemTech4}The range of the operator $T$ equals to $\ker\left(
\partial_{d,l}^{1}\right)  $, and $T$ implements a topological isomorphism
$\mathcal{Z}\left(  U\right)  \overset{\simeq}{\longrightarrow}\ker\left(
\partial_{d,l}^{1}\right)  $ of the Fr\'{e}chet spaces.
\end{lemma}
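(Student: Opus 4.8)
The plan is to show directly that $\operatorname{range}(T)=\ker(\partial_{d,l}^{1})$ and that $T$ is a topological isomorphism onto this kernel, by exhibiting an explicit two-sided inverse on $\ker(\partial_{d,l}^1)$. Recall from Lemma \ref{lemTech2} that a pair $(\zeta,\eta)$ with $\zeta=\big((\zeta_1,\zeta_2),(\zeta_3,\zeta_4)\big)$ and $\eta=\big((\eta_1,\eta_2),(\eta_3,\eta_4)\big)$ lies in $\ker(\partial_{d,l}^1)$ precisely when, in addition to the four compatibility conditions of Lemma \ref{lemTech2} for each of $\zeta,\eta$, the equations
\[
z_2\zeta_i-q^d z_1\zeta_i+q^l w_2\eta_i-w_1\eta_i=0
\]
hold componentwise (reading off the four components from Corollary \ref{corTech1} and the formula for $\partial_{d,l}^1$ as $1\otimes z-z\otimes q^d$ on the first slot and $q^l\otimes w-w\otimes 1$ on the second). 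First I would unwind these into four scalar-function identities, one per component $(z_1z_2)$, $(z_1w_2)$, $(w_1z_2)$, $(w_1w_2)$, keeping careful track of which variables each $\zeta_j,\eta_j$ depends on. The key structural observation is that in each of those identities exactly one of $\zeta_j$, $\eta_j$ appears multiplied by $z_1$ or $w_1$ and the other multiplied by $z_2$ or $w_2$, so that dividing through lets one express "most" of the data in terms of three free functions.

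Concretely, I would try to solve these relations to recover $\theta=(\theta_1,\theta_2,\theta_3,\theta_4)\in\mathcal Z(U)$ from a given element $(\zeta,\eta)\in\ker(\partial_{d,l}^1)$. Looking at $T$, one reads off immediately $\theta_3(w_1,z_2)=w_1^{-1}\zeta_3(w_1,z_2)$ when $w_1\ne0$ (well-defined and holomorphic because $\zeta_3(0,z_2)=0$, which is exactly one of the constraints), and similarly $\theta_2(z_1,w_2)=-q^{-l-1}w_2^{-1}\zeta_2(z_1,w_2)$ using $\zeta_2(z_1,0)=0$. Then $\theta_1$ is recovered from the $(z_1z_2)$-component of $\eta$: from $\eta_1=-q^{d+1}z_1\theta_2(z_1,0)+z_2\theta_3(0,z_2)+z_1z_2\theta_1(z_1,z_2)$ one isolates $\theta_1(z_1,z_2)=(z_1z_2)^{-1}\big(\eta_1-\eta_1(z_1,0)-\eta_1(0,z_2)+\eta_1(0,0)\big)$ after checking that the first two summands equal $\eta_1(z_1,0)$ and $\eta_1(0,z_2)$ respectively (and that $\eta_1(0,0)=0$); divisibility by $z_1z_2$ is automatic for a holomorphic function vanishing on both axes. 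Likewise $\theta_4$ comes from $\zeta_4$ by the symmetric computation. This defines a continuous linear map $R:\ker(\partial_{d,l}^1)\to\mathcal Z(U)$ — continuity because each operation is a composition of restrictions, the continuous "difference operators" $(\cdot)_z,(\cdot)_w$ of Subsection \ref{subsecQT}, and evaluations, all continuous on the relevant Fréchet spaces.

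It then remains to verify $R\circ T=\operatorname{id}_{\mathcal Z(U)}$ and $T\circ R=\operatorname{id}_{\ker(\partial_{d,l}^1)}$. The first is a direct substitution using the defining formulas for $\zeta_4$ and $\eta_1$ in $T$. The second — and this is the step I expect to be the main obstacle — requires showing that every $(\zeta,\eta)\in\ker(\partial_{d,l}^1)$ is genuinely of the form $T(\theta)$ for $\theta=R(\zeta,\eta)$; i.e. that the remaining components $\zeta_1$, $\eta_2$, $\eta_3$, $\eta_4$, $\zeta_2$ (in its full dependence), $\zeta_3$ are forced to match the prescribed shape. This is where one must genuinely use all the kernel equations together with all the Lemma \ref{lemTech2} compatibilities: for instance $\zeta_1\equiv0$ should follow because the $(z_1z_2)$-equation for $\zeta$ forces $z_2\zeta_1=q^d z_1\zeta_1$ after the $\eta$-terms there are absorbed, and a holomorphic function satisfying $z_2 F(z_1,z_2)=q^d z_1 F(z_1,z_2)$ must vanish (compare homogeneous degrees). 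I would organize this as a short sequence of "forcing" lemmas, one per component, being careful that the $q^d$ versus $q^{d+1}$ (and $q^l$ versus $q^{l+1}$) discrepancies between $\partial^0$, $\partial^1$ and $T$ are handled correctly. Finally, since both $T$ and $R$ are continuous linear maps between Fréchet spaces that are mutually inverse, $T$ is a topological isomorphism onto its range $\ker(\partial_{d,l}^1)$ by the open mapping theorem (or simply because a continuous linear bijection with continuous inverse is by definition a topological isomorphism).
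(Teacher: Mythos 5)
Your proposal follows essentially the same component-by-component strategy as the paper's proof: split a kernel element $(\zeta,\eta)$ into the four quadruple components of Lemma \ref{lemTech2}, write out the kernel equation $(z_{2}-q^{d}z_{1})\zeta=(w_{1}-q^{l}w_{2})\eta$ slot by slot via Corollary \ref{corTech1}, derive the forcing relations $\zeta_{1}=0$, $\eta_{4}=0$, and recover the free quadruple $\theta$ by holomorphic division (removable singularities, Remark \ref{remTech1}). The only substantive difference is presentational: you package the argument as an explicit continuous two-sided inverse $R\colon\ker(\partial_{d,l}^{1})\to\mathcal{Z}(U)$, whereas the paper verifies $\operatorname{im}(T)=\ker(\partial_{d,l}^{1})$ directly (so the image is closed), observes that $\ker T=\{0\}$, and invokes the Open Mapping Theorem. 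Your variant buys continuity of the inverse by inspection rather than by abstract machinery, at the cost of a little extra bookkeeping.

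Two small points to tidy. First, as written the composition $R\circ T$ only type-checks once you have established $\operatorname{im}(T)\subseteq\ker(\partial_{d,l}^{1})$; this is the easy direction and should be verified separately by substituting $T(\theta)$ into $\partial_{d,l}^{1}$ and watching the four components cancel, which is exactly what the paper does at the start. Second, the phrase ``after the $\eta$-terms there are absorbed'' in your forcing argument for $\zeta_{1}\equiv0$ is slightly misleading: by Corollary \ref{corTech1} the $(z_{1},z_{2})$-component of $(q^{l}w_{2}-w_{1})\eta$ is identically zero, because $w_{1}$ and $w_{2}$ annihilate the first slot of a compatible quadruple. So the $(z_{1},z_{2})$-slot of the kernel equation reads simply $(z_{2}-q^{d}z_{1})\zeta_{1}=0$, with no $\eta$-contribution at all; your conclusion is then correct by the coefficient-comparison (or removable-singularity) argument, and the symmetric degeneracy in the $(w_{1},w_{2})$-slot gives $\eta_{4}=0$. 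The remaining identifications $\zeta_{2}=-q^{l+1}w_{2}\theta_{2}$, $\zeta_{3}=w_{1}\theta_{3}$, $\eta_{2}=-q^{d+1}z_{1}\theta_{2}$, $\eta_{3}=z_{2}\theta_{3}$, and the recovery of $\theta_{1}$ and $\theta_{4}$ from $\eta_{1}$ and $\zeta_{4}$ via the compatibility relations, match the paper's computation line for line.
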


\begin{proof}
Take an element $\left[
\begin{array}
[c]{c}%
\zeta\\
\eta
\end{array}
\right]  =T\left(  \theta\right)  \in\operatorname{im}\left(  T\right)  $ with
$\theta\in\mathcal{Z}\left(  U\right)  $, and put $\vartheta=\partial
_{d,l}^{1}\left(  T\left(  \theta\right)  \right)  $. Using Corollary
\ref{corTech1}, and the fact that $\zeta_{1}=0$, $\eta_{4}=0$, we deduce that
\begin{align*}
\vartheta &  =\partial_{d,l}^{1}\left[
\begin{array}
[c]{c}%
\zeta\\
\eta
\end{array}
\right]  =\left(  z_{2}-q^{d}z_{1}\right)  \zeta+\left(  q^{l}w_{2}%
-w_{1}\right)  \eta\\
&  =\left(  \left(  0,-q^{d}z_{1}\zeta_{2}\left(  z_{1},w_{2}\right)  \right)
,\left(  z_{2}\zeta_{3}\left(  w_{1},z_{2}\right)  ,0\right)  \right)
+\left(  \left(  0,q^{l}w_{2}\eta_{2}\left(  z_{1},w_{2}\right)  \right)
,\left(  -w_{1}\eta_{3}\left(  w_{1},z_{2}\right)  ,0\right)  \right) \\
&  =\left(  \left(  0,q^{d+l+1}z_{1}w_{2}\theta_{2}\left(  z_{1},w_{2}\right)
\right)  ,\left(  z_{2}w_{1}\theta_{3}\left(  w_{1},z_{2}\right)  ,0\right)
\right) \\
&  +\left(  \left(  0,-q^{l+d+1}w_{2}z_{1}\theta_{2}\left(  z_{1}%
,w_{2}\right)  \right)  ,\left(  -w_{1}z_{2}\theta_{3}\left(  w_{1}%
,z_{2}\right)  ,0\right)  \right)  =0,
\end{align*}
that is, $T\left(  \theta\right)  \in\ker\left(  \partial_{d,l}^{1}\right)  $.
Conversely, take $\omega=\left[
\begin{array}
[c]{c}%
\zeta\\
\eta
\end{array}
\right]  \in\ker\left(  \partial_{d,l}^{1}\right)  $. Then $\left(
z_{2}-q^{d}z_{1}\right)  \zeta=\left(  w_{1}-q^{l}w_{2}\right)  \eta$, which
means that (see Corollary \ref{corTech1})%
\begin{align*}
\left(  z_{2}-q^{d}z_{1}\right)  \zeta_{1}\left(  z_{1},z_{2}\right)   &
=\left(  w_{1}-q^{l}w_{2}\right)  \eta_{1}\left(  z_{1},z_{2}\right)  =0,\\
-q^{d}z_{1}\zeta_{2}\left(  z_{1},w_{2}\right)   &  =\left(  z_{2}-q^{d}%
z_{1}\right)  \zeta_{2}\left(  z_{1},w_{2}\right)  =\left(  w_{1}-q^{l}%
w_{2}\right)  \eta_{2}\left(  z_{1},w_{2}\right)  =-q^{l}w_{2}\eta_{2}\left(
z_{1},w_{2}\right)  ,\\
z_{2}\zeta_{3}\left(  w_{1},z_{2}\right)   &  =\left(  z_{2}-q^{d}%
z_{1}\right)  \zeta_{3}\left(  w_{1},z_{2}\right)  =\left(  w_{1}-q^{l}%
w_{2}\right)  \eta_{3}\left(  w_{1},z_{2}\right)  =w_{1}\eta_{3}\left(
w_{1},z_{2}\right)  ,\\
0  &  =\left(  z_{2}-q^{d}z_{1}\right)  \zeta_{4}\left(  w_{1},w_{2}\right)
=\left(  w_{1}-q^{l}w_{2}\right)  \eta_{4}\left(  w_{1},w_{2}\right)  .
\end{align*}
It follows that (removable singularities)
\[
\zeta_{1}\left(  z_{1},z_{2}\right)  =0,\quad\eta_{4}\left(  w_{1}%
,w_{2}\right)  =0,\quad\zeta_{2}\left(  z_{1},w_{2}\right)  =-q^{l+1}%
w_{2}\theta_{2}\left(  z_{1},w_{2}\right)  ,\quad\zeta_{3}\left(  w_{1}%
,z_{2}\right)  =w_{1}\theta_{3}\left(  w_{1},z_{2}\right)
\]
for some $\theta_{2}\in\mathcal{O}\left(  z_{1},w_{2}\right)  $ and
$\theta_{3}\in\mathcal{O}\left(  w_{1},z_{2}\right)  $ (optionally one can use
Analytic Nullstellensatz, see below to the proof of Lemma \ref{lemTech5}). In
particular, $\eta_{2}\left(  z_{1},w_{2}\right)  =-q^{d+1}z_{1}\theta
_{2}\left(  z_{1},w_{2}\right)  $ and $\eta_{3}\left(  w_{1},z_{2}\right)
=z_{2}\theta_{3}\left(  w_{1},z_{2}\right)  $. Further, using Lemma
\ref{lemTech2}, we derive that $\zeta_{4}\left(  w_{1},0\right)  =\zeta
_{3}\left(  w_{1},0\right)  =w_{1}\theta_{3}\left(  w_{1},0\right)  $,
$\zeta_{4}\left(  0,w_{2}\right)  =\zeta_{2}\left(  0,w_{2}\right)
=-q^{l+1}w_{2}\theta_{2}\left(  0,w_{2}\right)  $. In particular, $\zeta
_{4}\left(  0,0\right)  =0$ and
\[
\zeta_{4}\left(  w_{1},w_{2}\right)  =w_{1}\theta_{3}\left(  w_{1},0\right)
-q^{l+1}w_{2}\theta_{2}\left(  0,w_{2}\right)  +w_{1}w_{2}\theta_{4}\left(
w_{1},w_{2}\right)
\]
for some $\theta_{4}\in\mathcal{O}\left(  w_{1},w_{2}\right)  $ (see below
Remark \ref{remTech1}). In a similar way, we obtain that
\begin{align*}
\eta_{1}\left(  z_{1},z_{2}\right)   &  =\eta_{1}\left(  z_{1},0\right)
+\eta_{1}\left(  0,z_{2}\right)  +z_{1}z_{2}\theta_{1}\left(  z_{1}%
,z_{2}\right)  =\eta_{2}\left(  z_{1},0\right)  +\eta_{3}\left(
0,z_{2}\right)  +z_{1}z_{2}\theta_{1}\left(  z_{1},z_{2}\right) \\
&  =-q^{d+1}z_{1}\theta_{2}\left(  z_{1},0\right)  +z_{2}\theta_{3}\left(
0,z_{2}\right)  +z_{1}z_{2}\theta_{1}\left(  z_{1},z_{2}\right)
\end{align*}
for some $\theta_{1}\in\mathcal{O}\left(  z_{1},z_{2}\right)  $. If
$\theta=\left(  \theta_{1},\theta_{2},\theta_{3},\theta_{4}\right)
\in\mathcal{Z}\left(  U\right)  $ is the free quadruple obtained, then
\[
T\left(  \theta\right)  =\left[
\begin{array}
[c]{c}%
\left(  \left(  0,-q^{l+1}w_{2}\theta_{2}\left(  z_{1},w_{2}\right)  \right)
,\left(  w_{1}\theta_{3}\left(  w_{1},z_{2}\right)  ,\zeta_{4}\right)  \right)
\\
\left(  \left(  \eta_{1},-q^{d+1}z_{1}\theta_{2}\left(  z_{1},w_{2}\right)
\right)  ,\left(  z_{2}\theta_{3}\left(  w_{1},z_{2}\right)  ,0\right)
\right)
\end{array}
\right]  =\left[
\begin{array}
[c]{c}%
\zeta\\
\eta
\end{array}
\right]  .
\]
Thus $\operatorname{im}\left(  T\right)  =\ker\left(  \partial_{d,l}%
^{1}\right)  $, and in particular, $\operatorname{im}\left(  T\right)  $ is
closed. One can easily see that $\ker\left(  T\right)  =\left\{  0\right\}  $.
By Open Mapping Theorem, we conclude that $T$ implements a topological
isomorphism of $\mathcal{Z}\left(  U\right)  $ onto $\ker\left(
\partial_{d,l}^{1}\right)  $.
\end{proof}

\begin{remark}
\label{remTech1}If $\zeta\left(  z,w\right)  $ is a holomorphic function
defined on a polydomain containing the origin, and $\zeta\left(  0,0\right)
=0$, then
\begin{align*}
\zeta\left(  z,w\right)   &  =\zeta_{z}\left(  z,w\right)  z+\zeta\left(
0,w\right)  =\zeta_{z}\left(  z,0\right)  z+\left(  \zeta_{z}\right)
_{w}\left(  z,w\right)  wz+\zeta\left(  0,w\right) \\
&  =z\frac{\zeta\left(  z,0\right)  -\zeta\left(  0,0\right)  }{z}%
+\zeta\left(  0,w\right)  +zw\left(  \zeta_{z}\right)  _{w}\left(  z,w\right)
=\zeta\left(  z,0\right)  +\zeta\left(  0,w\right)  +zw\zeta_{zw}\left(
z,w\right)  ,
\end{align*}
that is, $\zeta\left(  z,w\right)  =\zeta\left(  z,0\right)  +\zeta\left(
0,w\right)  +zw\zeta_{zw}\left(  z,w\right)  $.
\end{remark}

For the description of $\operatorname{im}\left(  \partial_{d,l}^{0}\right)  $
in terms of the quadruples from Lemma \ref{lemTech4}, we introduce the
following subspace $\mathcal{I}\left(  U\right)  \subseteq\mathcal{Z}\left(
U\right)  $ for those (free) quadruples $\theta$ such that $\theta_{2}\left(
0,0\right)  =\theta_{3}\left(  0,0\right)  $ and
\begin{align}
\theta_{1}\left(  z_{1},q^{d+1}z_{1}\right)   &  =\frac{\theta_{2}\left(
z_{1},0\right)  -\theta_{3}\left(  0,q^{d+1}z_{1}\right)  }{z_{1}%
},\label{Theta}\\
\theta_{4}\left(  q^{l+1}w_{2},w_{2}\right)   &  =\frac{\theta_{2}\left(
0,w_{2}\right)  -\theta_{3}\left(  q^{l+1}w_{2},0\right)  }{w_{2}},\nonumber
\end{align}
Thus $\mathcal{I}\left(  U\right)  $ is identified with the fibered product
$\mathcal{O}\left(  z_{1},w_{2}\right)  \underset{\mathbb{C}}{\times
}\mathcal{O}\left(  w_{1},z_{2}\right)  $ up to a topological isomorphism.
Every couple $\left(  \theta_{2},\theta_{3}\right)  $ from the fibered product
defines uniquely the related quadruple $\theta$ (see to the proof of the
forthcoming lemma).

\begin{lemma}
\label{lemTech5}The continuous linear operator
\[
T^{-1}\partial_{d,l}^{0}:\mathcal{O}_{d,l}\longrightarrow\mathcal{O}\left(
z_{1},w_{2}\right)  \underset{\mathbb{C}}{\times}\mathcal{O}\left(
w_{1},z_{2}\right)
\]
is transforming a compatible quadruple $\alpha\in\mathcal{O}_{d,l}$ into a
couple $\left(  \theta_{2},\theta_{3}\right)  $ whose quadruple is given by
\[
\theta=\left(  \left(  \alpha_{1}\right)  _{z_{1}}\left(  z_{1},z_{2}\right)
-q^{d+1}\left(  \alpha_{1}\right)  _{z_{2}}\left(  z_{1},z_{2}\right)
,\alpha_{2},\alpha_{3},\left(  \alpha_{4}\right)  _{w_{2}}\left(  w_{1}%
,w_{2}\right)  -q^{l+1}\left(  \alpha_{4}\right)  _{w_{1}}\left(  w_{1}%
,w_{2}\right)  \right)  ,
\]
where $\theta_{2}=\alpha_{2}$ and $\theta_{3}=\alpha_{3}$. Moreover,
$T^{-1}\partial_{d,l}^{0}$ implements a topological isomorphism onto.
\end{lemma}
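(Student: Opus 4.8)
The plan is to compute $\partial_{d,l}^{0}$ explicitly on a compatible quadruple $\alpha=((\alpha_1,\alpha_2),(\alpha_3,\alpha_4))\in\mathcal{O}_{d,l}$ using Corollary \ref{corTech1}, then invert $T$ on the result using the description of $\ker(\partial_{d,l}^1)$ from Lemma \ref{lemTech4}. Since $\partial_{d,l}^1\partial_{d,l}^0=0$, the image of $\partial_{d,l}^0$ lies in $\ker(\partial_{d,l}^1)=\operatorname{im}(T)$, so $T^{-1}\partial_{d,l}^0$ is well defined as a continuous linear map into $\mathcal{Z}(U)$; the point is to identify which free quadruple $\theta$ it produces and to show that $\theta$ always lies in the subspace $\mathcal{I}(U)\cong\mathcal{O}(z_1,w_2)\underset{\mathbb{C}}{\times}\mathcal{O}(w_1,z_2)$, and that the correspondence $\alpha\mapsto(\theta_2,\theta_3)$ is a topological isomorphism onto.

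First I would write $\partial_{d,l}^0(\alpha)=\left[\begin{array}{c} (w_1-q^{l+1}w_2)\alpha \\ (z_2-q^{d+1}z_1)\alpha\end{array}\right]$ and expand each entry component-wise via Corollary \ref{corTech1}. For the first entry $\zeta=(w_1-q^{l+1}w_2)\alpha$ one gets $\zeta_1=0$ (the $z_1z_2$-component is killed by both $w_1$ and $w_2$), $\zeta_2=-q^{l+1}w_2\alpha_2(z_1,w_2)$, $\zeta_3=w_1\alpha_3(w_1,z_2)$, and $\zeta_4=(w_1-q^{l+1}w_2)\alpha_4(w_1,w_2)$. Comparing with the formula for $T(\theta)$ in Subsection \ref{subsecOT}, this forces $\theta_2=\alpha_2$ and $\theta_3=\alpha_3$, and then $\zeta_4$ must equal $w_1\alpha_3(w_1,0)-q^{l+1}w_2\alpha_2(0,w_2)+w_1w_2\theta_4(w_1,w_2)$; matching this against $(w_1-q^{l+1}w_2)\alpha_4$ and using the compatibility relations $\alpha_4(w_1,0)=\alpha_3(w_1,0)$, $\alpha_4(0,w_2)=\alpha_2(0,w_2)$ from Lemma \ref{lemTech2} (plus Remark \ref{remTech1} to split off the $w_1w_2$-part) yields $\theta_4=(\alpha_4)_{w_2}-q^{l+1}(\alpha_4)_{w_1}$. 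The analogous bookkeeping on the second entry $\eta=(z_2-q^{d+1}z_1)\alpha$ recovers $\eta_2=-q^{d+1}z_1\alpha_2$, $\eta_3=z_2\alpha_3$ consistently, and pins down $\theta_1=(\alpha_1)_{z_1}-q^{d+1}(\alpha_1)_{z_2}$ by the same difference-operator splitting of $\eta_1$.

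Next I would verify that the resulting $\theta$ satisfies the defining constraints of $\mathcal{I}(U)$: the condition $\theta_2(0,0)=\theta_3(0,0)$ is just $\alpha_2(0,0)=\alpha_3(0,0)$, which follows from the compatibility of $\alpha$ at the generic point; and the two identities (\ref{Theta}) relating $\theta_1,\theta_4$ to $\theta_2,\theta_3$ should drop out of the expressions for $\theta_1,\theta_4$ just obtained by evaluating along $z_2=q^{d+1}z_1$ (resp. $w_1=q^{l+1}w_2$) and again invoking the compatibility relations among the $\alpha_i$. Conversely, given any couple $(\theta_2,\theta_3)$ in the fibered product, one reconstructs the quadruple $\theta$ by (\ref{Theta}) (removable singularities make $\theta_1,\theta_4$ holomorphic since the numerators vanish on the indicated curves), builds $T(\theta)\in\ker(\partial_{d,l}^1)$, and checks it actually lies in $\operatorname{im}(\partial_{d,l}^0)$ by exhibiting the preimage $\alpha$; continuity both ways plus $\ker=0$ and the Open Mapping Theorem then give the topological isomorphism.

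I expect the main obstacle to be the $\zeta_4$ and $\eta_1$ bookkeeping: one must carefully push the compatibility relations of Lemma \ref{lemTech2} through the evaluations at $0$ and apply Remark \ref{remTech1} in exactly the right variables so that the ``cross term'' $w_1w_2\theta_4$ (resp. $z_1z_2\theta_1$) is correctly identified, and then confirm that the two constraint equations (\ref{Theta}) are automatically satisfied rather than extra conditions — this is where a sign or a factor of $q^{d+1}$ vs.\ $q^d$ could easily go wrong. The removable-singularity / Analytic Nullstellensatz step needed for the converse direction is routine given the analogous argument already used in the proof of Lemma \ref{lemTech4}, so the real work is the explicit component computation and the verification that its output lands in $\mathcal{I}(U)$.
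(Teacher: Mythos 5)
Your plan reproduces the paper's proof step for step: compute $\partial_{d,l}^{0}(\alpha)$ component-wise via Corollary \ref{corTech1}, match against the $T(\theta)$ formula to identify $\theta_2=\alpha_2$, $\theta_3=\alpha_3$, use Lemma \ref{lemTech2} and Remark \ref{remTech1} to pin down $\theta_1,\theta_4$, verify the constraints (\ref{Theta}) by evaluating along $z_2=q^{d+1}z_1$ and $w_1=q^{l+1}w_2$, and recover $\alpha$ from $(\theta_2,\theta_3)$ in the converse direction via Analytic Nullstellensatz / removable singularities. The approach is correct and essentially identical to the paper's.
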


\begin{proof}
First by Lemma \ref{lemTech2}, $\alpha$ is identified with a compatible
quadruple
\[
\alpha=\left(  \left(  \alpha_{1}\left(  z_{1},z_{2}\right)  ,\alpha
_{2}\left(  z_{1},w_{2}\right)  \right)  ,\left(  \alpha_{3}\left(
w_{1},z_{2}\right)  ,\alpha_{4}\left(  w_{1},w_{2}\right)  \right)  \right)
\]
with the relations
\begin{align}
\alpha_{1}\left(  z_{1},0\right)   &  =\alpha_{2}\left(  z_{1},0\right)
,\quad\alpha_{3}\left(  w_{1},0\right)  =\alpha_{4}\left(  w_{1},0\right)
,\label{Alph}\\
\alpha_{1}\left(  0,z_{2}\right)   &  =\alpha_{3}\left(  0,z_{2}\right)
,\quad\alpha_{2}\left(  0,w_{2}\right)  =\alpha_{4}\left(  0,w_{2}\right)
.\nonumber
\end{align}
In this case, we have
\begin{align*}
\partial_{d,l}^{0}\left(  \alpha\right)   &  =\left[
\begin{array}
[c]{c}%
\left(  w_{1}-q^{l+1}w_{2}\right)  \alpha\\
\left(  z_{2}-q^{d+1}z_{1}\right)  \alpha
\end{array}
\right] \\
&  =\left[
\begin{array}
[c]{c}%
\left(  \left(  0,-q^{l+1}w_{2}\alpha_{2}\left(  z_{1},w_{2}\right)  \right)
,\left(  w_{1}\alpha_{3}\left(  w_{1},z_{2}\right)  ,\left(  w_{1}%
-q^{l+1}w_{2}\right)  \alpha_{4}\left(  w_{1},w_{2}\right)  \right)  \right)
\\
\left(  \left(  \left(  z_{2}-q^{d+1}z_{1}\right)  \alpha_{1}\left(
z_{1},z_{2}\right)  ,-q^{d+1}z_{1}\alpha_{2}\left(  z_{1},w_{2}\right)
\right)  ,\left(  z_{2}\alpha_{3}\left(  w_{1},z_{2}\right)  ,0\right)
\right)
\end{array}
\right]  .
\end{align*}
By Lemma \ref{lemTech4}, we conclude that $\partial_{d,l}^{0}\left(
\alpha\right)  =T\left(  \theta\right)  $ for a uniquely defined free
quadruple $\theta$. Therefore $\theta_{2}=\alpha_{2}$ and $\theta_{3}%
=\alpha_{3}$ with $\theta_{2}\left(  0,0\right)  =\alpha_{2}\left(
0,0\right)  =\alpha_{1}\left(  0,0\right)  =\alpha_{3}\left(  0,0\right)
=\theta_{3}\left(  0,0\right)  $. Moreover, using (\ref{Alph}), we obtain
that
\begin{align*}
\left(  w_{1}-q^{l+1}w_{2}\right)  \alpha_{4}\left(  w_{1},w_{2}\right)   &
=w_{1}\theta_{3}\left(  w_{1},0\right)  -q^{l+1}w_{2}\theta_{2}\left(
0,w_{2}\right)  +w_{1}w_{2}\theta_{4}\left(  w_{1},w_{2}\right) \\
&  =w_{1}\alpha_{3}\left(  w_{1},0\right)  -q^{l+1}w_{2}\alpha_{2}\left(
0,w_{2}\right)  +w_{1}w_{2}\theta_{4}\left(  w_{1},w_{2}\right) \\
&  =w_{1}\alpha_{4}\left(  w_{1},0\right)  -q^{l+1}w_{2}\alpha_{4}\left(
0,w_{2}\right)  +w_{1}w_{2}\theta_{4}\left(  w_{1},w_{2}\right)  ,
\end{align*}
which in turn implies that
\begin{align*}
w_{1}w_{2}\theta_{4}\left(  w_{1},w_{2}\right)   &  =w_{1}\left(  \alpha
_{4}\left(  w_{1},w_{2}\right)  -\alpha_{4}\left(  w_{1},0\right)  \right)
-q^{l+1}w_{2}\left(  \alpha_{4}\left(  w_{1},w_{2}\right)  -\alpha_{4}\left(
0,w_{2}\right)  \right) \\
&  =w_{1}w_{2}\left(  \alpha_{4}\right)  _{w_{2}}\left(  w_{1},w_{2}\right)
-q^{l+1}w_{2}w_{1}\left(  \alpha_{4}\right)  _{w_{1}}\left(  w_{1}%
,w_{2}\right)  ,
\end{align*}
that is, $\theta_{4}\left(  w_{1},w_{2}\right)  =\left(  \alpha_{4}\right)
_{w_{2}}\left(  w_{1},w_{2}\right)  -q^{l+1}\left(  \alpha_{4}\right)
_{w_{1}}\left(  w_{1},w_{2}\right)  $. In a similar way, we have (see
(\ref{Alph}))
\begin{align*}
\left(  z_{2}-q^{d+1}z_{1}\right)  \alpha_{1}\left(  z_{1},z_{2}\right)   &
=-q^{d+1}z_{1}\theta_{2}\left(  z_{1},0\right)  +z_{2}\theta_{3}\left(
0,z_{2}\right)  +z_{1}z_{2}\theta_{1}\left(  z_{1},z_{2}\right) \\
&  =-q^{d+1}z_{1}\alpha_{2}\left(  z_{1},0\right)  +z_{2}\alpha_{3}\left(
0,z_{2}\right)  +z_{1}z_{2}\theta_{1}\left(  z_{1},z_{2}\right) \\
&  =-q^{d+1}z_{1}\alpha_{1}\left(  z_{1},0\right)  +z_{2}\alpha_{1}\left(
0,z_{2}\right)  +z_{1}z_{2}\theta_{1}\left(  z_{1},z_{2}\right)  ,
\end{align*}
which in turn implies that $\theta_{1}\left(  z_{1},z_{2}\right)  =\left(
\alpha_{1}\right)  _{z_{1}}\left(  z_{1},z_{2}\right)  -q^{d+1}\left(
\alpha_{1}\right)  _{z_{2}}\left(  z_{1},z_{2}\right)  $. Thus
\begin{align*}
\theta &  =T^{-1}\partial_{d,l}^{0}\left(  \alpha\right) \\
&  =\left(  \left(  \alpha_{1}\right)  _{z_{1}}\left(  z_{1},z_{2}\right)
-q^{d+1}\left(  \alpha_{1}\right)  _{z_{2}}\left(  z_{1},z_{2}\right)
,\alpha_{2},\alpha_{3},\left(  \alpha_{4}\right)  _{w_{2}}\left(  w_{1}%
,w_{2}\right)  -q^{l+1}\left(  \alpha_{4}\right)  _{w_{1}}\left(  w_{1}%
,w_{2}\right)  \right)  .
\end{align*}
is the related quadruple. Moreover,
\begin{align*}
\theta_{1}\left(  z_{1},q^{d+1}z_{1}\right)   &  =\left(  \alpha_{1}\right)
_{z_{1}}\left(  z_{1},q^{d+1}z_{1}\right)  -q^{d+1}\left(  \alpha_{1}\right)
_{z_{2}}\left(  z_{1},q^{d+1}z_{1}\right)  =\frac{\alpha_{1}\left(
z_{1},0\right)  -\alpha_{1}\left(  0,q^{d+1}z_{1}\right)  }{z_{1}}\\
&  =\frac{\alpha_{2}\left(  z_{1},0\right)  -\alpha_{3}\left(  0,q^{d+1}%
z_{1}\right)  }{z_{1}}=\frac{\theta_{2}\left(  z_{1},0\right)  -\theta
_{3}\left(  0,q^{d+1}z_{1}\right)  }{z_{1}}%
\end{align*}
(see (\ref{Alph})). In a similar way, we have\
\begin{align*}
\theta_{4}\left(  q^{l+1}w_{2},w_{2}\right)   &  =\left(  \alpha_{4}\right)
_{w_{2}}\left(  q^{l+1}w_{2},w_{2}\right)  -q^{l+1}\left(  \alpha_{4}\right)
_{w_{1}}\left(  q^{l+1}w_{2},w_{2}\right)  =\frac{\alpha_{4}\left(
0,w_{2}\right)  -\alpha_{4}\left(  q^{l+1}w_{2},0\right)  }{w_{2}}\\
&  =\frac{\alpha_{2}\left(  0,w_{2}\right)  -\alpha_{3}\left(  q^{l+1}%
w_{2},0\right)  }{w_{2}}=\frac{\theta_{2}\left(  0,w_{2}\right)  -\theta
_{3}\left(  q^{l+1}w_{2},0\right)  }{w_{2}},
\end{align*}
which means that $\theta\in\mathcal{I}\left(  U\right)  $ or it is given by
the couple $\left(  \theta_{2},\theta_{3}\right)  \in\mathcal{O}\left(
z_{1},w_{2}\right)  \underset{\mathbb{C}}{\times}\mathcal{O}\left(
w_{1},z_{2}\right)  $.

Conversely, take a quadruple $\theta\in\mathcal{I}\left(  U\right)  $ related
to a couple $\left(  \theta_{2},\theta_{3}\right)  $ from the fibered product.
Put $f\left(  z_{1},z_{2}\right)  =q^{d+1}z_{1}\theta_{2}\left(
z_{1},0\right)  -z_{2}\theta_{3}\left(  0,z_{2}\right)  -z_{1}z_{2}\theta
_{1}\left(  z_{1},z_{2}\right)  $ to be a holomorphic function on the
polydomain $U_{x_{1}}\times U_{x_{2}}$, and it is vanishing over the
irreducible analytic set $z_{2}=q^{d+1}z_{1}$. Using Analytic Nullstellensatz
\cite[II.E.18]{GRo}, we conclude that $f$ is a multiply of the (irreducible)
polynomial $z_{2}-q^{d+1}z_{1}$, that is, $\left(  q^{d+1}z_{1}-z_{2}\right)
\alpha_{1}\left(  z_{1},z_{2}\right)  =f\left(  z_{1},z_{2}\right)  $ for some
analytic germ $\alpha_{1}$ at the origin. But $\alpha_{1}\left(  z_{1}%
,z_{2}\right)  =\left(  q^{d+1}z_{1}-z_{2}\right)  ^{-1}f\left(  z_{1}%
,z_{2}\right)  $ is holomorphic on $U_{x_{1}}\times U_{x_{2}}$ apart from the
set $z_{2}=q^{d+1}z_{1}$, which can be removed. Hence $\alpha_{1}%
\in\mathcal{O}\left(  z_{1},z_{2}\right)  $ and
\[
\left(  z_{2}-q^{d+1}z_{1}\right)  \alpha_{1}\left(  z_{1},z_{2}\right)
=-q^{d+1}z_{1}\theta_{2}\left(  z_{1},0\right)  +z_{2}\theta_{3}\left(
0,z_{2}\right)  +z_{1}z_{2}\theta_{1}\left(  z_{1},z_{2}\right)
\]
holds. In a similar way,
\[
\left(  w_{1}-q^{l+1}w_{2}\right)  \alpha_{4}\left(  w_{1},w_{2}\right)
=w_{1}\theta_{3}\left(  w_{1},0\right)  -q^{l+1}w_{2}\theta_{2}\left(
0,w_{2}\right)  +w_{1}w_{2}\theta_{4}\left(  w_{1},w_{2}\right)  .
\]
By assuming that $\alpha_{2}=\theta_{2}$ and $\alpha_{3}=\theta_{3}$, one can
easily verify the relations from (\ref{Alph}). For example, $\alpha_{4}\left(
0,w_{2}\right)  =\theta_{2}\left(  0,w_{2}\right)  =\alpha_{2}\left(
0,w_{2}\right)  $. It follows that $\alpha$ is a compatible quadruple and
$T^{-1}\partial_{d,l}^{0}\left(  \alpha\right)  =\theta$. It remains to use
Lemma \ref{lemTech4}.
\end{proof}

Now we can prove the following key result on the first diagonal cohomology group.

\begin{proposition}
\label{propTechE}The canonical identification $H_{d,l}^{1}=\mathcal{O}\left(
U\right)  $ holds up to a topological isomorphism. In this case, $\ker\left(
\partial_{d,l}^{1}\right)  =\operatorname{im}\left(  \partial_{d,l}%
^{1}\right)  \oplus\mathcal{O}\left(  U\right)  $ is a topological direct sum
of the Fr\'{e}chet spaces given by a continuous projection $p_{d,l}%
\in\mathcal{L}\left(  \ker\left(  \partial_{d,l}^{1}\right)  \right)  $ whose
image being identified with $\mathcal{O}\left(  U\right)  $ consists of the
following free quadruples
\[
\operatorname{im}\left(  p_{d,l}\right)  =\left\{  \left(  f\left(
z_{1}\right)  ,0,\lambda,g\left(  w_{2}\right)  \right)  :f\in\mathcal{O}%
\left(  z_{1}\right)  ,\lambda\in\mathbb{C},g\in\mathcal{O}\left(
w_{2}\right)  \right\}  \subseteq\mathcal{Z}\left(  U\right)  .
\]

\end{proposition}

\begin{proof}
We define the following continuous linear mapping
\begin{align*}
\Phi &  :\mathcal{Z}\left(  U\right)  \longrightarrow\mathcal{O}\left(
z\right)  \underset{\mathbb{C}}{\times}\mathcal{O}\left(  w\right)  ,\\
\Phi\left(  \theta\right)   &  =\left(  z\theta_{1}\left(  z,q^{d+1}z\right)
-\theta_{2}\left(  z,0\right)  +\theta_{3}\left(  0,q^{d+1}z\right)
,w\theta_{4}\left(  q^{l+1}w,w\right)  -\theta_{2}\left(  0,w\right)
+\theta_{3}\left(  q^{l+1}w,0\right)  \right)  .
\end{align*}
Notice that $\Phi\left(  \theta\right)  =\left(  f\left(  z\right)  ,g\left(
w\right)  \right)  \in\mathcal{O}\left(  z\right)  \oplus\mathcal{O}\left(
w\right)  $ with $f\left(  0\right)  =-\theta_{2}\left(  0,0\right)
+\theta_{3}\left(  0,0\right)  =g\left(  0\right)  $, that is, $\Phi\left(
\theta\right)  \in\mathcal{O}\left(  z\right)  \underset{\mathbb{C}}{\times
}\mathcal{O}\left(  w\right)  $. Further, $\theta\in\ker\left(  \Phi\right)  $
if and only if $\theta\in\mathcal{I}\left(  U\right)  $.

Now take $\left(  f\left(  z\right)  ,g\left(  w\right)  \right)
\in\mathcal{O}\left(  z\right)  \underset{\mathbb{C}}{\times}\mathcal{O}%
\left(  w\right)  $ with $\lambda=f\left(  0\right)  =g\left(  0\right)  $. It
follows that $f\left(  z\right)  =\lambda+zf_{1}\left(  z\right)  $ and
$g\left(  w\right)  =\lambda+wg_{1}\left(  w\right)  $ for some $f_{1}%
\in\mathcal{O}\left(  z\right)  $ and $g_{1}\in\mathcal{O}\left(  w\right)  $.
Put $\theta_{1}\left(  z_{1},z_{2}\right)  =f_{1}\left(  z_{1}\right)  $,
$\theta_{2}=0$, $\theta_{3}=\lambda$, $\theta_{4}\left(  w_{1},w_{2}\right)
=g_{1}\left(  w_{2}\right)  $, which defines a free quadruple $\theta$ such
that $\Phi\left(  \theta\right)  =\left(  zf_{1}\left(  z\right)
+\lambda,wg_{1}\left(  w\right)  +\lambda\right)  =\left(  f\left(  z\right)
,g\left(  w\right)  \right)  $. Thus
\[
\Psi:\mathcal{O}\left(  U\right)  \longrightarrow\mathcal{Z}\left(  U\right)
,\quad\Psi\left(  f,g\right)  =\left(  f_{z}\left(  z_{1}\right)  ,0,f\left(
0\right)  ,g_{w}\left(  w_{2}\right)  \right)
\]
is a continuous right inverse of $\Phi$. It follows that $p_{d,l}=\Psi\Phi$ is
a continuous projection on $\mathcal{Z}\left(  U\right)  $ such that
$\mathcal{Z}\left(  U\right)  =\mathcal{I}\left(  U\right)  \oplus
\operatorname{im}\left(  p_{d,l}\right)  $ is a topological direct sum, and
$\operatorname{im}\left(  p_{d,l}\right)  =\mathcal{O}\left(  U\right)  $ up
to a topological isomorphism of the Fr\'{e}chet spaces. Using Lemma
\ref{lemTech4} and Lemma \ref{lemTech5}, we deduce that
\[
H_{d,l}^{1}=\mathcal{Z}\left(  U\right)  /\mathcal{I}\left(  U\right)
=\mathcal{O}\left(  z\right)  \underset{\mathbb{C}}{\times}\mathcal{O}\left(
w\right)  =\mathcal{O}\left(  U\right)
\]
up to the topological isomorphisms, that is, $H_{d,l}^{1}=\mathcal{O}\left(
U\right)  $ holds.
\end{proof}

\subsection{The main result on the diagonal cohomology}

Now we can prove the central result on the cohomology groups of the diagonal
complex $\mathcal{O}_{q}\left(  d,l\right)  $.

\begin{theorem}
\label{thMain1}If $H_{d,l}^{p}$, $0\leq p\leq3$ are the cohomology groups of
the diagonal complex $\mathcal{O}_{q}\left(  d,l\right)  $, then $H_{d,l}%
^{0}=\left\{  0\right\}  $, $H_{d,l}^{1}=\mathcal{O}\left(  U\right)  $,
$H_{d,l}^{2}=\left\{  0\right\}  $ and $H_{d,l}^{3}=\left\{  0\right\}  $.
\end{theorem}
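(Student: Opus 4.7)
The case $H^1_{d,l}=\mathcal O(U)$ is Proposition \ref{propTechE}, so the task reduces to checking that $\partial^0_{d,l}$ is topologically injective, that $\ker(\pi_{d,l})=\operatorname{im}(\partial^1_{d,l})$, and that $\pi_{d,l}$ is surjective. My plan is to treat the three remaining groups in the order $H^0$, $H^3$, $H^2$, since each subsequent step becomes more delicate.

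For $H^0_{d,l}$, I take a quadruple $\alpha\in\mathcal O_{d,l}$ with $\partial^0_{d,l}(\alpha)=0$ and read off the components of $(w_1-q^{l+1}w_2)\alpha$ and $(z_2-q^{d+1}z_1)\alpha$ via Corollary \ref{corTech1}. Vanishing of the second coordinate of $(w_1-q^{l+1}w_2)\alpha$ gives $w_2\alpha_2=0$, vanishing of the third gives $w_1\alpha_3=0$, vanishing of the fourth gives $(w_1-q^{l+1}w_2)\alpha_4=0$, and the first coordinate of $(z_2-q^{d+1}z_1)\alpha$ gives $(z_2-q^{d+1}z_1)\alpha_1=0$. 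Since each factor is a non-zero-divisor in the corresponding ring of holomorphic germs (or holomorphic functions on a polydomain), we conclude $\alpha_1=\alpha_2=\alpha_3=\alpha_4=0$, hence $\alpha=0$.

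For $H^3_{d,l}$, I construct a continuous section of $\pi_{d,l}$. Given a pair $(\varphi(z),\psi(w))\in\mathcal O_{d+l}=\mathcal O(z)\underset{\mathbb C}{\times}\mathcal O(w)$ with common value $\lambda=\varphi(0)=\psi(0)$, set
\[
h_1(z_1,z_2)=\varphi(z_1),\quad h_4(w_1,w_2)=\psi(w_2),\quad h_2(z_1,w_2)=\varphi(z_1)+\psi(w_2)-\lambda,\quad h_3=\lambda.
\]
A direct check of the four boundary values from Lemma \ref{lemTech2} shows $h=(h_1,h_2,h_3,h_4)\in\mathcal O_{d,l}$, and $\pi_{d,l}(h)=(h_1(z,q^dz),h_4(q^lw,w))=(\varphi(z),\psi(w))$. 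The assignment $(\varphi,\psi)\mapsto h$ is manifestly continuous, so $\pi_{d,l}$ is surjective (and even topologically split).

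The main obstacle is $H^2_{d,l}=0$. Given $\alpha\in\ker(\pi_{d,l})$, I first apply the Analytic Nullstellensatz (as in the proof of Lemma \ref{lemTech5}) to write $\alpha_1(z_1,z_2)=(z_2-q^dz_1)\beta_1(z_1,z_2)$ and $\alpha_4(w_1,w_2)=(w_1-q^lw_2)\beta_4(w_1,w_2)$. I then seek $(\zeta,\eta)\in\mathcal O_{d,l}\oplus\mathcal O_{d,l}$ satisfying $\partial^1_{d,l}(\zeta,\eta)=\alpha$; the first and fourth components of the equation force $\zeta_1=\beta_1$ and $\eta_4=-\beta_4$. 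The compatibility relations of $\alpha$ combined with those of $\beta_1$, $\beta_4$ yield
\[
\alpha_2(z_1,w_2)=-q^dz_1\beta_1(z_1,0)-q^lw_2\beta_4(0,w_2)+z_1w_2\gamma(z_1,w_2),
\]
\[
\alpha_3(w_1,z_2)=w_1\beta_4(w_1,0)+z_2\beta_1(0,z_2)+w_1z_2\delta(w_1,z_2),
\]
for some $\gamma\in\mathcal O(z_1,w_2)$, $\delta\in\mathcal O(w_1,z_2)$, using the splitting of Remark \ref{remTech1}. I then set $\zeta_2(z_1,w_2)=\beta_1(z_1,0)$, $\eta_2(z_1,w_2)=-\beta_4(0,w_2)+q^{-l}z_1\gamma(z_1,w_2)$, $\zeta_3(w_1,z_2)=\beta_1(0,z_2)$, $\eta_3(w_1,z_2)=-\beta_4(w_1,0)-z_2\delta(w_1,z_2)$. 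Boundary values match: $\zeta_2(z_1,0)=\beta_1(z_1,0)=\zeta_1(z_1,0)$, $\zeta_3(0,z_2)=\beta_1(0,z_2)=\zeta_1(0,z_2)$, $\eta_2(0,w_2)=-\beta_4(0,w_2)=\eta_4(0,w_2)$, $\eta_3(w_1,0)=-\beta_4(w_1,0)=\eta_4(w_1,0)$, so $\zeta,\eta\in\mathcal O_{d,l}$ by Lemma \ref{lemTech2}, and a direct component-wise verification using Corollary \ref{corTech1} confirms $\partial^1_{d,l}(\zeta,\eta)=\alpha$. This gives the reverse inclusion $\ker(\pi_{d,l})\subseteq\operatorname{im}(\partial^1_{d,l})$ and completes the proof.
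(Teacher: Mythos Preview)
Your approach matches the paper's proof closely: Analytic Nullstellensatz for the first and fourth components, Remark \ref{remTech1} for the mixed components, and an explicit preimage under $\partial^1_{d,l}$; the $H^0$ and $H^3$ arguments are the same up to notation (for $H^0$ the paper phrases it via the Removable Singularity Theorem rather than component-wise, but the content is identical).

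There is one incomplete step in your $H^2$ argument. You specify $\zeta_1,\zeta_2,\zeta_3$ and $\eta_2,\eta_3,\eta_4$, but never $\zeta_4$ or $\eta_1$, and you check only four of the eight compatibility relations of Lemma \ref{lemTech2} (those linking to $\zeta_1$ and $\eta_4$). For $\zeta,\eta\in\mathcal O_{d,l}$ you must also prescribe $\zeta_4,\eta_1$ so that $\zeta_3(w_1,0)=\zeta_4(w_1,0)$, $\zeta_2(0,w_2)=\zeta_4(0,w_2)$, $\eta_1(z_1,0)=\eta_2(z_1,0)$, $\eta_1(0,z_2)=\eta_3(0,z_2)$. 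This is easy to repair---for your choices one can take $\zeta_4\equiv\beta_1(0,0)$ and $\eta_1(z_1,z_2)=-\beta_4(0,0)+q^{-l}z_1\gamma(z_1,0)-z_2\delta(0,z_2)$---and neither component contributes to $\partial^1_{d,l}(\zeta,\eta)$ by Corollary \ref{corTech1}, so once filled in the argument closes. The paper's proof writes out all eight components of $\zeta,\eta$ explicitly for exactly this reason.
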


\begin{proof}
Take $h=h\left(  z_{1},w_{1};z_{2},w_{2}\right)  \in\mathcal{O}_{d,l}$ with
$\partial_{d,l}^{0}h=0$. It follows that
\[
\left(  w_{1}-q^{l+1}w_{2}\right)  h\left(  z_{1},w_{1};z_{2},w_{2}\right)
=\left(  z_{2}-q^{d+1}z_{1}\right)  h\left(  z_{1},w_{1};z_{2},w_{2}\right)
=0.
\]
Notice that $w_{1}=q^{l+1}w_{2}$ defines an algebraic (or thin) subset
$\Xi\subseteq\mathbb{C}^{2}$ to be zero set of the polynomial function
$w_{1}-q^{l+1}w_{2}$. The function $h$ is vanishing out of $\Xi\cap\left(
U_{_{1}}\times U_{_{2}}\right)  $. By Removable Singularity Theorem
\cite[I.C.3]{GRo}, we deduce that $h$ is vanishing over all $U_{1}\times
U_{2}$. Therefore $H_{d,l}^{0}=\left\{  0\right\}  $. The fact $H_{d,l}%
^{1}=\mathcal{O}\left(  U\right)  $ follows from Proposition \ref{propTechE}.

Now let us prove that $H_{d,l}^{2}=\left\{  0\right\}  $. Assume that
$\pi_{d,l}\left(  h\right)  =h\left(  z,q^{l}w;q^{d}z.w\right)  =0$ for some
$h\in\mathcal{O}_{d,l}$. As a compatible quadruple, we obtain that
\[
h_{1}\left(  z_{1},q^{d}z_{1}\right)  =0,\quad h_{4}\left(  q^{l}w_{2}%
,w_{2}\right)  =0.
\]
As above in the proof of Lemma \ref{lemTech5}, using Analytic Nullstellensatz,
we conclude that $h_{1}=\left(  z_{2}-q^{d}z_{1}\right)  f_{1}\left(
z_{1},z_{2}\right)  $ and $h_{4}=\left(  q^{l}w_{2}-w_{1}\right)  f_{4}\left(
w_{1},w_{2}\right)  $ for some $f_{1}\in\mathcal{O}\left(  z_{1},z_{2}\right)
$ and $f_{4}\in\mathcal{O}\left(  w_{1},w_{2}\right)  $. Based on the
compatibility conditions from Lemma \ref{lemTech2}, we deduce that%
\begin{align*}
-q^{d}z_{1}f_{1}\left(  z_{1},0\right)   &  =h_{2}\left(  z_{1},0\right)
,\quad h_{3}\left(  w_{1},0\right)  =-w_{1}f_{4}\left(  w_{1},0\right)  ,\\
z_{2}f_{1}\left(  0,z_{2}\right)   &  =h_{3}\left(  0,z_{2}\right)  ,\quad
h_{2}\left(  0,w_{2}\right)  =q^{l}w_{2}f_{4}\left(  0,w_{2}\right)  ,
\end{align*}
which invokes that $h_{2}\left(  0,0\right)  =h_{3}\left(  0,0\right)  =0$. It
follows that (see Remark \ref{remTech1})
\begin{align*}
h_{2}\left(  z_{1},w_{2}\right)   &  =h_{2}\left(  z_{1},0\right)
+h_{2}\left(  0,w_{2}\right)  +z_{1}w_{2}g_{2}\left(  z_{1},w_{2}\right) \\
&  =-q^{d}z_{1}f_{1}\left(  z_{1},0\right)  +q^{l}w_{2}f_{4}\left(
0,w_{2}\right)  +z_{1}w_{2}g_{2}\left(  z_{1},w_{2}\right)  ,\\
h_{3}\left(  w_{1},z_{2}\right)   &  =h_{3}\left(  w_{1},0\right)
+h_{3}\left(  0,z_{2}\right)  +w_{1}z_{2}g_{3}\left(  w_{1},z_{2}\right) \\
&  =-w_{1}f_{4}\left(  w_{1},0\right)  +z_{2}f_{1}\left(  0,z_{2}\right)
+w_{1}z_{2}g_{3}\left(  w_{1},z_{2}\right)
\end{align*}
for some $g_{2}\in\mathcal{O}\left(  z_{1},w_{2}\right)  $ and $g_{3}%
\in\mathcal{O}\left(  w_{1},z_{2}\right)  $. We define
\begin{align*}
\zeta &  =\left(  \left(  f_{1}\left(  z_{1},z_{2}\right)  ,f_{1}\left(
z_{1},0\right)  \right)  ,\left(  f_{1}\left(  0,z_{2}\right)  +w_{1}%
g_{3}\left(  w_{1},z_{2}\right)  ,f_{1}\left(  0,0\right)  +w_{1}g_{3}\left(
w_{1},0\right)  \right)  \right)  ,\\
\eta &  =\left(  \left(  f_{4}\left(  0,0\right)  +q^{-l}z_{1}g_{2}\left(
z_{1},0\right)  ,f_{4}\left(  0,w_{2}\right)  +q^{-l}z_{1}g_{2}\left(
z_{1},w_{2}\right)  \right)  ,\left(  f_{4}\left(  w_{1},0\right)
,f_{4}\left(  w_{1},w_{2}\right)  \right)  \right)
\end{align*}
to be the elements of $\mathcal{O}_{d,l}$. One can easily verify that the
compatibility conditions from Lemma \ref{lemTech2} are satisfied. If
$\psi=\left[
\begin{array}
[c]{c}%
\zeta\\
\eta
\end{array}
\right]  \in\mathcal{O}_{d,l}^{\oplus2}$, then (see Corollary \ref{corTech1})
\begin{align*}
\partial_{d,l}^{1}\psi &  =\left(  z_{2}-q^{d}z_{1}\right)  \zeta+\left(
q^{l}w_{2}-w_{1}\right)  \eta\\
&  =\left(  \left(  h_{1}\left(  z_{1},z_{2}\right)  ,-q^{d}z_{1}f_{1}\left(
z_{1},0\right)  \right)  ,\left(  z_{2}f_{1}\left(  0,z_{2}\right)
+z_{2}w_{1}g_{3}\left(  w_{1},z_{2}\right)  ,0\right)  \right) \\
&  +\left(  \left(  0,q^{l}w_{2}f_{4}\left(  0,w_{2}\right)  +z_{1}w_{2}%
g_{2}\left(  z_{1},w_{2}\right)  \right)  ,\left(  -w_{1}f_{4}\left(
w_{1},0\right)  ,h_{4}\left(  w_{1},w_{2}\right)  \right)  \right) \\
&  =\left(  \left(  h_{1}\left(  z_{1},z_{2}\right)  ,h_{2}\left(  z_{1}%
,w_{2}\right)  \right)  ,\left(  h_{3}\left(  w_{1},z_{2}\right)
,h_{4}\left(  w_{1},w_{2}\right)  \right)  \right)  =h.
\end{align*}
Hence $H_{d,l}^{2}=\left\{  0\right\}  $. In particular, $\operatorname{im}%
\partial_{d,l}^{1}$ is closed.

Finally, let us prove that $H_{d,l}^{3}=\left\{  0\right\}  $. Take $\left(
f,g\right)  \in\mathcal{O}_{d+l}$, that is, $f\left(  0\right)  =g\left(
0\right)  $. Put $h=\left(  \left(  f\left(  z_{1}\right)  ,-f\left(
0\right)  +f\left(  z_{1}\right)  +g\left(  w_{2}\right)  \right)  ,\left(
g\left(  0\right)  ,g\left(  w_{2}\right)  \right)  \right)  $ to be a
compatible quadruple from $\mathcal{O}_{d,l}$. Then $\pi_{d,l}\left(
h\right)  =\left(  f,g\right)  $, that is, $\pi_{d,l}$ is onto.
\end{proof}

Thus the exactness fluctuations in $\mathcal{O}_{q}\left(  d,l\right)  $ occur
only in the first cohomology group. Actually, $\mathcal{O}_{q}\left(
d,l\right)  $ almost splits in the sense of the forthcoming assertion. Let us
consider the following continuous linear maps%
\begin{align*}
\tau_{d,l}^{0}  &  :\mathcal{O}_{d+l}\longrightarrow\mathcal{O}_{d,l}%
,\quad\tau_{d,l}^{0}\left(  f,g\right)  =\left(  \left(  f\left(
z_{1}\right)  ,-f\left(  0\right)  +f\left(  z_{1}\right)  +g\left(
w_{2}\right)  \right)  ,\left(  g\left(  0\right)  ,g\left(  w_{2}\right)
\right)  \right)  ,\\
\tau_{d,l}^{1}  &  :\mathcal{O}_{d,l}\longrightarrow%
\begin{array}
[c]{c}%
\mathcal{O}_{d,l}\\
\oplus\\
\mathcal{O}_{d,l}%
\end{array}
,\quad\tau_{d,l}^{1}\left(  \zeta\right)  =\left[
\begin{array}
[c]{c}%
\alpha\\
\beta
\end{array}
\right]
\end{align*}
with
\begin{align*}
\alpha_{1}\left(  z_{1},z_{2}\right)   &  =\frac{\zeta_{1}\left(  z_{1}%
,z_{2}\right)  -\zeta_{1}\left(  z_{1},q^{d}z_{1}\right)  }{z_{2}-q^{d}z_{1}%
},\quad\alpha_{2}\left(  z_{1},w_{2}\right)  =\alpha_{1}\left(  z_{1}%
,0\right)  -q^{-d}w_{2}\left(  \zeta_{2}\right)  _{z_{1}w_{2}}\left(
z_{1},w_{2}\right)  ,\\
\alpha_{3}\left(  w_{1},z_{2}\right)   &  =\alpha_{1}\left(  0,z_{2}\right)
,\quad\alpha_{4}\left(  w_{1},w_{2}\right)  =\alpha_{1}\left(  0,0\right)
-q^{-d}w_{2}\left(  \zeta_{2}\right)  _{z_{1}w_{2}}\left(  0,w_{2}\right)  ,
\end{align*}
and%
\begin{align*}
\beta_{1}\left(  z_{1},z_{2}\right)   &  =\beta_{4}\left(  0,0\right)
-z_{2}\left(  \zeta_{3}\right)  _{w_{1}z_{2}}\left(  0,z_{2}\right)
,\quad\beta_{2}\left(  z_{1},w_{2}\right)  =\beta_{4}\left(  0,w_{2}\right)
,\\
\beta_{3}\left(  w_{1},z_{2}\right)   &  =\beta_{4}\left(  w_{1},0\right)
-z_{2}\left(  \zeta_{3}\right)  _{w_{1}z_{2}}\left(  w_{1},z_{2}\right)
,\quad\beta_{4}\left(  w_{1},w_{2}\right)  =\frac{\zeta_{4}\left(  w_{1}%
,w_{2}\right)  -\zeta_{4}\left(  q^{l}w_{2},w_{2}\right)  }{q^{l}w_{2}-w_{1}}.
\end{align*}
They are occurred in the proofs of the previous assertions implicitly. We
resume all these maps and the related projections including $p_{d,l}$ from
Proposition \ref{propTechE} in the following assertion.

\begin{corollary}
\label{corDecom}The identities $\pi_{d,l}\tau_{d,l}^{0}=1$ and $\tau_{d,l}%
^{0}\pi_{d,l}+\partial_{d,l}^{1}\tau_{d,l}^{1}=1$ hold. In this case,
$p_{d,l}^{0}=\tau_{d,l}^{0}\pi_{d,l}\in\mathcal{L}\left(  \mathcal{O}%
_{d,l}\right)  $ and $p_{d,l}^{1}=\tau_{d,l}^{1}\partial_{d,l}^{1}%
\in\mathcal{L}\left(  \mathcal{O}_{d,l}^{\oplus2}\right)  $ are the continuous
projections such that the complex $\mathcal{O}_{q}\left(  d,l\right)  $ is
decomposed in the following way
\[%
\begin{array}
[c]{ccccccc}%
0\rightarrow\mathcal{O}_{d,l} & \overset{\partial_{d,l}^{0}}{\longrightarrow}
& \mathcal{O}_{d,l}^{\oplus2} & \overset{\partial_{d,l}^{1}}{\longrightarrow}
& \mathcal{O}_{d,l} & \overset{\pi_{d,l}}{\longrightarrow} & \mathcal{O}%
_{d+l}\rightarrow0\\
&  & \parallel &  & \parallel &  & \\%
\begin{array}
[c]{c}%
\\
\\
\\
\\
0\rightarrow\mathcal{O}_{d,l}%
\end{array}
&
\begin{array}
[c]{c}%
\\
\\
\\
\\
\overset{\partial_{d,l}^{0}}{\longleftrightarrow}%
\end{array}
&
\begin{array}
[c]{c}%
\operatorname{im}\left(  p_{d,l}^{1}\right) \\
\oplus\\
\operatorname{im}\left(  p_{d,l}\right) \\
\oplus\\
\operatorname{im}\left(  \partial_{d,l}^{0}\right)
\end{array}
&
\begin{array}
[c]{c}%
\\
_{\tau_{d,l}^{1}}\nwarrow\searrow^{\partial_{d,l}^{1}}\\
\\
\\
\end{array}
&
\begin{array}
[c]{c}%
\operatorname{im}\left(  p_{d,l}^{0}\right) \\
\oplus\\
\operatorname{im}\left(  \partial_{d,l}^{1}\right) \\
\\
\end{array}
&
\begin{array}
[c]{c}%
\\
_{\tau_{d,l}^{0}}\nwarrow\searrow^{\pi_{d,l}}\\
\\
\\
\end{array}
&
\begin{array}
[c]{c}%
\\
\\
\mathcal{O}_{d+l}\rightarrow0\\
\\
\end{array}
\end{array}
\]

\end{corollary}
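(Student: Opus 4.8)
The plan is to establish the two operator identities by a direct computation on the compatible-quadruple description of $\mathcal{O}_{d,l}$ from Lemma \ref{lemTech2}, and then to obtain the projection property and the decomposition diagram as formal consequences, combined with Theorem \ref{thMain1} and Proposition \ref{propTechE}.

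First I would check $\pi_{d,l}\tau_{d,l}^{0}=1$. For $(f,g)\in\mathcal{O}_{d+l}$ the quadruple $\tau_{d,l}^{0}(f,g)$ has first component $f(z_{1})$, which does not depend on $z_{2}$, and fourth component $g(w_{2})$, which does not depend on $w_{1}$; since $\pi_{d,l}(h)=(h_{1}(z_{1},q^{d}z_{1}),h_{4}(q^{l}w_{2},w_{2}))$, this immediately returns $(f,g)$. The compatibility $f(0)=g(0)$ also makes the four relations of Lemma \ref{lemTech2} hold, so $\tau_{d,l}^{0}(f,g)\in\mathcal{O}_{d,l}$; along the way one should record that the quadruples $\alpha,\beta$ defining $\tau_{d,l}^{1}$ are compatible too, since each of the four constraints fixes one variable to $0$, which annihilates the difference-operator corrections, making these checks routine.

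The core of the argument is the homotopy identity $\tau_{d,l}^{0}\pi_{d,l}+\partial_{d,l}^{1}\tau_{d,l}^{1}=1$ on $\mathcal{O}_{d,l}$. Fix $\zeta\in\mathcal{O}_{d,l}$, set $c=\zeta_{1}(0,0)=\zeta_{2}(0,0)=\zeta_{3}(0,0)=\zeta_{4}(0,0)$, compute $\pi_{d,l}(\zeta)=(\zeta_{1}(z,q^{d}z),\zeta_{4}(q^{l}w,w))$ and hence the four components of $\tau_{d,l}^{0}\pi_{d,l}(\zeta)$; on the other side use Corollary \ref{corTech1} to write $\partial_{d,l}^{1}\tau_{d,l}^{1}(\zeta)=(z_{2}-q^{d}z_{1})\alpha+(q^{l}w_{2}-w_{1})\beta$ componentwise. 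The first and fourth components collapse by the telescoping identities $(z_{2}-q^{d}z_{1})\alpha_{1}=\zeta_{1}(z_{1},z_{2})-\zeta_{1}(z_{1},q^{d}z_{1})$ and $(q^{l}w_{2}-w_{1})\beta_{4}=\zeta_{4}(w_{1},w_{2})-\zeta_{4}(q^{l}w_{2},w_{2})$ built into the formulas for $\tau_{d,l}^{1}$. The delicate step is the two mixed components: after rewriting $-q^{d}z_{1}\alpha_{1}(z_{1},0)$ and $q^{l}w_{2}\beta_{4}(0,w_{2})$ by the same telescoping and substituting the difference-operator terms, what remains reorganizes, via the two-variable splitting of Remark \ref{remTech1} applied to $\zeta_{2}$ (resp. $\zeta_{3}$) together with the compatibility relations $\zeta_{1}(z_{1},0)=\zeta_{2}(z_{1},0)$, $\zeta_{4}(0,w_{2})=\zeta_{2}(0,w_{2})$ (resp. the analogous $\zeta_{1}(0,z_{2})=\zeta_{3}(0,z_{2})$, $\zeta_{4}(w_{1},0)=\zeta_{3}(w_{1},0)$), precisely into $\zeta_{2}(z_{1},w_{2})$ (resp. $\zeta_{3}(w_{1},z_{2})$), the additive constants involving $c$ cancelling. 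This is the step I expect to absorb most of the effort, and the only genuine obstacle; everything else is bookkeeping.

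Finally, the projection property is formal: $\pi_{d,l}\tau_{d,l}^{0}=1$ gives $(p_{d,l}^{0})^{2}=\tau_{d,l}^{0}(\pi_{d,l}\tau_{d,l}^{0})\pi_{d,l}=p_{d,l}^{0}$, while the homotopy identity together with $\pi_{d,l}\partial_{d,l}^{1}=0$ gives $(p_{d,l}^{1})^{2}=\tau_{d,l}^{1}(1-\tau_{d,l}^{0}\pi_{d,l})\partial_{d,l}^{1}=\tau_{d,l}^{1}\partial_{d,l}^{1}-\tau_{d,l}^{1}\tau_{d,l}^{0}(\pi_{d,l}\partial_{d,l}^{1})=p_{d,l}^{1}$. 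For the diagram, note $\ker p_{d,l}^{0}=\ker\pi_{d,l}=\operatorname{im}\partial_{d,l}^{1}$ (the last equality being $H_{d,l}^{2}=0$ from Theorem \ref{thMain1}) and $\operatorname{im}p_{d,l}^{0}=\operatorname{im}\tau_{d,l}^{0}\cong\mathcal{O}_{d+l}$ via $\pi_{d,l}$, so the third term splits as $\operatorname{im}p_{d,l}^{0}\oplus\operatorname{im}\partial_{d,l}^{1}$; likewise $\ker p_{d,l}^{1}=\ker\partial_{d,l}^{1}$ (since $\partial_{d,l}^{1}p_{d,l}^{1}=\partial_{d,l}^{1}$, hence $p_{d,l}^{1}$ vanishes on $\ker\partial_{d,l}^{1}$), which by $H_{d,l}^{0}=0$ and the projection $p_{d,l}$ of Proposition \ref{propTechE} splits as $\operatorname{im}\partial_{d,l}^{0}\oplus\operatorname{im}p_{d,l}$, while $\operatorname{im}p_{d,l}^{1}\cong\operatorname{im}\partial_{d,l}^{1}$ with mutually inverse $\partial_{d,l}^{1}$ and $\tau_{d,l}^{1}$. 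Assembling these identifications yields exactly the displayed decomposition of $\mathcal{O}_{q}(d,l)$.
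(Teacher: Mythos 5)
Your proposal is correct and follows essentially the same route as the paper: a direct componentwise verification of the two operator identities on compatible quadruples via Corollary \ref{corTech1}, the compatibility relations from Lemma \ref{lemTech2}, and the two-variable splitting of Remark \ref{remTech1}, followed by the formal derivation of the projection and splitting properties from Theorem \ref{thMain1} and Proposition \ref{propTechE}. You correctly isolate the mixed components $\zeta_{2},\zeta_{3}$ as the delicate step, which is exactly where the paper's computation of $a_{2}+b_{2}$ and $a_{3}+b_{3}$ does the work.
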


\begin{proof}
Take $\zeta\in\mathcal{O}_{d,l}$ and put $f\left(  z\right)  =\zeta_{1}\left(
z,q^{d}z\right)  $, $g\left(  w\right)  =\zeta_{4}\left(  q^{l}w,w\right)  $
with $\pi_{d,l}\left(  \zeta\right)  =\left(  f\left(  z\right)  ,g\left(
w\right)  \right)  $. In this case,
\[
\tau_{d,l}^{0}\pi_{d,l}\left(  \zeta\right)  =\left(  \left(  f\left(
z_{1}\right)  ,f\left(  z_{1}\right)  -\lambda+g\left(  w_{2}\right)  \right)
,\left(  \lambda,g\left(  w_{2}\right)  \right)  \right)
\]
with $\lambda=f\left(  0\right)  =g\left(  0\right)  $.

Let us choose $\alpha_{1}\in\mathcal{O}\left(  z_{1},z_{2}\right)  $ and
$\beta_{4}\in\mathcal{O}\left(  w_{1},w_{2}\right)  $ so that $\zeta
_{1}\left(  z_{1},z_{2}\right)  =f\left(  z_{1}\right)  +\left(  z_{2}%
-q^{d}z_{1}\right)  \alpha_{1}\left(  z_{1},z_{2}\right)  $ and $\zeta
_{4}\left(  w_{1},w_{2}\right)  =g\left(  w_{2}\right)  +\left(  q^{l}%
w_{2}-w_{1}\right)  \beta_{4}\left(  w_{1},w_{2}\right)  $. Put
\begin{align*}
\alpha_{2}\left(  z_{1},w_{2}\right)   &  =\alpha_{1}\left(  z_{1},0\right)
-q^{-d}w_{2}\left(  \zeta_{2}\right)  _{z_{1}w_{2}}\left(  z_{1},w_{2}\right)
,\quad\alpha_{3}\left(  w_{1},z_{2}\right)  =\alpha_{1}\left(  0,z_{2}\right)
,\\
\alpha_{4}\left(  w_{1},w_{2}\right)   &  =\alpha_{1}\left(  0,0\right)
-q^{-d}w_{2}\left(  \zeta_{2}\right)  _{z_{1}w_{2}}\left(  0,w_{2}\right)  ,
\end{align*}
and%
\begin{align*}
\beta_{1}\left(  z_{1},z_{2}\right)   &  =\beta_{4}\left(  0,0\right)
-z_{2}\left(  \zeta_{3}\right)  _{w_{1}z_{2}}\left(  0,z_{2}\right)
,\quad\beta_{2}\left(  z_{1},w_{2}\right)  =\beta_{4}\left(  0,w_{2}\right)
,\\
\beta_{3}\left(  w_{1},z_{2}\right)   &  =\beta_{4}\left(  w_{1},0\right)
-z_{2}\left(  \zeta_{3}\right)  _{w_{1}z_{2}}\left(  w_{1},z_{2}\right)  ,
\end{align*}
which result in the quadruples $\alpha$ and $\beta$. Based on Lemma
\ref{lemTech2}, one can easily verify that they are compatible quadruples from
$\mathcal{O}_{d,l}$ \ indeed. Notice that $\tau_{d,l}^{1}\left(  \zeta\right)
=\left[
\begin{array}
[c]{c}%
\alpha\\
\beta
\end{array}
\right]  $. For brevity we put $a=\left(  z_{2}-q^{d}z_{1}\right)  \alpha$ and
$b=\left(  q^{l}w_{2}-w_{1}\right)  \beta$. By Corollary \ref{corTech1}, we
have
\begin{align*}
a  &  =\left(  \left(  \left(  z_{2}-q^{d}z_{1}\right)  \alpha_{1}\left(
z_{1},z_{2}\right)  ,-q^{d}z_{1}\alpha_{1}\left(  z_{1},0\right)  +z_{1}%
w_{2}\left(  \zeta_{2}\right)  _{z_{1}w_{2}}\left(  z_{1},w_{2}\right)
\right)  ,\left(  z_{2}\alpha_{1}\left(  0,z_{2}\right)  ,0\right)  \right)
,\\
b  &  =\left(  \left(  0,q^{l}w_{2}\beta_{4}\left(  0,w_{2}\right)  \right)
,\left(  -w_{1}\beta_{4}\left(  w_{1},0\right)  +w_{1}z_{2}\left(  \zeta
_{3}\right)  _{w_{1}z_{2}}\left(  w_{1},z_{2}\right)  ,\left(  q^{l}%
w_{2}-w_{1}\right)  \beta_{4}\left(  w_{1},w_{2}\right)  \right)  \right)  .
\end{align*}
Moreover, $\partial_{d,l}^{1}\tau_{d,l}^{1}\left(  \zeta\right)
=\partial_{d,l}^{1}\left[
\begin{array}
[c]{c}%
\alpha\\
\beta
\end{array}
\right]  =a+b$. But $\zeta_{1}\left(  z_{1},0\right)  =\zeta_{2}\left(
z_{1},0\right)  $ and $\zeta_{4}\left(  0,w_{2}\right)  =\zeta_{2}\left(
0,w_{2}\right)  $, therefore (see Lemma \ref{lemTech2} and Remark
\ref{remTech1})
\begin{align*}
a_{2}+b_{2}  &  =-q^{d}z_{1}\alpha_{1}\left(  z_{1},0\right)  +z_{1}%
w_{2}\left(  \zeta_{2}\right)  _{z_{1}w_{2}}\left(  z_{1},w_{2}\right)
+q^{l}w_{2}\beta_{4}\left(  0,w_{2}\right) \\
&  =\zeta_{1}\left(  z_{1},0\right)  -f\left(  z_{1}\right)  +z_{1}%
w_{2}\left(  \zeta_{2}\right)  _{z_{1}w_{2}}\left(  z_{1},w_{2}\right)
+\zeta_{4}\left(  0,w_{2}\right)  -g\left(  w_{2}\right) \\
&  =\zeta_{2}\left(  z_{1},0\right)  +\zeta_{2}\left(  0,w_{2}\right)
+z_{1}w_{2}\left(  \zeta_{2}\right)  _{z_{1}w_{2}}\left(  z_{1},w_{2}\right)
-f\left(  z_{1}\right)  -g\left(  w_{2}\right) \\
&  =\zeta_{2}\left(  z_{1},z_{2}\right)  +\zeta_{2}\left(  0,0\right)
-f\left(  z_{1}\right)  -g\left(  w_{2}\right)  =\zeta_{2}\left(  z_{1}%
,z_{2}\right)  +\lambda-f\left(  z_{1}\right)  -g\left(  w_{2}\right)
\end{align*}
In a similar way, we have
\begin{align*}
a_{3}+b_{3}  &  =z_{2}\alpha_{1}\left(  0,z_{2}\right)  -w_{1}\beta_{4}\left(
w_{1},0\right)  +w_{1}z_{2}\left(  \zeta_{3}\right)  _{w_{1}z_{2}}\left(
w_{1},z_{2}\right) \\
&  =\zeta_{1}\left(  0,z_{2}\right)  -f\left(  0\right)  +\zeta_{4}\left(
w_{1},0\right)  -g\left(  0\right)  +w_{1}z_{2}\left(  \zeta_{3}\right)
_{w_{1}z_{2}}\left(  w_{1},z_{2}\right) \\
&  =\zeta_{3}\left(  0,z_{2}\right)  -\zeta_{3}\left(  0,0\right)  +\zeta
_{3}\left(  w_{1},0\right)  -\zeta_{3}\left(  0,0\right)  +w_{1}z_{2}\left(
\zeta_{3}\right)  _{w_{1}z_{2}}\left(  w_{1},z_{2}\right) \\
&  =\zeta_{3}\left(  w_{1},z_{2}\right)  -\zeta_{3}\left(  0,0\right)
=\zeta_{3}\left(  w_{1},z_{2}\right)  -\lambda.
\end{align*}
It follows that
\begin{align*}
a+b  &  =\left(  \zeta_{1}\left(  z_{1},z_{2}\right)  -f\left(  z_{1}\right)
,\zeta_{2}\left(  z_{1},z_{2}\right)  +\lambda-f\left(  z_{1}\right)
-g\left(  w_{2}\right)  ,\zeta_{3}\left(  w_{1},z_{2}\right)  -\lambda
,\zeta_{4}\left(  w_{1},w_{2}\right)  -g\left(  w_{2}\right)  \right) \\
&  =\zeta-\left(  \left(  f\left(  z_{1}\right)  ,-\lambda+f\left(
z_{1}\right)  +g\left(  w_{2}\right)  \right)  ,\left(  \lambda,g\left(
w_{2}\right)  \right)  \right)  =\zeta-\tau_{d,l}^{0}\pi_{d,l}\left(
\zeta\right)  ,
\end{align*}
that is, $\partial_{d,l}^{1}\tau_{d,l}^{1}\left(  \zeta\right)  +\tau
_{d,l}^{0}\pi_{d,l}\left(  \zeta\right)  =\zeta$. Thus $p_{d,l}^{0}=\tau
_{d,l}^{0}\pi_{d,l}$ is a continuous projection with
\[
p_{d,l}^{0}\left(  \zeta\right)  =\left(  \left(  f\left(  z_{1}\right)
,f\left(  z_{1}\right)  -\lambda+g\left(  w_{2}\right)  \right)  ,\left(
\lambda,g\left(  w_{2}\right)  \right)  \right)  \quad\text{and\quad
}\mathcal{O}_{d,l}=\operatorname{im}\left(  p_{d,l}^{0}\right)  \oplus
\operatorname{im}\left(  \partial_{d,l}^{1}\right)  .
\]
Moreover, $p_{d,l}^{1}=\tau_{d,l}^{1}\partial_{d,l}^{1}$ is a projection too,
and $\operatorname{im}\left(  1-p_{d,l}^{1}\right)  =\ker\partial_{d,l}^{1}$.
Hence $\mathcal{O}_{d,l}^{\oplus2}=\operatorname{im}\left(  p_{d,l}%
^{1}\right)  \oplus\ker\partial_{d,l}^{1}$. The rest follows from Proposition
\ref{propTechE} and Theorem \ref{thMain1}.
\end{proof}

\subsection{The partial differential and difference operators $M$,
$N$\label{subsecPDE}}

The operators $M_{x_{i},l}$, $M_{y_{i},l}$, $N_{x_{j},d}$ and $N_{y_{j},d}$
from (\ref{Nd}) considered above in Subsection \ref{subsecQRP} (see Lemma
\ref{lemTech3}), in turn define the following partial differential and
difference operators
\[
M_{d,l}=\left[
\begin{array}
[c]{cc}%
M_{x_{2},l} & M_{y_{2},l}%
\end{array}
\right]  :\mathcal{O}_{d,l}^{\oplus2}\rightarrow\mathcal{O}_{d,l+1}%
\text{,\quad}N_{d,l}=\left[
\begin{array}
[c]{cc}%
N_{y_{1},d} & N_{x_{1},d}%
\end{array}
\right]  :\mathcal{O}_{d,l}^{\oplus2}\rightarrow\mathcal{O}_{d+1,l}.
\]
The following technical result plays a key role in the proof of the main
result (Theorem \ref{thCENTER}).

\begin{lemma}
\label{lemDDkey}If $\beta\in\ker\left(  \partial_{d,l}^{1}\right)  $ with
$M_{d,l}\beta\in\ker\left(  \pi_{d,l+1}\right)  $ or $N_{d,l}\beta\in
\ker\left(  \pi_{d+1,l}\right)  $, then $\beta\in\operatorname{im}\left(
\partial_{d,l}^{0}\right)  $. Thus
\[
\left(  M_{d,l}^{-1}\left(  \ker\left(  \pi_{d,l+1}\right)  \right)  \cup
N_{d,l}^{-1}\left(  \ker\left(  \pi_{d+1,l}\right)  \right)  \right)  \cap
\ker\left(  \partial_{d,l}^{1}\right)  \subseteq\operatorname{im}\left(
\partial_{d,l}^{0}\right)
\]
for all $d,l$.
\end{lemma}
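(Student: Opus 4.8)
The plan is to reduce the statement to the scalar obstruction $\Phi$ appearing in the proof of Proposition \ref{propTechE}. By Lemma \ref{lemTech4}, every $\beta\in\ker\left(\partial_{d,l}^{1}\right)$ can be written uniquely as $\beta=T\left(\theta\right)$ for a free quadruple $\theta=\left(\theta_{1},\theta_{2},\theta_{3},\theta_{4}\right)\in\mathcal{Z}\left(U\right)$. By Lemma \ref{lemTech5} the subspace $\operatorname{im}\left(\partial_{d,l}^{0}\right)$ is carried by $T^{-1}$ precisely onto $\mathcal{I}\left(U\right)$, and from the proof of Proposition \ref{propTechE} one has $\theta\in\mathcal{I}\left(U\right)$ if and only if $\Phi\left(\theta\right)=0$. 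Hence the asserted inclusion is equivalent to the implication: if $M_{d,l}\beta\in\ker\left(\pi_{d,l+1}\right)$ or $N_{d,l}\beta\in\ker\left(\pi_{d+1,l}\right)$, then $\Phi\left(\theta\right)=0$.

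The core of the argument is the pair of identities
\[
\pi_{d,l+1}\left(M_{d,l}\beta\right)=q^{l+1}\Phi\left(\theta\right),\qquad\pi_{d+1,l}\left(N_{d,l}\beta\right)=q^{d+1}\Phi\left(\theta\right),
\]
valid for every $\beta=T\left(\theta\right)\in\ker\left(\partial_{d,l}^{1}\right)$. To prove them I would substitute the explicit components of $\beta=T\left(\theta\right)$ recorded in Subsection \ref{subsecOT} into the quadruple formulas of Lemma \ref{lemTech3} for $M_{x_{2},l}$, $M_{y_{2},l}$, $N_{x_{1},d}$, $N_{y_{1},d}$. Since $\pi$ only reads the first ($\left(z_{1},z_{2}\right)$-) and fourth ($\left(w_{1},w_{2}\right)$-) slot of a quadruple, it suffices to compute only those two slots of $M_{d,l}\beta\in\mathcal{O}_{d,l+1}$ and of $N_{d,l}\beta\in\mathcal{O}_{d+1,l}$. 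A direct computation gives the first slot of $M_{d,l}\beta$ as $q^{l+1}\bigl(-\theta_{2}\left(z_{1},0\right)+\theta_{3}\left(0,qz_{2}\right)+z_{1}\theta_{1}\left(z_{1},qz_{2}\right)\bigr)$ and its fourth slot as $w_{1}\theta_{4}\left(w_{1},w_{2}\right)+q^{l+1}\bigl(\theta_{3}\left(w_{1},0\right)-\theta_{2}\left(0,w_{2}\right)\bigr)$; evaluating the former along $z_{2}=q^{d}z_{1}$ and the latter along $w_{1}=q^{l+1}w_{2}$ yields exactly $q^{l+1}$ times the two components of $\Phi\left(\theta\right)$. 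The computation for $N_{d,l}\beta$ is entirely parallel, now with the diagonal curves $z_{2}=q^{d+1}z_{1}$ and $w_{1}=q^{l}w_{2}$ dictated by $\pi_{d+1,l}$ and the overall factor $q^{d+1}$.

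Granting the two identities, the conclusion is immediate: if $M_{d,l}\beta\in\ker\left(\pi_{d,l+1}\right)$ then $q^{l+1}\Phi\left(\theta\right)=0$, hence $\Phi\left(\theta\right)=0$ since $q\neq0$, hence $\theta\in\mathcal{I}\left(U\right)$ and $\beta\in\operatorname{im}\left(\partial_{d,l}^{0}\right)$; the case $N_{d,l}\beta\in\ker\left(\pi_{d+1,l}\right)$ is symmetric. This proves the displayed inclusion for all $d,l$.

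I expect the only real difficulty to be bookkeeping in the middle step. The quadruples $M_{d,l}\beta$ and $N_{d,l}\beta$ are each assembled from four components of $\beta$ passed through a difference operator $\left(\cdot\right)_{z_{1}}$, $\left(\cdot\right)_{w_{2}}$, $\left(\cdot\right)_{z_{2}}$ or $\left(\cdot\right)_{w_{1}}$ together with a partial derivative at the origin, so one must carefully track which entry of $\beta$ each of the two operators in $M_{d,l}=\left[M_{x_{2},l}\ M_{y_{2},l}\right]$ and $N_{d,l}=\left[N_{y_{1},d}\ N_{x_{1},d}\right]$ acts on, the powers $q^{l+1}$ and $q^{d+1}$ carried by $M_{y_{2},l}$ and $N_{y_{1},d}$, and the internal shifts $z\mapsto qz$, $w\mapsto qw$ inside the difference quotients. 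Once the first and fourth slots are computed correctly, restriction to the relevant diagonal curves reproduces $\Phi\left(\theta\right)$ with no further cancellation required, which is precisely the reason the lemma holds.
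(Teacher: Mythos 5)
Your proof is correct and takes essentially the same route as the paper's: both substitute $\beta=T(\theta)$ into the quadruple formulas of Lemma \ref{lemTech3}, compute the first and fourth slots of $M_{d,l}\beta$ (resp.\ $N_{d,l}\beta$), and observe that vanishing of $\pi_{d,l+1}$ (resp.\ $\pi_{d+1,l}$) on the diagonals forces $\theta\in\mathcal{I}(U)$. Your identity $\pi_{d,l+1}(M_{d,l}\beta)=q^{l+1}\Phi(\theta)$ (and its twin $\pi_{d+1,l}(N_{d,l}\beta)=q^{d+1}\Phi(\theta)$) is a clean repackaging of the paper's verification of conditions (\ref{Theta}), which is exactly the condition $\Phi(\theta)=0$, so the content is the same.
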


\begin{proof}
By Lemma \ref{lemTech4}, we have $\beta=\left[
\begin{array}
[c]{c}%
\zeta\\
\eta
\end{array}
\right]  =T\left(  \theta\right)  $ for a uniquely defined free quadruple
$\theta\in\mathcal{Z}\left(  U\right)  $. Put $h=M_{d,l}\beta=\overline{\zeta
}+\overline{\eta}$ with $\overline{\zeta}=M_{x_{2},l}\zeta$, $\overline{\eta
}=M_{y_{2},l}\eta$. First assume that $\pi_{d,l+1}\left(  h\right)  =0$. Using
Lemma \ref{lemTech3}, we deduce that (see to the definition of $T\left(
\theta\right)  $ in Subsection \ref{subsecOT})%
\begin{align*}
\overline{\zeta}\left(  z_{1},w_{1};z_{2},w_{2}\right)   &  =\left(  \left(
\overline{\zeta}_{1}\left(  z_{1},z_{2}\right)  ,\overline{\zeta}_{2}\left(
z_{1},w_{2}\right)  \right)  ,\left(  \overline{\zeta}_{3}\left(  w_{1}%
,z_{2}\right)  ,\overline{\zeta}_{4}\left(  w_{1},w_{2}\right)  \right)
\right)  ,\\
\overline{\zeta}_{1}\left(  z_{1},z_{2}\right)   &  =-q^{l+1}\theta_{2}\left(
z_{1},0\right)  ,\text{\quad}\overline{\zeta}_{2}\left(  z_{1},w_{2}\right)
=-q^{l+1}\theta_{2}\left(  z_{1},w_{2}\right)  ,\\
\overline{\zeta}_{3}\left(  z_{1},w_{2}\right)   &  =-q^{l+1}\theta_{2}\left(
0,0\right)  +w_{1}\theta_{4}\left(  w_{1},0\right)  ,\\
\overline{\zeta}_{4}\left(  z_{1},w_{2}\right)   &  =-q^{l+1}\theta_{2}\left(
0,w_{2}\right)  +w_{1}\theta_{4}\left(  w_{1},w_{2}\right)  ,
\end{align*}
and $q^{-l-1}\overline{\eta}$ has the following components
\[
q^{-l-1}\overline{\eta}=\left(  \left(  \theta_{3}\left(  0,qz_{2}\right)
+z_{1}\theta_{1}\left(  z_{1},qz_{2}\right)  ,\theta_{3}\left(  0,0\right)
+z_{1}\theta_{1}\left(  z_{1},0\right)  \right)  ,\left(  \theta_{3}\left(
w_{1},qz_{2}\right)  ,\theta_{3}\left(  w_{1},0\right)  \right)  \right)  .
\]
Since $h=h\left(  z_{1},w_{1};z_{2},w_{2}\right)  =\overline{\zeta}%
+\overline{\eta}$, it follows that%
\begin{align*}
h_{1}\left(  z_{1},z_{2}\right)   &  =q^{l+1}\left(  z_{1}\theta_{1}\left(
z_{1},qz_{2}\right)  -\theta_{2}\left(  z_{1},0\right)  +\theta_{3}\left(
0,qz_{2}\right)  \right)  ,\\
h_{2}\left(  z_{1},w_{2}\right)   &  =q^{l+1}\left(  z_{1}\theta_{1}\left(
z_{1},0\right)  -\theta_{2}\left(  z_{1},w_{2}\right)  +\theta_{3}\left(
0,0\right)  \right)  ,\\
h_{3}\left(  w_{1},z_{2}\right)   &  =-q^{l+1}\theta_{2}\left(  0,0\right)
+q^{l+1}\theta_{3}\left(  w_{1},qz_{2}\right)  +w_{1}\theta_{4}\left(
w_{1},0\right)  ,\\
h_{4}\left(  w_{1},w_{2}\right)   &  =-q^{l+1}\theta_{2}\left(  0,w_{2}%
\right)  +q^{l+1}\theta_{3}\left(  w_{1},0\right)  +w_{1}\theta_{4}\left(
w_{1},w_{2}\right)  .
\end{align*}
Moreover, the assumption $h\left(  z_{1},q^{l+1}w_{2};q^{d}z_{1},w_{2}\right)
=\pi_{d,l+1}\left(  h\right)  =0$ invokes that $h_{1}\left(  z_{1},q^{d}%
z_{1}\right)  =0$ and $h_{4}\left(  q^{l+1}w_{2},w_{2}\right)  =0$. In
particular, $\theta_{2}\left(  0,0\right)  =\theta_{3}\left(  0,0\right)  $,
$\theta_{1}$ and $\theta_{4}$ do satisfy (\ref{Theta}), which means that
$\theta\in\mathcal{I}\left(  U\right)  $. By Lemma \ref{lemTech5}, we derive
that $\theta=T^{-1}\partial_{d,l}^{0}\left(  \alpha\right)  $ for a certain
$\alpha\in\mathcal{O}_{d,l}$, that is, $\beta=T\left(  \theta\right)
=\partial_{d,l}^{0}\left(  \alpha\right)  \in\operatorname{im}\left(
\partial_{d,l}^{0}\right)  $.

Now assume that $h=N_{d,l}\beta=\overline{\zeta}+\overline{\eta}$ with
$\overline{\zeta}=N_{y_{1},d}\zeta$, $\overline{\eta}=N_{x_{1},d}\eta$, and
$\pi_{d+1,l}\left(  h\right)  =0$. Using again Lemma \ref{lemTech3}, we deduce
that%
\[
q^{-d-1}\overline{\zeta}=\left(  \left(  \theta_{3}\left(  0,z_{2}\right)
,\theta_{3}\left(  0,0\right)  +w_{2}\theta_{4}\left(  0,w_{2}\right)
\right)  ,\left(  \theta_{3}\left(  qw_{1},z_{2}\right)  ,\theta_{3}\left(
qw_{1},0\right)  +w_{2}\theta_{4}\left(  qw_{1},w_{2}\right)  \right)
\right)  ,
\]
whereas for $\overline{\eta}$ we have
\begin{align*}
\overline{\eta}_{1}\left(  z_{1},z_{2}\right)   &  =-q^{d+1}\theta_{2}\left(
z_{1},0\right)  +z_{2}\theta_{1}\left(  z_{1},z_{2}\right)  ,\quad
\overline{\eta}_{2}\left(  z_{1},w_{2}\right)  =-q^{d+1}\theta_{2}\left(
z_{1},w_{2}\right)  ,\\
\overline{\eta}_{3}\left(  z_{1},w_{2}\right)   &  =-q^{d+1}\theta_{2}\left(
0,0\right)  +z_{2}\theta_{1}\left(  0,z_{2}\right)  ,\quad\overline{\eta}%
_{4}\left(  z_{1},w_{2}\right)  =-q^{d+1}\theta_{2}\left(  0,w_{2}\right)  .
\end{align*}
Taking into account that $h=\overline{\zeta}+\overline{\eta}$, we derive that
\begin{align*}
h_{1}\left(  z_{1},z_{2}\right)   &  =z_{2}\theta_{1}\left(  z_{1}%
,z_{2}\right)  -q^{d+1}\theta_{2}\left(  z_{1},0\right)  +q^{d+1}\theta
_{3}\left(  0,z_{2}\right)  ,\\
h_{2}\left(  z_{1},w_{2}\right)   &  =q^{d+1}\left(  -\theta_{2}\left(
z_{1},w_{2}\right)  +\theta_{3}\left(  0,0\right)  +w_{2}\theta_{4}\left(
0,w_{2}\right)  \right)  ,\\
h_{3}\left(  w_{1},z_{2}\right)   &  =z_{2}\theta_{1}\left(  0,z_{2}\right)
-q^{d+1}\theta_{2}\left(  0,0\right)  +q^{d+1}\theta_{3}\left(  qw_{1}%
,z_{2}\right)  ,\\
h_{4}\left(  w_{1},w_{2}\right)   &  =q^{d+1}\left(  -\theta_{2}\left(
0,w_{2}\right)  +\theta_{3}\left(  qw_{1},0\right)  +w_{2}\theta_{4}\left(
qw_{1},w_{2}\right)  \right)  ,
\end{align*}
and $h\left(  z_{1},q^{l}w_{2};q^{d+1}z_{1},w_{2}\right)  =\pi_{d+1,l}\left(
h\right)  =0$. In particular, we\ have $h_{1}\left(  z_{1},q^{d+1}%
z_{1}\right)  =0$ and $h_{4}\left(  q^{l}w_{2},w_{2}\right)  =0$, which result
in (\ref{Theta}). The rest is clear.
\end{proof}

In the following assertion, the upper indices $\left(  k\right)  $ should not
be mixed up with the derivatives.

\begin{lemma}
\label{lemOMN}Let $\left\{  s_{i,j}^{\left(  k\right)  }\right\}
\subseteq\operatorname{im}\left(  p_{i,j}\right)  $ and $\left\{  \omega
_{i,j}^{\left(  k\right)  }\right\}  \subseteq\mathcal{O}_{i,j}^{\oplus2}$ be
sequences such that the limit
\[
\chi_{i,j}=\lim_{k}\partial_{i,j}^{1}\omega_{i,j}^{\left(  k\right)
}+M_{i,j-1}s_{i,j-1}^{\left(  k\right)  }-N_{i-1,j}s_{i-1,j}^{\left(
k\right)  }\quad\text{in\quad}\mathcal{O}_{i,j}%
\]
converges to some $\chi_{i,j}\in\operatorname{im}\left(  p_{i,j}^{0}\right)  $
for all $i$, $j$, $i+j\leq n$. Then there are limits $\lim_{k}s_{i,j}^{\left(
k\right)  }=s_{i,j}$, $i+j<n$ and $\lim_{k}\partial_{i,j}^{1}\omega
_{i,j}^{\left(  k\right)  }=\partial_{i,j}^{1}\omega_{i,j}$, $i+j\leq n$ such
that
\[
\chi_{i,j}=\partial_{i,j}^{1}\omega_{i,j}+M_{i,j-1}s_{i,j-1}-N_{i-1,j}%
s_{i-1,j}%
\]
holds for all $i,j$, $i+j\leq n$.
\end{lemma}

\begin{proof}
First prove that $\lim_{k}s_{i,j-1}^{\left(  k\right)  }=s_{i,j-1}$ converges
iff so is $\lim_{k}s_{i-1,j}^{\left(  k\right)  }=s_{i-1,j}$ whenever $i+j\leq
n$. In this case, we would have
\[
\chi_{i,j}-M_{i,j-1}s_{i,j-1}+N_{i-1,j}s_{i-1,j}=\lim_{k}\partial_{i,j}%
^{1}\omega_{i,j}^{\left(  k\right)  }%
\]
out of the continuity of the operators $M_{i,j-1}$ and $N_{i-1,j}$. Taking
into account that $\operatorname{im}\left(  \partial_{i,j}^{1}\right)  $ is
closed (see Theorem \ref{thMain1}), we obtain that $\lim_{k}\partial_{i,j}%
^{1}\omega_{i,j}^{\left(  k\right)  }=\partial_{i,j}^{1}\omega_{i,j}$ for some
$\omega_{i,j}$, and the result $\chi_{i,j}=\partial_{i,j}^{1}\omega
_{i,j}+M_{i,j-1}s_{i,j-1}-N_{i-1,j}s_{i-1,j}$ follows.

Now by assumption, we have
\begin{align*}
s_{i,j-1}^{\left(  k\right)  }  &  =p_{i,j-1}\left(  s_{i,j-1}^{\left(
k\right)  }\right)  =\left(  f^{\left(  k\right)  }\left(  z_{1}\right)
,0,\lambda^{\left(  k\right)  },g^{\left(  k\right)  }\left(  w_{2}\right)
\right)  ,\\
s_{i-1,j}^{\left(  k\right)  }  &  =p_{i-1,j}\left(  s_{i-1,j}^{\left(
k\right)  }\right)  =\left(  u^{\left(  k\right)  }\left(  z_{1}\right)
,0,\mu^{\left(  k\right)  },v^{\left(  k\right)  }\left(  w_{2}\right)
\right)
\end{align*}
are free quadruples thanks to Proposition \ref{propTechE}, where $\left\{
f^{\left(  k\right)  }\right\}  \subseteq\mathcal{O}\left(  z\right)  $,
$\left\{  g^{\left(  k\right)  }\right\}  \subseteq\mathcal{O}\left(
w\right)  $ and $\left\{  \lambda^{\left(  k\right)  }\right\}  \subseteq
\mathbb{C}$. By Lemma \ref{lemTech4}, these free quadruples are in turn
identified with the following compatible ones
\begin{align*}
T\left(  s_{i,j-1}^{\left(  k\right)  }\right)   &  =\left[
\begin{array}
[c]{c}%
\zeta_{i,j-1}^{\left(  k\right)  }\\
\eta_{i,j-1}^{\left(  k\right)  }%
\end{array}
\right]  =\left[
\begin{array}
[c]{c}%
\left(  \left(  0,0\right)  ,\left(  w_{1}\lambda^{\left(  k\right)  }%
,\lambda^{\left(  k\right)  }w_{1}+w_{1}w_{2}g^{\left(  k\right)  }\left(
w_{2}\right)  \right)  \right) \\
\left(  \left(  \lambda^{\left(  k\right)  }z_{2}+z_{1}z_{2}f^{\left(
k\right)  }\left(  z_{1}\right)  ,0\right)  ,\left(  \lambda^{\left(
k\right)  }z_{2},0\right)  \right)
\end{array}
\right]  \in\ker\left(  \partial_{i,j-1}^{1}\right) \\
T\left(  s_{i-1,j}^{\left(  k\right)  }\right)   &  =\left[
\begin{array}
[c]{c}%
\zeta_{i-1,j}^{\left(  k\right)  }\\
\eta_{i-1,j}^{\left(  k\right)  }%
\end{array}
\right]  =\left[
\begin{array}
[c]{c}%
\left(  \left(  0,0\right)  ,\left(  \mu^{\left(  k\right)  }w_{1}%
,\mu^{\left(  k\right)  }w_{1}+w_{1}w_{2}v^{\left(  k\right)  }\left(
w_{2}\right)  \right)  \right) \\
\left(  \left(  \mu^{\left(  k\right)  }z_{2}+z_{1}z_{2}u^{\left(  k\right)
}\left(  z_{1}\right)  ,0\right)  ,\left(  \mu^{\left(  k\right)  }%
z_{2},0\right)  \right)
\end{array}
\right]  \in\ker\left(  \partial_{i-1,j}^{1}\right)
\end{align*}
in $\mathcal{O}_{i,j-1}$ and $\mathcal{O}_{i-1,j}$, respectively. Moreover,
based on Corollary \ref{corDecom}, we have
\[
\chi_{i,j}=p_{i,j}^{0}\left(  \chi_{i,j}\right)  =\tau_{i,j}^{0}\left(
f_{ij},g_{ij}\right)  =\left(  \left(  f_{ij}\left(  z_{1}\right)
,-\lambda_{ij}+f_{ij}\left(  z_{1}\right)  +g_{ij}\left(  w_{2}\right)
\right)  ,\left(  \lambda_{ij},g_{ij}\left(  w_{2}\right)  \right)  \right)
\]
in $\mathcal{O}_{i,j}$ for some $\left(  f_{i,j},g_{i,j}\right)
\in\mathcal{O}_{i+j}$ with $f_{ij}\left(  0\right)  =\lambda_{ij}%
=g_{ij}\left(  0\right)  $. Using Lemma \ref{lemTech3}, we deduce that
\begin{align*}
M_{i,j-1}s_{i,j-1}^{\left(  k\right)  }  &  =M_{x_{2},j-1}\zeta_{i,j-1}%
^{\left(  k\right)  }+M_{y_{2},j-1}\eta_{i,j-1}^{\left(  k\right)  }=\left(
\left(  0,0\right)  ,\left(  w_{1}g^{\left(  k\right)  }\left(  0\right)
,w_{1}g^{\left(  k\right)  }\left(  w_{2}\right)  \right)  \right) \\
&  +q^{j}\left(  \left(  \lambda^{\left(  k\right)  }+z_{1}f^{\left(
k\right)  }\left(  z_{1}\right)  ,\lambda^{\left(  k\right)  }+z_{1}f^{\left(
k\right)  }\left(  z_{1}\right)  \right)  ,\left(  \lambda^{\left(  k\right)
},\lambda^{\left(  k\right)  }\right)  \right)
\end{align*}%
\[
=\left(  \left(  q^{j}\left(  \lambda^{\left(  k\right)  }+z_{1}f^{\left(
k\right)  }\left(  z_{1}\right)  \right)  ,q^{j}\left(  \lambda^{\left(
k\right)  }+z_{1}f^{\left(  k\right)  }\left(  z_{1}\right)  \right)  \right)
,\left(  q^{j}\lambda^{\left(  k\right)  }+w_{1}g^{\left(  k\right)  }\left(
0\right)  ,q^{j}\lambda^{\left(  k\right)  }+w_{1}g^{\left(  k\right)
}\left(  w_{2}\right)  \right)  \right)  .
\]
In a similar way, we deduce that
\begin{align*}
N_{i-1,j}s_{i-1,j}^{\left(  k\right)  }  &  =N_{y_{1},i-1}\zeta_{i-1,j}%
^{\left(  k\right)  }+N_{x_{1},i-1}\eta_{i-1,j}^{\left(  k\right)  }%
=q^{i}\left(  \left(  \mu^{\left(  k\right)  },\mu^{\left(  k\right)  }%
+w_{2}v^{\left(  k\right)  }\left(  w_{2}\right)  \right)  ,\left(
\mu^{\left(  k\right)  },\mu^{\left(  k\right)  }+w_{2}v^{\left(  k\right)
}\left(  w_{2}\right)  \right)  \right) \\
&  +\left(  \left(  z_{2}u^{\left(  k\right)  }\left(  z_{1}\right)
,0\right)  ,\left(  z_{2}u^{\left(  k\right)  }\left(  0\right)  ,0\right)
\right)
\end{align*}%
\[
=\left(  \left(  q^{i}\mu^{\left(  k\right)  }+z_{2}u^{\left(  k\right)
}\left(  z_{1}\right)  ,q^{i}\left(  \mu^{\left(  k\right)  }+w_{2}v^{\left(
k\right)  }\left(  w_{2}\right)  \right)  \right)  ,\left(  q^{i}\mu^{\left(
k\right)  }+z_{2}u^{\left(  k\right)  }\left(  0\right)  ,q^{i}\left(
\mu^{\left(  k\right)  }+w_{2}v^{\left(  k\right)  }\left(  w_{2}\right)
\right)  \right)  \right)  .
\]
If $\omega_{i,j}^{\left(  k\right)  }=\left[
\begin{array}
[c]{c}%
a^{\left(  k\right)  }\\
b^{\left(  k\right)  }%
\end{array}
\right]  \in\mathcal{O}_{i,j}^{\oplus2}$ for some compactible quadruples
$a^{\left(  k\right)  }$ and $b^{\left(  k\right)  }$, then $\partial
_{i,j}^{1}\omega_{i,j}^{\left(  k\right)  }=\left(  z_{2}-q^{i}z_{1}\right)
a^{\left(  k\right)  }+\left(  q^{j}w_{2}-w_{1}\right)  b^{\left(  k\right)
}$. Using Corollary \ref{corTech1}, we obtain that
\[
\partial_{i,j}^{1}\omega_{i,j}^{\left(  k\right)  }=\left(  \left(  \left(
z_{2}-q^{i}z_{1}\right)  a_{1}^{\left(  k\right)  },-q^{i}z_{1}a_{2}^{\left(
k\right)  }+q^{j}w_{2}b_{2}^{\left(  k\right)  }\right)  ,\left(  z_{2}%
a_{3}^{\left(  k\right)  }-w_{1}b_{3}^{\left(  k\right)  },\left(  q^{j}%
w_{2}-w_{1}\right)  b_{4}^{\left(  k\right)  }\right)  \right)  .
\]
Put $\chi^{\left(  k\right)  }=\partial_{i,j}^{1}\omega_{i,j}^{\left(
k\right)  }+M_{i,j-1}s_{i,j-1}^{\left(  k\right)  }-N_{i-1,j}s_{i-1,j}%
^{\left(  k\right)  }$ to be convergent to $\chi_{i,j}$. By adding up all
those compatible quadruples obtained above, we have
\begin{align*}
\chi_{1}^{\left(  k\right)  }  &  =\left(  z_{2}-q^{i}z_{1}\right)
a_{1}^{\left(  k\right)  }+q^{j}\left(  \lambda^{\left(  k\right)  }%
+z_{1}f^{\left(  k\right)  }\left(  z_{1}\right)  \right)  -q^{i}\mu^{\left(
k\right)  }-z_{2}u^{\left(  k\right)  }\left(  z_{1}\right)  \rightarrow
f_{ij}\left(  z_{1}\right)  ,\\
\chi_{2}^{\left(  k\right)  }  &  =-q^{i}z_{1}a_{2}^{\left(  k\right)  }%
+q^{j}w_{2}b_{2}^{\left(  k\right)  }+q^{j}\left(  \lambda^{\left(  k\right)
}+z_{1}f^{\left(  k\right)  }\left(  z_{1}\right)  \right)  -q^{i}\left(
\mu^{\left(  k\right)  }+w_{2}v^{\left(  k\right)  }\left(  w_{2}\right)
\right) \\
&  \rightarrow-\lambda_{ij}+f_{ij}\left(  z_{1}\right)  +g_{ij}\left(
w_{2}\right)  ,\\
\chi_{3}^{\left(  k\right)  }  &  =z_{2}a_{3}^{\left(  k\right)  }-w_{1}%
b_{3}^{\left(  k\right)  }+q^{j}\lambda^{\left(  k\right)  }+w_{1}g^{\left(
k\right)  }\left(  0\right)  -q^{i}\mu^{\left(  k\right)  }-z_{2}u^{\left(
k\right)  }\left(  0\right)  \rightarrow\lambda_{ij},\\
\chi_{4}^{\left(  k\right)  }  &  =\left(  q^{j}w_{2}-w_{1}\right)
b_{4}^{\left(  k\right)  }+q^{j}\lambda^{\left(  k\right)  }+w_{1}g^{\left(
k\right)  }\left(  w_{2}\right)  -q^{i}\left(  \mu^{\left(  k\right)  }%
+w_{2}v^{\left(  k\right)  }\left(  w_{2}\right)  \right)  \rightarrow
g_{ij}\left(  w_{2}\right)  .
\end{align*}
For $w_{1}=0$, $z_{2}=0$ in $\chi_{3}^{\left(  k\right)  }\rightarrow
\lambda_{ij}$, we derive that $q^{j}\lambda^{\left(  k\right)  }-q^{i}%
\mu^{\left(  k\right)  }\rightarrow\lambda_{ij}$, Along the line $z_{2}%
=q^{i}z_{1}$ in $\chi_{1}^{\left(  k\right)  }\rightarrow f_{ij}\left(
z_{1}\right)  $, we obtain that $q^{j}f^{\left(  k\right)  }\left(
z_{1}\right)  -q^{i}u^{\left(  k\right)  }\left(  z_{1}\right)  \rightarrow
\left(  f_{ij}\right)  _{z}\left(  z_{1}\right)  $. Along the line
$w_{1}=q^{j}w_{2}$ in $\chi_{4}^{\left(  k\right)  }\rightarrow g_{ij}\left(
w_{2}\right)  $, we obtain that $q^{j}g^{\left(  k\right)  }\left(
w_{2}\right)  -q^{i}v^{\left(  k\right)  }\left(  w_{2}\right)  \rightarrow
\left(  g_{ij}\right)  _{w}\left(  w_{2}\right)  $. It follows that
\begin{align*}
q^{j}s_{i,j-1}^{\left(  k\right)  }-q^{i}s_{i-1,j}^{\left(  k\right)  }  &
=\left(  q^{j}f^{\left(  k\right)  }\left(  z_{1}\right)  -q^{i}u^{\left(
k\right)  }\left(  z_{1}\right)  ,0,q^{j}\lambda^{\left(  k\right)  }-q^{i}%
\mu^{\left(  k\right)  },q^{j}g^{\left(  k\right)  }\left(  w_{2}\right)
-q^{i}v^{\left(  k\right)  }\left(  w_{2}\right)  \right) \\
&  \rightarrow\left(  \left(  f_{ij}\right)  _{z}\left(  z_{1}\right)
,0,\lambda_{ij},\left(  g_{ij}\right)  _{w}\left(  w_{2}\right)  \right)  ,
\end{align*}
that is, the presence of $\lim_{k}s_{i,j-1}^{\left(  k\right)  }=s_{i,j-1}$
implies $\lim_{k}s_{i-1,j}^{\left(  k\right)  }=s_{i-1,j}$ and vise-versa. In
particular, if $j=0$ then the limit $\lim_{k}s_{i-1,0}^{\left(  k\right)
}=s_{i-1,0}$ ($s_{i,-1}^{\left(  k\right)  }=0$) does exist for every $i$. In
a similar way, $\lim_{k}s_{0,j-1}^{\left(  k\right)  }=s_{0,j-1}$
($s_{-1,j}^{\left(  k\right)  }=0$) exists for every $j$. Thus $s_{i,0}$,
$s_{0,j}$ do exist, and $\chi_{i,0}=\partial_{i,0}^{1}\omega_{i,0}%
-N_{i-1,0}s_{i-1,0}$, $\chi_{0,j}=\partial_{0,j}^{1}\omega_{0,j}%
+M_{0,j-1}s_{0,j-1}$ hold.

Finally, fix $m\leq n$. Then $s_{m-1,0}=\lim_{k}s_{m-1,0}^{\left(  k\right)
}$ implies that $\lim_{k}s_{m-2,1}^{\left(  k\right)  }=s_{m-2,1}$ converges
too. By iterating, we deduce that all $\lim_{k}s_{i,j}^{\left(  k\right)
}=s_{i,j}$, $i+j=m-1$ converge and
\begin{align*}
\chi_{m-i,i}  &  =\lim_{k}\partial_{m-i,i}^{1}\omega_{m-i,i}^{\left(
k\right)  }+M_{m-i,i-1}s_{m-i,i-1}^{\left(  k\right)  }-N_{m-i-1,i}%
s_{m-i-1,i}^{\left(  k\right)  }\\
&  =\partial_{m-i,i}^{1}\omega_{m-i,i}+M_{m-i,i-1}s_{m-i,i-1}-N_{m-i-1,i}%
s_{m-i-1,i}%
\end{align*}
for all $i$. Whence $\lim_{k}s_{i,j}^{\left(  k\right)  }=s_{i,j}$ exists for
all $i,j$, $i+j<n$, $\lim_{k}\partial_{i,j}^{1}\omega_{i,j}^{\left(  k\right)
}=\partial_{i,j}^{1}\omega_{i,j}$ and $\chi_{i,j}=\partial_{i,j}^{1}%
\omega_{i,j}+M_{i,j-1}s_{i,j-1}-N_{i-1,j}s_{i-1,j}$ hold for all $i,j$,
$i+j\leq n$.
\end{proof}

\bigskip

\end{document}